\newtheorem{theorem}{Theorem}[section]
\newtheorem{lemma}[theorem]{Lemma}
\newtheorem{definition}[theorem]{Definition}
\newtheorem{prop}[theorem]{Proposition}
\newtheorem{corollary}[theorem]{Corollary}
\theoremstyle{definition}
\newtheorem{rem}[theorem]{Remark}
\newtheorem{example}[theorem]{Example}
\newcommand\pf{\begin{proof}}
\newcommand\epf{\end{proof}}
\newcommand{\cmddotrtimes}{\mathop{\raisebox{0.2ex}{\makebox[0.86em][l]{${\scriptstyle>\mathrel{\mkern-4mu}\lessdot}$}}\raisebox{0.12ex}{$ \shortmid$}}}
\numberwithin{equation}{section}
\title[Quotient sheaves of algebraic supergroups]
{Quotient sheaves of algebraic supergroups are superschemes}
\author[A.\ Masuoka]{Akira Masuoka}
\address{Akira Masuoka: 
Institute of Mathematics, 
University of Tsukuba, 
Ibaraki 305-8571, Japan}
\email{akira@math.tsukuba.ac.jp}
\author[A.\ N.\ Zubkov]{Alexander N.~Zubkov}
\address{Alexander N.~Zubkov: 
Department of Mathematics, 
Omsk State Pedagogical University, 
Omsk--644099, Russia}
\email{a.zubkov@yahoo.com}
\begin{document}

% \maketitle

\noindent
{\sc}

\medskip
\noindent
{\sc}

\hspace{3cm}

%\section{}
\maketitle

\begin{abstract}
To generalize some fundamental results on group schemes to the super context, we study the quotient sheaf 
$G \tilde{/} H$ of an algebraic supergroup $G$ by its closed supersubgroup $H$,
in arbitrary characteristic $\neq$ 2. Our main theorem states that $G \tilde{/} H$
is a Noetherian superscheme. This together with derived results give positive answers to interesting
questions posed by J. Brundan.
\end{abstract}

\section*{Introduction}
This paper is concerned with generalizing theory of algebraic groups, as presented by 
Demazure and Gabriel \cite{dg} or Jantzen \cite{jan},
to the super context from a \emph{functorial} view-point. 
Recent papers with the same concern include \cite{br}, \cite{z} and \cite{z1}.
Unless otherwise stated, we work over a fixed field $K$ whose characteristic is different from 2.
We study the quotient sheaf $G \tilde{/} H$ of an algebraic supergroup $G$ by its closed supersubgroup $H$,
strongly motivated by those interesting questions posed by J. Brundan which will be noted below.

Let us recall from \cite{z} some basic definitions and known results.
Those vector spaces (over the field $K$ as above) which are graded by $\mathbb{Z}_2 = \{0, 1\}$ 
form a tensor category, $\mathsf{SMod}_K$,
with the canonical symmetry. Objects defined in this symmetric tensor category are called
with the adjective `super' attached. For example, an algebra object in $\mathsf{SMod}_K$ 
is called a \emph{superalgebra}. All superalgebras including Hopf superalgebras are
assumed to be supercommutative. A \emph{$K$-functor} (resp., a \emph{supergroup}) is a set-valued 
(resp., group-valued) functor defined on the category $\mathsf{SAlg}_K$ of superalgebras.
The $K$-functors includes the following subclasses:
\begin{equation*}
\mbox{(affine superschemes)} \subset \mbox{(superschemes)} \subset \mbox{($K$-sheaves)}. 
\end{equation*}
Every $K$-functor $X$ has uniquely a 
$K$-sheaf $\widetilde{X}$ (with respect to the fppf topology) 
together with a morphism $X \to \widetilde{X}$ of $K$-functors which have the 
obvious universal property; this $\widetilde{X}$ is called the \emph{sheafification} of $X$. 

By an \emph{algebraic supergroup} we always mean an algebraic affine supergroup, or namely 
a supergroup $G$ which is represented by a finitely generated Hopf superalgebra $K[G]$.
A \emph{closed supersubgroup} of $G$ is a supergroup $H$ represented by a quotient 
Hopf superalgebra of $K[G]$.  Let $G$, $H$ be as just defined.
The $K$-functor which associates to every superalgebra $R$, the set $G(R)/H(R)$ of right cosets 
is called the \emph{naive quotient}, denoted $(G/H)_{(n)}$. The sheafification 
of $(G/H)_{(n)}$ is denoted by $G \tilde{/} H$. It is proved in \cite{z} (see also \cite{amas}) that 
$G \tilde{/} H$ is an affine supergroup, if $H$ is normal in $G$. It is important and is our concern to study
$G \tilde{/} H$ when $H$ is not necessarily normal. Note that if the ranges of $G$, $H$ are restricted to 
the category $\mathsf{Alg}_K$ of (purely even) algebras, we have an algebraic 
(non-super) group denoted $G_{res}$, and its closed subgroup 
denoted $H_{res}$. We are also concerned with the relation between $G \tilde{/} H$ and 
the quotient $G_{res} \tilde{/} H_{res}$ in the classical, non-super situation.

The questions on $G \tilde{/} H$ posed by Brundan, which were brought to the second named author by a private
communication, are the following.

\begin{itemize}
\item[(Q1)] 
Is $G \tilde{/} H$ necessarily a superscheme?
\item[(Q2)]
Is $G \tilde{/} H$ affine whenever the algebraic group $H_{res}$ is geometrically reductive? 
\end{itemize}

On the other hand Brundan \cite{br} defined a kind of quotients of $G$ by $H$, which we call the 
\emph{Brundan quotient}, as a superscheme with some desired properties; see below.
The Brundan quotient looks different from the explicitly constructed $G \tilde{/} H$.
Therefore we have the following in mind.

\begin{itemize}
\item[(Q3)] 
Does the Brundan quotient always exist, and coincide with $G \tilde{/} H$?
\end{itemize}

This paper answers these questions all in the positive. First of all, our main theorem, which 
answers (Q1) positively, is the following: 

\begin{theorem}\label{mainthm}
Let $G$ be an algebraic supergroup, and let $H$ be a closed supersubgroup of $G$. Then  
the $K$-sheaf $G \tilde{/} H$ is a Noetherian superscheme.
\end{theorem}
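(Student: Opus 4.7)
The plan is to reduce to the classical (purely even) quotient, which is already a Noetherian scheme by Demazure--Gabriel \cite{dg}, and then control the odd/Grassmann directions via Hopf-superalgebra techniques. The strategy has three stages: (i) construct a canonical morphism $\pi \colon G \tilde{/} H \to G_{res} \tilde{/} H_{res}$; (ii) prove that $\pi$ is representable by affine superschemes, so that affine opens of the base pull back to affine super-opens of the source; and (iii) glue the affine Noetherian pieces into a superscheme structure on $G \tilde{/} H$.

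For (i), the Hopf projection $K[G] \twoheadrightarrow K[G_{res}]$ (dually, the functorial section $G_{res} \hookrightarrow G$) induces the natural map $G \to G_{res}$, and the analogous map for $H$ yields $\pi$. For (ii), cover $G_{res} \tilde{/} H_{res}$ by finitely many affine opens $U_i = \mathrm{Spec}(A_i)$ (possible by its quasi-projectivity and Noetherianness), and let $V_i$ be the $H$-invariant open affine sub-superscheme of $G$ obtained as the preimage of $U_i$ under the composite $G \to G_{res} \to G_{res} \tilde{/} H_{res}$; then one expects $\pi^{-1}(U_i) = V_i \tilde{/} H$. Invoking the super analogue of the normal basis theorem (cf.\ \cite{z}, \cite{amas}) one has $K[G] \cong K[G_{res}] \otimes \wedge(W)$ as a $K[G_{res}]$-module for a finite-dimensional odd vector space $W$, and analogously for $K[H]$. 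Combining this decomposition with faithfully flat descent along $K[V_i]^H \hookrightarrow K[V_i]$, one identifies $K[V_i]^H$ as a finitely generated $A_i$-superalgebra whose spectrum represents $V_i \tilde{/} H$. Finally for (iii), the affine pieces $V_i \tilde{/} H = \mathrm{Spec}(K[V_i]^H)$ glue along the transition data of $\{U_i\}$ to a superscheme structure on $G \tilde{/} H$, which inherits Noetherianness from its finite affine cover by superschemes of finite type over Noetherian rings.

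The chief obstacle lies in (ii), the local affineness claim. This has two delicate substeps. First, finite generation of the super-invariants $K[V_i]^H$ over $A_i$, which is not automatic in positive characteristic; it should be reduced to the classical fact $A_i = K[(V_i)_{res}]^{H_{res}}$ by exploiting that $K[V_i]$ is a finitely generated module over $K[(V_i)_{res}]$, with generators supplied by the exterior algebra on $W$, and then bootstrapping $H_{res}$-invariance up to $H$-invariance using the retraction $H_{res} \hookrightarrow H$. Second, verification that $\mathrm{Spec}(K[V_i]^H)$ genuinely represents the fppf sheafification $V_i \tilde{/} H$; this is a descent argument resting on the fact that $V_i \to V_i \tilde{/} H$ should be an fppf $H$-torsor in the super category, for which the faithful flatness of Hopf superalgebra extensions in characteristic $\neq 2$ (a cornerstone established in \cite{z}) is essential. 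Once both substeps are in hand, the gluing in (iii) is routine and the theorem follows.
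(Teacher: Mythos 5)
Your outline shares the paper's broad strategy (reduce to the classical Noetherian scheme $G_{res}\tilde{/}H_{res}$, lift its finite affine cover, and treat each affine piece via the decomposition $K[G]\simeq \overline{K[G]}\otimes\wedge(W)$ and a descent/affineness criterion), but both of your ``delicate substeps'' conceal genuine gaps, and there is a third gap you do not flag. First, the morphism $\pi\colon G\tilde{/}H\to G_{res}\tilde{/}H_{res}$ of stage (i) does not exist as stated: the two functors live on different categories ($\mathsf{SAlg}_K$ versus $\mathsf{Alg}_K$), and the natural comparison map goes the wrong way --- one only has an \emph{inclusion} $G_{ev}\tilde{/}H_{ev}\subseteq G\tilde{/}H$, not a retraction, because the section $\overline{K[G]}\to K[G]$ coming from the tensor decomposition is an algebra map but not a Hopf algebra map. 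Consequently ``preimages of affine opens under $\pi$'' is not available, and the real problem is to show that every open subfunctor of $G_{ev}\tilde{/}H_{ev}$ is the trace of an open subfunctor of $G\tilde{/}H$ (Proposition \ref{surjection}). The paper proves this by choosing a faithful representation with $H=Stab_G(U)$, embedding $G\tilde{/}H$ into the supergrassmannian $GL(V)\tilde{/}P(U)\simeq Gr(s|r,m|n)$ (Proposition \ref{Grprop}), where the lattice of opens is explicitly a bijection onto that of the even part, and then invoking the classical immersion theorem of \cite{dg} for $G_{res}\tilde{/}H_{res}\to GL(V)_{res}\tilde{/}P(U)_{res}$. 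This supergrassmannian step is the key idea missing from your stage (i)/(ii) and cannot be bypassed by positing $\pi$. Relatedly, you never argue that the lifted opens actually \emph{cover} $G\tilde{/}H$; the paper checks this on algebraically closed fields (Lemma \ref{U_i}).

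Second, your treatment of local affineness is circular: ``faithfully flat descent along $K[V_i]^H\hookrightarrow K[V_i]$'' presupposes that $B=K[V_i]$ is faithfully flat over $C=B^{coD}$, which is precisely one of the equivalent conditions of Theorem \ref{superOberst} that has to be \emph{verified}, and the faithful-flatness results of \cite{z} for Hopf superalgebra quotients $K[G]\to K[H]$ do not apply to $B$, which is only a comodule superalgebra attached to an open piece of $G$. The paper instead verifies condition (2) of Theorem \ref{superOberst} directly (Proposition \ref{prop-aim}): writing $\mathfrak{a}=\mathrm{Ker}(R\to T)$ for the right coideal subalgebras $R\simeq\wedge(W^A)$, $T\simeq\wedge(W^D)$, one uses that $\mathfrak{a}$ is nilpotent, Doi's theorem \cite{doi} applied to the even quotient $\overline{B}$, and the category equivalence $\mathsf{SMod}_T^D\approx\mathsf{SMod}^{\overline{D}}$ to show that each graded piece $B\mathfrak{a}^i/B\mathfrak{a}^{i+1}$ is an injective $D$-comodule, whence $B$ is $D$-injective and $\alpha_B$ is surjective by a nilpotence argument. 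Your ``bootstrapping $H_{res}$-invariance up to $H$-invariance'' does not substitute for this filtration-plus-injectivity argument, which is where the whole difficulty of the theorem is concentrated.
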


The same statement holds true for the $K$-sheaf $G \tilde{\backslash} H$ which is defined to be the sheafification
of the naive quotient $(G \backslash H)_{(n)}$ of left cosets. This and other results on $G \tilde{\backslash} H$
follow from the corresponding results on $G \tilde{/} H$ applied to the opposite supergroups 
$G^{op} \supseteq H^{op}$. 
As a corollary to the proof of the theorem above, we have that $G \tilde{/} H$ is affine iff $G_{res} \tilde{/} H_{res}$ is affine;
see Corollary \ref{cor1}. This answers (Q2) in the positive since it follows from 
Cline et al. \cite{cps}, Corollary 4.5,  that 
$G_{res} \tilde{/} H_{res}$ is affine if $H_{res}$ is geometrically reductive; see Remark \ref{folklore} for more details.

The \emph{Brundan quotient} of $G$ by $H$, cited above, is a pair $(X, \pi)$ of a Noetherian superscheme $X$ and
a morphism $\pi : G \to X$ such that 
\begin{itemize}
\item[(1)]
$\pi$ is affine and faithfully flat\ (see Section 9 for definitions), 
\item[(2)]
$\pi$ factors (necessarily in a unique way) through the quotient morphism $G \to (G/H)_{(n)}$, and
\item[(3)]
if a morphism $G \to Y$ to a superscheme $Y$ factors through $G \to (G/H)_{(n)}$, it uniquely factors through $\pi$.
\end{itemize}
\noindent
We will prove that the quotient morphism $G \to G \tilde{/} H$ has the property (1); see Corollary \ref{affandfaith}. 
Since it has obviously the properties (2), (3), the main theorem above answers (Q3) in the positive.

Recall that Theorem \ref{mainthm} above was proved in the classical, non-super situation by Demazure and
Gabriel \cite{dg}, III, \S 3, 5.4; thus we know already that with our notation, $G_{res} \tilde{/} H_{res}$ is a
Noetherian scheme. Our proof of the theorem reduces to this classical result, investigating
the relation of  $G \tilde{/} H$ with $G_{res} \tilde{/} H_{res}$. 
Our method of the proof is a combination of geometric and Hopf-algebraic ones, which work effectively 
for global and local questions, respectively. 
Our geometric method is represented by Comparison Theorem \ref{comparison}, which generalizes in the super context 
\emph{th$\Acute{e}$or$\Grave{e}$me de comparaison} by Demazure and Gabriel \cite{dg}, I, \S 1, 4.4. 
Besides the functorial approach, there is another approach to supergeometry through \emph{geometric superspaces},
which are topological spaces with structure sheaves of superalgebras. Roughly speaking, the Comparison Theorem tells us
that the two approaches are equivalent at the level of superschemes; it enables us to obtain useful topological 
information on superschemes.
We emphasize that our proof of the theorem is not
merely a translation of the proof by Demazure and Gabriel, giving more detailed explanations.
As our Hopf-algebraic method the tensor product decomposition of a Hopf superalgebra plays an
important role; see Proposition \ref{prop-isom} which is reproduced from \cite{amas}. 
This result tells us that $G$ is moderately related with $G_{res}$,
and it enables us at some crucial steps to reduce our argument to the non-super context.

The main body of this paper consists of Sections 8 and 9. Section 8 is devoted mostly to proving the main theorem above,
while Section 9 shows some further properties of $G \tilde{/} H$; the latter contains, besides the corollary last referred to, 
Proposition \ref{ev of factor is factor of ev} which states especially that $G \tilde{/} H$, with its range restricted to 
$\mathsf{Alg}_K$, turns to coincide with $G_{res} \tilde{/} H_{res}$. The preceding seven sections and the last, rather
independent Section 10 are devoted to preliminaries for the two main sections. Let us describe briefly the contents of these
eight sections. 
Section 1 discusses direct limits, which are used to construct geometric superspaces. Section 2 gives
basic results on super(co)algebras and their super(co)modules. Section 3 summarizes basics on $K$-functors and sheaves.
In Section 4, we discuss geometric superspaces, and construct such a space from a $K$-functor. In Section 5, we 
formulate the Comparison Theorem cited above, and prove it. Section 6 discusses the supergrassmannian, which turns out to be
a model of quotient sheaves, and into which every quotient sheaf $G \tilde{/} H$ can be embedded.  In Section 7, we discuss
the quotient \emph{dur sheaf} $X \tilde{\tilde{/}} G$ associated to an affine superscheme
$X$ on which an affine supergroup $G$ acts; with $G$, $H$ as before, the discussion will be applied to 
$U \tilde{/} H$, where $U$ is such an affine open supersubscheme of $G$ that is stable under the right multiplication by $H$.
Theorem \ref{superOberst} gives some necessary and sufficient conditions for us to have that
the $G$-action on $X$ is free, and  $X \tilde{\tilde{/}} G$ is affine. A part of the proof of the theorem will be postponed
until Section 10. The postponed proof uses the bozonization technique, which is applied to a more general situation 
(i.e., to the braided tensor category of Yetter-Drinfeld modules which generalizes $\mathsf{SMod}_K$) than is needed for the sake
of its own interest.

\section{Direct limits}\label{directlimit}

Let $\mathcal{A}$ be a category. For an object $A\in Ob \ \mathcal{A}$, denote the functors $B \mapsto \mathrm{Mor}_{\mathcal{A}}(B,
A)$ and $B \mapsto \mathrm{Mor}_{\mathcal{A}}(A, B),\ B \in Ob \ {\mathcal {A}},$ by $h_A$ and $h^A$, respectively. The functors $h_A$ and $h^A$ are called
{\it representable, contravariant} and {\it covariant} functors, respectively. Denote the category of covariant functors from $\mathcal{A}$ to $\mathsf{Sets}$ by $\mathsf{Sets}^{\mathcal{A}}$. 

\begin{lemma}\label{yoneda}
(Yoneda's Lemma, cf. \cite{mc}, p.61)\
For any covariant functor $f: \mathcal{A} \to \mathsf{Sets}$ there is an bijection
$$\mathrm{Mor}_{\mathsf{Sets}^{\mathcal{A}}}(h^A, f) \overset{\simeq}{\longrightarrow} f(A),\ A \in Ob \ \mathcal{A},$$
functorial in both arguments. 
\end{lemma}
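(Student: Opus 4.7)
The plan is to exhibit mutually inverse assignments between the two sides. In one direction, send a natural transformation $\eta \colon h^A \to f$ to the element $\Phi(\eta) := \eta_A(\mathrm{id}_A) \in f(A)$. In the other direction, send $x \in f(A)$ to the family $\phi^x = (\phi^x_B)_{B \in Ob\,\mathcal{A}}$ whose component at $B$ is
\[
\phi^x_B \colon h^A(B) \longrightarrow f(B), \qquad \phi^x_B(u) = f(u)(x),
\]
for $u \in \mathrm{Mor}_{\mathcal{A}}(A, B)$. Call this assignment $\Psi$.

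First I would verify that each $\phi^x$ really is a morphism in $\mathsf{Sets}^{\mathcal{A}}$: for $g \colon B \to C$, both composites $\phi^x_C \circ h^A(g)$ and $f(g) \circ \phi^x_B$ send $u \colon A \to B$ to $f(g \circ u)(x) = f(g)(f(u)(x))$ by functoriality of $f$, so the naturality square commutes. Then I would check that $\Phi$ and $\Psi$ are mutually inverse: one has $\Phi(\Psi(x)) = \phi^x_A(\mathrm{id}_A) = f(\mathrm{id}_A)(x) = x$, while for any $\eta$ the naturality square of $\eta$ at a morphism $u \colon A \to B$, evaluated on $\mathrm{id}_A$, yields $\eta_B(u) = f(u)(\eta_A(\mathrm{id}_A)) = \phi^{\Phi(\eta)}_B(u)$, proving $\eta = \Psi(\Phi(\eta))$.

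Finally, functoriality in both arguments is a routine verification of two commutative squares. A morphism $\alpha \colon f \to f'$ in $\mathsf{Sets}^{\mathcal{A}}$ induces postcomposition $\eta \mapsto \alpha \circ \eta$ on the left and $\alpha_A$ on the right, and both send a given $\eta$ to $\alpha_A(\eta_A(\mathrm{id}_A))$. A morphism $v \colon A \to A'$ in $\mathcal{A}$ induces the natural transformation $h^v \colon h^{A'} \to h^A$ (precomposition by $v$), giving $\eta \mapsto \eta \circ h^v$ on the left and $f(v) \colon f(A) \to f(A')$ on the right; these agree because naturality of $\eta$ at $v$, applied to $\mathrm{id}_A$, gives $(\eta \circ h^v)_{A'}(\mathrm{id}_{A'}) = \eta_{A'}(v) = f(v)(\eta_A(\mathrm{id}_A))$.

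There is no genuine obstacle here; the proof is entirely formal, the only nontrivial input being the use of the naturality square of $\eta$ to recover $\eta$ from the single value $\eta_A(\mathrm{id}_A)$. The main point to be careful about is bookkeeping of variances, ensuring that the functoriality statement in $A$ reflects the fact that $A \mapsto h^A$ is contravariant, so that both $A \mapsto \mathrm{Mor}_{\mathsf{Sets}^{\mathcal{A}}}(h^A, f)$ and $A \mapsto f(A)$ become covariant functors that are to be compared.
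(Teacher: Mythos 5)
Your proposal is correct and follows exactly the paper's own argument: the paper defines the bijection by $\mathbf{g}\mapsto \mathbf{g}(A)(\mathrm{id}_A)$ with inverse $x\mapsto \mathbf{g}_x$, $\mathbf{g}_x(\alpha)=f(\alpha)(x)$, which are precisely your $\Phi$ and $\Psi$. You simply spell out the naturality and mutual-inverse verifications that the paper leaves implicit.
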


\begin{proof}
The bijection is defined by ${\bf g}\mapsto x_{\bf g}={\bf g}(A)(\mathrm{id}_A)$ and its inverse by $x \mapsto {\bf g}_x$, where
${\bf g}_x(\alpha)=f(\alpha)(x),\ {\bf g}\in \mathrm{Mor}_{\mathsf{Sets}^{\mathcal{A}}}(h^A, f),\ x\in f(A),\ \alpha\in h^A(C),\ C\in Ob \ {\mathcal{A}}$.
\end{proof}

For the contravariant version of the above lemma see \cite{bd}, Theorem 1.6.

Set $f=h^B$. By Yoneda's Lemma each $\phi\in \mathrm{Mor}_{\mathcal{A}}(B, A)$ defines a morphism of functors $h^A \to h^B$ that is denoted by
${\bf h}(\phi)$. 

Recall the definition of a direct limit.
Let $f : \mathcal{A} \to \mathcal{B}$ be a covariant functor. A {\it direct limit} of $f$,
denoted by $\lim\limits_{\rightarrow} f$, is an object $Z\in Ob \ \mathcal{B}$ and a collection
of morphisms $\{i^f_X : f(X)\to Z\}_{X\in Ob \ \mathcal{A}}$ such that:
\begin{itemize}
\item [1.]
For any two objects $X, Y\in  Ob \ \mathcal{A}$ and an arbitrary 
morphism $\alpha\in \mathrm{Mor}_{\mathcal{A}}(X, Y)$ the diagram
$$\begin{array}{ccc}
 & Z & \\
i^f_X \ \nearrow & & \nwarrow \ i^f_Y  \\
f(X) & \stackrel{f(\alpha)}{\rightarrow} & f(Y)
\end{array}
$$
is commutative.
\item[2.]
If an object $Z'$ and a collection of morphisms $\{j^f_X : f(X)\to Z'\}_{X\in Ob \ \mathcal{A}}$
satisfy the above condition, then there is a unique morphism $g : Z\to Z'$ such that
$j^f_X=g i^f_X, X\in Ob \ \mathcal{A}$.
\end{itemize}

We omit the upper index $f$ if it does not lead to confusion.
  
If $\lim\limits_{\rightarrow} f$ exists, then it is unique up to an isomorphism.
One can define symmetrically a {\it projective limit} $\lim\limits_{\leftarrow} f$ of a functor $f$. It is also unique up to an isomorphism, whenever it exists (cf. \cite{bd}, Corollary 3.2 and remarks below). 

Finally, if $f : \mathcal{A} \to \mathcal{B}$ and $g : \mathcal{A} \to \mathcal{B}$ are covariant functors, then any morphism of functors ${\bf h} : f\to g$ induces a morphism $\lim\limits_{\rightarrow} {\bf h} : \lim\limits_{\rightarrow} f\to
\lim\limits_{\rightarrow} g$, provided the direct limits exist. To construct it just consider the collection 
$\{ i^g_A {\bf h}(A) : f(A)\to \lim\limits_{\rightarrow} g\}_{A\in Ob \ \mathcal{A}}$.

\begin{example}\label{limitset}
Let $f : \mathcal{A}\to \mathsf{Sets}$ be a covariant functor. By \cite{bd}, Proposition 3.4, $\lim\limits_{\rightarrow} f$ exists and equals 
a quotient set of $\bigsqcup_{A\in Ob \ \mathcal{A}} f(A)$ by the smallest equivalence relation
that contains all pairs $(a, b)$ such that $f(\alpha)(a)=b, a\in f(A),\ b \in f(B)$ and $\alpha \in \mathrm{Mor}_{\mathcal{A}}(A, B)$.
\end{example}

Let $\alpha \in \mathrm{Mor}_{\mathcal{A}}(A, B)$ and $\beta\in \mathrm{Mor}_{\mathcal{A}}(A, C)$. An object
$D \in Ob \ \mathcal{A}$ with two morphisms $\gamma\in \mathrm{Mor}_{\mathcal{A}}(B, D)$ and $\delta\in \mathrm{Mor}_{\mathcal{A}}(C, D)$ such that 
$\delta\beta=\gamma\alpha$ is called a {\it compositum} of $\alpha$ and $\beta$.
\begin{example}\label{field}
Let $\mathsf{F}_K$ be a category of field extensions $K\subseteq F$ whose morphisms are $K$-algebra morphisms.
For any two morphisms $\alpha : F\to L_1$ and $\beta : F\to L_2$ in $\mathsf{F}_K$ for a compositum of $\alpha$ and $\beta$
one can choose a compositum of fields $L_1 L_2$ (over $F$) with the canonical inclusions $L_1\to L_1 L_2$ and $L_2\to L_1 L_2$.
\end{example}

The proof of the following lemma is an elementary exercise that is left to the reader.

\begin{lemma}\label{limit}
Let $f : \mathcal{A}\to \mathsf{Sets}$ be a covariant functor. If arbitrary two morphisms in $\mathcal{A}$ admit a compositum, then: 

1. Any two elements $a\in f(A), b\in f(B)$ 
are equivalent iff there is an object $C \in Ob \ \mathcal{A}$ and morphisms $\alpha\in \mathrm{Mor}_{\mathcal{A}}(A, C), \beta \in \mathrm{Mor}_{\mathcal{A}}(B, C)$
such that $f(\alpha)(a)=f(\beta)(b)$; 

2. If $h$ is a subfunctor of $f$, then $\lim\limits_{\rightarrow} h$ is a subset of
$\lim\limits_{\rightarrow} f$;

3. If $h_1$ and $h_2$ are subfunctors of $f$, then $\lim\limits_{\rightarrow} (h_1\bigcap h_2)=
\lim\limits_{\rightarrow} h_1\bigcap\lim\limits_{\rightarrow} h_2$.  
\end{lemma}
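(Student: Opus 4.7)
The plan is to use the hypothesis on composita exactly at the points where one passes between pairwise-generating relations and the transitive closure described in Example \ref{limitset}.

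For part 1, I would first observe that the condition ``there exist $\alpha\in\mathrm{Mor}_{\mathcal{A}}(A,C)$, $\beta\in\mathrm{Mor}_{\mathcal{A}}(B,C)$ with $f(\alpha)(a)=f(\beta)(b)$'' is manifestly symmetric, reflexive (take $C=A$ and $\alpha=\beta=\mathrm{id}_A$), and contains every generating pair $(a,f(\alpha)(a))$ (take $C=B$, $\beta=\mathrm{id}_B$). The key step is transitivity: given $a\sim b$ witnessed by $\alpha_1:A\to C_1$, $\beta_1:B\to C_1$ and $b\sim c$ witnessed by $\alpha_2:B\to C_2$, $\beta_2:D\to C_2$, I choose a compositum $E$ of $\beta_1$ and $\alpha_2$ with structure maps $\gamma_1:C_1\to E$, $\gamma_2:C_2\to E$ satisfying $\gamma_1\beta_1=\gamma_2\alpha_2$. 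Then the computation
\[
f(\gamma_1\alpha_1)(a)=f(\gamma_1)f(\alpha_1)(a)=f(\gamma_1)f(\beta_1)(b)=f(\gamma_2)f(\alpha_2)(b)=f(\gamma_2)f(\beta_2)(c)
\]
exhibits $E$ together with $\gamma_1\alpha_1$ and $\gamma_2\beta_2$ as a witness to $a\sim c$. Hence the described relation is an equivalence relation that contains the generators, and by minimality it coincides with the one in Example \ref{limitset}.

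For part 2, the map $\bigsqcup_A h(A)\to\bigsqcup_A f(A)$ descends to a map $\lim\limits_{\rightarrow} h\to \lim\limits_{\rightarrow} f$, so I only have to verify injectivity. If $a\in h(A)$ and $b\in h(B)$ become equal in $\lim\limits_{\rightarrow} f$, part 1 yields $\alpha,\beta$ with $f(\alpha)(a)=f(\beta)(b)$; since $h$ is a subfunctor, the values of $h$ and $f$ on morphisms agree on elements of $h$, so $h(\alpha)(a)=h(\beta)(b)$, and part 1 applied inside $h$ identifies them in $\lim\limits_{\rightarrow} h$.

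For part 3, the inclusion $\lim\limits_{\rightarrow}(h_1\cap h_2)\subseteq\lim\limits_{\rightarrow} h_1\cap\lim\limits_{\rightarrow} h_2$ is formal from part 2. For the reverse inclusion, represent a common class by $a_1\in h_1(A)$ and $a_2\in h_2(B)$; they are equivalent in $\lim\limits_{\rightarrow} f$, so by part 1 there exist $\alpha:A\to C$, $\beta:B\to C$ with $f(\alpha)(a_1)=f(\beta)(a_2)$. This common element lies in $h_1(C)\cap h_2(C)=(h_1\cap h_2)(C)$ because $h_i$ is a subfunctor, so it represents the given class in $\lim\limits_{\rightarrow}(h_1\cap h_2)$.

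The main subtlety is the transitivity argument in part 1, since everything else is bookkeeping; the existence of a compositum for any two parallel morphisms out of a common object is the essential hypothesis that prevents the naive equivalence relation from being too coarse and lets the filtered-style description in part 1 hold.
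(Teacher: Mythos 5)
Your proof is correct and is exactly the elementary argument the paper has in mind (the lemma's proof is explicitly left to the reader there): the compositum hypothesis is used precisely to make the witnessed relation transitive, and parts 2 and 3 then follow from part 1 by the bookkeeping you describe. The only point worth stating explicitly is the trivial converse in part 1, namely that $f(\alpha)(a)=f(\beta)(b)$ forces $a\sim f(\alpha)(a)=f(\beta)(b)\sim b$ under the generated equivalence relation, so that the two relations genuinely coincide rather than one merely containing the other.
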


If $x\in \lim\limits_{\rightarrow} f$, then a subfunctor 
$f_x$ of $f$ is defined by $f_x(A)=i_A^{-1}(x)$; this $f_x(A)$ can be identified with
$x\bigcap f(A)$, whenever $x$ is identified with an equivalence class. Thus $f=\bigsqcup_{x\in
\lim\limits_{\rightarrow} f} f_x$, and every $f_x$ is an indecomposable functor. 

Let $f : \mathcal{A}\to \mathsf{Sets}$ be as above. Consider the category $\mathcal{M}_f$ whose objects are 
pairs $(A, x),\ A\in Ob \ \mathcal{A},\ x \in f(A)$ and morphisms $(A, x) \to (B, y)$ are
morphisms $\phi \in \mathrm{Mor}_{\mathcal{A}}(A, B)$ such that 
$f(\phi)(x)=y$. We have the functor $\delta_f : (\mathcal{M}_f)^{\circ} \to
\mathsf{Sets}^{\mathcal{A}}$ defined by $\delta_f((A, x))=h^A,\ \delta_f(\phi)={\bf h}(\phi)$.
Here, $(\mathcal{M}_f)^{\circ}$ denotes the opposite category of $\mathcal{M}_f$. 

\begin{lemma}\label{limdelta}
It holds that $\lim\limits_{\rightarrow}\delta_f=f$.
\end{lemma}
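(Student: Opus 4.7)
The plan is to prove this by a direct application of Yoneda's Lemma, exhibiting $f$ as the colimit together with explicit structure morphisms and checking universality. This is essentially the density (or co-Yoneda) theorem for covariant presheaves: every functor $f : \mathcal{A} \to \mathsf{Sets}$ is canonically a colimit of representables indexed by its category of elements $\mathcal{M}_f$, read in the opposite direction.

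First I would construct the structure morphisms of the colimit cocone. For each object $(A,x)$ of $\mathcal{M}_f$, use Lemma \ref{yoneda} to define $i_{(A,x)} : \delta_f((A,x)) = h^A \to f$ as the morphism corresponding to $x \in f(A)$; explicitly $i_{(A,x)}(C)(\alpha) = f(\alpha)(x)$ for $\alpha \in \mathrm{Mor}_{\mathcal{A}}(A,C)$. To verify that this family is a cocone over $\delta_f$, take a morphism $\phi : (A,x) \to (B,y)$ in $\mathcal{M}_f$ (so $\phi : A \to B$ with $f(\phi)(x) = y$); it becomes a morphism $(B,y) \to (A,x)$ in $(\mathcal{M}_f)^{\circ}$ and is sent by $\delta_f$ to ${\bf h}(\phi) : h^B \to h^A$. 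The required compatibility $i_{(A,x)} \circ {\bf h}(\phi) = i_{(B,y)}$ then becomes the straightforward identity $f(\beta\phi)(x) = f(\beta)(f(\phi)(x)) = f(\beta)(y)$ at $\beta \in h^B(C)$.

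Next I would verify universality. Suppose given any $F \in \mathsf{Sets}^{\mathcal{A}}$ and a second cocone $\{j_{(A,x)} : h^A \to F\}$. Yoneda encodes each $j_{(A,x)}$ by the element $y_{(A,x)} := j_{(A,x)}(A)(\mathrm{id}_A) \in F(A)$. Evaluating the cocone compatibility at $B$ on $\mathrm{id}_B$ translates, for every $\phi : A \to B$ with $f(\phi)(x) = y$, into the identity $F(\phi)(y_{(A,x)}) = y_{(B,y)}$. This is precisely the naturality required of the map $g : f \to F$ defined by $g(A)(x) := y_{(A,x)}$, and one checks at once that $g \circ i_{(A,x)} = j_{(A,x)}$ (both sides send $\alpha \in h^A(C)$ to $y_{(C,f(\alpha)(x))}$). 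Uniqueness of $g$ is automatic, since evaluating $g \circ i_{(A,x)} = j_{(A,x)}$ at $\mathrm{id}_A$ forces $g(A)(x) = y_{(A,x)}$.

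The only real obstacle is bookkeeping: keeping the directions straight when passing to $(\mathcal{M}_f)^{\circ}$ and distinguishing the contravariant action $\phi \mapsto {\bf h}(\phi)$ on representables from the covariant action $\phi \mapsto f(\phi)$ defining $\mathcal{M}_f$. Once that is fixed, the argument is just two successive applications of Yoneda's Lemma.
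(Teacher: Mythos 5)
Your proof is correct and follows essentially the same route as the paper: the cocone maps are the Yoneda morphisms $i_{(A,x)}={\bf g}_x : h^A\to f$, and the universal map out of the colimit is recovered by evaluating a competing cocone at $\mathrm{id}_A$. You simply spell out the cocone compatibility and uniqueness checks that the paper leaves implicit.
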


\begin{proof}
By Yoneda's Lemma 
morphisms $i_{(A, x)}={\bf g}_x : h^A\to f$ satisfy the first condition of the definition of a direct limit. For a collection
of morphisms $i'_{(A, x)} : h^A\to h$ as in the second condition the morphism ${\bf g} : f\to h$ is (uniquely) defined by 
${\bf g}(A)(x)=i'_{(A, x)}(\mathrm{id}_A),\ A \in Ob \ \mathcal{A},\ x \in f(A)$. 
\end{proof}
  
\begin{lemma}\label{prlimit}
(cf. \cite{bd}, Proposition 3.7)\
If a covariant functor $f : \mathcal{A} \to \mathcal{B}$ has a direct limit,
then for any $B\in Ob\ \mathcal{B}$ we have the natural isomorphism $\mathrm{Mor}_{\mathcal{B}}(\lim\limits_{\rightarrow} f, B) \simeq
\lim\limits_{\leftarrow} h_B \circ f$.
\end{lemma}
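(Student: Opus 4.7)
The plan is to unwind both sides from their universal properties and match them directly; the statement is essentially the dual of the universal property of $\lim_{\rightarrow} f$ restated in Hom form.

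First I would give an explicit description of $\lim_{\leftarrow} h_B \circ f$. Since $h_B:\mathcal{B}\to\mathsf{Sets}$ is contravariant and $f$ is covariant, $h_B\circ f$ is a contravariant functor $\mathcal{A}\to\mathsf{Sets}$, so its projective limit always exists and can be described as the set of \emph{compatible families}
\[
\{\varphi_X:f(X)\to B\}_{X\in Ob\,\mathcal{A}}\quad\text{with}\quad \varphi_Y\circ f(\alpha)=\varphi_X\text{ for every }\alpha\in\mathrm{Mor}_{\mathcal{A}}(X,Y),
\]
equipped with the obvious projections to each $(h_B\circ f)(X)=\mathrm{Mor}_{\mathcal{B}}(f(X),B)$. (This is the contravariant dual of Example \ref{limitset} and may be quoted from \cite{bd}, Corollary 3.2.)

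Next I would define the candidate isomorphism
\[
\Phi:\mathrm{Mor}_{\mathcal{B}}(\lim_{\rightarrow} f,\,B)\longrightarrow \lim_{\leftarrow} h_B\circ f,\qquad \Phi(g)=(g\circ i_X^f)_{X\in Ob\,\mathcal{A}}.
\]
That $\Phi(g)$ is a compatible family is immediate from condition~1 in the definition of $\lim_{\rightarrow} f$: for $\alpha\in\mathrm{Mor}_{\mathcal{A}}(X,Y)$ we have $i_Y^f\circ f(\alpha)=i_X^f$, so postcomposing with $g$ yields the required relation.

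The inverse is furnished by condition~2 of the definition of $\lim_{\rightarrow} f$. Given a compatible family $\{\varphi_X\}$ as above, the morphisms $\varphi_X:f(X)\to B$ form a cocone on $f$ targeted at $B$, hence there is a unique morphism $g:\lim_{\rightarrow} f\to B$ with $g\circ i_X^f=\varphi_X$ for all $X$; setting $\Psi(\{\varphi_X\})=g$ gives a two-sided inverse of $\Phi$ by uniqueness in the universal property. Finally, naturality in $B$ is automatic: for $\beta:B\to B'$ in $\mathcal{B}$, postcomposition by $\beta$ on $\mathrm{Mor}_{\mathcal{B}}(\lim_{\rightarrow} f,-)$ corresponds under $\Phi$ to the family-wise postcomposition $\{\varphi_X\}\mapsto\{\beta\circ\varphi_X\}$, which is exactly the map on projective limits induced by ${\bf h}(\beta)$.

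There is no real obstacle here; the proof is a straightforward translation between the universal property of $\lim_{\rightarrow} f$ in $\mathcal{B}$ and the universal property of the projective limit in $\mathsf{Sets}$. The only point that deserves a line of care is the preliminary identification of $\lim_{\leftarrow} h_B\circ f$ with compatible families, since projective limits were introduced in the excerpt only by duality; once this is in hand, the bijection and its naturality follow formally.
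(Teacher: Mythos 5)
Your proof is correct and complete; the paper itself gives no argument for this lemma, simply citing \cite{bd}, Proposition 3.7, and what you have written is precisely the standard verification that both sides classify cocones on $f$ with vertex $B$. The one point you rightly flag --- identifying $\lim\limits_{\leftarrow} h_B\circ f$ with compatible families $\{\varphi_X\}$ satisfying $\varphi_Y\circ f(\alpha)=\varphi_X$ --- is handled correctly, and the rest follows formally from conditions 1 and 2 in the paper's definition of the direct limit.
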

Let $f : {\mathcal A}\times {\mathcal B}\to \mathsf{Sets}$ be a bifunctor. We have a functor $g : A\to f(A, ?)$ from $\mathcal A$ to $\mathsf{Sets}^{\mathcal B}$. Since $\lim\limits_{\stackrel{\rightarrow}{A}} f(A, B)$ exists for any $B\in {\mathcal B}$, we have also a 
functor $h : B\to \lim\limits_{\stackrel{\rightarrow}{A}} f(A, B)$.  
\begin{lemma}\label{restriction}
Assume that $g=\lim\limits_{\stackrel{\rightarrow}{A}} f(A, ?)$ exists. Then $g\simeq h$.
\end{lemma}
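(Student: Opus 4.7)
The plan is to verify that $h$, equipped with natural transformations $j_A : g(A) \to h$ defined below, satisfies the universal property of $\lim\limits_{\stackrel{\rightarrow}{A}} g(A)$ in the functor category $\mathsf{Sets}^{\mathcal{B}}$; the uniqueness of direct limits up to isomorphism will then yield the claimed identification $g \simeq h$.

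First I make $h$ into a functor $\mathcal{B} \to \mathsf{Sets}$. Given $\beta \in \mathrm{Mor}_{\mathcal{B}}(B, B')$, the family $\{f(A, \beta)\}_A$ is a morphism of functors $f(-, B) \to f(-, B')$ in $\mathsf{Sets}^{\mathcal{A}}$, so the construction of an induced morphism of direct limits recalled just before Example \ref{limitset} produces a canonical map $h(\beta) : h(B) \to h(B')$, and functoriality of this assignment is inherited from functoriality of that induced-morphism construction. For each $A \in Ob\ \mathcal{A}$ and each $B \in Ob\ \mathcal{B}$, let $j_A(B) : f(A, B) \to h(B)$ be the canonical insertion into the direct limit $h(B)$; naturality of $j_A = \{j_A(B)\}_B$ in $B$ is precisely the commutativity built into the definition of $h(\beta)$. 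Compatibility $j_{A'} \circ g(\alpha) = j_A$ for $\alpha \in \mathrm{Mor}_{\mathcal{A}}(A, A')$ holds by condition 1 of the direct limit $h(B)$ applied at each $B$.

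To verify the universal property of $h$ in $\mathsf{Sets}^{\mathcal{B}}$, suppose $h' \in \mathsf{Sets}^{\mathcal{B}}$ carries a compatible family $\{k_A : g(A) \to h'\}_{A \in Ob\ \mathcal{A}}$. Evaluating at a fixed $B$, the maps $k_A(B) : f(A, B) \to h'(B)$ form a cocone over the functor $f(-, B) : \mathcal{A} \to \mathsf{Sets}$, so condition 2 of the direct limit $h(B)$ produces a unique map $\phi(B) : h(B) \to h'(B)$ with $\phi(B) \circ j_A(B) = k_A(B)$ for all $A$. That $\phi = \{\phi(B)\}_B$ is natural in $B$ follows from the uniqueness clause in this universal property: for $\beta : B \to B'$, both $\phi(B') \circ h(\beta)$ and $h'(\beta) \circ \phi(B)$, precomposed with $j_A(B)$, reduce to $k_A(B') \circ f(A, \beta)$ by the naturality of $k_A$, so the uniqueness of $\phi(B)$ forces them to coincide. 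Uniqueness of $\phi$ as a natural transformation is obtained by applying the same uniqueness pointwise.

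The argument is wholly formal; the only mild obstacle is the bookkeeping of two nested universal properties, one in $\mathsf{Sets}$ for each fixed $B$ and one in $\mathsf{Sets}^{\mathcal{B}}$, together with the naturality checks in $B$. The lemma ultimately records the standard principle that colimits in the functor category $\mathsf{Sets}^{\mathcal{B}}$ are computed pointwise.
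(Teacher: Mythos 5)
Your proof is correct and takes essentially the same route as the paper's: both arguments play the pointwise universal properties of the colimits $h(B)=\lim\limits_{\stackrel{\rightarrow}{A}} f(A,B)$ against the universal property of $g$ in $\mathsf{Sets}^{\mathcal{B}}$. The only difference is packaging --- you verify directly that $h$ with the cocone $\{j_A\}$ is a direct limit in the functor category and then invoke uniqueness of direct limits, whereas the paper explicitly constructs the mutually inverse morphisms $\mathbf{u}: h\to g$ and $\mathbf{v}: g\to h$; the content is the same.
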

\begin{proof}
Since $\{i^{f(A, ?)}_A(B)\}_{A\in Ob \ {\mathcal A}}$
satisfies the first condition for $\lim\limits_{\stackrel{\rightarrow}{A}} f(A, B)$, it defines ${\bf u}_B : \lim\limits_{\stackrel{\rightarrow}{A}} f(A, B)\to g(B)$ that is functorial in $B$. Therefore, we have a morphism ${\bf u} : h\to g$. Symmetrically, any collection $\{i^{f(?, B)}_A\}_{B\in Ob \ {\mathcal B}}$ defines a morphism $j_A : f(A, ?)\to h$. Moreover, $j_{A'}f(\alpha, ?)=j_A$ for all $A, A'\in Ob \ {\mathcal A}, \alpha\in
\mathrm{Mor}_{\mathcal A}(A, A')$. By the universality, there is a morphism ${\bf v} : g\to h$ and ${\bf u}{\bf v}= \mathrm{id}_g, {\bf v}{\bf u}= \mathrm{id}_h$. 
\end{proof}
The following lemma is obvious; see Example \ref{limitset}.
\begin{lemma}\label{fiberprodset}
Let $f\in \mathsf{Sets}^{\mathcal A}$ and $C\to D, \lim\limits_{\rightarrow} f\to D$ be maps of sets. Then
$C\times_D \lim\limits_{\rightarrow} f\simeq \lim\limits_{\rightarrow} C\times_D f$, where $(C\times_D f)(A)=C\times_D f(A),
A\in Ob \ {\mathcal A}$. 
\end{lemma}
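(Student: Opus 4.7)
The plan is to use the explicit description of direct limits as quotient sets from Example \ref{limitset} and to exhibit mutually inverse bijections on the level of representatives. Write $\lim\limits_{\rightarrow} f = \bigl(\bigsqcup_{A} f(A)\bigr)/\sim$ and $\lim\limits_{\rightarrow}(C\times_D f) = \bigl(\bigsqcup_{A} C\times_D f(A)\bigr)/\approx$ accordingly. Observe that the transition map of $C\times_D f$ associated to $\alpha\in\mathrm{Mor}_{\mathcal A}(A, B)$ is just $(c, x)\mapsto (c, f(\alpha)(x))$, and the image pair indeed lies in $C\times_D f(B)$ because the map $f(B)\to D$ defining this fiber product is the composite $f(B)\to \lim\limits_{\rightarrow} f\to D$, which is compatible with $f(\alpha)$.

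First I would define $\Phi:\lim\limits_{\rightarrow}(C\times_D f)\to C\times_D\lim\limits_{\rightarrow} f$ on a representative by $[(c, x)]\mapsto (c, [x])$; the codomain constraint is met by the same observation. Respect for $\approx$ is immediate since both $(c, x)$ and $(c, f(\alpha)(x))$ are sent to $(c, [x])$. Conversely, I would define $\Psi:C\times_D\lim\limits_{\rightarrow} f\to \lim\limits_{\rightarrow}(C\times_D f)$ by picking, for a pair $(c, y)$ with $y\in\lim\limits_{\rightarrow} f$, any representative $x\in f(A)$ of $y$; then $(c, x)\in C\times_D f(A)$, and I set $\Psi(c, y):= [(c, x)]$.

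The one nontrivial step, and the only potential obstacle, is showing that $\Psi$ does not depend on the chosen representative. If $x\in f(A)$ and $x'\in f(A')$ satisfy $[x] = [x']$, then by Example \ref{limitset} there is a finite zigzag $x = y_0, y_1, \ldots, y_n = x'$ in $\bigsqcup_A f(A)$ in which each consecutive pair $(y_i, y_{i+1})$ is directly related by some transition map $f(\alpha_i)$ of $f$ (in one of the two possible directions). Each $y_i$ lies in the same class as $x$, so $y_i$ and $c$ share an image in $D$; hence $(c, y_i)\in C\times_D f(A_i)$ for every $i$. Since the transition maps of $C\times_D f$ act on the second coordinate by those of $f$, the zigzag $(c, y_0), \ldots, (c, y_n)$ witnesses $[(c, x)] = [(c, x')]$ in $\lim\limits_{\rightarrow}(C\times_D f)$. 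Finally, $\Phi\circ\Psi$ and $\Psi\circ\Phi$ are manifestly the identities on representatives, and the resulting bijection is natural in $C$, $D$, and $f$.
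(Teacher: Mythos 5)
Your argument is correct and is exactly the route the paper intends: the paper offers no written proof, declaring the lemma ``obvious'' by reference to Example \ref{limitset}, and your construction of the mutually inverse maps $\Phi$ and $\Psi$ via the quotient-set description (including the zigzag verification that $\Psi$ is well defined) is the straightforward expansion of that hint. Nothing further is needed.
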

\begin{lemma}\label{equivofcategories}
Let $f : \mathcal{A}\to\mathcal{B}$ and $g : \mathcal{B}\to\mathcal{A}$ be two (covariant) functors. If 
$f$ is a left adjoint to $g$ and both $f$ and $g$ are full and faithful, then $f$ and $g$ are equivalences 
which are quasi-inverses of each other.
\end{lemma}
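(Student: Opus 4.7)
The plan is to exploit the unit--counit formulation of the adjunction and show that the fully faithful hypothesis on $f$ forces the unit to be a natural isomorphism, while the fully faithful hypothesis on $g$ forces the counit to be one; once both are isomorphisms, $f$ and $g$ are quasi-inverse equivalences by definition.

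\medskip

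First I would fix the natural bijection
\[
\phi_{A,B}:\mathrm{Mor}_{\mathcal{B}}(f(A),B)\xrightarrow{\ \simeq\ }\mathrm{Mor}_{\mathcal{A}}(A,g(B))
\]
supplied by the adjunction, and extract from it the unit $\eta_A=\phi_{A,f(A)}(\mathrm{id}_{f(A)}):A\to gf(A)$ and the counit $\varepsilon_B=\phi_{g(B),B}^{-1}(\mathrm{id}_{g(B)}):fg(B)\to B$. A standard computation using naturality of $\phi$ gives the formula $\phi_{A,B}(\beta)=g(\beta)\circ\eta_A$ for every $\beta:f(A)\to B$.

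\medskip

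Next, for fixed $A,A'\in\mathcal{A}$, I would consider the composite
\[
\mathrm{Mor}_{\mathcal{A}}(A,A')\xrightarrow{\ f\ }\mathrm{Mor}_{\mathcal{B}}(f(A),f(A'))\xrightarrow{\ \phi_{A,f(A')}\ }\mathrm{Mor}_{\mathcal{A}}(A,gf(A')).
\]
Using the formula above together with naturality of $\eta$, this composite sends $\alpha$ to $g(f(\alpha))\circ\eta_A=\eta_{A'}\circ\alpha$, i.e.\ it equals postcomposition with $\eta_{A'}$. Since $f$ is full and faithful and $\phi$ is always bijective, the composite is bijective for every $A$. Yoneda's Lemma (Lemma \ref{yoneda}) then forces $\eta_{A'}$ to be an isomorphism; as $A'$ was arbitrary, $\eta$ is a natural isomorphism. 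A symmetric argument using the dual formula $\phi_{A,B}^{-1}(\alpha)=\varepsilon_B\circ f(\alpha)$, the full faithfulness of $g$, and the contravariant version of Yoneda's Lemma shows that $\varepsilon$ is a natural isomorphism as well.

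\medskip

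Finally, $\eta:\mathrm{id}_{\mathcal{A}}\to gf$ and $\varepsilon:fg\to\mathrm{id}_{\mathcal{B}}$ being natural isomorphisms is precisely the statement that $f$ and $g$ are equivalences quasi-inverse to each other. The only delicate step is the calculational verification that the two composites above really are postcomposition (resp.\ precomposition) with $\eta$ and $\varepsilon$; this is a routine application of the naturality of $\phi$ together with the triangle identities, and presents no real obstacle.
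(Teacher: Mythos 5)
Your argument is correct, and it is a legitimately self-contained proof of a standard fact. It does, however, take a somewhat different route from the paper. The paper's proof is a three-line appeal to cited results of Bucur--Deleanu: from the full faithfulness of the right adjoint $g$ it extracts a natural isomorphism $B \simeq fg(B)$ (i.e.\ the counit is an isomorphism), concludes that $f$ is fully faithful \emph{and} essentially surjective, hence an equivalence by the standard characterization, and then identifies the quasi-inverse of $f$ with $g$ by uniqueness of adjoints. You instead work entirely with the unit--counit data: full faithfulness of $f$ makes postcomposition with $\eta_{A'}$ a natural bijection $\mathrm{Mor}_{\mathcal{A}}(A,A')\to\mathrm{Mor}_{\mathcal{A}}(A,gf(A'))$, so $\eta$ is invertible by Yoneda, and dually for $\varepsilon$; with both triangles' vertices now isomorphic you read off the equivalence directly. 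Your version avoids invoking the ``fully faithful $+$ essentially surjective'' criterion and the uniqueness-of-quasi-inverse step, at the cost of the explicit naturality computations; the paper's version is shorter but leans on external propositions. One small inaccuracy: the verification that your composite equals postcomposition with $\eta_{A'}$ needs only the naturality of $\phi$ (equivalently of $\eta$), not the triangle identities, so your closing remark slightly overstates what is being used -- but this does not affect the correctness of the proof.
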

\begin{proof}
By Proposition 1.13', \cite{bd}, for any $B\in \mathcal{B}$ there is a natural isomorphism $B\simeq fg(B)$. Proposition 1.19,
\cite{bd}, infers that $f$ is an equivalence. Let $l :  \mathcal{B}\to\mathcal{A}$ be its quasi-inverse. Proposition 1.16 and Corollary 1.11, \cite{bd}, imply that $l\simeq g$. 
\end{proof}

\section{Supermodules and supercomodules}\label{supermodule}

A \emph{supervector space} is a vector space graded by the group $\mathbb{Z}_2 = \{0, 1\}$. Given such a
vector space $V$, the homogeneous components are denoted by $V_0$, $V_1$. The degree of a homogeneous 
element, say $v$, is denoted by $|v|$. Let $\mathsf{SMod}_K$ denote the $K$-linear abelian category of supervector spaces.
This forms naturally a tensor category with the canonical symmetry
$$V \otimes W \overset{\simeq}{\longrightarrow} W \otimes V, \quad v \otimes w \mapsto (-1)^{|v||w|} w \otimes v,$$
where $V, W \in \mathsf{SMod}_K$.
Objects defined in this symmetric tensor category are called with the adjective `super' attached. For example,
a \emph{(Hopf) superalgebra} is a (Hopf) algebra object in $\mathsf{SMod}_K$. Superalgebras in any kind,
including Hopf superalgebras, are all assumed to be supercommutative
so that $ab = (-1)^{|a||b|}ba$, unless otherwise stated. 
Let $\mathsf{SAlg}_K$ denote the category of (supercommutative) superalgebras.

Given $A \in \mathsf{SAlg}_K$, we let ${}_A\mathsf{SMod}$,\ $\mathsf{SMod}_A$ denote the category of left and respectively, 
right $A$-supermodules; an object in ${}_A\mathsf{SMod}$, for example, is precisely a left $A$-module object in $\mathsf{SMod}_K$.
The two categories just defined are identified if we regard each $M \in \mathsf{SMod}_A$ as an object in ${}_A\mathsf{SMod}$ 
by defining the left $A$-action  
\begin{equation}\label{leftaction}
am := (-1)^{|a||m|}ma, \quad a \in A,\ m \in M
\end{equation}
on the supervector space $M$. We remark that $M$ thus turns into an $(A, A)$-superbimodule.
\begin{prop}\label{Noetherian}
For $A \in \mathsf{SAlg}_K$, the following are equivalent:
\begin{itemize}
\item[(1)]
$A$ is left Noetherian as a ring;
\item[(2)]
$A$ is right Noetherian as a ring;
\item[(3)]
The superideals in $A$ satisfy the ascending chain condition.
\end{itemize}
\end{prop}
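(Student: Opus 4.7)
The implications (1) $\Rightarrow$ (3) and (2) $\Rightarrow$ (3) are immediate, since any superideal is in particular both a left and a right ideal. The substantive content therefore lies in the two converse implications, which I plan to handle simultaneously by proving that, under (3), the superalgebra $A$ is Noetherian as a module over its even subalgebra $A_0$. Because $A$ is supercommutative, every element of $A_0$ commutes with every homogeneous element of $A$, so $A_0$ lies in the centre of $A$; consequently every one-sided ideal of $A$ is automatically an $A_0$-submodule, and the ascending chain condition on $A_0$-submodules will then deliver both (1) and (2) at once.

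The first task is to deduce from (3) that the commutative ring $A_0$ is itself Noetherian. Given an ideal $I \subseteq A_0$, the subset $IA$ of $A$ is two-sided by centrality of $A_0$ and $\mathbb{Z}_2$-graded with even part $I$ and odd part $IA_1$, hence is a superideal, and $IA \cap A_0 = I$. So any strictly ascending chain of ideals in $A_0$ lifts to a strictly ascending chain of superideals in $A$, contradicting (3). The second task is to show that $A_1$ is finitely generated as an $A_0$-module. By (3), the superideal $J$ generated by $A_1$, which equals $A_1 + A_1^2$, is finitely generated as an ideal of $A$; expanding a finite generating set as finite sums of products $a \cdot w$ with $w \in A_1$, I obtain finitely many odd elements $w_1, \ldots, w_N \in A_1$ with $J = A w_1 + \cdots + A w_N$. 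For any $a \in A_1$, writing $a = \sum_i b_i w_i$ and separating parities (the contribution from the odd parts of the $b_i$ lies in $A_0$ and must cancel against the odd parity of $a$) yields $a = \sum_i b_i^{(0)} w_i$ with $b_i^{(0)} \in A_0$, so that $A_1 = A_0 w_1 + \cdots + A_0 w_N$.

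Combining the two tasks, $A = A_0 + A_0 w_1 + \cdots + A_0 w_N$ is finitely generated as a module over the commutative Noetherian ring $A_0$, and the classical fact that a finitely generated module over a commutative Noetherian ring is Noetherian then closes the argument. The main technical point I expect is the second task, where one must bridge the gap between ``finite generation of $J$ as an ideal of $A$'' and ``finite generation of $A_1$ as an $A_0$-module''; the parity-separation step is exactly the device that extracts the purely odd generators one needs, after which the argument reduces to standard commutative Noetherian theory.
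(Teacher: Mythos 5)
Your proof is correct, but it takes a genuinely different route from the paper's. The paper's argument is purely formal: condition (3) says exactly that $A$, and hence also $A\oplus A[1]$, is a Noetherian object of ${}_A\mathsf{SMod}$, and the isomorphism $A\oplus A[1]\simeq A\otimes\mathbb{Z}_2$ in ${}_A\mathsf{SMod}$ then transfers the ascending chain condition to arbitrary (ungraded) left ideals, since any left ideal $L$ produces a graded submodule $L\otimes\mathbb{Z}_2$ of $A\otimes\mathbb{Z}_2$ and this assignment is injective and inclusion-preserving; the right-sided case is symmetric. You instead reduce to classical commutative Noetherian theory by establishing two structural facts: that $A_0$ is Noetherian (via $I\mapsto IA$ with $IA\cap A_0=I$) and that $A_1$ is a finitely generated $A_0$-module (via parity separation applied to homogeneous odd generators of the superideal $AA_1=A_1\oplus A_1^2$, using that ACC on superideals gives finite generation of every superideal), whence $A$ is module-finite over the central Noetherian subring $A_0$ and every one-sided ideal is an $A_0$-submodule. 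All the steps check out: $A_0$ is indeed central by supercommutativity, $(IA)_0=I$ since $IA=I\oplus IA_1$, and the even component of $\sum_i b_i^{(1)}w_i$ must vanish against the odd parity of $a$. What each approach buys: the paper's degree-shift trick is shorter and involves no computation with generators, while your route yields the extra information that a Noetherian supercommutative superalgebra has Noetherian even part and finitely generated odd part over it --- facts that are useful in their own right (compare the role of $B_1$ as a $B_0$-module in Lemma \ref{flat}).
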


If these conditions are satisfied we say that $A$ is \emph{Noetherian}.

\begin{proof}
Obviously, (1) $\Rightarrow$ (3). To prove the converse, assume (3). 
In ${}_A\mathsf{SMod}$, construct the direct sum $A \oplus A[1]$ of $A$ and its degree shift $A[1]$.
Then this direct sum is Noetherian in ${}_A\mathsf{SMod}$. On the other hand, 
we can make $A \otimes \mathbb{Z}_2$ into an object in ${}_A\mathsf{SMod}$ by defining 
$$ |b \otimes i| = i, \quad a(b \otimes i) = ab \otimes (|a| + i)$$
for $a \in A$, $b \otimes i \in A \otimes \mathbb{Z}_2$.
We see that $(a, b) \mapsto a \otimes |a| + b \otimes |b| $ gives an isomorphism 
$A \oplus A[1] \overset{\simeq}{\longrightarrow} A \otimes \mathbb{Z}_2$ in ${}_A\mathsf{SMod}$, 
which implies that $A \otimes \mathbb{Z}_2$ is Noetherian. 
Therefore we must have (1). Similarly we see (2) $\Leftrightarrow$ (3). 
\end{proof} 

Recall from \cite{amas}, Lemma 5.1(1) or \cite{z}, p. 721, the following result.

\begin{lemma}\label{faithfulflat}
Let $A \in \mathsf{SAlg}_K$ and $M \in \mathsf{SMod}_A$. The following are equivalent: 
\begin{itemize}
\item[(1)]
$M$ is faithfully flat as a left $A$-module;
\item[(2)]
$M$ is faithfully flat as a right $A$-module;
\item[(3)]
The functor $M \otimes_A : {}_A\mathsf{SMod} \to \mathsf{SMod}_K$ is faithfully exact.
\end{itemize}
\end{lemma}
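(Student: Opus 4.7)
The plan is to deduce all three equivalences from a single gadget: the doubling device used in the proof of Proposition~\ref{Noetherian}, combined with the super symmetry of the tensor product over a supercommutative base. By (2.1) the right supermodule $M$ comes equipped with an $(A,A)$-superbimodule structure, and the proof reduces to transferring faithful exactness through two steps: ordinary-versus-super on one side, and then left-versus-right at the super level.

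For (2) $\Leftrightarrow$ (3), the first observation is that $M \otimes_A N$ depends only on the underlying vector spaces and the module actions, not on whether the gradings are retained. Hence (2) $\Rightarrow$ (3) is immediate: a short exact sequence of left $A$-supermodules is a short exact sequence of ordinary left $A$-modules, tensoring with $M$ preserves it, and the super grading on the result is built back in canonically. For the converse, given an ordinary left $A$-module $N$, I would form the left $A$-supermodule
\[
\tilde N := N \otimes_K K[\mathbb Z_2], \qquad a(n \otimes i) := an \otimes (i + |a|),
\]
exactly as in the proof of Proposition~\ref{Noetherian}, whose underlying ordinary left $A$-module is $N \oplus N$. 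Any short exact sequence of ordinary left $A$-modules lifts, via $N \mapsto \tilde N$, to a short exact sequence of left $A$-supermodules. Applying (3) and using $M \otimes_A \tilde N \cong (M \otimes_A N) \oplus (M \otimes_A N)$, both exactness and faithfulness of $M \otimes_A -$ on ordinary left $A$-modules follow by extracting a direct summand.

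For (1) $\Leftrightarrow$ (3), I would invoke super symmetry: using (2.1) and the supercommutativity of $A$, the assignment $m \otimes n \mapsto (-1)^{|m||n|} n \otimes m$ descends to a natural isomorphism $M \otimes_A N \cong N \otimes_A M$ of supervector spaces for every $A$-supermodule $N$. Thus (3) is equivalent to the faithful exactness of $- \otimes_A M$ on right $A$-supermodules, which by the doubling device applied on the right is in turn equivalent to (1).

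The main obstacle I foresee is pure sign-bookkeeping: verifying that the super braiding descends through the defining relations $ma \otimes n = m \otimes an$ of the tensor product, and that the gadget $\tilde N$ genuinely belongs to ${}_A\mathsf{SMod}$ and decomposes as $N \oplus N$ with the action dictated by (2.1). Both are routine but require the conventions of (2.1) to be applied consistently; once they are in place, the three conditions collapse via the chain (1) $\Leftrightarrow$ (3) $\Leftrightarrow$ (2).
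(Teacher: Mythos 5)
The paper offers no proof of this lemma: it is explicitly recalled from \cite{amas}, Lemma 5.1(1) and \cite{z}, p.~721, so there is no in-paper argument to compare against. Your proof is correct and is essentially the standard self-contained argument that those sources use: the super swap $m\otimes n\mapsto(-1)^{|m||n|}n\otimes m$, which descends to $M\otimes_A N\simeq N\otimes_A M$ precisely because $A$ is supercommutative and the two-sided structure is the one from \eqref{leftaction}, handles left versus right; the doubling $N\mapsto\tilde N=N\otimes_K K\mathbb{Z}_2$ handles graded versus ungraded. One point deserves more attention than ``routine sign-bookkeeping'': the underlying ordinary left $A$-module of $\tilde N$ is not $N\oplus N$ on the nose but $N\oplus N^{\sigma}$, where $\sigma$ is the parity automorphism of $A$ (identity on $A_0$, $-\mathrm{id}$ on $A_1$) and $N^{\sigma}$ is $N$ with the action pulled back along $\sigma$. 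Indeed, since the action of $a$ sends $N\otimes i$ to $N\otimes(i+|a|)$, the two $A$-stable summands are spanned by the elements $n\otimes 0+n\otimes 1$ (on which $a$ acts as on $N$) and $n\otimes 0-n\otimes 1$ (on which $a$ acts through $\sigma(a)$); here $\mathrm{char}\,K\neq 2$ is used. This does not break your argument: $N$ is still a natural direct summand of $\tilde N$, which already yields exactness of $M\otimes_A{-}$ on ordinary modules, and for faithfulness one adds the easy observation that $M\otimes_A N^{\sigma}\simeq M\otimes_A N$ via $m\mapsto(-1)^{|m|}m$, so that $M\otimes_A N=0$ forces $M\otimes_A\tilde N=0$ and hence $N=0$. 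With that one clarification your chain $(1)\Leftrightarrow(3)\Leftrightarrow(2)$ is complete.
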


If these conditions are satisfied we say that $M$ is \emph{faithfully flat over} $A$, or that 
$M$ is a \emph{faithfully flat $A$-module}. Recall also that the equivalence above remains to hold
if we remove $``$faithfully" from all the conditions.

Next, 
let $C$ be a supercoalgebra.  Let $\mathsf{SMod}^C$, ${}^C\mathsf{SMod}$ 
denote the categories of right and respectively, left $C$-supercomodules
If $C$ is regarded as an ordinary coalgebra, we let
$\mathsf{Mod}^C$, ${}^C\mathsf{Mod}$ denote the categories of right and respectively, left $C$-comodules.

\begin{prop}\label{injcomod}
Let $C$ be as above. For $M \in \mathsf{SMod}^C$, the following are equivalent:
\begin{itemize}
\item[(1)]
$M$ is injective as a right $C$-comodule;
\item[(2)]
$M$ is an injective object in $\mathsf{SMod}^C$;
\item[(3)]
$M$ is coflat as a right $C$-comodule in the sense that the cotensor product functor 
$M \Box_C : {}^C\mathsf{Mod} \to \mathsf{Mod}_K$ is exact;
\item[(4)]
The cotensor product functor 
$M \Box_C : {}^C\mathsf{SMod} \to \mathsf{SMod}_K$ is exact.
\end{itemize}
A parallel result holds true for every object in ${}^C\mathsf{SMod}$.
\end{prop}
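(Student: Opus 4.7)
The plan is to reduce the proposition to two classical-style statements (one in the ordinary-comodule world and one in the super world) and then bridge them. For (1) $\Leftrightarrow$ (3), I would cite the classical Doi--Takeuchi equivalence for ordinary coalgebras, which says a right $C$-comodule is injective if and only if it is coflat. For (2) $\Leftrightarrow$ (4), I would repeat this argument verbatim inside the super setting, using the super-cofree comodules $V \otimes C$ for $V \in \mathsf{SMod}_K$. The relevant facts are: the super-cofree $V \otimes C$ is super-injective (via the forgetful--cofree adjunction $\mathrm{Hom}^C(-, V \otimes C) \simeq \mathrm{Hom}_{\mathsf{SMod}_K}(-, V)$, exact because $\mathsf{SMod}_K$ is semisimple); it is super-coflat (since $(V \otimes C) \Box_C L \simeq V \otimes L$ is exact in $L$); and every $M \in \mathsf{SMod}^C$ embeds in $M \otimes C$ via the coaction $\rho_M$. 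Together these yield: super-injective $\Leftrightarrow$ super-direct-summand of a super-cofree $\Leftrightarrow$ super-coflat.

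The bridge (1) $\Leftrightarrow$ (2) uses a grade-decomposition trick similar in flavour to the one in the proof of Proposition \ref{Noetherian}. For (1) $\Rightarrow$ (2): given a super-mono $\iota : N' \hookrightarrow N$ and a super-morphism $f : N' \to M$, I would forget gradings and use ordinary injectivity of $M$ to produce an ordinary $C$-colinear extension $\tilde{f} : N \to M$, then decompose $\tilde{f} = \tilde{f}_0 + \tilde{f}_1$ into its grade-preserving and grade-shifting components. Since $\rho_M$ and $\rho_N$ both preserve grading, matching the grade components of the $C$-colinearity identity $\rho_M \tilde{f} = (\tilde{f} \otimes 1)\rho_N$ forces each of $\tilde{f}_0$ and $\tilde{f}_1$ to be $C$-colinear individually, so that $\tilde{f}_0$ is a super-morphism extending $f$. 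For (2) $\Rightarrow$ (1): by the cofree characterization of the previous paragraph, $M$ is a super-direct-summand of some $V \otimes C$ with $V \in \mathsf{SMod}_K$; forgetting gradings realizes $M$ as an ordinary direct summand of the ordinary-cofree $V \otimes C$, which is ordinary-injective, hence so is $M$. The parallel statement for left supercomodules would follow by the same arguments applied on the opposite side.

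The main obstacle I anticipate is the asymmetry between the super and ordinary settings: an arbitrary ordinary $C$-comodule carries no canonical super-comodule structure, so one cannot naively lift injectivity questions from $\mathsf{Mod}^C$ to $\mathsf{SMod}^C$. The cofree characterization sidesteps this issue by producing a uniform object that is simultaneously super- and ordinary-cofree, while the grade-decomposition trick handles the reverse direction without requiring any such lift.
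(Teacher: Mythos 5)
Your proposal is correct in substance but routes the implications differently from the paper. The paper's chain is $(1) \Leftrightarrow (3)$ by Takeuchi's Proposition A.2.1, $(3) \Rightarrow (4)$ trivially, $(4) \Rightarrow (2)$ by a modification of Takeuchi's proof resting on the identity $(M \Box_C N^*)_i = \mathrm{Hom}_{\mathsf{SMod}^C}(N, M[i])$, $i=0,1$, for finite-dimensional $N \in \mathsf{SMod}^C$, and $(2) \Rightarrow (1)$ by splitting the coaction $M \to M \otimes C$ in $\mathsf{SMod}^C$ and using that the cofree $M \otimes C$ is injective in $\mathsf{Mod}^C$ --- this last step is exactly your $(2) \Rightarrow (1)$. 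What you add is a direct $(1) \Rightarrow (2)$ via grade-decomposition of an ordinary colinear extension $\tilde{f} = \tilde{f}_0 + \tilde{f}_1$: this is valid (since $\rho_M$ and $\rho_N$ are even, the even and odd components of the colinearity identity separate, so $\tilde{f}_0$ is itself colinear and restricts to $f$), it is the same trick the paper deploys later in the proof of Lemma \ref{extenofhoms}, and it even avoids the characteristic hypothesis; but it is logically redundant once $(1) \Leftrightarrow (3) \Rightarrow (4) \Rightarrow (2)$ is established. The one point you should tighten is in your $(2) \Leftrightarrow (4)$ paragraph: the three ``relevant facts'' you list only yield super-injective $\Leftrightarrow$ super-direct-summand of a super-cofree $\Rightarrow$ super-coflat. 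The converse implication, super-coflat $\Rightarrow$ super-injective, is the nontrivial half of Takeuchi's theorem and does not follow from those facts; superizing it requires the duality displayed above (note the degree shift $M[i]$ that appears, which is precisely why the paper speaks of a ``slight modification'' rather than a verbatim repetition) together with local finiteness of supercomodules. Provided you carry out that step, your argument goes through and proves all four equivalences; the parallel statement for ${}^C\mathsf{SMod}$ follows as you say.
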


\begin{proof}
The equivalence (1) $\Leftrightarrow$ (3) is due to Takeuchi \cite{tak1}, Proposition A.2.1.
Note that if $N \in \mathsf{SMod}^C$ is finite-dimensional over $K$, the dual vector space $N^*$ is naturally
an object in ${}^C\mathsf{SMod}$, and we have 
$$ (M \Box_C N^*)_i = \mathrm{Hom}_{\mathsf{SMod}^C}(N, M[i]), \quad i = 0, 1, $$
where $M[0] = M$, and $M[1]$ is the degree shift of $M$. Then a slight modification of the proof of 
\cite{tak1}, Proposition A.2.1 shows that (4) $\Rightarrow$ (2). 

Obviously, (3) $\Rightarrow$ (4). The proof will complete if we prove (2) $\Rightarrow$ (1). 
Assume (2). Then the structure morphism $M \to M \otimes C$ in $\mathsf{SMod}^C$
splits. This implies (1), since $M \otimes C$ is an injective object in $\mathsf{Mod}^C$. 
\end{proof}

\section{$K$-functors}\label{K-functors}

In what follows we use definitions and notations from \cite{z}.  
Recall that $\mathsf{SAlg}_K$ is a category of supercommutative superalgebras over a field $K$ whose characteristic
is different from 2.
The category $\mathsf{Alg}_K$ of commutative $K$-algebras can be regarded as a full subcategory of $\mathsf{SAlg}_K$.
For simplicity we denote the functor category $\mathsf{Sets}^{\mathsf{SAlg}_K}$ as
\begin{equation*}
\mathcal{F} = \mathsf{Sets}^{\mathsf{SAlg}_K}.
\end{equation*}
An object in this category is called a \emph{$K$-functor}. 
For $A\in \mathsf{SAlg}_K$, denote the $K$-functor $h^{A}$ by $SSp \ A$, and call such a $K$-functor an {\it affine superscheme}. The superalgebra $A$
is called the {\it coordinate superalgebra} of $X=SSp \ A$, and is denoted by $K[X]$. 

Let $I$ be a superideal of $A\in \mathsf{SAlg}_K$. Define a {\it closed} subfunctor $V(I)$ and an {\it open}
subfunctor $D(I)$ of $SSp \ A$ as follows (cf. \cite{z}, p.719; see also \cite{jan}, Part I, 1.4 -- 1.5). For any $B\in\mathsf{SAlg}_K$ set
$$V(I)(B)=\{x\in SSp \ A(B) | x(I)=0\}$$
and 
$$
D(I)(B)=\{x\in SSp \ A(B) | x(I)B=B\}.$$
Since $V(I)\simeq SSp \ A/I$, we call
$Y=V(I)$ a {\it closed supersubscheme} of $SSp \ A$ and $I=I_Y$ the {\it defining ideal} of $Y$.
All standard properties of closed and open subfunctors of affine schemes mentioned in \cite{jan}, Part I, 1.4-1.5, are translated to the category of affine superschemes per verbatim; see also \cite{z}, Lemma 2.2. 

Let $X$ be a $K$-functor. A subfunctor $Y\subseteq X$ is said to be {\it closed} ({\it open}) iff for any morphism
${\bf f} : SSp \ A \to X$ in $\mathcal{F}$ the pre-image ${\bf f}^{-1}(Y)$ is closed (respectively, open) in $SSp \ A$. These definitions are copied from  \cite{jan}, Part I, 1.7 and 1.12 (or from \cite{dg},
I, \S 1, 3.6 and \S 2, 4.1). Again the properties of open and closed subfunctors mentioned in \cite{jan}, {Part I, 1.7, 1.12}, can be translated to the category $\mathcal{F}$
per verbatim (see Lemma 9.1 below). We call such a translation a {\it superization} of the corresponding property. Proofs of superizations that are not difficult are left to the reader. For example, let us remark a (super)variant of 1.7({6}) from \cite{jan}, Part I. 
If $Y$ is an open subfunctor of $X$ and if $\alpha : A\to A'$ is a morphism of superalgebras, then
$X(\alpha)^{-1}(Y(A'))=Y(A)$ whenever $A'$ is a faithfully flat $A$-supermodule via $\alpha$, or $A'_0$ is a faithfully 
flat $A_0$-module via $\alpha_0$ (cf. Lemma 1.3, \cite{z}).  

Given $R \in \mathsf{SAlg}_K$ and an $R$-superalgebra $A$, we let $\iota_R^A$ denote the canonical morphism 
$R\to A$. In particular,
$\iota_{A_0}^A$ denotes the inclusion $A_0 \hookrightarrow A$ from the even component $A_0$ into $A$. 
Let $X\in \mathcal{F}$. Set 
\begin{equation}\label{Xev}
X_{ev}(A)=X(\iota^A_{A_0})(X(A_0)),\ A \in \mathsf{SAlg}_K. 
\end{equation}
Obviously, $X_{ev}$ is a subfunctor of $X$, and $X \mapsto X_{ev}$ is an endofunctor of $\mathcal{F}$ preserving inclusions. More precisely, if $Y$ is a subfunctor of $X$, then $Y_{ev}\subseteq Y\bigcap X_{ev}$. 
Given $A \in \mathsf{SAlg}_K$, we set
\begin{equation}\label{overlineA}
\overline{A} := A/AA_1 = A_0/A_1^2. 
\end{equation}
This is the largest purely even quotient algebra of $A$. We see $$(SSp \ A)_{ev}=V(AA_1)\simeq SSp \ \overline{A}.$$ More generally, $$V(I)_{ev}=V(I)\bigcap (SSp \ A)_{ev}\simeq
V(I+AA_1).$$

\begin{lemma}\label{open}
The following statements hold:

1. If $I$ is a superideal of $A\in \mathsf{SAlg}_K$, then $D(I)_{ev}=D(I)\bigcap (SSp \ A)_{ev}$;

2. If $Y$ and $Y'$ are open subfunctors of a $K$-functor $X$, then $Y=Y'$ iff $Y(C)=Y'(C)$ for any $C\in \mathsf{F}_K$ iff $Y_{ev}=Y'_{ev}$.
\end{lemma}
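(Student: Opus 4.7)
My plan is to dispose of part 1 by directly unfolding the definitions, and then to prove part 2 by reducing the nontrivial implication to the affine case where the classical non-super analogue applies; the super-to-classical transition is carried out via a nilpotency argument.

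For part 1, fix a test superalgebra $B$. A $B$-point of $(SSp\,A)_{ev}$ is a superalgebra morphism $x : A \to B$ factoring through $B_0 \hookrightarrow B$; since $x$ is grading-preserving, it kills $A_1$, so $x(I) = x(I_0) \subseteq B_0$ where $I_0 := I \cap A_0$. Such an $x$ belongs to $D(I)$ iff $x(I_0) B = B$, and taking the even part of an expression $1 = \sum_i x(a_i) b_i$ (each $a_i \in I_0$, $b_i \in B$) shows this is equivalent to $x(I_0) B_0 = B_0$. On the other hand $D(I)_{ev}(B) = D(I)(\iota^B_{B_0})\bigl(D(I)(B_0)\bigr)$ consists of morphisms of the form $A \xrightarrow{y} B_0 \hookrightarrow B$ with $y(I_0) B_0 = B_0$, producing the same set.

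For part 2, the forward implications are immediate. Moreover, every $C \in \mathsf{F}_K$ is purely even, so $\iota^C_{C_0} = \mathrm{id}_C$ and $Y_{ev}(C) = Y(C)$; hence $Y_{ev} = Y'_{ev}$ forces $Y(C) = Y'(C)$ on every field. The core step is the remaining implication: $Y(C) = Y'(C)$ for every $C \in \mathsf{F}_K$ implies $Y = Y'$. I reduce to the affine case by pulling back along an arbitrary $\mathbf{f} : SSp\,A \to X$: the pullbacks $\mathbf{f}^{-1}(Y)$ and $\mathbf{f}^{-1}(Y')$ are of the form $D(I)$ and $D(I')$ for superideals $I, I' \subseteq A$, and the hypothesis yields $D(I)(C) = D(I')(C)$ for every field extension $C/K$. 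Since $C$ is purely even, each morphism $A \to C$ kills $A_1$ and is uniquely determined by its restriction $A_0 \to C$, so $D(I)(C)$ coincides with the set of $C$-points of the classical open subfunctor $D(I_0) \subseteq SSp\,A_0$. The standard commutative-algebra fact that two open subschemes of an affine scheme agreeing on all field-valued points are equal then gives $\sqrt{I_0} = \sqrt{I'_0}$ in $A_0$.

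Finally, I lift this classical equality to $D(I) = D(I')$ as subfunctors of $SSp\,A$. For any $B \in \mathsf{SAlg}_K$ and $x : A \to B$, I claim $x(I) B = B \Leftrightarrow x(I_0) B_0 = B_0$. Writing $1 = \sum_i x(a_i) b_i$ with $|a_i| = |b_i|$ and taking the even part, the odd-odd contribution $n := \sum_{|a_j| = 1} x(a_j) b_j$ lies in $B_0$; each summand $x(a_j) b_j$ squares to zero by supercommutativity (using $\mathrm{char}\,K \neq 2$), and since $B_0$ is commutative these nilpotent summands commute, so $n$ itself is nilpotent. Hence $e := 1 - n \in x(I_0) B_0$ is a unit in $B_0$, giving $1 \in x(I_0) B_0$; the converse is trivial. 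Combined with $\sqrt{I_0} = \sqrt{I'_0}$, which yields $D(I_0)(B_0) = D(I'_0)(B_0)$ on every commutative $B_0$, we conclude $D(I) = D(I')$, hence $Y = Y'$. The main obstacle is precisely this last passage: the nilpotency of sums of odd-odd products is what permits one to ignore the odd generators of $I$ when testing whether they generate the unit ideal, thereby reducing the super question to its classical counterpart.
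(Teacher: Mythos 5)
Your proof is correct and takes essentially the same route as the paper's: part 1 by directly unwinding the definitions and observing that $x(I)B=B$ is equivalent to $x(I_0)B_0=B_0$ (the step the paper delegates to \cite{z}, Lemma 1.3), and part 2 by superizing \cite{jan}, Part I, 1.7(4) through reduction to the affine case and field-valued points, plus the remark that $Z(C)=Z_{ev}(C)$ for purely even $C$. Your explicit nilpotency computation for sums of odd-odd products is precisely the super-specific input that the paper outsources to its citations (that odd elements lie in the nilradical, cf.\ \cite{z}, Lemma 1.1), so the argument is a filled-in version of the paper's rather than a genuinely different one.
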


\begin{proof}
Observe that $\alpha\in D(I)_{ev}(C)$ iff there are $x_1, \ldots , x_n\in I_0$ and $c_1, \ldots , c_n\in C_0$ such that
$\sum_{1\leq i\leq n}c_i\alpha(x_i)=1, C\in \mathsf{SAlg}_K$. The first statement follows by Lemma 1.3, \cite{z}. 
The proof of the first equivalence in the second statement can be copied from 1.7(4), \cite{jan}, Part I; see also  
\cite{z}, Lemma 1.1.   
It remains to notice that $Z(C)=Z_{ev}(C)$
for all $Z\in \mathcal{F}$ and $C \in \mathsf{Alg}_K$. 
\end{proof}

A collection of open subfunctors $\{Y_i\}_{i\in I}$ of a $K$-functor $X$ is called an {\it open covering} whenever $X(A)=\bigcup_{i\in I}
Y_i(A)$ for any $A\in \mathsf{F}_K$ (cf. \cite{jan}, Part I, 1.7). 

Let us understand through the natural identification that the symbol $(\mathsf{SAlg}_K)^{\circ}$, which denotes the 
opposite category of $\mathsf{SAlg}_K$, represents the category of affine superschemes over $K$.
We define a Grothendieck topology $T_{loc}$ in $(\mathsf{SAlg}_K)^{\circ}$ as follows.
A covering in $T_{loc}$ is defined to be a
collection of finitely many morphisms $\{SSp~R_{f_i}\to SSp~R\}_{1\leq i\leq n}$, 
where $R\in \mathsf{SAlg}_K, f_1, \ldots, f_n\in R_0$ such that
$\sum_{1\leq i\leq n}R_0 f_i=R_0$. As is easily seen, the thus defined coverings satisfy the conditions 1-3 given in \cite{bd}, p.46, so that 
$T_{loc}$ is indeed a Grothendieck topology.
Notice that each $SSp \ R_{f_i}\to SSp \ R$ is an isomorphism onto $D(Rf_i)$ and
$D(Rf_i)$ form an open covering of $SSp \ R$. 

\begin{definition}\label{def-superscheme}
1. A sheaf $X$ on $T_{loc}$ is called a \emph{local functor}. By the definition, $X \in \mathcal{F}$.
Observe that any affine superscheme is a local functor (cf. \cite{jan}, Part I, 1.8(4)). 

2. A local $K$-functor $X$ is called a \emph{superscheme} provided $X$ has an open covering
$\{Y_i\}_{i\in I}$ with $Y_i\simeq SSp \ A_i, A_i\in \mathsf{SAlg}_K$. 
The full subcategory of all superschemes in $\mathcal{F}$ is denoted by $\mathcal{SF}$.    

3. A superscheme $X$ is said to be \emph{Noetherian}, 
if it has an open covering $\{Y_i\}_{i\in I}$ with $Y_i\simeq SSp \ A_i$, as above,
such that $I$ is finite, and each $A_i$ is Noetherian; see Proposition \ref{Noetherian}. 
Observe that an affine superscheme $SSp~A$ is Noetherian iff $A$ is Noetherian.  
\end{definition}

\begin{prop}\label{local}
(cf. \cite{jan}, Part I, 1.8, or \cite{dg}, I, \S 1, Proposition 4.13)\
A $K$-functor $X$ is local iff for any $K$-functor $Y$ and its open covering $\{Y_i\}_{i\in I}$ the diagram
$$(*) \hspace{7mm} \mathrm{Mor}_{\mathcal{F}}(Y, X)\to\prod_{i\in I}\mathrm{Mor}_{\mathcal{F}}(Y_i, X)\begin{array}{c}\to \\
\to\end{array}\prod_{i, j\in I} \mathrm{Mor}_{\mathcal{F}}(Y_i\bigcap Y_j, X)$$
is exact. 
\end{prop}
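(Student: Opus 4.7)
To prove the $(\Leftarrow)$ direction, I would apply the hypothesis to $Y = SSp \ R$ with the covering $\{D(Rf_i)\}_{1 \le i \le n}$, where $f_1, \dots, f_n \in R_0$ satisfy $\sum_i R_0 f_i = R_0$. Two routine verifications are needed: (a) this family is an open covering in the functorial sense, because for any field extension $L/K$ and any $x : R \to L$ the relation $1 = \sum_i r_i f_i$ forces $x(f_{i_0}) \neq 0$ for some index, placing $x$ in $D(Rf_{i_0})(L)$; and (b) $D(Rf_i) \bigcap D(Rf_j) \simeq SSp \ R_{f_i f_j}$ as subfunctors of $SSp \ R$. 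Yoneda's Lemma then identifies $(*)$ with the standard sheaf diagram for $X$ on the given $T_{loc}$-cover, so the assumed exactness yields that $X$ is local.

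The $(\Rightarrow)$ direction is the substantive one. Fix $X$ local and an open cover $\{Y_i\}_{i \in I}$ of a $K$-functor $Y$. The plan is to verify exactness of $(*)$ pointwise: for each $A \in \mathsf{SAlg}_K$ and each $y \in Y(A)$, refine the pullback cover $\{y^{-1}(Y_i)\}$ of $SSp \ A$ to a genuine $T_{loc}$-cover and then invoke the sheaf property of $X$. Writing $y$ also for the associated morphism $SSp \ A \to Y$ and setting $U_i := y^{-1}(Y_i)$, the superization of the standard description of open subfunctors presents $U_i = D(J_i)$ for a superideal $J_i \subseteq A$, and testing on field-valued points shows that $\{U_i\}$ is an open cover of $SSp \ A$ in the functorial sense. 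Every prime $\mathfrak{p} \subset A_0$ contains $A_1^2$ (each generator $a b$ with $a,b \in A_1$ satisfies $(ab)^2 = 0$), so $\mathfrak{p}$ induces a field-valued point $\pi_{\mathfrak{p}} : A \twoheadrightarrow \overline{A} \to \mathrm{Frac}(A_0/\mathfrak{p})$; the covering condition forces $J_{i,0} \not\subseteq \mathfrak{p}$ for some $i = i(\mathfrak{p})$, and running over all primes one concludes $\sum_i J_{i,0} = A_0$. A partition of unity $1 = \sum_k a_k g_k$ with $g_k \in J_{i(k),0}$ then produces a $T_{loc}$-cover $\{SSp \ A_{g_k} \to SSp \ A\}$ refining $\{U_i\}$, with each composite $SSp \ A_{g_k} \hookrightarrow SSp \ A \xrightarrow{y} Y$ factoring through $Y_{i(k)}$.

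With the refinement in hand the two exactness claims follow formally. For injectivity, morphisms $f, g : Y \to X$ agreeing on each $Y_i$ produce $f(y)$ and $g(y)$ with equal images in every $X(A_{g_k})$, so they coincide by locality of $X$. For exactness at the middle, a compatible family $(f_i : Y_i \to X)_{i \in I}$ yields local sections $f_{i(k)}(y_k) \in X(A_{g_k})$, where $y_k$ is the factorization of $y|_{SSp \ A_{g_k}}$ through $Y_{i(k)}$; these agree on double overlaps $SSp \ A_{g_k g_l}$ because $f_{i(k)}$ and $f_{i(l)}$ coincide on $Y_{i(k)} \bigcap Y_{i(l)}$. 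Locality of $X$ then glues them into a unique $f(y) \in X(A)$. The remaining checks — independence of the refinement, naturality of $y \mapsto f(y)$ in $A$, and $f|_{Y_i} = f_i$ — follow from the uniqueness clause in the sheaf condition together with the stability of $T_{loc}$-covers under base change. The main technical point, and the crux of the argument, is the extraction of a $T_{loc}$-cover from the purely field-theoretic covering hypothesis, which is exactly what Lemma \ref{open} is tailored to enable.
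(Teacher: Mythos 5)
Your proof is correct, and the $(\Leftarrow)$ direction matches the paper's (Yoneda applied to $Y=SSp\ R$ with the covering $\{D(Rf_i)\}$). For the substantive $(\Rightarrow)$ direction, however, you take a genuinely different route. The paper splits the work in two: first a reduction of arbitrary $Y$ to the affine case, in which injectivity is proved by contradiction via Yoneda and the gluing of a compatible family $({\bf f}_i)$ is carried out by a Zorn's lemma argument on the poset of partial extensions $(T,{\bf g})$ with $T$ a subfunctor of $Y$ containing all $Y_i$; and then a final step treating $Y=SSp\ R$ with an arbitrary covering $\{D(J_i)\}$ by refining to principal opens, invoking quasi-compactness to extract a finite subcover, and only there using the locality of $X$. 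You instead construct $f(A)(y)$ pointwise: pull the cover back along $y$, pass from $\{D(J_i)\}$ to a finite principal $T_{loc}$-cover via a partition of unity $1=\sum a_kg_k$, glue with the sheaf condition, and then verify independence of the refinement and naturality in $A$ from the uniqueness clause. Your partition-of-unity step is the paper's quasi-compactness step in different clothing, but you bypass the transfinite extension argument entirely; the price is that the well-definedness and naturality checks, which you correctly identify but only sketch, carry real (if routine) content, whereas the paper's Zorn argument disposes of them structurally. One small correction: the lemma doing the work in extracting the $T_{loc}$-cover is not really Lemma \ref{open}; what you actually use is that open subfunctors of $SSp\ A$ are of the form $D(I)$ together with the fact that a superideal contained in no prime superideal is the unit ideal (for which the paper cites \cite{z}, Lemma 1.1).
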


\begin{proof}
The part $``$if" is by Yoneda's Lemma applied to $Y=SSp \ R$ and its open covering $\{Y_i=D(Rf_i)\}_{1\leq i\leq n}$. 
Conversely, assume that all diagrams (*) are exact, provided
$Y=SSp \ R$ and $Y_i=D(J_i), i\in I$ ($I$ is not necessary finite). 

Consider two morphisms ${\bf f}, {\bf g} : Y\to X$ such that ${\bf f}\neq {\bf g}$ but ${\bf f}|_{Y_i}=
{\bf g}|_{Y_i}$ for all $i\in I$. There is a superalgebra $A\in \mathsf{SAlg}_K$
and an element $y\in Y(A)$ with ${\bf f}(A)(y)\neq {\bf g}(A)(y)$. By Yoneda's
Lemma $y$ induces a morphism ${\bf g}_y : SSp \ A\to Y$ that satisfies ${\bf f}'={\bf f}
{\bf g}_y\neq {\bf g}'={\bf g} {\bf g}_y$. On the other hand, $\{ Y'_i={\bf g}_y^{-1}(Y_i) \}_{i\in
I}$ is an open covering of $SSp \ A$ and ${\bf f}'|_{Y'_i}={\bf g}'|_{Y'_i}$ for
all $i$. The contradiction implies that the map on the left is injective.

Suppose that $({\bf f}_i)_{i\in I}$ belongs to the kernel of the maps on the right. Let $\mathcal{P}$ be a set consisting of pairs $(T, {\bf g})$,
where $T$ is a subfunctor of $Y$ that contains each $Y_i$ and ${\bf g} \in \mathrm{Mor}_{\mathcal
{F}}(T, X)$ satisfies ${\bf g}|_{Y_i}={\bf f}_i,\ i\in I$. The set $\mathcal{P}$ is
not empty and it is partially ordered by $(T, {\bf g})\leq (T', {\bf g}')$ iff
$T\subseteq T'$ and ${\bf g}'|_T={\bf g}$. By Zorn's Lemma $\mathcal{P}$ contains a
maximal element $(T, {\bf g})$. Assume that $T\neq Y$. There exists $A\in
\mathsf{SAlg}_K$ and $y\in Y(A)\setminus T(A)$. As above, $y$ induces the
morphism ${\bf g}_y : SSp \ A\to Y$. By assumption, there exists a
unique morphism ${\bf u} : SSp \ A\to X$ that is defined by
${\bf u}|_{{\bf g}_y^{-1}(Y_i)}={\bf f}_i {\bf g}_y, i\in I$. It follows that
${\bf u}|_{{\bf g}_y^{-1}(T)}={\bf g}{\bf g}_y|_{{\bf g}_y^{-1}(T)}$. Define a pair $(T', {\bf g}') >
(T, {\bf g})$ as follows. For a superalgebra $B\in \mathsf{SAlg}_K$ set
$$T'(B)= T(B)\bigcup\{Y(\alpha)(y)|\alpha\in SSp \ A (B)\}
=T(B)\bigcup\{{\bf f}_y(\alpha)|\alpha\in SSp \ A (B)\}.$$ The morphism
${\bf g}'$ is defined by ${\bf g}'(B)|_{T(B)}={\bf g}(B)$ and
${\bf g}'(B)({\bf g}_y(\alpha))={\bf u}(B)(\alpha)$. It can be easily checked that
$T'$ is a subfunctor of $Y$, $T\subseteq T', T\neq T'$ and ${\bf g}'$ is
correctly defined. 

It remains to consider a diagram with $Y=SSp \ R$ and $Y_i=D(J_i), i\in I$. Replace $\{Y_i=D(J_i)\}_{i\in I}$ by the open covering $\{ D(Rf)\}_{f\in (J_i)_0 , i\in I}$. Observe that any open covering
of $Y=SSp \ R$ contains a finite subcovering.  In particular, for any finite subset
$S\subseteq \{(f, i)|f\in (J_i)_0, i\in I\}$ such that $\{D(Rf)\}_{(f, i)\in S}$ is an open covering of $Y$ there is 
a unique morphism ${\bf f}_S : Y\to X$ such that ${\bf f}_S |_{D(Rf)}={\bf f}_i |_{D(Rf)}, (f, i)\in S$. 
In particular, $S\subseteq S'$ implies ${\bf f}_S={\bf f}_{S'}$ and by the above injectivity, 
${\bf f}_S|_{D(J_i)}={\bf f}_i |_{D(J_i)}$ for each $i\in I$. 
Thus ${\bf f}={\bf f}_S$ is the required pre-image of $({\bf f}_i)_{i\in I}$. 
\end{proof}
Superizing \cite{jan}, Part I, 1.9 (2)) and 1.12 (4, 5, 6), we see that an open or closed subfunctor $Y$ of a local $K$-functor $X$ (respectively, of a superscheme $X$) is again
local (a superscheme).
 
\begin{lemma}\label{covering}
Let ${\bf f} : X\to Y$ be a morphism of local functors, and let $\{ Y_i\}_{i\in I}$ be an open covering of $Y$. Then
$\bf f$ is an isomorphism iff each ${\bf f}|_{{\bf f}^{-1}(Y_i)}$ is.
\end{lemma}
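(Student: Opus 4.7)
The plan is to reduce to Proposition \ref{local} by gluing the local inverses. The ``only if'' direction is immediate, since restriction of an isomorphism to an open subfunctor is an isomorphism onto its preimage. For the ``if'' direction, set $X_i := \mathbf{f}^{-1}(Y_i)$, so that $\{X_i\}_{i \in I}$ is an open covering of $X$, and let $\mathbf{g}_i : Y_i \to X_i \subseteq X$ denote the inverse of the assumed isomorphism $\mathbf{f}|_{X_i} : X_i \to Y_i$.

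First I would verify that these local inverses are compatible. Since $\mathbf{f}|_{X_i}$ is an isomorphism, its restriction to the open subfunctor $X_i \cap X_j = \mathbf{f}|_{X_i}^{-1}(Y_i \cap Y_j)$ of $X_i$ gives an isomorphism $X_i \cap X_j \overset{\simeq}{\to} Y_i \cap Y_j$; the same applies with the roles of $i, j$ swapped. Both $\mathbf{g}_i|_{Y_i \cap Y_j}$ and $\mathbf{g}_j|_{Y_i \cap Y_j}$ are therefore two-sided sections of the same isomorphism $X_i \cap X_j \to Y_i \cap Y_j$, hence they coincide. Thus the family $(\mathbf{g}_i)_{i \in I}$ lies in the equalizer of the two parallel arrows appearing in the diagram $(*)$ of Proposition \ref{local} (applied with $Y$ as in the present lemma, and with $X$ as target). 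Since $X$ is local, this diagram is exact, so there exists a unique morphism $\mathbf{g} : Y \to X$ with $\mathbf{g}|_{Y_i} = \mathbf{g}_i$ for all $i \in I$.

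Finally I would check that $\mathbf{g}$ is a two-sided inverse to $\mathbf{f}$. By construction, $(\mathbf{f} \circ \mathbf{g})|_{Y_i} = \mathbf{f}|_{X_i} \circ \mathbf{g}_i = \mathrm{id}_{Y_i}$ and $(\mathbf{g} \circ \mathbf{f})|_{X_i} = \mathbf{g}_i \circ \mathbf{f}|_{X_i} = \mathrm{id}_{X_i}$. Applying the injectivity part of Proposition \ref{local} to the local functors $Y$ and $X$ respectively (using the open coverings $\{Y_i\}$ of $Y$ and $\{X_i\}$ of $X$) forces $\mathbf{f} \circ \mathbf{g} = \mathrm{id}_Y$ and $\mathbf{g} \circ \mathbf{f} = \mathrm{id}_X$.

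There is no real obstacle here beyond correctly invoking the locality of both source and target; the only mild subtlety is noting that each $\mathbf{f}|_{X_i \cap X_j}$ is automatically an isomorphism, which is what makes the two candidate sections agree on overlaps and allows the gluing via Proposition \ref{local}.
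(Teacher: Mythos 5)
Your proof is correct and follows essentially the same route as the paper's: both arguments rest on the exactness of the diagram $(*)$ in Proposition \ref{local} to glue the local inverses $\mathbf{g}_i$ and to identify the resulting composites with the identity morphisms. The only cosmetic difference is that you build the global inverse $\mathbf{g} : Y \to X$ directly, whereas the paper first invokes Yoneda's Lemma and performs the same gluing on $\mathrm{Mor}_{\mathcal{F}}(SSp~A, -)$ for each $A$; the details it leaves to the reader are precisely the ones you supply.
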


\begin{proof}
The part $``$if" is obvious. By Yoneda's Lemma one has to show that the map $\mathrm{Mor}_{\mathcal{F}}(SSp \ A, X)\to \mathrm{Mor}_{\mathcal{F}}(SSp \ A, Y)$, induced by
$\bf f$, is a bijection for any $A\in \mathsf{SAlg}_K$. It easily follows by Proposition \ref{local}. We leave details to the reader. 
\end{proof}   

To describe open affine supersubschemes of an affine superscheme $SSp~A$, let 
$\phi : A\to B$ be a morphism in $\mathsf{SAlg}_K$; it induces the morphism of superschemes $SSp \ \phi : SSp \ B\to SSp \ A$.

\begin{lemma}\label{openaffine}
$SSp \ \phi$ is an isomorphism of $SSp \ B$ onto an open subfunctor of $SSp \ A$ iff the following conditions hold: 
\begin{enumerate}
\item There are elements $x_1, \ldots , x_t\in A_0$ such that $\sum_{1\leq i\leq t}B_0\phi(x_i)=B_0$.
\item The induced morphisms $A_{x_i}\to B_{\phi(x_i)}$ are isomorphisms. 
\end{enumerate}
\end{lemma}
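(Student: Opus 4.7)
The plan is to prove each implication by appealing to Lemma \ref{covering} together with the elementary fact that for $f\in A_0$ the principal open $D(Af)\subseteq SSp \ A$ is isomorphic to $SSp \ A_f$, so that $(SSp \ \phi)^{-1}(D(Af))=D(B\phi(f))$, and the localised map on coordinate superalgebras is $A_f\to B_{\phi(f)}$.

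For the easier direction $(\Leftarrow)$, assume (1) and (2). Put $U=\bigcup_{i=1}^{t} D(Ax_i)$, an open subfunctor of $SSp \ A$, and set $U_i=D(B\phi(x_i))$. Condition (1) says that for every $x\in SSp \ B(R)$ the element $1\in R$ lies in the ideal $\sum_{i} R\cdot x(\phi(x_i))$, so the composition $x\circ\phi : A\to R$ belongs to $U(R)$; thus $SSp \ \phi$ factors through the inclusion $U\hookrightarrow SSp \ A$, and the $U_i$ cover $SSp \ B$. By (2), each restriction $U_i \to D(Ax_i)$ corresponds to the given isomorphism $A_{x_i}\cong B_{\phi(x_i)}$, hence is itself an isomorphism. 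Lemma \ref{covering} applied to $SSp \ \phi : SSp \ B\to U$ with the open covering $\{D(Ax_i)\}_{i=1}^{t}$ of $U$ then shows that it is an isomorphism, realising $SSp \ B$ as the open subfunctor $U\subseteq SSp \ A$.

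For $(\Rightarrow)$, suppose conversely that $SSp \ \phi$ identifies $SSp \ B$ with an open subfunctor $U\subseteq SSp \ A$. By the superization of \cite{jan}, Part I, 1.7(9), $U$ admits an open covering by principal opens $D(Af_\alpha)$ with $f_\alpha\in A_0$; the restriction to even generators is legitimate because odd elements square to zero in characteristic $\neq 2$, so for any superideal $I$ one has $D(I)=D(AI_0)$. Pulling back gives an open covering $\{D(B\phi(f_\alpha))\}_\alpha$ of $SSp \ B$; by the super-analogue of quasi-compactness (\cite{jan}, Part I, 1.7(6)) finitely many indices $x_1,\dots,x_t\in A_0$ already suffice, which is condition (1). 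Restricting the isomorphism $SSp \ \phi : SSp \ B\cong U$ to $D(B\phi(x_i))\to D(Ax_i)$ yields isomorphisms of affine superschemes $SSp \ B_{\phi(x_i)}\cong SSp \ A_{x_i}$, and by Yoneda's Lemma \ref{yoneda} these are induced by isomorphisms $A_{x_i}\to B_{\phi(x_i)}$ of the coordinate superalgebras, giving (2).

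The only delicate point is the $(\Rightarrow)$ direction, where one must invoke two superizations from \cite{jan}, Part I, 1.7: the existence of a covering of $U$ by principal opens with even generators, and the quasi-compactness of the affine superscheme $SSp \ B$. Both are among the verbatim translations the paper has already declared routine, so the proof reduces, modulo these, to a direct application of Lemma \ref{covering} and Yoneda.
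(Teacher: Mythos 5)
Your proof is correct and takes essentially the same route as the paper's: both directions rest on the identity $(SSp \ \phi)^{-1}(D(Af))=D(B\phi(f))$, the identification $D(Af)\simeq SSp \ A_f$, and Lemma \ref{covering} applied to the coverings $\{D(Ax_i)\}$ and $\{D(B\phi(x_i))\}$. The only cosmetic difference is in the forward direction, where the paper obtains the elements $x_1,\dots,x_t\in I_0$ directly from the membership $\phi\in D(I)(B)$ (writing the open image as $D(I)$) instead of extracting a finite subcover of a principal-open covering by quasi-compactness.
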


\begin{proof}
If $SSp \ \phi$ is an isomorphism of $SSp \ B$ onto $D(I)\subseteq SSp \ A$,
then $\phi\in D(I)(B)$. In other words, there are $x_1, \ldots , x_t\in I_0$ 
such that $\sum_{1\leq i\leq t}B_0\phi(x_i)=B_0$. We have $(SSp \ \phi)^{-1}(D(Ax))=D(B\phi(x)), x\in A_0$.
Since $D(Ax)\subseteq D(I)$ whenever $x\in I_0$, $SSp \ \phi$ induces the isomorphism $SSp \ B_{\phi(x)}\to SSp \ A_x$.
Conversely, assume that conditions 1 and 2 hold. It is clear that $SSp \ \phi$ induces a morphism $SSp \ B\to D(I), I=\sum_{1\leq i\leq t} Ax_i$. 
It remains to notice that $D(Ax_i)$ and $D(B\phi(x_i))$ form open coverings of $D(I)$ and $SSp \ B$ respectively. 
Moreover, $SSp \ \phi$ induces an isomorphism $D(B\phi(x_i))$ onto $D(Ax_i), 1\leq i\leq t$. Lemma \ref{covering}  concludes the proof.
\end{proof}

Let us define a Grothendieck topology $T_{fppf}$ of {\it fppf coverings} in $(\mathsf{SAlg}_K)^{\circ}$. 
A covering in $T_{fppf}$ is defined to be a collection of finitely many morphisms
$\{SSp~R_i\to SSp~R\}_{1\leq i\leq n}$, where each $R_i$ is a finitely presented $R$-superalgebra 
and $R_1\times\ldots\times R_n$ is a faithfully flat $R$-module. A sheaf on $T_{fppf}$ is called a {\it $K$-sheaf} 
(or {\it faisceau} in the terminology from \cite{dg}, III, \S 1).

\begin{prop}\label{sheafification}
For any $X\in \mathcal{F}$ there are a $K$-sheaf $\tilde{X}$, called
{\it sheafification} of $X$, and a morphism $p : X\to \tilde{X}$
such that for any $K$-sheaf $Y$ the canonical map $\mathrm{Mor}(\tilde{X},
Y)\to \mathrm{Mor}(X, Y)$ induced by $p$ is a bijection.
\end{prop}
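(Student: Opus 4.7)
The plan is to realize $\tilde{X}$ via the classical Grothendieck plus construction applied twice, adapted to the topology $T_{fppf}$ on $(\mathsf{SAlg}_K)^{\circ}$; all requisite direct limit machinery has been prepared in Section \ref{directlimit}.

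First, for each $R \in \mathsf{SAlg}_K$, I organize the fppf coverings of $SSp \ R$ into a filtered system under the refinement relation. Common refinements exist because, given two coverings $\mathcal{U} = \{R \to R_i\}_{i=1}^n$ and $\mathcal{V} = \{R \to S_j\}_{j=1}^m$, the family $\{R \to R_i \otimes_R S_j\}_{i,j}$ is again an fppf covering: finiteness, finite presentation, and faithful flatness are all preserved under termwise tensor products over $R$. For each covering $\mathcal{U}$ I define
$$L(\mathcal{U}, X) := \Bigl\{(x_i)_i \in \prod_i X(R_i) \,\Big|\, X(\mu_1^{ij})(x_i) = X(\mu_2^{ij})(x_j) \text{ for all } i,j\Bigr\},$$
where $\mu_1^{ij}, \mu_2^{ij}$ are the two canonical maps into $R_i \otimes_R R_j$, and set $X^{+}(R) := \varinjlim_{\mathcal{U}} L(\mathcal{U}, X)$. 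Functoriality in $R$ follows by pulling back coverings along any morphism $R \to R'$, so $X^+$ is a $K$-functor, and the trivial cover $\{R \to R\}$ induces a canonical morphism $p_1 : X \to X^+$ sending $x$ to the class of $(x)$.

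Next I would establish two standard formal facts. First, $X^+$ is always separated, in the sense that for every fppf covering $\{R \to R_i\}$ the map $X^+(R) \to \prod_i X^+(R_i)$ is injective: one represents two classes with equal restrictions on a common refinement, passes to a further refinement compatible over each $R_i$ (whose existence follows from filteredness), and concludes equality in the colimit. Second, if $X$ is already separated, then $X^+$ is a sheaf: a compatible family $(\xi_i)$ in $\prod_i X^+(R_i)$ can be represented on a simultaneous refinement $\{R_i \to R_{i,k}\}$, and the combined family $\{R \to R_{i,k}\}_{i,k}$ is then an fppf covering of $R$ on which the chosen representatives glue, producing an element of $X^+(R)$ with the required restrictions. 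Consequently $\tilde{X} := (X^+)^+$ is a $K$-sheaf, and $p := p_2 \circ p_1$ is the required morphism, where $p_2 : X^+ \to (X^+)^+$ is the canonical map.

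Finally, for the universal property, any $K$-sheaf $Y$ satisfies $Y \simeq Y^+$ canonically, since sheafhood amounts to unique descent of every compatible family. Applying the functor $(-)^+$ twice to a morphism ${\bf f} : X \to Y$ thus yields $\tilde{X} = X^{++} \to Y^{++} \simeq Y$ extending ${\bf f}$ along $p$; uniqueness of the extension follows from the separatedness of $X^+$ together with the sheaf axiom for $Y$. The main obstacle is the verification of the second formal fact above: checking that $(X^+)^+$ genuinely satisfies the sheaf axiom requires choosing, after sufficient refinement, a single fppf covering on which all the local sections representing a compatible family admit simultaneous representatives, and then using the cocycle condition together with the filtering of the direct limit to amalgamate them into a section over $R$. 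Every other step is routine once the stability of $T_{fppf}$ under base change is in hand.
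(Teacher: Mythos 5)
The paper does not actually prove this proposition; it delegates it entirely to the citation of Vistoli, Theorem 2.64, on sheafification of functors on a site. Your double plus construction is precisely the standard argument behind that citation, and it is essentially correct: the termwise tensor product $\{R\to R_i\otimes_R S_j\}$ does give a common refinement (finiteness, finite presentation and faithful flatness are preserved, using Lemma \ref{faithfulflat} for the super setting), base change keeps you inside $T_{fppf}$, and the two formal facts you cite (separatedness of $X^+$, sheafhood of $X^+$ for separated $X$) go through as in the classical case. Two points deserve more care than you give them. First, for $\varinjlim_{\mathcal{U}} L(\mathcal{U},X)$ to be a directed colimit of sets you must check that when $\mathcal{V}=\{R\to S_j\}$ refines $\mathcal{U}=\{R\to R_i\}$ via two different refinement maps $\alpha,\beta$ (i.e.\ factorizations $R_{\alpha(j)}\to S_j$ and $R_{\beta(j)}\to S_j$), the two induced maps $L(\mathcal{U},X)\to L(\mathcal{V},X)$ coincide; this is exactly where the equalizer condition defining $L(\mathcal{U},X)$ is used, since both factorizations factor through $R_{\alpha(j)}\otimes_R R_{\beta(j)}\to S_j$ and the cocycle condition identifies the two pullbacks there. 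Without this the "filtered system" is not well defined and the rest of the construction does not parse. Second, in the uniqueness part of the universal property it is the separatedness of $Y$ (half of the sheaf axiom for $Y$), combined with the observation that every section of $\tilde{X}(R)$ agrees with a section coming from $X$ after passing to some fppf covering of $R$, that forces two extensions of ${\bf f}$ to coincide; the separatedness of $X^+$ is not what does the work there. With these points supplied your argument is a complete and self-contained proof, which is arguably more informative than the paper's bare reference, and the same construction with $T_{ff}$ in place of $T_{fppf}$ yields the dur sheafification $\tilde{\tilde{X}}$ invoked later in Section \ref{K-functors}.
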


This proposition is a special case of more general statement about
sheafifications of functors on sites; see \cite{vist}, Theorem 2.64.

\begin{definition}\label{def-suitable}
For $R, R' \in \mathsf{SAlg}_K$, we write $R \leq R'$ or $R'\geq R$, if $R'$ is a fppf covering of $R$.
A $K$-functor $X$ is called \emph{suitable} if it commutes with finite direct products of
superalgebras, and if given $R \leq R'$,
the induced map $X(R)\to X(R')$ is injective.
\end{definition}

\begin{rem}\label{suitable} 
(cf. \cite{z}, pp.721-722, or \cite{jan}, Part I, 5.4)\
If $X$ is suitable, then for any $A\in \mathsf{SAlg}_K$ $$\tilde{X}(A)=\lim\limits_{\rightarrow} X(B, A), X(B, A)=\mathrm{Ker}(X(B)
\begin{array}{c}\to \\
\to\end{array} X(B\otimes_A B)),$$ where 
$B$ runs over all fppf coverings of $A$. Besides, $p : X \to \tilde{X}$ is an
injection. For example, if $X$ is a subfunctor of a $K$-sheaf $Y$ that commutes with finite direct
products of superalgebras, then for each $R \in \mathsf{SAlg}_K$,
$$\tilde{X}(R)=\{y\in Y(R)|\mbox{there is} \ R'\geq R \
\mbox{such that} \ Y(\iota_R^{R'})(y)\in X(R')\}$$
\end{rem}

Let $G$ be a group $K$-sheaf, and let $X$ be a suitable $K$-functor on which $G$ acts on the right.
Assume that $G$ acts on $X$ \emph{freely}, so that for any $R\in\mathsf{SAlg}_K$, 
the group $G(R)$ acts freely on $X(R)$. 
The functor $A\mapsto X(A)/G(A), A\in \mathsf{SAlg}_K,$ is called the {\it naive quotient} of $X$
over $G$, and is denoted by $(X/G)_{(n)}$. As in \cite{jan}, Part I, 5.5, one can check that
$(X/G)_{(n)}$ is a suitable $K$-functor. Its sheafification is called the {\it quotient $K$-sheaf}
of $X$ over $G$, and is denoted by $X \tilde{/} G$. By Remark \ref{suitable}, we have 
$(X/G)_{(n)} \subseteq X \tilde{/} G$ (cf. \cite{z}, pp. 725-726). Symmetrically one can define 
a quotient $K$-sheaf of $X$ over $G$, provided $G$ acts freely on $X$ on the left.
 
By an \emph{algebraic supergroup} we always mean an algebraic affine supergroup. Let $G$ be such 
a supergroup.  Thus, $G=SSp \ K[G]$, where $K[G]$ is 
a Hopf superalgebra that is finitely generated as an algebra.
Let $H$ be a closed supersubgroup of $G$, that is $H=V(I_H)$, where $I_H$ is a Hopf superideal of $K[G]$.
Since $H$ acts freely on $G$ on the right one can define the quotient $K$-sheaf $G \tilde{/} H$.

Another Grothendieck topology that is, however, less concerned with in this paper is the Grothendieck topology $T_{ff}$ of 
\emph{faithfully flat coverings} in $(\mathsf{SAlg}_K)^{\circ}$, which associates to $SSp~R$, collections of finitely many morphisms
$\{SSp~R_i\to SSp~R\}_{1\leq i\leq n}$ such that $R_1\times\ldots\times R_n$ is a faithfully flat $R$-module. 
A sheaf on $T_{ff}$ is called a {\it dur $K$-sheaf}.
Since $T_{fppf} \subseteq T_{ff}$, a dur $K$-sheaf is necessarily a $K$-sheaf. 
In parallel to Proposition \ref{sheafification},
it is known that for any $X\in \mathcal{F}$, there exist uniquely a dur $K$-sheaf $\tilde{\tilde{X}}$ together
with a morphism $X\to \tilde{\tilde{X}}$ which have the obvious universal property. This $\tilde{\tilde{X}}$ is called the
\emph{dur sheafification} of $X$. The dur sheafification of the naive quotient $(X/G)_{(n)}$ as defined above is denoted by 
$X \tilde{\tilde{/}} G$. 

\begin{rem}\label{sheaf is local}
By Lemma 1.2 (ii), \cite{z}, $T_{loc} \subseteq T_{fppf}$ and $T_{loc} \subseteq T_{ff}$ as well. Thus any $K$-sheaf (or dur $K$-sheaf) is a local $K$-functor.
\end{rem}
\section{Geometric superspaces}\label{superspaces}

A {\it geometric superspace} $X$ consists of a topological space $X^e$ and a sheaf
of commutative superalgebras $\mathcal{O}_X$ such that all stalks $\mathcal{O}_x,\ x \in X^e$, are local superalgebras; see below.
A morphism of superspaces $f : X\to Y$ is a pair $(f^e, f^*)$, where $f^e : X^e\to
Y^e$ is a morphism of topological spaces and $f^* : \mathcal{O}_Y\to
f^e_*\mathcal{O}_X$ is a morphism of sheaves such that $f^*_x : \mathcal{O}_{Y,
f(x)}\to \mathcal{O}_{X, x}$ is a local morphism for any $x\in X^e$.  
We let  $\mathcal{V}$ denote the category of superspaces. 

Let $A\in \mathsf{SAlg}_K$. By a {\it prime} (resp., {\it maximal}) {\it superideal} of $A$, we mean a super ideal of the form 
$p_0 \oplus A_1$, where $p_0$ is a prime (resp., maximal) ideal of $A_0$. A prime super ideal is characterized as a two-sided
ideal $p$ of $A$ such that $A/p$ is an integral domain. A maximal superideal is the same as a maximal left (or right) ideal of $A$.
It follows that the Jacobson radical $rad \, A$ of $A$ equals $rad \, A_0 \oplus A_1$; see \cite{z}, Lemma 1.1. 
A superalgebra with a unique maximal superideal is said to be 
\emph{local}. 
We define the {\it localization} $A_p$ of $A$ at a prime superideal $p = p_0 \oplus A_1$ by $A_p = (A_0 \setminus p_0)^{-1} A$;
this is a local superalgebra. 
A morphism $\alpha : A \to B$ between local superalgebras
is said to be \emph{local} if $\alpha(\mathfrak{m})\subseteq\mathfrak{n}$, where $\mathfrak{m}$ and $\mathfrak{n}$ are 
the unique maximal superideals of
$A$ and $B$, respectively.

We define an {\it affine superspace} $SSpec \ A$ as follows. The
underlying topological space of $SSpec \ A$ coincides with the prime spectrum (that is, the set of all primes) 
of $A$ endowed with the Zariski topology. In other words, $U\subseteq (SSpec \ A)^e$ is open iff
there is a superideal $I$ in $A$ such that $U=U(I)=\{p\in (SSpec \ A)^e| I\not\subseteq p\}$. For any open subset
$U \subseteq (SSpec \ A)^e$ the superalgebra $\mathcal{O}_{SSpec \
A}(U)$ consists of all locally constant functions $h : U\to
\bigsqcup_{p\in U} A_p$ such that $h(p)\in A_p$, $p \in U$. 

A superspace $X$ is called a {\it geometric
superscheme} iff there is an open covering $X^e=\bigcup_{i\in I}
U_i$ such that $(U_i, \mathcal{O}_X |_{U_i})\simeq SSpec \ A_i,
A_i\in \mathsf{SAlg}_K, i\in I$. We let $\mathcal{SV}$ denote the full subcategory of all geometric
superschemes in the category $\mathcal{V}$ of superspaces.

\begin{lemma}\label{canobijection} 
(\cite{dg}, I, \S 1, Theorem 2.1)\ 
There is a canonical bijection 
$$\mathrm{Mor}_{\mathcal{V}}(X, SSpec\ A) \simeq \mathrm{Hom}_{\mathsf{SAlg}_K}(A, \mathcal{O}_X(X)),$$ 
which is functorial in both arguments.
\end{lemma}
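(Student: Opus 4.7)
The plan is to define a pair of maps and show they are mutually inverse. For the forward direction $\Phi$, I would first establish that $\mathcal{O}_{SSpec\ A}(SSpec\ A)\simeq A$ canonically. This follows the classical template: using an open covering of $SSpec\ A$ by basic opens $D(A_0 f_i)$ with $\sum A_0 f_i=A_0$, any global section—locally constant function $h:SSpec\ A\to\bigsqcup_p A_p$—is reconstructed as an element of $A$ via a partition-of-unity argument. The super context adds nothing essentially new because a prime superideal has the form $p=p_0\oplus A_1$ and $A_p=(A_0\setminus p_0)^{-1}A$, so everything reduces to the classical statement about $A_0$, with the odd part coming along for the ride. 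Under this identification, I set $\Phi(f):=f^*(SSpec\ A):A\to\mathcal{O}_X(X)$.

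For the inverse $\Psi$, given $\phi:A\to\mathcal{O}_X(X)$, I define $f^e$ as follows. For $x\in X^e$, let $\phi_x:A\to\mathcal{O}_{X,x}$ be the composition with the canonical map $\mathcal{O}_X(X)\to\mathcal{O}_{X,x}$. Since $\mathcal{O}_{X,x}$ is local with unique maximal superideal $\mathfrak{m}_x$, the preimage $\phi_x^{-1}(\mathfrak{m}_x)$ is a prime superideal of $A$, and I set $f^e(x):=\phi_x^{-1}(\mathfrak{m}_x)$. Continuity of $f^e$ reduces to checking that pre-images of basic open sets $U(Aa)$ are open in $X^e$, which follows since $x\in(f^e)^{-1}(U(Aa))$ iff $\phi_x(a)\notin\mathfrak{m}_x$ iff $\phi(a)$ is invertible in $\mathcal{O}_{X,x}$, and the locus where a section of a sheaf of local superalgebras is invertible at stalks is open.

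To define $f^*:\mathcal{O}_{SSpec\ A}\to f^e_*\mathcal{O}_X$, note that by the universal property of localization each $\phi_x$ factors through a local morphism $\widetilde{\phi}_x:A_{f^e(x)}\to\mathcal{O}_{X,x}$. Given an open $U\subseteq SSpec\ A$ and $h\in\mathcal{O}_{SSpec\ A}(U)$, set $V:=(f^e)^{-1}(U)$ and, for each $x\in V$, form $\widetilde{\phi}_x(h(f^e(x)))\in\mathcal{O}_{X,x}$. Locally on $U$ we can write $h=a/b$ with $a\in A$, $b\in A_0$ not vanishing on the relevant open, so on the corresponding open subset of $V$ the element $\phi(b)$ is invertible in the stalks, hence in sections on a small enough neighborhood of each point by the local-ring sheaf condition; therefore $\phi(a)/\phi(b)$ defines an honest section of $\mathcal{O}_X$ on that neighborhood. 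The sheaf axiom on $\mathcal{O}_X$ glues these local data into $f^*(U)(h)\in\mathcal{O}_X(V)$. By construction $f^*_x$ is the local morphism $\widetilde{\phi}_x$.

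Finally, I verify $\Phi\circ\Psi=\mathrm{id}$ and $\Psi\circ\Phi=\mathrm{id}$; the first is immediate from the construction, and the second follows because a morphism of superspaces to $SSpec\ A$ is determined by its effect on global sections combined with the locality requirement on stalks, which rigidifies both $f^e$ (as $\phi_x^{-1}(\mathfrak{m}_x)$) and $f^*$ (via the universal property of localization). Functoriality in both arguments is then routine. The main obstacle is the construction of $f^*$: one must ensure that the stalkwise recipe $\widetilde{\phi}_x\circ h(f^e(x))$ genuinely patches into a section, which rests on showing that $\phi(b)$ becomes a unit in $\mathcal{O}_X(W)$ for a sufficiently small neighborhood $W$ of each $x\in V$ whenever $h=a/b$ near $f^e(x)$. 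This is precisely where the hypothesis that $\mathcal{O}_X$ is a sheaf of \emph{local} superalgebras is used.
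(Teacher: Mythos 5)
Your proposal is correct and follows essentially the same route as the paper's own (much terser) proof: both directions are the classical ones, with the forward map $f\mapsto f^*(X)$ under the identification $A=\mathcal{O}_{SSpec\,A}(SSpec\,A)$, and the inverse built from $f^e(x)=\phi_x^{-1}(\mathfrak{m}_x)$ together with the locally defined $a/b\mapsto \phi(a)/\phi(b)$ on stalks. You simply supply the verifications (continuity, gluing of $f^*$, mutual inversion) that the paper leaves implicit.
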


\begin{proof}
A morphism $f : X\to SSpec \ A$
induces $\phi(f)=f^*(X) : A =\mathcal{O}_{SSpec \ A}(SSpec \
A) \to \mathcal{O}_X(X)$. Conversely, let $\phi : A \to \mathcal{O}_X(X)$ be a superalgebra morphism.
Let $\phi_x$ denote the composite of $\phi$ with the canonical {\it evaluation} morphism $\mathcal{O}_X(X)\to \mathcal{O}_x , x\in X$.
Define $f^e(\phi) : X^e \to (SSpec \ A)^e$ by $f^e(\phi)(x)=\phi^{-1}(m_x)$,
where $x\in X$ and $m_x$ is the maximal superideal of the local superalgebra 
$\mathcal{O}_x$. The corresponding morphism of sheaves $f(\phi)^*$ can be defined
locally. If $f^e(\phi)(x)=p$, then $f(\phi)^*(\frac{a}{b})=\frac{\phi(a)}{\phi(b)}, \frac{a}{b}\in
A_p$.
\end{proof}
Set $X=SSpec \ B$ in Lemma \ref{canobijection}. Then $\mathrm{Mor}_{\mathcal{V}}(SSpec \ B, SSpec \
A)\simeq \mathrm{Hom}_{\mathsf{SAlg}_K}(A, B)$. A morphism induced by $\phi\in \mathrm{Hom}_{\mathsf{SAlg}_K}(A, B)$ is denoted by $SSpec \ \phi$.

Let $X$ be a geometric superspace, and suppose $f\in {\mathcal O}_X(X^e)_0$. Define a subset $X^e_f$ as follows. A point $x\in X^e$ belongs to
$X^e_f$ iff $f$ is an invertible element in ${\mathcal O}_x$. For example, if $X=SSpec \ A$, then $X_f=U(Af)$. 
\begin{lemma}\label{principalopen}
The following statements hold: \\
1. If $g : X\to Y$ is a morphism of geometric superspaces and $h\in {\mathcal O}_Y(Y^e)$, then $g^{-1}(Y^e_h)=X^e_{g^*(h)}$. \\
2. If $X$ is a geometric superscheme, then $X^e_f$ is open for any $f\in {\mathcal O}_X(X^e)$. In particular,
${\mathcal O}_X(X^e_f)\simeq {\mathcal O}_X(X^e)_f$.
\end{lemma}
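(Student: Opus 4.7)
The plan is to reduce part (1) to the fact that local morphisms of local superalgebras both preserve and reflect units, and to reduce part (2) first to the affine case and then to the sheaf axiom on an affine cover.

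For (1), I would argue pointwise on stalks. A point $x \in X^e$ lies in $g^{-1}(Y^e_h)$ iff $g^e(x) \in Y^e_h$, i.e., the germ $h_{g^e(x)}$ is a unit in $\mathcal{O}_{Y, g^e(x)}$. Since $g^*_x : \mathcal{O}_{Y, g^e(x)} \to \mathcal{O}_{X, x}$ is local by the very definition of a morphism of geometric superspaces, it preserves units (automatic for any ring morphism) and reflects them (contrapositive of $g^*_x(\mathfrak{m}_{g^e(x)}) \subseteq \mathfrak{m}_x$, using that in a local superalgebra a unit is precisely an element outside the maximal superideal). Hence $h_{g^e(x)}$ is a unit iff $g^*(h)_x$ is a unit, proving $g^{-1}(Y^e_h) = X^e_{g^*(h)}$.

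For (2), I would first treat the affine case $X = SSpec\, A$ with $f \in A_0$. By the definition of the Zariski topology, $X^e_f = U(Af)$ is open, and one verifies directly that $(U(Af), \mathcal{O}_X|_{U(Af)}) \simeq SSpec\, A_f$, whence $\mathcal{O}_X(X^e_f) \simeq A_f = \mathcal{O}_X(X^e)_f$. For a general geometric superscheme $X$, choose an affine open cover $\{U_i\}_{i \in I}$ with $(U_i, \mathcal{O}_X|_{U_i}) \simeq SSpec\, A_i$. Applying part (1) to each inclusion $U_i \hookrightarrow X$ yields $X^e_f \cap U_i = (U_i)_{f_i}$ with $f_i$ the restriction of $f$; by the affine case this is open in $U_i$, hence in $X^e$, and thus $X^e_f = \bigcup_i (X^e_f \cap U_i)$ is open.

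For the isomorphism $\mathcal{O}_X(X^e_f) \simeq \mathcal{O}_X(X^e)_f$, both sides are recovered as equalizers over $\{U_i\}$ and $\{U_i \cap U_j\}$; on each affine piece the identification reduces to the affine case, and compatibility on overlaps follows from naturality of localization. The main obstacle I anticipate is justifying that localization at $f$ commutes with this equalizer in the non-affine situation; this is automatic for finite covers and in particular for Noetherian geometric superschemes, which is the setting relevant to the applications of this paper.
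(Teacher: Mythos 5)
Your proof is correct and follows essentially the same route as the paper, which simply declares part~1 obvious (your stalkwise argument with local morphisms preserving and reflecting units is the intended content) and proves part~2 by applying part~1 to the inclusions $U_i\hookrightarrow X$ of an affine open cover. You are in fact more careful than the paper on the final claim $\mathcal{O}_X(X^e_f)\simeq \mathcal{O}_X(X^e)_f$: your observation that localization must commute with the sheaf equalizer, which requires a quasi-compactness/finiteness hypothesis (satisfied in the Noetherian situations where the lemma is used), is a genuine subtlety that the paper's one-line proof passes over in silence.
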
  
\begin{proof}
The first statement is obvious. If $U\simeq SSpec \ R$ is an open subspace of $X$, then one can apply the first statement for the inclusion
$U\to X$.
\end{proof}
From now on we denote the open subspace $(X^e_f, {\mathcal O}_X|_{X^e_f})$ by $X_f$.
\begin{lemma}\label{limitexist}
The categories $\mathcal{V}$ and $\mathcal{F}$ are closed with respect to direct sums
and cokernels of morphisms.  In particular,
any (covariant) functor from a small category to 
$\mathcal{V}$ (respectively, to $\mathcal{F}$) has a direct limit.
\end{lemma}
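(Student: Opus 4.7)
The plan is to reduce the $``$In particular" assertion to the first two via the standard categorical principle: a category possessing all small direct sums and all coequalizers of pairs of parallel morphisms admits all small colimits, since any direct limit $\lim\limits_{\rightarrow} F$ of a functor $F : \mathcal{D}\to \mathcal{C}$ can be realised as the coequalizer of the two natural morphisms $\bigsqcup_{\alpha : d\to d'} F(d) \rightrightarrows \bigsqcup_{d\in Ob\, \mathcal{D}} F(d)$ indexed by all morphisms $\alpha$ of $\mathcal{D}$. Hence it suffices to exhibit direct sums and coequalizers in each of $\mathcal{F}$ and $\mathcal{V}$ separately.

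For $\mathcal{F} = \mathsf{Sets}^{\mathsf{SAlg}_K}$ this is immediate, because the target category $\mathsf{Sets}$ already admits all small colimits and hence they may be computed pointwise, as recorded in Example \ref{limitset}. Explicitly, $(\bigsqcup_i X_i)(A) = \bigsqcup_i X_i(A)$, while the coequalizer of ${\bf f}, {\bf g} : X\to Y$ sends $A$ to the quotient of $Y(A)$ by the smallest equivalence relation identifying ${\bf f}(A)(x)$ with ${\bf g}(A)(x)$ for $x\in X(A)$; functoriality in $A$ is routine.

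For $\mathcal{V}$ the direct sum $\bigsqcup_i X_i$ should be modelled on the topological disjoint union $\bigsqcup_i X_i^e$ equipped with the sheaf whose value on $\bigsqcup_i U_i$ is $\prod_i \mathcal{O}_{X_i}(U_i)$; its stalks coincide with the stalks of the individual $\mathcal{O}_{X_i}$ and are therefore local. For the coequalizer of a pair $f, g : X\to Y$ in $\mathcal{V}$ I would first form the topological quotient $\pi : Y^e \to Z^e$ under the equivalence relation generated by $f^e(x) \sim g^e(x)$, and then place on $Z^e$ the subsheaf $\mathcal{O}_Z$ of $\pi_{*}\mathcal{O}_Y$ defined by
$$\mathcal{O}_Z(U) = \{ s\in \mathcal{O}_Y(\pi^{-1}(U)) \mid f^*(s) = g^*(s) \text{ on } (f^e)^{-1}\pi^{-1}(U) \}.$$
Being an equalizer of two sheaf morphisms this is itself a sheaf, and the universal properties of the topological quotient together with those of equalizers of sections yield the expected universal property among morphisms in $\mathcal{V}$ out of $Y$ equalizing $f$ and $g$.

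The main obstacle will be verifying that every stalk $\mathcal{O}_{Z, z}$ is a local superalgebra, and that its canonical maps to the local superalgebras $\mathcal{O}_{Y, y}$ for $y\in \pi^{-1}(z)$ are local morphisms. A subalgebra of a product of local superalgebras need not be local in general, so some care is needed: the argument has to exploit that within each fibre $\pi^{-1}(z)$ the local data $(\mathcal{O}_{Y, y}, \mathfrak{m}_y)$ are constrained by the induced maps $f^*_x, g^*_x$, and that this constraint forces a single maximal superideal to survive in the equalizer. If direct locality should fail at some point one must correct $\mathcal{O}_Z$ by a further localization procedure, following the pattern of Demazure--Gabriel \cite{dg}, I, \S 1 in the non-super case. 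This is the only place where the assumption that objects of $\mathcal{V}$ are locally-ringed rather than merely ringed superspaces plays an essential role, and it is where the technical heart of the proof will lie.
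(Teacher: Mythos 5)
Your proposal follows essentially the same route as the paper, which reduces the final assertion to the existence of direct sums and cokernels (citing Bucur--Deleanu, Proposition 3.4), computes colimits in $\mathcal{F}$ pointwise in $\mathsf{Sets}$, and for $\mathcal{V}$ copies the construction of Demazure--Gabriel, I, \S 1, Proposition 1.6 --- precisely the disjoint union and the equalizer subsheaf of $\pi_*\mathcal{O}_Y$ that you describe. The one step you leave hedged, locality of the stalks $\mathcal{O}_{Z,z}$, in fact goes through directly with no further localization procedure: since the equivalence relation on $Y^e$ is generated by $f^e(x)\sim g^e(x)$ and the maps $f^*_x$, $g^*_x$ are local morphisms, the condition $s_y\in\mathfrak{m}_y$ is constant along each fibre $\pi^{-1}(z)$, and a germ violating it is invertible on a saturated open neighbourhood of that fibre, so $\mathcal{O}_{Z,z}$ has the unique maximal superideal $\{\,s_z \mid s_y\in\mathfrak{m}_y \ \text{for all}\ y\in\pi^{-1}(z)\,\}$ and the maps to the $\mathcal{O}_{Y,y}$ are local.
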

\begin{proof}
The second statement follows by Proposition 3.4,
\cite{bd}. The proof of the first statement for $\mathcal{V}$ can be copied from \cite{dg}, I, \S 1, Proposition 1.6. 
Since the category $\mathsf{Sets}$ is closed with respect to direct products and cokernels of morphisms (cf. \cite{bd}), the first statement for $\mathcal{F}$ follows. 
\end{proof}
 
\section{Comparison Theorem}\label{sec-comparison}

Let $X\in \mathcal{F}$. Recall from Section 1 the definition of the category $\mathcal{M}_X$.
Define the functor $d_X : (\mathcal{M}_X)^{\circ} \to
\mathcal{V}$ by $d_X(R, x) =SSpec \ R, d_X(\phi)=SSpec \ \phi$. By Lemma \ref{limitexist} $|X|= \lim\limits_{\rightarrow} d_X$ exists and belongs to $\mathcal{V}$.
The geometric superspace $|X|$ is called a {\it geometric realization} of $X$ (cf. \cite{dg}, I, \S 1, 4.2).
Besides, $X \mapsto |X|$ is a functor from $\mathcal{F}$ to $\mathcal{V}$. If ${\bf f} : X \to Y$ is a morphism in $\mathcal{F}$, then 
$|{\bf f}| : |X|\to |Y|$ is (uniquely) defined by  the collection of morphisms $\{i_{R, {\bf f}(R)(x)}\}_{(R, x)\in \mathcal{M}_X}$.
For the sake of simplicity the canonical morphisms $i_{(R, x)} : d_X(R, x) \to |X|$ are denoted just by $i_x$. 

\begin{example}\label{affinespace}
$SSpec \ A\simeq |SSp \ A|$. Indeed, the collection of morphisms $SSpec \ \phi : SSpec \ R\to SSpec \ A, \phi\in SSp \ A(R),$
satisfies the first condition. For any other collection $i'_{\phi} : SSpec \ R\to X$ as in the second condition, set $g=i'_{\mathrm{id}_A}$.
Moreover, Yoneda's Lemma implies that
$|{\bf g}_x|=i_x$ for any $x\in X(A)$.  
\end{example}

Define a functor $\mathcal{V} \to \mathcal{F}, X  \mapsto X^{\diamond}$, where
$X^{\diamond}(R)=\mathrm{Mor}_{\mathcal{V}}(SSpec \ R , X), R\in \mathsf{SAlg}_K$. In other words, $X^{\diamond}$ is a restriction
of $h_X$ on the full subcategory of affine superspaces ({\it functor of points} in the terminology of \cite{cf}).
Notice that $(SSpec \ A)^{\diamond}=SSp \ A$ by Lemma \ref{canobijection}.
 
\begin{lemma}\label{diamond} 
The following statements hold: 

1. For any $X\in \mathcal{V}$ and its open supersubspace $U$, $U^{\diamond}$ is an open subfunctor of $X^{\diamond}$; 

2. If $\{U_i\}_{i\in I}$ is an open covering of $X$, then
$\{ U^{\diamond}_i \}_{i\in I}$ is an open covering of $X^{\diamond}$; 

3. $(SSpec \ A)^{\diamond}$ and $(SSpec \ \phi)^{\diamond}$
are identified with $SSp \ A$ and $SSp \ \phi$ respectively, where $\phi : A\to B$; 

4. If $I$ is a superideal in $A$, then $U(I)^{\diamond}=D(I)$.
\end{lemma}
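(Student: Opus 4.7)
The plan is to dispatch (3), (4), (1), and (2) in that order, since (3) sets up notation, (4) is the key topological computation, and (1), (2) follow from (4) by Yoneda.

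Statement (3) follows at once from Lemma~\ref{canobijection}: $(SSpec\ A)^{\diamond}(B) = \mathrm{Mor}_{\mathcal{V}}(SSpec\ B, SSpec\ A) \simeq \mathrm{Hom}_{\mathsf{SAlg}_K}(A, B) = SSp\ A(B)$, the bijection being functorial in $B$ and in $A$; in particular it sends $SSpec\ \phi$ to $\phi$, matching the Yoneda morphism $SSp\ \phi$.

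For (4), I would use Lemma~\ref{canobijection} to write an element of $U(I)^{\diamond}(B)$ as a superspace morphism $f = SSpec\ \phi : SSpec\ B \to SSpec\ A$ whose image lies in $U(I)^e$. Since $f^e$ sends a prime $q \subseteq B$ to $\phi^{-1}(q)$, the condition becomes: $I \not\subseteq \phi^{-1}(q)$, i.e.\ $\phi(I) \not\subseteq q$, for every prime $q$ of $B$. Passing to complements and using the fact that every proper (super)ideal of $B$ sits inside a maximal, hence prime, superideal, this is equivalent to $\phi(I) B = B$, which is precisely the defining condition of $D(I)(B)$. One should also record that an open supersubspace $g^{-1}(U(I))$ of a morphism $g : SSpec\ B \to SSpec\ A$ is itself $U(I')$ for the appropriate $I'$, but this is automatic from the definition of the Zariski topology on $SSpec\ B$.

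For (1), let ${\bf f} : SSp\ A \to X^{\diamond}$ correspond by Yoneda's Lemma to $g \in X^{\diamond}(A)$, a morphism $g : SSpec\ A \to X$ in $\mathcal{V}$; then ${\bf f}(B)(\phi) = g \circ SSpec\ \phi$ for every $\phi \in SSp\ A(B)$. Because $U \subseteq X$ is open, $g^{-1}(U)$ is an open subspace of $SSpec\ A$, and the description of the topology on $SSpec\ A$ gives a superideal $J$ of $A$ with $g^{-1}(U)^e = U(J)^e$. A short calculation, together with (4) applied inside $SSpec\ A$, shows ${\bf f}^{-1}(U^{\diamond})(B) = D(J)(B)$; hence ${\bf f}^{-1}(U^{\diamond}) = D(J)$ is open in $SSp\ A$, proving that $U^{\diamond}$ is open in $X^{\diamond}$.

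For (2), each $U_i^{\diamond}$ is an open subfunctor of $X^{\diamond}$ by (1), so only the covering property on field extensions $F \in \mathsf{F}_K$ remains to be checked. For such an $F$, regarded as a purely even superalgebra, $SSpec\ F$ consists of a single point (the unique prime $0 \oplus 0$); hence any morphism $f : SSpec\ F \to X$ has $f^e$ mapping that point to some $x \in X^e$, which lies in some $U_i^e$, and $f$ then factors uniquely through the open subspace $U_i$. Therefore $X^{\diamond}(F) = \bigcup_{i \in I} U_i^{\diamond}(F)$, as required. The main obstacle in the whole argument is the careful translation between morphisms in $\mathcal{V}$ and the functor-of-points picture via Lemma~\ref{canobijection} and Yoneda; once this is in hand, the topology on $SSpec\ A$ does the rest.
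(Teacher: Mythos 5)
Your proof is correct and follows essentially the same route as the paper: the paper's own argument pulls back $U$ along a point $x\in X^{\diamond}(R)$ to an open set $U(I)\subseteq (SSpec\ R)^e$ and identifies the preimage subfunctor with $D(I)$ via the condition $I\not\subseteq\phi^{-1}(p)$ for all primes $p$, exactly as you do, and it leaves parts (2)--(4) to the reader. Your write-up merely supplies those omitted details (the Yoneda/``functor of points'' translation via Lemma~\ref{canobijection}, and the reduction of the covering condition to fields, where $SSpec\ F$ is a single point), all of which are sound.
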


\begin{proof}
Assume that $x\in X^{\diamond}(R)$ defines a morphism
${\bf g}_x : SSp \ R\to X^{\diamond}$. We have $x^{-1}(U)=U(I)$ for 
a superideal $I$ in $R$. Observe that $\phi\in SSp \ R(B)$ belongs to ${\bf g}_x^{-1}(U^{\diamond})(B)$ iff for any $p\in (SSpec \ B)^e$ $I\not\subseteq \phi^{-1}(p)$. Thus  ${\bf g}_x^{-1}(U^{\diamond})=D(I)$. The last statements are left for the reader.
\end{proof}

\begin{prop}\label{adjoint} 
(\cite{dg}, I, \S 1, Proposition 4.1)
The functor $X \mapsto |X|$ is a left adjoint to
$Y \mapsto  Y^{\diamond}$.
\end{prop}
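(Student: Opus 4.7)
The plan is to exhibit both $\mathrm{Mor}_{\mathcal V}(|X|,Y)$ and $\mathrm{Mor}_{\mathcal F}(X,Y^{\diamond})$ as one and the same projective limit indexed by the category $(\mathcal{M}_X)^{\circ}$, and then to argue naturality in $X$ and $Y$. First I would fix $X\in \mathcal F$ and $Y\in \mathcal V$. Since $|X|=\lim_{\rightarrow} d_X$ by construction, Lemma \ref{prlimit} yields a natural bijection
\begin{equation*}
\mathrm{Mor}_{\mathcal V}(|X|,Y)\;\simeq\;\lim_{\leftarrow}\, h_Y\!\circ d_X,
\end{equation*}
and by definition of $d_X$ and $Y^{\diamond}$, the functor $h_Y\!\circ d_X:(\mathcal{M}_X)^{\circ}\to \mathsf{Sets}$ sends $(R,x)\mapsto \mathrm{Mor}_{\mathcal V}(SSpec\, R,Y)=Y^{\diamond}(R)$, and a morphism $\phi:(R,x)\to (R',x')$ in $\mathcal{M}_X$ to the transition map $Y^{\diamond}(\phi)$.

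Next, on the $\mathcal F$-side, Lemma \ref{limdelta} identifies $X$ with $\lim_{\rightarrow}\delta_X$, so Lemma \ref{prlimit} again gives
\begin{equation*}
\mathrm{Mor}_{\mathcal F}(X,Y^{\diamond})\;\simeq\;\lim_{\leftarrow}\, h_{Y^{\diamond}}\!\circ\delta_X.
\end{equation*}
Because $\delta_X(R,x)=h^R$, Yoneda's Lemma \ref{yoneda} supplies a natural identification $\mathrm{Mor}_{\mathcal F}(h^R,Y^{\diamond})\simeq Y^{\diamond}(R)$ under which the morphism $\mathbf{h}(\phi)$ induced by $\phi\in\mathrm{Mor}_{\mathcal{M}_X}((R,x),(R',x'))$ corresponds exactly to $Y^{\diamond}(\phi)$. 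Hence $h_{Y^{\diamond}}\!\circ\delta_X$ and $h_Y\!\circ d_X$ are isomorphic as functors from $(\mathcal{M}_X)^{\circ}$ to $\mathsf{Sets}$, and consequently their projective limits coincide.

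Composing the two chains of natural bijections gives the desired isomorphism
\begin{equation*}
\Phi_{X,Y}:\;\mathrm{Mor}_{\mathcal V}(|X|,Y)\;\overset{\simeq}{\longrightarrow}\;\mathrm{Mor}_{\mathcal F}(X,Y^{\diamond}).
\end{equation*}
Concretely, $\Phi_{X,Y}$ sends $g:|X|\to Y$ to the morphism of $K$-functors whose component at $R$ carries $x\in X(R)$ to the composite $g\circ i_x\in Y^{\diamond}(R)$ (with $i_x:SSpec\, R=d_X(R,x)\to |X|$ the structural map of the direct limit). Naturality in $Y$ is immediate from the construction, since post-composition with a morphism $Y\to Y'$ commutes with the canonical maps $i_x$. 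For naturality in $X$, given $\mathbf f:X'\to X$ one uses that $|\mathbf f|\circ i_{x'}=i_{\mathbf f(R)(x')}$ by the very definition of $|\mathbf f|$, which matches the transition on the $\mathcal F$-side.

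The main obstacle is bookkeeping: one must keep straight that the transition morphisms in the two projective limits agree. The essential point is that under Yoneda, the morphism $\mathbf h(\phi)$ acts on an element $y\in Y^{\diamond}(R')\simeq \mathrm{Mor}(h^{R'},Y^{\diamond})$ by precomposition, and this precomposition becomes exactly $Y^{\diamond}(\phi)$ through the identification $\mathrm{Mor}(h^{R'},Y^{\diamond})\simeq Y^{\diamond}(R')$. Once this compatibility is verified, the rest of the proof is formal manipulation of limits and Yoneda, and the adjunction follows with no further input.
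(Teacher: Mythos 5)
Your proposal is correct and is essentially the paper's own argument: the paper likewise defines $\Phi_{X,Y}(g)(R)(x)=g\circ i_x$, invokes $X=\lim_{\rightarrow}\delta_X$ (Lemma \ref{limdelta}) and Lemma \ref{prlimit}, and concludes from the commutative squares whose bottom rows are the identity $\mathrm{Mor}_{\mathcal V}(SSpec\,R,Y)=Y^{\diamond}(R)=\mathrm{Mor}_{\mathcal F}(SSp\,R,Y^{\diamond})$ --- which is exactly your identification of $h_Y\circ d_X$ with $h_{Y^{\diamond}}\circ\delta_X$ as diagrams over $(\mathcal{M}_X)^{\circ}$. You merely spell out the projective-limit bookkeeping that the paper leaves implicit.
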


\begin{proof}
Let $f\in \mathrm{Mor}_{\mathcal{V}}(|X|, Y)$. Define ${\bf f} : X\to Y^{\diamond}$ by ${\bf f}(R)(x)=fi_x, x\in X(R)$. 
By a routine verification ${\bf f}$ is a morphism of functors. Moreover, $f\mapsto {\bf f}$ depend of both $X$ and $Y$ functorially.
Since $\lim\limits_{\rightarrow} \delta_X=X$ and all diagrams 
$$\begin{array}{ccc}
\mathrm{Mor}_{\mathcal{V}}(|X|, Y) & \to & \mathrm{Mor}_{\mathcal{F}}(X, Y^{\diamond}) \\
h_Y(i_x) \downarrow & & \downarrow h_{Y^{\diamond}}({\bf g}_x)\\
\mathrm{Mor}_{\mathcal{V}}(SSpec \ R, Y) & \to & \mathrm{Mor}_{\mathcal{F}}(SSp \ R,
Y^{\diamond})
\end{array}$$
are commutative, where the bottom arrows are identical maps, the proof follows then 
by applying Lemma \ref{prlimit}.
\end{proof}

Let $A\in \mathsf{SAlg}_K, B\in \mathsf{F}_K$; see Example \ref{field} for the definition of $\mathsf{F}_K$.
In what follows, $SSp \ A(B)$ is considered as a topological space whose open subsets are $D(I)(B)$, where
$I$ runs over all superideals of $A$; see \cite{jan}, Part I, 1.4(5) and 1.5(8). 

\begin{example}\label{F_K}
(\cite{dg}, I, \S 1, 4.8)\
One sees that $f=SSp \ A|_{\mathsf{F}_K}$ is a functor from $\mathsf{F}_K$ to the category of topological spaces. 
Note that $\lim\limits_{\rightarrow} f$ and $(SSpec \ A)^e$ are identified as topological spaces. 
By combining Lemma \ref{limit} with Example \ref{field}, we see that $a\in f(B), b\in f(C)$ are equivalent iff 
$\mathrm{Ker}~a=\mathrm{Ker}~b=p\in (SSpec \ A)^e$. Thus, $\lim\limits_{\rightarrow} f$ and $(SSpec \ A)^e$ 
are identified as sets. 
A subset $V\subseteq \lim\limits_{\rightarrow} f$ is open iff $i_B^{-1}(V)$ is open in $SSp \ A(B)$ for any 
$B\in \mathsf{F}_K$; see Remark \ref{commdiagram} below). 
If $V\subseteq \lim\limits_{\rightarrow} f$ is open, then $V=U(I)$, where $I=I_C$ and $C$ is a compositum of 
the fields $Q(A/p), p\in (SSpec \ A)^e$. Conversely, if $V=U(I)$, then $i_B^{-1}(V)=D(I_B)(B)$. 
Here $I_B$ is generated by $I\setminus p$, where $p\in U(I)$ and the field of fractions $Q(A/p)$ is isomorphic 
to a subfield of $B$.  
\end{example}

The following lemma is a refinement of \cite{dg}, I, \S 1, 4.9.

\begin{lemma}\label{natiso}
For any $X\in \mathcal{F}$ the topological space $|X|^e$ is naturally isomorphic to $\lim\limits_{\rightarrow} X|_{\mathsf{F}_K}$.
\end{lemma}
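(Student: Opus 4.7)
The plan is to present both sides as the direct limit of the same diagram of topological spaces, indexed over the opposite category $(\mathcal{M}_X)^{\circ}$ of the category of elements of $X$.

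By Lemma \ref{limdelta}, $X \simeq \lim\limits_{\rightarrow} \delta_X$ with $\delta_X(R, x) = SSp \ R$, so every $K$-functor is a colimit of representables. For a representable $X = SSp \ A$, both sides compute $(SSpec \ A)^e$: the left-hand side by Example \ref{affinespace}, the right-hand side by Example \ref{F_K}. It thus suffices to verify that both constructions commute with the colimit $X = \lim\limits_{\rightarrow} \delta_X$.

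For $X \mapsto |X|^e$: the realization functor $|\cdot| : \mathcal{F} \to \mathcal{V}$ is a left adjoint by Proposition \ref{adjoint}, hence preserves direct limits; the underlying-space functor from $\mathcal{V}$ to topological spaces likewise preserves colimits, as one sees from the explicit construction of coproducts and cokernels in $\mathcal{V}$ (cf.\ Lemma \ref{limitexist}). Consequently $|X|^e$ is the direct limit of the diagram $(R, x) \mapsto (SSpec \ R)^e$ over $(\mathcal{M}_X)^{\circ}$. For $X \mapsto \lim\limits_{\rightarrow} X|_{\mathsf{F}_K}$: restriction along $\mathsf{F}_K \hookrightarrow \mathsf{SAlg}_K$ preserves colimits, since colimits in functor categories are pointwise, so $X|_{\mathsf{F}_K} \simeq \lim\limits_{\rightarrow} SSp \ R|_{\mathsf{F}_K}$ indexed over $(\mathcal{M}_X)^{\circ}$; Lemma \ref{restriction} applied to the bifunctor $((R, x), F) \mapsto SSp \ R(F)$ then interchanges the two direct limits, giving $\lim\limits_{\rightarrow} X|_{\mathsf{F}_K}$ as the direct limit over $(\mathcal{M}_X)^{\circ}$ of $\lim\limits_{\rightarrow} SSp \ R|_{\mathsf{F}_K} \simeq (SSpec \ R)^e$, again by Example \ref{F_K}.

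Comparing these two expressions yields the desired natural isomorphism. Point-wise it sends a point of $|X|^e$ represented by $((R, x), p)$ with $p \in (SSpec \ R)^e$ to the class in $\lim\limits_{\rightarrow} X|_{\mathsf{F}_K}$ of $X(R \to Q(R/p))(x) \in X(Q(R/p))$, and conversely sends $(F, y)$ with $y \in X(F)$ to $i_y(0) \in |X|^e$, where $(0)$ denotes the unique point of $(SSpec \ F)^e$. Naturality in $X$ follows from the naturality of all constructions involved. The main subtlety is to check that the topologies on the two sides coincide, but this reduces via the colimit presentations above to the representable case already handled in Example \ref{F_K}.
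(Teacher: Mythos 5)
Your proposal is correct and follows essentially the same route as the paper: both arguments identify $|X|^e$ with $\lim\limits_{\stackrel{\rightarrow}{R,x}}(SSpec\ R)^e$, invoke Example \ref{F_K} on each representable piece, interchange the two direct limits via Lemma \ref{restriction}, and collapse $\lim\limits_{\stackrel{\rightarrow}{R,x}}(SSp\ R)(B)$ to $X(B)$ by Lemma \ref{limdelta}, with the topology handled afterwards by the final-topology description on each $X(B)$. The only difference is organizational (you read the chain of isomorphisms from the right-hand side toward the left), so no further comment is needed.
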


\begin{proof}
We have 
$$|X|^e=\lim\limits_{\stackrel{\rightarrow}{R, x}} (SSpec \ R)^e\simeq
\lim\limits_{\stackrel{\rightarrow}{R,
x}}\lim\limits_{\stackrel{\rightarrow}{B\in \mathsf{F}_K}}(SSp \
R)(B)\simeq$$$$\simeq\lim\limits_{\stackrel{\rightarrow}{B\in
\mathsf{F}_K}}\lim\limits_{\stackrel{\rightarrow}{R, x}}(SSp \ R)(B)\simeq
\lim\limits_{\stackrel{\rightarrow}{B\in \mathsf{F}_K}}X(B)\simeq
\lim\limits_{\rightarrow} X|_{\mathsf{F}_K}.$$
By Example \ref{F_K} each $X(B)$ has a structure of a topological space as follows. A subset $V\subseteq X(B)$ is open iff 
for any pair $(R, x)\in \mathcal{M}_X$ there is a superideal $I_x$ in $R$ such that ${\bf g}_x(B)^{-1}(V)=D(I_x)(B)$.
In its turn, $U\subseteq \lim\limits_{\rightarrow} X|_{\mathsf{F}_K}$ is open iff $i_B^{-1}(U)$ is open in $X(B)$ for any $B\in \mathsf{F}_K$.
\end{proof}

If ${\bf f} : X \to Y$ is a morphism in $\mathcal{F}$, then by Lemma \ref{natiso}
$|{\bf f}|^e$ is identified with $\lim\limits_{\rightarrow} {\bf f}|_{\mathsf{F}_K}$.
 
For a subset $P\subseteq |X|^e$ define a subfunctor $X_P$ of $X$ by $x\in X_P(A)\subseteq X(A)$ iff  $X(\phi)(x)\in\bigcup_{t\in P}
(X|_{\mathsf{F}_K})_t(B)=i_B^{-1}(P)$ for all $B\in \mathsf{F}_K, \phi\in SSp \ A(B)$.   
As in \cite{dg}, I, \S 1, 4.10, we have $X_P|_{\mathsf{F}_K}=\bigsqcup_{t\in P}(X|_{\mathsf{F}_K})_t$ and $\lim\limits_{\rightarrow} X_P|_{\mathsf{F}_K}=P$.

For any morphism of $K$-functors ${\bf f} : X\to Y$ and a subset $P\subseteq |Y|^e$ set $Q=(|{\bf f}|^e)^{-1}(P)$. Then 
$X_Q={\bf f}^{-1}(Y_P)$ (just observe that ${\bf f}$ takes $(X|_{\mathsf{F}_K})_t$ to $(Y|_{\mathsf{F}_K})_{|{\bf f}|^e(t)}, t\in |X|^e)$.

\begin{example}\label{residuefield} 
(\cite{dg}, I, \S 1, 4.11)\
Let $X=(X^e, \mathcal{O}_X)$ be a geometric superspace. Set $G=X^{\diamond}|_{\mathsf{F}_K}$. We have 
$$G(A)=\mathrm{Mor}_{\mathcal{V}}(SSpec \ A, X)= \bigsqcup_{x \in X^e}\mathrm{Hom}_{\mathsf{F}_K}(F(x), A),$$ 
where $A\in \mathsf{F}_K,\ F(x)= \mathcal{O}_{X, x}/m_x$. In other words, $|X^{\diamond}|^e$ is identified with $X^e$ as a set
and for any $x\in X^e$ we have $G_x=h^{F(x)}$.
\end{example}

Let $P\subseteq X^e, X\in \mathcal{V}$. Consider $P=P^e$ as a topological space with the induced topology and denote 
the natural embedding $P^e\to X^e$ by $i^e$. 
Set $\tilde{P}=(P^e, \mathcal{O}_P)$, where $\mathcal{O}_P=(i^e)^{-1}\mathcal{O}_X$ is the inverse image of $\mathcal{O}_X$; see \cite{hart}, II, \S 1. $\tilde{P}$ is a geometric superspace and we have the natural morphism of superspaces 
$\tilde{i}=(i^e, i^*) : \tilde{P}\to X$ that induces  isomorphisms $\mathcal{O}_{X, t}\simeq \mathcal{O}_{P, t}, t\in P$. 
Observe that any morphism of superspaces $f : Y\to X$ such that $f^e(Y^e)\subseteq P=P^e$ (uniquely) factors through $\tilde{i} : \tilde{P}\to X$.
In particular, the natural map $\mathrm{Mor}_{\mathcal{V}}(Y, \tilde{P})\to \mathrm{Mor}_{\mathcal{V}}(Y, X)$ is injective. 

We define the subfunctor $X^{\diamond}_P\subseteq X^{\diamond}$ as above.
By Example \ref{residuefield}, $X^{\diamond}_P(A)=\{x\in \mathrm{Mor}_{\mathcal{V}}(SSpec \ A, X)| x(SSpec \ A)\subseteq P^e\}=\tilde{P}^{\diamond}(A), A\in \mathsf{SAlg}_K$.  
By Lemma \ref{diamond} (i) $X^{\diamond}_P$ is an open subfunctor, provided $P^e$ is an open subset. 

\begin{example}\label{opensubset}
(\cite{dg}, I, \S 1, 4.11)\
A subset $P\subseteq (SSpec \ A)^e =|SSp \ A|^e$ is open iff $(SSp \ A)_P$ is an open subfunctor. The part $``$if" follows by
$(SSp \ A)_P=(SSpec \ A)^{\diamond}_P$. If $(SSp \ A)_P=D(I)$ for a superideal $I$ in $A$, then $$D(I)(B)=
\bigsqcup_{p\in P} \mathrm{Hom}_{\mathsf{F}_K}(Q(A/p), B)$$ 
for any field $B$. Thus $P=U(I)$.  
\end{example}

\begin{rem}\label{commdiagram}  
Let $X\in \mathcal{F}$. Then $P\subseteq |X|^e$ is open iff for any $(R, x)\in \mathcal{M}_X$ the pre-image $(i^e_x)^{-1}(P)$ is open in $(SSpec \ R)^e$. 
In fact, refine the topology on $|X|^e$ as follows. A subset $Q\subseteq |X|^e$ is claimed to be open iff $(i^e_x)^{-1}(Q)$ is open in
$(SSpec \ R)^e$ for any $(R, x)\in \mathcal{M}_X$. The set $|X|^e$ equipped with this topology is denoted by $X^e$. We have the morphism
$Y=(Y^e , \mathcal{O}_Y)\to |X|$, where $\mathcal{O}_Y$ is a sheafification of the pre-sheaf $\mathcal{O}_{|X|}$ on $Y^e$.
Moreover, each $i_x$ can be extended to a morphism $i'_x : SSpec \ R\to Y$ so that the diagrams
$$\begin{array}{ccc}
Y & \to & |X| \\
i'_x \nwarrow &  & \nearrow \ i_x \\
 & SSpec \ R &
\end{array}
$$
and
$$
\begin{array}{ccc}
 & Y & \\
i'_x \ \nearrow & & \nwarrow \ i'_y  \\
SSpec \ R & \rightarrow & SSpec \ S
\end{array}
$$
are commutative. Thus $Y\simeq |X|$ as geometric superspaces. 
In particular, $Y^e=|X|^e$ as topological spaces.
\end{rem}

Given $X \in \mathcal{F}$, let
$Op(X),\ Op(|X|^e)$
denote the set of all open subfunctors of $X$, and the set of all open subset of $Op(|X|^e)$, respectively.

\begin{prop}\label{open-open}
(\cite{dg}, I, \S 1, Proposition 4.12)\
Let $X \in \mathcal{F}$. Then the map 
$$Op(|X|^e) \to Op(X)$$
given by $P \mapsto X_P$ is a bijection.
\end{prop}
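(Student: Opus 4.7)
The plan is to verify well-definedness, injectivity, and surjectivity of $P\mapsto X_P$ separately, relying on the identity $\lim\limits_{\rightarrow} X_P|_{\mathsf{F}_K}=P$ recorded just before the proposition and on Lemma~\ref{natiso}. For well-definedness, given open $P\subseteq |X|^e$ and a Yoneda-test morphism ${\bf g}_x:SSp\,R\to X$ coming from $x\in X(R)$, the formula $X_{(|{\bf f}|^e)^{-1}(P)}={\bf f}^{-1}(X_P)$ established earlier gives ${\bf g}_x^{-1}(X_P)=(SSp\,R)_{(i_x^e)^{-1}(P)}$. By Remark~\ref{commdiagram} the set $(i_x^e)^{-1}(P)$ is open in $(SSpec\,R)^e$, and Example~\ref{opensubset} then makes $(SSp\,R)_{(i_x^e)^{-1}(P)}$ an open subfunctor of $SSp\,R$, so $X_P\in Op(X)$. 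Injectivity is then immediate, since $P=\lim\limits_{\rightarrow} X_P|_{\mathsf{F}_K}$ recovers $P$ from $X_P$.

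For surjectivity, starting from an open subfunctor $U\subseteq X$, I would define $P:=\lim\limits_{\rightarrow} U|_{\mathsf{F}_K}$, viewed as a subset of $\lim\limits_{\rightarrow} X|_{\mathsf{F}_K}\simeq|X|^e$ via Lemma~\ref{limit}(2) and Lemma~\ref{natiso}. To verify $P$ is open I would invoke Remark~\ref{commdiagram}: for any $(R,x)\in\mathcal{M}_X$ the preimage ${\bf g}_x^{-1}(U)$ is open in $SSp\,R$, hence equals some $D(I)$, and an argument parallel to Example~\ref{F_K}, exploiting that $\mathsf{F}_K$ consists of purely even fields so that $\phi\in D(I)(B)\iff \ker\phi\in U(I)$, identifies $(i_x^e)^{-1}(P)$ with $U(I)$, which is open. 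Next, $U\subseteq X_P$ is immediate from the definition of $X_P$: if $x\in U(A)$ and $\phi\in SSp\,A(B)$ with $B\in\mathsf{F}_K$, then $X(\phi)(x)\in U(B)$ has image $i_B(X(\phi)(x))\in P$.

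The reverse inclusion $X_P\subseteq U$ is the main obstacle. Given $x\in X_P(A)$, write ${\bf g}_x^{-1}(U)=D(I)$; applying the hypothesis to the residue-field maps $A\to Q(A/p)$ at every prime superideal $p\subseteq A$ yields $|{\bf g}_x|^e((SSpec\,A)^e)\subseteq P$, and combining with the identification above this forces $U(I)=(SSpec\,A)^e$, i.e., $I$ is contained in no prime superideal of $A$. The key non-formal step is to convert this topological fact back into $I=A$: since every prime superideal of $A$ has the form $p_0\oplus A_1$ with $p_0$ prime in $A_0$, the even part $I_0$ lies in no prime of $A_0$, and the standard commutative argument yields $1\in I_0\subseteq I$. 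Therefore $D(I)=SSp\,A$, so ${\bf g}_x$ factors through $U$, giving $x\in U(A)$ and completing the surjectivity.
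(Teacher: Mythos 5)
Your proof is correct and follows essentially the same route as the paper: well-definedness and injectivity via Remark~\ref{commdiagram}, Example~\ref{opensubset} and the identity $\lim\limits_{\rightarrow} X_P|_{\mathsf{F}_K}=P$, and surjectivity by setting $P=\lim\limits_{\rightarrow}U|_{\mathsf{F}_K}$, using the saturation of $\bigsqcup_B U(B)$ to identify $(i_x^e)^{-1}(P)$ with $U(I)$, and reducing to the fact that a superideal $I$ contained in no prime superideal must equal $A$ (which you usefully spell out via $I_0=A_0$). The only cosmetic difference is that you verify openness of $P$ directly, whereas the paper's first paragraph establishes ``$P$ is open iff $X_P$ is open'' for an arbitrary subset $P$, so openness of $P$ comes for free once $X_P=U$ is proved.
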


\begin{proof}
By Remark \ref{commdiagram} $P\subseteq |X|^e$ is open iff $Q=(i^e_x)^{-1}(P)$ is open in $SSpec \ R$ for any $R\in \mathsf{SAlg}_K, x\in X(R)$.  
By Examples \ref{affinespace} and \ref{opensubset}, all $Q$ are open iff all $(SSp \ R)_Q={\bf g}_x^{-1}(X_P)$ are open iff $X_P$ is open. Since $\lim\limits_{\rightarrow} X_P|_{\mathsf{F}_K}=P$
the map $P\mapsto X_P$ is injective. 

If $Y \in Op(X)$, then the set $\bigsqcup_{B\in \mathsf{F}_K}Y(B)$ is saturated with respect 
to the equivalence relation defining $\lim\limits_{\rightarrow} X|_{\mathsf{F}_K}$ (by superization of \cite{jan}, Part I, 1.7(6)). Set $P=
\lim\limits_{\rightarrow} Y|_{\mathsf{F}_K}$.
It follows that $X_P(A)$ consists of all $x\in X(A)$ such that for any
$\phi : A\to B, B\in \mathsf{F}_K,$ $X(\phi)(x)\in Y(B)$. In other words, for the morphism ${\bf g}_x : SSp \ A\to X$ and for the open subfunctor
$D(I)={\bf f}_x^{-1}(Y)$ any prime superideal $p\in (SSpec \ A)^e$ does not contain $I$. It immediately infers that $I=A$ and $x\in Y(A)$, that is
$X_P=Y$. 
\end{proof}
\begin{lemma}\label{fullfaithfunc}
The functor $X\to X^{\diamond}$ induces a full and faithful functor from $\mathcal{SV}$ to $\mathcal{SF}$.
\end{lemma}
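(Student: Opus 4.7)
The plan has two parts: first, to check that $X^{\diamond}$ lies in $\mathcal{SF}$ whenever $X\in \mathcal{SV}$; second, to show that the induced map on morphism sets is a bijection. For the first part, I would start with an affine open covering $\{U_i\}_{i\in I}$ of $X$ with $U_i\simeq SSpec\ A_i$. Lemma \ref{diamond}(i)--(iii) then gives an open covering $\{U_i^{\diamond}\}$ of $X^{\diamond}$ with each $U_i^{\diamond}\simeq SSp\ A_i$ an affine superscheme. To finish one needs $X^{\diamond}$ to be a local $K$-functor: given a covering $\{SSp\ R_{f_i}\to SSp\ R\}_{1\le i\le n}$ in $T_{loc}$, compatible elements of $X^{\diamond}(R_{f_i})=\mathrm{Mor}_{\mathcal V}(SSpec\ R_{f_i},X)$ glue uniquely to a morphism $SSpec\ R\to X$ by the standard sheaf-theoretic gluing of morphisms of geometric superspaces along the open cover $\{(SSpec\ R)_{f_i}\}$ of $SSpec\ R$. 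Hence $X^{\diamond}$ is a sheaf on $T_{loc}$, and so $X^{\diamond}\in \mathcal{SF}$.

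For the second part, fix $Y\in \mathcal{SV}$; by the same argument $Y^{\diamond}$ is a local $K$-functor. I would first treat the affine case $X=SSpec\ A$, where Yoneda's Lemma together with Lemma \ref{canobijection} gives the identifications
\begin{equation*}
\mathrm{Mor}_{\mathcal F}(SSp\ A,Y^{\diamond})\simeq Y^{\diamond}(A)=\mathrm{Mor}_{\mathcal V}(SSpec\ A,Y),
\end{equation*}
and a direct unwinding shows this bijection is precisely $f\mapsto f^{\diamond}$. For a general $X$ with affine open cover $\{U_i\}$, faithfulness follows immediately: if $f^{\diamond}=g^{\diamond}$, then composing with each inclusion $U_i\hookrightarrow X$ and using the affine case gives $f|_{U_i}=g|_{U_i}$, hence $f=g$ by gluing in $\mathcal V$. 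For fullness, given $\phi:X^{\diamond}\to Y^{\diamond}$, each restriction $\phi|_{U_i^{\diamond}}$ comes from a unique morphism $f_i:U_i\to Y$ by the affine case. To verify $f_i$ and $f_j$ agree on $U_i\cap U_j$, I would choose an affine open cover $\{W_{ijk}\}$ of the supersubscheme $U_i\cap U_j$; both $f_i|_{W_{ijk}}$ and $f_j|_{W_{ijk}}$ induce $\phi|_{W_{ijk}^{\diamond}}$, so they coincide by the affine-case uniqueness, and consequently $f_i|_{U_i\cap U_j}=f_j|_{U_i\cap U_j}$ by gluing in $\mathcal V$. The $f_i$ then glue to $f:X\to Y$, and $f^{\diamond}=\phi$ holds because they agree on the open covering $\{U_i^{\diamond}\}$ of $X^{\diamond}$ and $Y^{\diamond}$ is a local $K$-functor, so Proposition \ref{local} applies.

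The main obstacle is the bookkeeping at the intersections $U_i\cap U_j$, which need not themselves be affine: this forces one to pass to a secondary affine open covering and invoke the affine bijection twice, once to produce each $f_i$ and again to compare $f_i$ with $f_j$ on every piece $W_{ijk}$. Everything else reduces to a routine assembly of Yoneda's Lemma, Lemma \ref{diamond}, the sheaf-theoretic gluing of superspace morphisms, and Proposition \ref{local} for local $K$-functors.
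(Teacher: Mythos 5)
Your proposal is correct and follows essentially the same route as the paper: establish $X^{\diamond}\in\mathcal{SF}$ via Lemma \ref{diamond} and locality, settle the affine case by Yoneda's Lemma together with Lemma \ref{canobijection}, and reduce the general case to it by comparing the exact gluing sequences for an affine open covering on both sides (the paper packages your restriction-and-reglue bookkeeping into one commutative diagram with exact rows). The paper handles the non-affine intersections $X_j\cap X_{j'}$ exactly as you do, by passing to a secondary affine covering, choosing the principal opens $U(Af)\simeq SSpec\ A_f$ whose pairwise intersections $U(Afg)$ are again affine.
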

\begin{proof} We have the diagram
\begin{equation*}
\begin{array}{cccc}
\mathrm{Mor}_{\mathcal{SV}}(X, Y) & \to & \prod_{j \in J} \mathrm{Mor}_{\mathcal{SV}}(X_j, Y)
\begin{array}{c} \to \\ \to \end{array} & \prod_{j, j' \in J} \mathrm{Mor}_{\mathcal{SV}}(X_j\cap X_{j'}, Y) \\
\downarrow & & \downarrow & \downarrow \\ \mathrm{Mor}_{\mathcal{SF}}(X^{\diamond}, Y^{\diamond}) & \to &
\prod_{j \in J} \mathrm{Mor}_{\mathcal{SF}}(X_j^{\diamond},
Y^{\diamond})
\begin{array}{c} \to \\ \to \end{array} & \prod_{j, j' \in J} \mathrm{Mor}_{\mathcal{
SF}}(X_j^{\diamond}\cap X_{j'}^{\diamond}, Y^{\diamond})
\end{array},
\end{equation*}
where $\{X_j\}_{j\in J}$ is an open affine covering of 
$X$. Since horizontal lines are exact sequences, it remains to consider
the case $X\simeq U(I)\subseteq SSpec \ A$. In its turn,
$U(I)$ has an affine covering $\{U(Af)\simeq SSpec \ A_f\}_{f\in I_0}$ and
$U(Af)\bigcap U(Ag)=U(Afg)$. In other words, the case $X=SSpec \ A$ is only needed and we see that
$\mathrm{Mor}_{\mathcal{SV}}(X, Y)=Y^{\diamond}(A)=\mathrm{Mor}_{\mathcal{SF}}(X^{\diamond},
Y^{\diamond})$. Proposition \ref{local} and Lemma \ref{diamond} imply $X^{\diamond}\in \mathcal{SF}$ whenever
$X\in \mathcal{SV}$. 
\end{proof}

The $m|n$-{\it affine superspace} ${\bf A}^{m|n}$ is defined by 
${\bf A}^{m|n}(B)=B_0^m\bigoplus B_1^n$ for $B\in\mathsf{SAlg}_K$; cf. \cite{z}, p.719. 
For any $K$-functor $X$, we denote $\mathrm{Mor}_{\mathcal F}(X, {\bf A}^{1|1})$ by ${\mathcal O}(X)$,  
following the notation of \cite{dg}; this is denoted by $K[X]$ in \cite{jan}. 
${\mathcal O}(X)$ has an obvious structure 
of a superalgebra; see \cite{dg}, I, \S 1, 3.3, or \cite{jan}, Part I, 1.3.
\begin{rem}\label{affinesheaf}
Let $X=SSp \ R$ and $D(Rf)$ is an open subfunctor of $X$. Then ${\mathcal O}(D(Rf))\simeq R_f\simeq {\mathcal O}_{SSpec \ R}(U(Rf))$. 
\end{rem}
\begin{prop}\label{sheaf}
Let $X\in {\mathcal F}$. The sheaf ${\mathcal O}_{|X|}$ is naturally isomorphic to $P\mapsto {\mathcal O}(X_P)$, where $P$ runs over open subsets of
$|X|^e$.
\end{prop}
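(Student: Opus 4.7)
The plan is to combine the adjunction of Proposition \ref{adjoint} with Lemma \ref{canobijection}. Let $A = K[s,\xi]$ with $|s|=0,\ |\xi|=1$, so that $SSp\ A = \mathbf{A}^{1|1}$ by definition and $(SSpec\ A)^{\diamond} = SSp\ A = \mathbf{A}^{1|1}$ by Lemma \ref{diamond}(3). For any $Y \in \mathcal{F}$, Proposition \ref{adjoint} yields a natural bijection
\begin{equation*}
\mathrm{Mor}_{\mathcal{V}}(|Y|, SSpec\ A) \simeq \mathrm{Mor}_{\mathcal{F}}(Y, \mathbf{A}^{1|1}) = \mathcal{O}(Y),
\end{equation*}
while Lemma \ref{canobijection} identifies the left-hand side with $\mathrm{Hom}_{\mathsf{SAlg}_K}(A, \mathcal{O}_{|Y|}(|Y|^e))$, which is in turn canonically identified with $\mathcal{O}_{|Y|}(|Y|^e)$ (as a set). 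This gives the isomorphism $\mathcal{O}(Y) \simeq \mathcal{O}_{|Y|}(|Y|^e)$ on global sections of $|Y|$.

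To obtain the statement on every open $P \subseteq |X|^e$, the crucial step is to identify $|X_P|$, the geometric realization of the open subfunctor $X_P$ attached to $P$ by Proposition \ref{open-open}, with the open subspace $(P, \mathcal{O}_{|X|}|_P)$ of $|X|$ in $\mathcal{V}$. Granted this identification, specializing the preceding argument to $Y = X_P$ yields
\begin{equation*}
\mathcal{O}_{|X|}(P) = \mathcal{O}_{|X_P|}(|X_P|^e) \simeq \mathcal{O}(X_P),
\end{equation*}
and naturality in $P$ — compatibility with the restriction maps induced by $X_{P'} \subseteq X_P$ for $P' \subseteq P$ — is automatic from the naturality of Proposition \ref{adjoint} and Lemma \ref{canobijection}.

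The main obstacle is therefore the identification $|X_P| \simeq (P, \mathcal{O}_{|X|}|_P)$. I would proceed in two stages. At the topological level, Lemma \ref{natiso} combined with the identity $X_P|_{\mathsf{F}_K} = \bigsqcup_{t \in P}(X|_{\mathsf{F}_K})_t$ (recorded right after Lemma \ref{natiso}) gives $|X_P|^e = P$ as a set, and Remark \ref{commdiagram} shows that the two topologies on $P$ coincide. For the structure sheaf, one reduces to the affine case via the cone $\{i_x : SSpec\ R \to |X|\}_{(R, x) \in \mathcal{M}_X}$. By the paragraph preceding Example \ref{residuefield}, ${\bf g}_x^{-1}(X_P) = (SSp\ R)_{Q_x}$ with $Q_x = (i_x^e)^{-1}(P)$; writing $(SSp\ R)_{Q_x} = D(I_x)$, Examples \ref{affinespace} and \ref{opensubset} together with Remark \ref{affinesheaf} identify $|D(I_x)|$ with the open subspace $(Q_x, \mathcal{O}_{SSpec\ R}|_{Q_x})$ of $SSpec\ R$. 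The universal property of $|X|$ as a direct limit in $\mathcal{V}$ (which exists by Lemma \ref{limitexist}) then glues these local identifications into the required global isomorphism; the technical heart is the verification of compatibility on overlaps, equivalently that $\mathcal{O}_{|X|}|_P$ is determined by the compatible family of restricted structure sheaves pulled back from objects of $\mathcal{M}_{X_P}$.
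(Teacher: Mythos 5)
Your proposal is correct in substance but organizes the argument differently from the paper, and the two routes are worth contrasting. Your opening step is a genuinely nicer way to handle global sections: since ${\bf A}^{1|1}=SSp\ K[s,\xi]$ with $K[s,\xi]$ free on one even and one odd generator, the chain
$\mathcal{O}(Y)=\mathrm{Mor}_{\mathcal{F}}(Y,(SSpec\ K[s,\xi])^{\diamond})\simeq \mathrm{Mor}_{\mathcal{V}}(|Y|,SSpec\ K[s,\xi])\simeq \mathrm{Hom}_{\mathsf{SAlg}_K}(K[s,\xi],\mathcal{O}_{|Y|}(|Y|^e))\simeq \mathcal{O}_{|Y|}(|Y|^e)$
(Proposition \ref{adjoint} plus Lemma \ref{canobijection}) packages in one line what the paper obtains by unwinding projective limits, and naturality makes it a superalgebra isomorphism, not just a bijection. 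The paper instead never mentions $|X_P|$: it shows $P\mapsto\mathcal{O}(X_P)$ is a sheaf via Proposition \ref{local}, settles the affine case by Remark \ref{affinesheaf}, and then proves the explicit colimit presentation $X_P\simeq\lim\limits_{\stackrel{\rightarrow}{R,x}}(SSp\ R)_{V_{R,x}}$ \emph{indexed over all of} $\mathcal{M}_X$ using Lemmas \ref{restriction} and \ref{fiberprodset}, after which $\mathcal{O}(X_P)$ becomes the projective limit $\lim\limits_{\leftarrow}\mathcal{O}_{SSpec\ R}(V_{R,x})$ that defines $\mathcal{O}_{|X|}(P)$. The one place where your sketch is thinner than it should be is exactly the step you call the technical heart: $|X_P|$ is by definition a colimit over $\mathcal{M}_{X_P}$, whereas $(P,\mathcal{O}_{|X|}|_P)$ is assembled from the restrictions of \emph{all} points $(R,x)\in\mathcal{M}_X$, and these are different diagrams. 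To pass between them you must cover each $(SSp\ R)_{V_{R,x}}$ by principal opens $SSp\ R_f$ whose localized points $x_f$ do lie in $\mathcal{M}_{X_P}$, and invoke the sheaf property of $\mathcal{O}_{SSpec\ R}$ to see that the two projective limits agree; this re-indexing is precisely what the paper's colimit computation delivers, so it cannot be dismissed as routine compatibility on overlaps. With that cofinality argument written out, your proof goes through.
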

\begin{proof}
If $P$ is a union of open subsets $P_i, i\in I$, then for any $B\in \mathsf{F}_K $ $X_P(B)=i_B^{-1}(P)=\bigcup_{i\in I}i_B^{-1}(P_i)=
\bigcup_{i\in I} X_{P_i}(B)$. Since ${\bf A}^{1|1}$ is an affine superscheme, Proposition \ref{local} implies that $P\mapsto {\mathcal O}(X_P)$
is a sheaf. By Remark \ref{affinesheaf} the statement holds for $X=SSp \ R$.

For each pair $(R, x)\in {\mathcal M}_X$ denote $i_x^{-1}(P)\subseteq (SSpec \ R)^e$ by $V_{R, x}$. We omit
the subindex $(R, x)$ if it does not lead to any confusion. Observe that $(SSp \ R)_V={\bf g}_x^{-1}(X_P)$; see the final notice in Example \ref{affinespace} and the notice before Example \ref{residuefield}.
 
By Lemmas \ref{restriction} and \ref{fiberprodset} we have
$$X_P(A)\simeq X_P(A)\times_{X(A)} X(A)=X_P(A)\times_{X(A)}(\lim\limits_{\rightarrow}\delta_X)(A)$$
$$\simeq
X_P(A)\times_{X(A)}\lim\limits_{\stackrel{\rightarrow}{R, x}} SSp \ R(A)\simeq\lim\limits_{\stackrel{\rightarrow}{R, x}} X_P(A)\times_{X(A)}
SSp \ R(A)$$$$\simeq\lim\limits_{\stackrel{\rightarrow}{R, x}}(SSp \ R)_V(A)\simeq (\lim\limits_{\stackrel{\rightarrow}{R, x}}(SSp \ R)_V)(A)$$
for any $A\in\mathsf{SAlg}_K$. Thus $X_P\simeq \lim\limits_{\stackrel{\rightarrow}{R, x}}(SSp \ R)_V$.
It remains to mimic the proof of \cite{dg}, I, \S 1, Proposition 4.14.  
\end{proof}
Proposition \ref{sheaf} infers that the functor $X\mapsto |X|$ induces the functor $\mathcal{SF}\to \mathcal{SV}$ which takes open subfunctors to
open subspaces.
\begin{lemma}\label{fullfaithspace}
The above functor $\mathcal{SF}\to \mathcal{SV}$ is full and faithful.
\end{lemma}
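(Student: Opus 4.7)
The plan is to convert the problem, via the adjunction of Proposition \ref{adjoint}, into showing that the unit of the adjunction is an isomorphism on superschemes, and then to reduce to the affine case by the sheaf (local functor) property. For $X, Y \in \mathcal{SF}$, the canonical map $\mathrm{Mor}_{\mathcal{F}}(X, Y) \to \mathrm{Mor}_{\mathcal{V}}(|X|, |Y|)$ factors through the adjunction as post-composition with the unit
$$\eta_Y : Y \to |Y|^{\diamond}, \qquad \eta_Y(R)(x) = i_x \in \mathrm{Mor}_{\mathcal{V}}(SSpec \ R, |Y|),$$
where $i_x : SSpec \ R \to |Y|$ is the canonical morphism attached to $(R, x) \in \mathcal{M}_Y$. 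By the Yoneda Lemma \ref{yoneda}, the asserted bijection holds for all $X \in \mathcal{SF}$ if and only if $\eta_Y$ is an isomorphism in $\mathcal{F}$, so the task reduces to proving this for each $Y \in \mathcal{SF}$.

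The affine case is immediate: for $Y = SSp \ A$ one has $|Y| \simeq SSpec \ A$ (Example \ref{affinespace}) and $|Y|^{\diamond} \simeq SSp \ A$ (Lemma \ref{diamond}(3)); unwinding the definitions shows that $\eta_{SSp \ A}$ is the identity. For general $Y \in \mathcal{SF}$, fix an open affine covering $\{Y_j\}_{j \in J}$. By the remark following Proposition \ref{sheaf}, the functor $|-|$ sends $\mathcal{SF}$ into $\mathcal{SV}$ and sends open subfunctors to open subspaces, so $\{|Y_j|\}$ is an open covering of $|Y|$ in $\mathcal{SV}$; by Lemma \ref{diamond}(1)--(2), $\{|Y_j|^{\diamond}\}$ is then an open covering of $|Y|^{\diamond}$ in $\mathcal{SF}$. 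Directly from the construction of $i_x$, the unit $\eta_Y$ carries $Y_j(R)$ into $|Y_j|^{\diamond}(R)$ and restricts there to $\eta_{Y_j}$.

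To finish, observe that both $Y$ and $|Y|^{\diamond}$ are local $K$-functors: the former by Remark \ref{sheaf is local}, and the latter because $|Y|^{\diamond} \in \mathcal{SF}$ by the argument at the end of the proof of Lemma \ref{fullfaithfunc}. Given $R \in \mathsf{SAlg}_K$ and $\alpha \in |Y|^{\diamond}(R)$, the subfunctors $\alpha^{-1}(|Y_j|^{\diamond})$ form an open covering of $SSp \ R$, which I refine (using quasi-compactness of $SSp \ R$) to a finite principal open covering $\{D(R f_k)\}_{1 \le k \le n}$ with each $D(R f_k) \subseteq \alpha^{-1}(|Y_{j(k)}|^{\diamond})$. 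The affine case applied to each $Y_{j(k)}$ produces a compatible family of elements in $Y_{j(k)}(R_{f_k}) \subseteq Y(R_{f_k})$, which the sheaf property of $Y$ (Proposition \ref{local}) glues to a unique element of $Y(R)$ mapping to $\alpha$. Injectivity of $\eta_Y(R)$ is handled by the same local-to-global argument together with injectivity in the affine case.

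The main obstacle is the structural bridge between the functorial and the geometric sides: one must invoke Proposition \ref{sheaf} to ensure that open subfunctors of $Y$ correspond to open subspaces of $|Y|$, and invoke the proof of Lemma \ref{fullfaithfunc} to ensure that $|Y|^{\diamond}$ is again a local $K$-functor. With these two ingredients in place, the sheaf-gluing step is routine.
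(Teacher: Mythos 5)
Your argument is correct, but it is organized differently from the paper's. The paper proves full faithfulness directly on morphism sets by a double reduction: it first covers the source $X$ by open affines and uses the exact equalizer rows (locality of $Y$ and of the sheaf $U\mapsto\mathcal{O}_{|X|}(U)$-side) to reduce to $X=SSp\ R$; it then pulls back an open affine covering $\{Y_i\}$ of the target along ${\bf f}$, resp.\ along $f^e$, to reduce to the case where $Y$ is affine as well, where Yoneda and Lemma \ref{canobijection} finish the job. You instead never reduce on $X$: via Proposition \ref{adjoint} you convert the whole statement into the assertion that the unit $\eta_Y : Y \to |Y|^{\diamond}$ is an isomorphism for every $Y\in\mathcal{SF}$, check it on affines, and then glue over an open affine covering of $Y$ using that both $Y$ and $|Y|^{\diamond}$ are local functors and that $\eta_Y^{-1}(|Y_j|^{\diamond})=Y_j$ (the latter following from Proposition \ref{open-open}). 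The two proofs consume the same ingredients --- Lemma \ref{canobijection} for the affine case, Proposition \ref{sheaf} and Proposition \ref{open-open} for the correspondence of open subfunctors with open subspaces, and locality for gluing --- but yours buys slightly more: it establishes outright one of the two natural isomorphisms ($Y\simeq|Y|^{\diamond}$) needed for Comparison Theorem \ref{comparison}, so that the appeal to Lemma \ref{equivofcategories} becomes partly redundant; the paper's version stays closer to the bare full-faithfulness claim and avoids having to track the compatibility of the unit with open subfunctors. Two small points of bookkeeping in your write-up: the locality of $Y$ is part of the definition of a superscheme (Definition \ref{def-superscheme}) rather than a consequence of Remark \ref{sheaf is local}, and in the gluing step the agreement of the local sections on the overlaps $D(Rf_kf_l)$ should be obtained from the injectivity statement (proved first, for all test superalgebras) rather than from the affine case alone, since $Y_{j(k)}\cap Y_{j(l)}$ need not be affine; both are easily repaired and do not affect the correctness of the argument.
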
 
\begin{proof}
Let $X, Y\in \mathcal{SF}$. If $\{X_i\}_{i\in I}$ is an open affine covering of $X$, then $|X_i|$ form an open affine covering of $X$.
As in Lemma \ref{fullfaithfunc}, we have a commutative diagram with exact horizontal lines:
\begin{equation*}
\begin{array}{cccc}
\mathrm{Mor}_{\mathcal{SF}}(X, Y) & \to & \prod_{i \in I}\mathrm{Mor}_{\mathcal{SF}}(X_i, Y)
\begin{array}{c} \to \\
 \to \end{array} & \prod_{i, i' \in I} \mathrm{Mor}_{\mathcal{SF}}(X_i \cap X_{i'}, Y) \\
\downarrow & & \downarrow & \downarrow \\ \mathrm{Mor}_{\mathcal{SV}}(|X|, |Y|) & \to &
\prod_{i\in I} \mathrm{Mor}_{\mathcal{SV}}(|X_i|,
|Y|) \begin{array}{c} \to \\
\to \end{array} & \prod_{i, i'\in I}\mathrm{Mor}_{\mathcal{
SV}}(|X_i|\cap |X_{i'}|, |Y|)
\end{array}
\end{equation*} 
that shows that one has to consider the case $X=SSp \ R$ only.
Suppose that ${\bf f}, {\bf g}\in\mathrm{Mor}_{\mathcal{SF}}(SSp \ R, Y)$ satisfy $|{\bf f}|=|{\bf g}|$. 
Fix an open affine covering 
$\{Y_i\}_{i\in I}$ of $Y$. Since $(|{\bf f}|^e)^{-1}(|Y_i|)=(|{\bf g}|^e)^{-1}(|Y_i|)$, it infers ${\bf f}^{-1}(Y_i)={\bf g}^{-1}(Y_i)$.
In particular, one can assume that $Y=SSp \ S$. Thus Yoneda's Lemma and Lemma \ref{canobijection} imply ${\bf f}={\bf g}$.
Finally, consider $f\in\mathrm{Mor}_{\mathcal{SV}}(SSpec \ R, |Y|)$.
The above diagram applied to the open affine covering $\{(SSp \ R)_{Q_i}\}_{i\in I}, Q_i=(f^e)^{-1}(|Y_i|)$, reduces the general case to
$Y=SSp \ S$.
\end{proof}
The following theorem, which we call the Comparison Theorem, is a superization of \emph{th$\Acute{e}$or$\Grave{e}$me de comparaison}, \cite{dg}, I, \S 1, 4.4. 
\begin{theorem}\label{comparison}
The functors $X\mapsto |X|$ and $Y\mapsto Y^{\diamond}$ define equivalences of the categories
$\mathcal{SF}$ and $\mathcal{SV}$ which are quasi-inverses of each other. 
\end{theorem}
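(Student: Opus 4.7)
The plan is to assemble Theorem \ref{comparison} from the preparatory results already in hand, with Lemma \ref{equivofcategories} as the final assembly step. Explicitly, I will verify (i) that the two functors restrict to functors between $\mathcal{SF}$ and $\mathcal{SV}$, (ii) that the adjunction of Proposition \ref{adjoint} restricts to these subcategories, and (iii) that both restricted functors are full and faithful, so that Lemma \ref{equivofcategories} applies directly.

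For (i), the restriction $Y \mapsto Y^{\diamond}\colon \mathcal{SV}\to \mathcal{SF}$ is in effect already checked in the course of Lemma \ref{fullfaithfunc}: given $Y\in\mathcal{SV}$ with open affine covering $\{Y_j\}_{j\in J}$, $Y_j \simeq SSpec\ A_j$, Lemma \ref{diamond} identifies each $Y_j^{\diamond}$ with $SSp\ A_j$ and shows the $Y_j^{\diamond}$ form an open covering of $Y^{\diamond}$; Proposition \ref{local} together with Lemma \ref{covering} then upgrades $Y^{\diamond}$ to a superscheme. Conversely, for $X\in \mathcal{SF}$ with open affine cover $\{X_i\}_{i\in I}$, $X_i\simeq SSp\ A_i$, Proposition \ref{open-open} and Proposition \ref{sheaf} identify each $|X_i|$ with an open subspace of $|X|$, and these cover $|X|^e$; by Example \ref{affinespace} we have $|X_i|\simeq SSpec\ A_i$, so $|X|\in \mathcal{SV}$. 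This is precisely the remark recorded immediately after Proposition \ref{sheaf}.

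For (ii), Proposition \ref{adjoint} supplies the adjunction $|\cdot|\dashv (-)^{\diamond}$ at the level of $\mathcal{F}$ and $\mathcal{V}$. Since the inclusions $\mathcal{SF}\hookrightarrow \mathcal{F}$ and $\mathcal{SV}\hookrightarrow\mathcal{V}$ are full and both functors land in the correct subcategory by (i), the adjunction restricts verbatim to an adjunction between $\mathcal{SF}$ and $\mathcal{SV}$. For (iii), Lemma \ref{fullfaithfunc} states that $(-)^{\diamond}\colon \mathcal{SV}\to\mathcal{SF}$ is full and faithful, and Lemma \ref{fullfaithspace} states the same for $|\cdot|\colon \mathcal{SF}\to\mathcal{SV}$. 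Applying Lemma \ref{equivofcategories} to this restricted adjunction then yields at once that $|\cdot|$ and $(-)^{\diamond}$ are quasi-inverse equivalences between $\mathcal{SF}$ and $\mathcal{SV}$.

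The substantive work has already been absorbed into the preparatory lemmas; the hardest piece was the full faithfulness of $|\cdot|\colon\mathcal{SF}\to\mathcal{SV}$ in Lemma \ref{fullfaithspace}, where a covering argument and Yoneda's Lemma reduce the problem to the affine case. The only thing left for the present theorem is bookkeeping, so I expect no real obstacle beyond confirming that each of steps (i)--(iii) is already available in a form usable at the superscheme level.
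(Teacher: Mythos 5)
Your proposal is correct and follows essentially the same route as the paper, whose proof is exactly the combination of Lemma \ref{equivofcategories}, Proposition \ref{adjoint}, Lemma \ref{fullfaithfunc} and Lemma \ref{fullfaithspace}; your steps (i) and (ii) merely make explicit the restriction facts the paper records in the last line of the proof of Lemma \ref{fullfaithfunc} and in the remark following Proposition \ref{sheaf}.
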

\begin{proof}
Combine Lemma \ref{equivofcategories} and Proposition \ref{adjoint} with Lemmas \ref{fullfaithfunc} and \ref{fullfaithspace}.
\end{proof}

\begin{prop}\label{superschemeissheaf} Any superscheme is a
dur $K$-sheaf (and $K$-sheaf as well).
\end{prop}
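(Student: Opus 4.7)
The plan is to reduce, via the containments $T_{loc} \subseteq T_{fppf} \subseteq T_{ff}$, to proving the stronger statement that every superscheme is a dur $K$-sheaf; the $K$-sheaf assertion is then automatic since every cover in $T_{fppf}$ is a cover in $T_{ff}$. I would then argue in two steps: first the affine case by faithfully flat descent, then the general case by gluing along an affine open covering using Proposition \ref{local}.

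For the affine case, let $X = SSp\,A$ and let $\{SSp\,R_i \to SSp\,R\}_{1\le i\le n}$ be a faithfully flat covering. Setting $R' = R_1 \times \cdots \times R_n$, the sheaf condition is equivalent to exactness of
\begin{equation*}
\mathrm{Hom}_{\mathsf{SAlg}_K}(A, R) \to \mathrm{Hom}_{\mathsf{SAlg}_K}(A, R') \rightrightarrows \mathrm{Hom}_{\mathsf{SAlg}_K}(A, R' \otimes_R R'),
\end{equation*}
which follows upon applying $\mathrm{Hom}_{\mathsf{SAlg}_K}(A, -)$ to the equalizer $R \to R' \rightrightarrows R' \otimes_R R'$. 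The latter is super faithfully flat descent for the $R$-supermodule $R$, which holds since $R'$ is a faithfully flat $R$-supermodule and hence induces a faithfully exact tensor functor by Lemma \ref{faithfulflat}.

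For a general superscheme $X$ with an open affine covering $\{U_\alpha\}$, take $R \to R'$ as above and consider an element $x' \in X(R')$ whose two images in $X(R' \otimes_R R')$ agree. Viewing $x'$ as a morphism $SSp\,R' \to X$, the pullbacks $V_\alpha = {x'}^{-1}(U_\alpha) = D(J_\alpha)$ form an open covering of $SSp\,R'$ whose two pullbacks to $SSp(R' \otimes_R R')$ coincide. By faithfully flat descent for superideals — again a direct consequence of Lemma \ref{faithfulflat} applied to the sub-supermodules $J_\alpha \subseteq R'$ — each $V_\alpha$ descends to an open subfunctor $W_\alpha$ of $SSp\,R$, and the $W_\alpha$ form an open covering. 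On each piece, $x'|_{V_\alpha}$ factors through the affine $U_\alpha$ with matching descent data, so by the affine case it descends uniquely to a morphism $W_\alpha \to U_\alpha \hookrightarrow X$; the resulting morphisms agree on overlaps $W_\alpha \cap W_\beta$ by the uniqueness half of the affine case applied to an affine refinement of $U_\alpha \cap U_\beta$. Since $X$ is a local functor, Proposition \ref{local} glues them into the required element of $X(R)$. Injectivity of $X(R) \to X(R')$ is obtained analogously, and more simply, by pulling back an affine covering of $X$ along the two candidate morphisms and invoking the affine case on each piece.

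The main technical obstacle is the descent of open subfunctors of an affine superscheme along a faithfully flat super base change; once it is granted, the rest is a formal gluing argument paralleling the classical treatment in \cite{dg}. This obstacle reduces to descent of superideals under $R \to R'$, which is handled by Lemma \ref{faithfulflat} by passing to the underlying $R$-supermodule structure, with no further input needed.
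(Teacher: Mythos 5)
Your route is genuinely different from the paper's: you argue entirely inside $\mathcal{F}$ by faithfully flat descent (affine case) followed by gluing over a descended open cover via Proposition \ref{local}, whereas the paper passes through the Comparison Theorem, writes $X\simeq Y^{\diamond}$ with $Y\in\mathcal{SV}$, and verifies the sheaf axioms by showing that $SSpec\ B\to SSpec\ A$ is surjective and is the coequalizer of $SSpec\ B\otimes_A B \rightrightarrows SSpec\ B$ in $\mathcal{SV}$. Your affine case is fine: $R\to R'\rightrightarrows R'\otimes_R R'$ is an equalizer by Lemma \ref{faithfulflat}, and $\mathrm{Hom}_{\mathsf{SAlg}_K}(A,-)$ preserves it.

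The gap is in the step where the cover descends. You assert that each $V_\alpha=(x')^{-1}(U_\alpha)=D(J_\alpha)$ descends to an open subfunctor of $SSp\ R$ by ``faithfully flat descent for superideals,'' reducing to sub-supermodule descent for $J_\alpha\subseteq R'$. But submodule descent requires the two extensions of $J_\alpha$ to $R'\otimes_R R'$ to coincide \emph{as sub-supermodules} of $R'\otimes_R R'$, and your hypothesis only gives that the two pullback \emph{open subfunctors} coincide. Since $D(J)$ depends only on the radical of $J$ (distinct superideals define the same open subfunctor), this says only that the two extended ideals have equal radicals, and no canonical choice of $J_\alpha$ repairs this: replacing $J_\alpha$ by its radical does not produce a module-level descent datum, because forming radicals does not commute with the base changes $R'\rightrightarrows R'\otimes_R R'$ (already for inseparable field extensions $R'\otimes_R R'$ is non-reduced). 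What is actually needed is the topological statement that for $R\to R'$ faithfully flat the map $(SSpec\ R')^e\to(SSpec\ R)^e$ is a surjective submersion, so that an open subset of $(SSpec\ R')^e$ whose two preimages in $(SSpec\ R'\otimes_R R')^e$ agree is the preimage of an open subset of $(SSpec\ R)^e$; this is the super analogue of the classical submersion theorem for faithfully flat quasi-compact morphisms (EGA IV, 2.3.12) and rests on a going-down plus constructibility argument, not on Lemma \ref{faithfulflat}. Packaging exactly this topological input is what the paper's detour through $\mathcal{SV}$ accomplishes. Once you supply the submersion lemma, your gluing goes through; the same surjectivity on spectra is also needed to make the injectivity step precise, since the two candidate morphisms $SSp\ R\to X$ a priori pull $\{U_\alpha\}$ back to two different open covers, and one must invoke Lemma \ref{open} to identify them before applying the affine case piecewise.
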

\begin{proof} Let $X\in \mathcal{SF}$. By Comparison Theorem, there is $Y\in \mathcal{SV}$ such that
$X\simeq Y^{\diamond}$. It is enough to check that $Y^{\diamond}$ satisfies the following conditions
(cf. \cite{dg}, p.285, or \cite{z}, p.721). 

1. For a finite family of superalgebras $\{A_i\}_{i\in I}$ one has $Y^{\diamond}(\prod_{i\in I}A_i)\simeq\prod_{i\in I}Y^{\diamond}(A_i)$. 

2. If $B$ is an $A$-superalgebra that is a faithfully flat $A$-module, then the diagram
$$Y^{\diamond}(A)\to Y^{\diamond}(B) \begin{array}{c} \to \\
 \to\end{array} Y^{\diamond}(B\otimes_A B)$$   
is exact. 

Since $SSpec \ \prod_{i\in I} A_i$ is isomorphic to a direct sum of superschemes $SSpec \ A_i$, the first condition holds. The second condition holds whenever $SSpec \ B\to SSpec \ A$ is a surjective morphism and 
it is a cokernel of $SSpec \ B\otimes_A B \begin{array}{c} \to \\ \to\end{array} SSpec \ B$ (in the category $\mathcal{SV}$).
If $p\in   
(SSpec \ A)^e$, then $Bp\bigcap A=p$ (cf. \cite{bur}, I, \S 3, Proposition 9) and the multiplicative set $S=A\setminus p$ does not meet 
$Bp$. There is a $q\in (SSpec \ B)^e$ such that $q\bigcap S=\emptyset$ and therefore, $q\bigcap A=p$. In other words,
$(SSpec \ \iota^B_A)^e$ is surjective. Since for any $p\in |SSpec \ A|$ the morphism of stalks $(SSpec \ \iota^B_A)_p$
is injective, $SSpec \ \iota^B_A$ is surjective. Finally, $SSp \ B\otimes_A B\simeq SSp \ B\times_{SSp \ A} SSp \ B$ in the category $\mathcal{F}$. Comparison Theorem infers that $SSpec \ B\otimes_A B\simeq SSpec \ B\times_{SSpec \ A} SSpec \ B$ in the category $\mathcal{SV}$.  
\end{proof}

\section{Supergrassmannian}\label{Grassmannian}

Let $V$ be a supervector space of {\it superdimension} $m|n$, that is $\dim V_0=m, \dim V_1=n$.
Denote $m|n$ by $s\dim V$.
 
A general linear supergroup $GL(V)$, that is denoted also by $GL(m|n)$,
is a group $K$-functor such
that for any $A\in \mathsf{SAlg}_K$ the group $GL(V)(A)$ consists of all
even and $A$-linear automorphisms of $V\otimes A$.
The $A$-supermodule $V\otimes A$ is a {\it free $A$-supermodule}
$A^{m|n}$ of {\it superrank} $m|n$. The group
$GL(V)(A)$ acts freely and transitively on the bases of $A^{m|n}$ as a free supermodule.

A projective object in the category of (left or right) $A$-supermodules is called a {\it projective $A$-supermodule}.
By \cite{amas}, Lemma 5.1, an $A$-supermodule $P$ is projective iff it is projective as an $A$-module.

If $A$ is a local superalgebra, and if $P$ is a finitely generated projective $A$-supermodule, then
$P$ is free. Indeed, let $\mathfrak{m}$ be the unique maximal superideal of $A$. Notice that 
$\mathfrak{m}= rad \, A$ is the Jacobson radical
of $A$; see the second paragraph of Section 4. By choosing homogeneous elements
in $P$ which project onto homogeneous basis elements in the supervector space $P/\mathfrak{m}P$    
over the field $A/\mathfrak{m}$, 
we define such a morphism $A^{s|t}\to P$ of supermodules that induces an isomorphism
$A^{s|t}/\mathfrak{m}A^{s|t}\to P/\mathfrak{m}P$, where $s|t =  s\dim \, P/\mathfrak{m}P$.
Nakayama's Lemma proves that this morphism 
is an epimorphism, and is indeed an isomorphism since it splits by the projectivity of $P$; see
\cite{kash}, Theorem 9.2.1(d).
 
We say that a projective $A$-supermodule $P$ has a superrank $m|n$, 
whenever $P$ is finitely generated and for any $p\in (SSpec \ A)^e$ its localization
$P_p$ is a free $A_p$-supermodule of superrank $m|n$.

Define $nil\, A = \sqrt{0} \oplus A_1$, the nilradical of $A$; this equals the intersection $\bigcap\, p$ of all $p \in (SSpec\, A)^e$.
\begin{lemma}\label{ranklm}
A finitely generated $A$-supermodule $P$ is projective of superrank $m|n$ iff $P/(nil \, A)P$ is a projective $A/nil \, A$-supermodule of
superrank $m|n$.
\end{lemma}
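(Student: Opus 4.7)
For the $(\Rightarrow)$ direction, base change handles everything. If $P$ is projective of superrank $m|n$, then $P/(nil\,A)P \cong P \otimes_A A/nil\,A$ is projective over $A/nil\,A$ since base change preserves projectivity. Prime superideals of $A/nil\,A$ correspond bijectively to prime superideals of $A$ (all of which contain $nil\,A$), and localization commutes with tensor products, so $(P/(nil\,A)P)_{p'} \cong P_p/(nil\,A)P_p$ is the reduction of the free supermodule $P_p \cong A_p^{m|n}$, hence free of superrank $m|n$ over $A_p/(nil\,A)A_p$.

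For the converse, it will suffice by the characterization recalled immediately before the lemma to show that $P_p$ is a free $A_p$-supermodule of superrank $m|n$ for every $p \in (SSpec\,A)^e$. Fix such a $p$, put $p' = p/nil\,A$, and write $A'_{p'} = A_p/(nil\,A)A_p$. By hypothesis $P'_{p'} := P_p/(nil\,A)P_p$ is free of superrank $m|n$ over $A'_{p'}$. Choose even lifts $e_1, \ldots, e_m \in (P_p)_0$ and odd lifts $f_1, \ldots, f_n \in (P_p)_1$ of a homogeneous basis of $P'_{p'}$, and assemble them into a morphism $\phi \colon A_p^{m|n} \to P_p$ of $A_p$-supermodules. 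Since every prime superideal of $A$ contains $nil\,A$, we have $(nil\,A)A_p \subseteq \mathfrak{m}_p$, where $\mathfrak{m}_p$ is the unique maximal superideal of the local superalgebra $A_p$, which coincides with its Jacobson radical. Nakayama's Lemma (applied in the super setting, cf.\ the discussion preceding the lemma) then forces $\phi$ to be surjective.

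The main obstacle is to promote this surjection to an isomorphism. The strategy is to leverage the constant-superrank hypothesis: the super-dimension $s\dim(P_p/\mathfrak{m}_p P_p)$ over the residue field $A_p/\mathfrak{m}_p$ agrees with $s\dim(P'_{p'}/\mathfrak{m}'_{p'}P'_{p'}) = m|n$, so the minimal number of homogeneous generators of $P_p$ is exactly $m$ even plus $n$ odd, matching the rank of $A_p^{m|n}$. Meanwhile the reduction $\bar{\phi} \colon (A'_{p'})^{m|n} \to P'_{p'}$ is a surjection between free supermodules of equal finite superrank over the purely even commutative ring $A'_{p'}$, and is therefore an isomorphism. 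The plan is to lift the inverse of $\bar{\phi}$ to a section $s \colon P_p \to A_p^{m|n}$ of $\phi$ by using the projectivity of $P'_{p'}$ together with a successive-approximation argument along the filtration by powers of $(nil\,A)A_p$, which forces $\ker\phi = 0$. Once $\phi$ is an isomorphism at every $p$, the supermodule $P$ is locally free of constant superrank $m|n$, and combined with finite generation this gives projectivity of the claimed superrank.
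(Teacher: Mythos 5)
Your forward direction is fine and agrees with the paper: base change preserves projectivity, and the canonical isomorphism $(P/(nil\,A)P)_{\overline{p}}\simeq P_p/(nil\,A)_pP_p$ identifies the reduction of the free supermodule $P_p$, so the superrank is preserved.

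The converse, however, contains a genuine gap, and it sits exactly where you flag ``the main obstacle.'' The mechanism you propose --- lift the inverse of $\bar\phi$ to a section of $\phi\colon A_p^{m|n}\to P_p$ by successive approximation along the powers of $(nil\,A)A_p$ --- cannot work, because sections of surjections do not lift along nilpotent extensions for arbitrary finitely generated modules. Concretely, take $A=A_p=k[x]/(x^2)$ (purely even) and $P=A/(x)$. Then $nil\,A=(x)$, $P/(nil\,A)P=k$ is free of superrank $1|0$ over $A/nil\,A=k$, $\bar\phi$ is an isomorphism, and yet the Nakayama surjection $\phi\colon A\to P$ has kernel $(x)\neq 0$ and admits no $A$-linear section (any section would send $1$ to a unit $u$, and then $0=s(x\cdot 1)=xu\neq 0$). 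This also shows that the implication you are trying to establish at the local level --- ``$P_p/(nil\,A)_pP_p$ free of superrank $m|n$ $\Rightarrow$ $P_p$ free of superrank $m|n$'' --- is false without a further hypothesis on $P$ itself. The ingredient your argument never invokes is the projectivity of $P$ (equivalently of its localizations): the paper's ``same arguments as above'' refers to the paragraph preceding the lemma, where the surjection $A_p^{s|t}\to P_p$ obtained from Nakayama is split \emph{using the projectivity of} $P_p$, so that its kernel is a finitely generated direct summand with zero fiber at $\mathfrak{m}_p$ and hence vanishes. In other words, the actual content of the lemma is that the superrank can be computed after reduction modulo $nil\,A$, because $(nil\,A)_p\subseteq nil\,A_p\subseteq pA_p=\mathfrak{m}_p$; the converse must be applied with the projectivity (or local freeness) of $P$ supplied from the ambient context, as it is in every use the paper makes of it (e.g.\ in Proposition \ref{rankprop}, where local freeness of $P_p$ is part of the hypothesis). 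Your plan, which tries to manufacture that projectivity out of the projectivity of $P/(nil\,A)P$ alone, is attempting to prove a statement to which the displayed example is a counterexample.
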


\begin{proof}
We have a canonical isomorphism $P_p/(nil \, A)_p P_p\simeq (P/(nil \, A)P)_{\overline{p}}$, where $\overline{p}=p/nil \, A, p\in (SSpec \ A)^e$. 
Since $(nil \, A)_p\subseteq nil \, A_p \subseteq pA_p$, the same arguments as above conclude the proof.
\end{proof}

\begin{prop}\label{rankprop}
 (see also \cite{cf}, Appendix)
The following statements for a finitely generated $A$-supermodule $P$ are equivalent:
\begin{enumerate}
\item $P$ is projective of superrank $m|n$;
\item $P_p$ is a free $A_p$-supermodule of superrank $m|n$ 
for any maximal superideal $p$ of $A$;
\item For any maximal superideal $p$ of $A$, there is $f\in A_0\setminus p_0$ such that 
$P_f$ is a free $A_f$-supermodule of rank $m|n$;
\item There are elements $f_1, \ldots , f_t\in A_0$ such that $\sum_{1\leq i\leq t} A_0f_i=A_0$, and for any $i$, $P_{f_i}$ is a free
$A_{f_i}$-supermodule of superrank $m|n$. 
\end{enumerate} 
\end{prop}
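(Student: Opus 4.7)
The plan is to prove the four conditions equivalent cyclically, via $(1)\Rightarrow(2)\Rightarrow(3)\Rightarrow(4)\Rightarrow(1)$. The implication $(1)\Rightarrow(2)$ is immediate, since every maximal superideal is prime. For $(3)\Rightarrow(4)$, I would collect the elements $f_p\in A_0\setminus p_0$ provided by $(3)$ as $p$ ranges over maximal superideals of $A$; the ideal they generate in $A_0$ cannot lie in any maximal ideal $\mathfrak{n}$ of $A_0$, because then $\mathfrak{n}\oplus A_1$ would be a maximal superideal $p$ for which no admissible $f_p$ could exist. Hence the $f_p$ generate the unit ideal of $A_0$, and a finite subcollection suffices. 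For $(4)\Rightarrow(1)$, each $P_{f_i}$ is free, hence projective, over $A_{f_i}$; since $\sum A_0 f_i=A_0$, the map $A\to\prod A_{f_i}$ is faithfully flat, so projectivity of $P$ over $A$ follows by standard faithfully flat descent (which in our supercommutative setting reduces, via Lemma \ref{faithfulflat}, to its classical counterpart). The rank statement is automatic: for any prime superideal $q$ of $A$, some $f_i\notin q_0$, so $P_q=(P_{f_i})_q$ is a localization of a free $A_{f_i}$-supermodule of superrank $m|n$, and hence is itself free of superrank $m|n$.

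The core of the proof is $(2)\Rightarrow(3)$. Fix a maximal superideal $p$ of $A$ and choose homogeneous $x_1,\ldots,x_{m+n}\in P$ whose images form a homogeneous basis of $P_p$; these determine a morphism $\varphi:A^{m|n}\to P$ whose localization $\varphi_p$ is an isomorphism. Since $\mathrm{coker}\,\varphi$ is finitely generated and vanishes at $p$, some $g\in A_0\setminus p_0$ makes $\varphi_g$ surjective. To invert $\varphi_g$, fix a finite homogeneous generating set $y_1,\ldots,y_s$ of $P$, write each $\varphi_p^{-1}(y_j/1)\in A_p^{m|n}$ in coordinates, and clear all denominators by passing to a single $f\in A_0\setminus p_0$ (a multiple of $g$); this produces a candidate inverse $\psi:P_f\to A_f^{m|n}$. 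The identities $\psi\circ\varphi_f=\mathrm{id}$ and $\varphi_f\circ\psi=\mathrm{id}$ hold in $A_p^{m|n}$ and $P_p$ by construction, and each amounts to finitely many equalities on the generators; absorbing finitely more denominators into $f$ makes them hold over $A_f$, yielding $P_f\cong A_f^{m|n}$.

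The main obstacle is this final denominator-clearing step: in a non-Noetherian setting, $\ker\varphi_g$ need not be finitely generated, so one cannot argue directly that the kernel vanishes after further localization. The key point that rescues the argument is that $\psi$ is built directly from $\varphi_p^{-1}$ applied to the finitely many generators $y_j$, so only finitely many denominators arise, and the verifications that $\psi$ is well-defined and inverts $\varphi_f$ reduce to finitely many equalities in $P_p$ and $A_p^{m|n}$ — each of which becomes an equality in $P_f$ and $A_f^{m|n}$ after inverting one further element of $A_0\setminus p_0$.
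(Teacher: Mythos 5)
Your cycle works except at $(2)\Rightarrow(3)$, where there is a genuine gap: your $\psi$ is not known to be well defined. To define a homomorphism $P_f\to A_f^{m|n}$ by prescribing images of the generators $y_1,\dots,y_s$, you must check that \emph{every} relation among the $y_j$ over $A_f$ is mapped to zero, and the module of relations need not be finitely generated ($P$ is only assumed finitely generated, not finitely presented); so well-definedness does not reduce to finitely many equalities, and no single further denominator-clearing fixes it. The symptom is that your argument uses hypothesis $(2)$ only at the one maximal superideal $p$, and the single-point statement ``$P$ finitely generated with $P_p$ free implies $P_f$ free for some $f\in A_0\setminus p_0$'' is false. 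Take $A=\prod_{\mathbb N}\mathbb F_2$ (purely even), $I=\bigoplus_{\mathbb N}\mathbb F_2$ and $P=A/I$: at any maximal ideal $p\supseteq I$ one has $I_p=0$, so $P_p\simeq A_p$ is free of rank $1$, yet $I_f\neq 0$ for every $f\notin p$, so $P_f$ is never free; your construction yields precisely the ill-defined assignment $\psi(\bar 1)=1$.

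The repair must use $(2)$ at \emph{all} maximal superideals, i.e.\ the constancy of the superrank: once $\varphi_g:A_g^{m|n}\to P_g$ is surjective, its localization at every maximal superideal of $A_g$ is a surjection between free supermodules of the same finite superrank, hence an isomorphism, so $\ker\varphi_g$ is locally zero and therefore zero --- no second localization or explicit inverse is needed. This is exactly how Bourbaki, II, \S 5, Theorem 2 treats the classical case, and the paper's own proof simply reduces to that theorem by passing to $P/(nil\,A)P$ over $A/nil\,A$ via Lemma \ref{ranklm}; you should either reproduce that local argument or reduce to the even case likewise. Your other implications are essentially correct, but in $(4)\Rightarrow(1)$ note that one first descends finite presentation along the faithfully flat map $A\to\prod_i A_{f_i}$ (Bourbaki, I, \S 3, Proposition 11) and then uses that finitely presented flat modules are projective; the reduction of super-projectivity to ordinary projectivity is \cite{amas}, Lemma 5.1, not Lemma \ref{faithfulflat}.
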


\begin{proof}
The statements 3 and 4 are equivalent to the statement that the set
$$E=\{f\in A_0 | P_f \ \mbox{is a free} \ A_f -\mbox{supermodule of superrank} \ m|n\}$$ generates $A_0$ as an ideal, or equivalently, $E$ is not contained in any maximal superideal of $A$ (cf. \cite{z}, Lemma 1.1). 
Lemma \ref{ranklm}, the isomorphism from this lemma and \cite{bur}, II, \S 5, Theorem 2 as well, infer that
the statements 1 and 2 are equivalent to each other.
Finally, we have a canonical isomorphism  
$P_f/(nil \, A)_f P_f\simeq (P/(nil \, A) P)_{\overline{f}}$, where $\overline{f}$ is a residue class of $f$ in $A/nil \, A, f\in A_0$.
Observe that $(nil \, A)_f\subseteq nil \, A_f$. Again by \cite{bur}, II, \S 5, Theorem 2, the statements 2 and 3 are equivalent to each other.
\end{proof}

Fix non-negative integers $s, r$ such that $s\leq m, r\leq n$.
Define a $K$-functor 
$$Gr(s|r, m|n)(A)=\{M | M \ \mbox{is a direct summand of} \ A^{m|n} \ \mbox{of superrank} \ s|r\},$$
where $A \in \mathsf{SAlg}_K$. 
If $M\in Gr(s|r, m|n)(A)$, then for any superalgebra morphism $\phi : A\to B$ define $$Gr(s|r, m|n)(\phi)(M)=M\otimes_A B\subseteq A^{m|n}\otimes_A B=
B^{m|n}.$$
Since $(M\otimes_A B)_{p}\simeq M_q\otimes_{A_q} B_p, q=\phi^{-1}(p), p\in (SSpec B)^e$ (see Lemma 1.4, \cite{z1}), $M\otimes_A B$ has superrank $s|r$.      
To simplify our notations we denote $Gr(s|r, m|n)$ by $Gr$, if it does not lead to confusion.
\begin{lemma}\label{extensbylocalmorph}
Let $A\to B$ be a local morphism of local superalgebras, and let $M$ be a finitely generated (right) $A$-module. 
If $M\neq 0$, then $M\otimes_A B\neq 0$.
\end{lemma}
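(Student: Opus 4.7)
The plan is to reduce to the residue fields via a standard Nakayama argument adapted to the super setting. Let $\mathfrak{m}$ and $\mathfrak{n}$ denote the unique maximal superideals of $A$ and $B$, respectively, and let $\alpha : A \to B$ be the given local morphism, so that $\alpha(\mathfrak{m}) \subseteq \mathfrak{n}$.

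First I would observe that $\mathfrak{m}$ coincides with the Jacobson radical $\mathrm{rad}\, A$ (as noted in Section 4 of the excerpt), and that the residue ring $A/\mathfrak{m} = A_0/\mathfrak{m}_0$ is in fact a field, since by the form $\mathfrak{m} = \mathfrak{m}_0 \oplus A_1$ of a maximal superideal the odd part is killed. The same holds for $B/\mathfrak{n}$, and the locality of $\alpha$ induces a well-defined field extension $A/\mathfrak{m} \to B/\mathfrak{n}$.

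Next, since $M$ is finitely generated over the local ring $A$ and $M \neq 0$, ordinary Nakayama's Lemma gives $M/\mathfrak{m}M \neq 0$; this is then a nonzero supervector space (in particular a nonzero vector space) over the field $A/\mathfrak{m}$.

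Finally, I would compute
\begin{equation*}
(M \otimes_A B) \otimes_B (B/\mathfrak{n}) \;\cong\; M \otimes_A (B/\mathfrak{n}) \;\cong\; (M/\mathfrak{m}M) \otimes_{A/\mathfrak{m}} (B/\mathfrak{n}),
\end{equation*}
where the second isomorphism uses $\alpha(\mathfrak{m}) \subseteq \mathfrak{n}$ to view $B/\mathfrak{n}$ as an $A/\mathfrak{m}$-module. The right-hand side is a tensor product of a nonzero vector space with a nonzero field extension over a field, hence is nonzero. Therefore $M \otimes_A B \neq 0$, which is the desired conclusion. There is no serious obstacle here; the only point to handle with care is the verification that Nakayama's Lemma applies in the super context, which is ensured by the identification of $\mathfrak{m}$ with $\mathrm{rad}\, A$ already recorded in the text.
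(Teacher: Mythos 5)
Your proof is correct and is essentially the paper's argument: both hinge on Nakayama's Lemma applied to the finitely generated module over the local (super)ring, followed by base change to the residue field. The only difference is cosmetic --- the paper first passes to the purely even reduced quotients $\overline{A}\to\overline{B}$ so that it can quote the classical commutative lemma from \cite{bur}, II, \S 4 for the residue-field step, whereas you carry out that step directly at the level of $A/\mathfrak{m}\to B/\mathfrak{n}$, which is legitimate because a maximal superideal has the form $\mathfrak{m}_0\oplus A_1$, so the residue ring is already a field and the tensor product $(M/M\mathfrak{m})\otimes_{A/\mathfrak{m}}(B/\mathfrak{n})$ of nonzero vector spaces is nonzero.
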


\begin{proof}
The induced morphism $\overline{A}\to \overline{B}$ is also local. Besides, we have an epimorphism $M\otimes_A B\to M/M(nil \, A)\otimes_{\overline{A}}
\overline{B}$. By Nakayama's Lemma $M/M(nil\, A)\neq 0$ and our statement follows by \cite{bur}, II, \S 4, Lemma 4. 
\end{proof}

\begin{lemma}\label{annihilator}
Let $M$ be a finitely generated $R$-supermodule, and let $A$ be an $R$-superalgebra. Then $M\otimes_R A=0$ iff
$\iota^A_R(I)A=A$, where $I=Ann_R(M)$.  
\end{lemma}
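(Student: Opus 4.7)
The plan is to prove the two implications separately, and for the forward direction $(\Leftarrow)$ argue directly, while for the reverse $(\Rightarrow)$ proceed by contrapositive via localization, reducing to Lemma \ref{extensbylocalmorph}.

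\textbf{($\Leftarrow$)} First note that $I$ is a superideal of $R$: if $r=r_0+r_1\in R$ annihilates $M$, then for any homogeneous $m\in M$ the equality $r_0m+r_1m=0$ has summands of opposite parity, so each homogeneous component of $r$ lies in $I$. Assuming $\iota^A_R(I)A=A$, I would write $1=\sum_k \iota^A_R(a_k)b_k$ with $a_k\in I$ homogeneous and $b_k\in A$, and then for any elementary tensor $m\otimes c\in M\otimes_R A$ compute
\[
m\otimes c \;=\; m\otimes\Bigl(\sum_k \iota^A_R(a_k)b_kc\Bigr) \;=\; \sum_k (ma_k)\otimes(b_kc) \;=\; 0 .
\]

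\textbf{($\Rightarrow$)} I would prove the contrapositive. Assume $\iota^A_R(I)A\neq A$. Since $1\in A_0$, the even part $(\iota^A_R(I)A)_0$ is a proper ideal of $A_0$, hence lies in some maximal ideal $\mathfrak{m}_0\subset A_0$; by the description of maximal superideals recalled in Section \ref{superspaces} (they all have the form $p_0\oplus A_1$), the superideal $\mathfrak{m}:=\mathfrak{m}_0\oplus A_1$ is a maximal superideal of $A$ containing $\iota^A_R(I)A$. Put $\mathfrak{p}:=(\iota^A_R)^{-1}(\mathfrak{m})$, which is a prime superideal of $R$. Because $\iota^A_R$ sends $R_0\setminus\mathfrak{p}_0$ into $A_0\setminus\mathfrak{m}_0$, whose images become units in $A_\mathfrak{m}$, the structure morphism $R\to A_\mathfrak{m}$ factors uniquely through a \emph{local} morphism $R_\mathfrak{p}\to A_\mathfrak{m}$ of local superalgebras.

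Now I would localize using associativity of tensor products:
\[
0 \;=\; (M\otimes_R A)_\mathfrak{m} \;\simeq\; M\otimes_R A_\mathfrak{m} \;\simeq\; M_\mathfrak{p}\otimes_{R_\mathfrak{p}} A_\mathfrak{m}.
\]
Since $M_\mathfrak{p}$ is finitely generated over $R_\mathfrak{p}$ and the morphism $R_\mathfrak{p}\to A_\mathfrak{m}$ is local, Lemma \ref{extensbylocalmorph} forces $M_\mathfrak{p}=0$. Applied to a finite generating set $m_1,\dots,m_n$ of $M$, this yields elements $s_i\in R_0\setminus\mathfrak{p}_0$ with $s_im_i=0$; their product $s:=s_1\cdots s_n$ lies in $R_0\setminus\mathfrak{p}_0$ and annihilates $M$, so $s\in I$. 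But then $\iota^A_R(s)\in\iota^A_R(I)A\subseteq\mathfrak{m}$, forcing $s\in\mathfrak{p}$, a contradiction. The main (and modest) obstacle is the bookkeeping of super-localization: verifying that maximal superideals always have the form $\mathfrak{m}_0\oplus A_1$ so that a proper $\iota^A_R(I)A$ is contained in one, and checking that $R_\mathfrak{p}\to A_\mathfrak{m}$ is indeed local so that Lemma \ref{extensbylocalmorph} is applicable; once that is in place, the argument is the standard commutative-algebra one.
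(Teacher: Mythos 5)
Your proof is correct and follows essentially the same route as the paper: both localize at a maximal superideal $\mathfrak{m}$ of $A$ containing $\iota^A_R(I)$, pass to the local morphism $R_{\mathfrak p}\to A_{\mathfrak m}$ with $\mathfrak p=(\iota^A_R)^{-1}(\mathfrak m)$, and invoke Lemma \ref{extensbylocalmorph} to get $M_{\mathfrak p}=0$ and a contradiction. The only cosmetic difference is that the paper derives the contradiction from the superized support formula $\mathrm{Supp}(M)=\{p\mid I\subseteq p\}$ (citing Bourbaki), whereas you unwind the relevant half of that formula by hand via a finite generating set, which is equally valid.
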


\begin{proof}
The part $``$if" is obvious. By Lemma 1.2 (i), \cite{z}, $R_p$ is a flat $R$-module for any $p\in (SSpec R)^e$. Therefore, Proposition 16 and 17 from \cite{bur}, II, \S 4, can be superized per verbatim. We have
$$Supp(M)=\{p\in (SSpec \ R)^e|M_p\neq  0\}=\{p\in (SSpec \ R)^e| I\subseteq p\}.$$  
Assume that $\iota^A_R(I)A\neq A$ and $q$ is a maximal superideal of $A$ that contains $\iota^A_R(I)$.
By Lemma 1.4, \cite{z1}, $M\otimes_R A=0$ implies $M_p\otimes_{R_p} \iota^A_R(R_0\setminus p_0)^{-1}A=0$, and moreover, $M_p\otimes_{R_p} A_q=0$, where $p=(\iota^A_R)^{-1}(q)\in (SSpec \ R)^e$. 
Lemma \ref{extensbylocalmorph} implies $M_p=0$, a contradiction.
\end{proof}

Let $R\in \mathsf{SAlg}_K$. Recall from the remark just above Proposition \ref{Noetherian} that we have 
the canonical identification ${}_R\mathsf{SMod} = \mathsf{SMod}_R$, and every object in these categories naturally turns into an 
$(R, R)$-superbimodule. Let $B$ be an $R$-superalgebra. We see that if $M \in {}_R\mathsf{SMod}$, then 
$b \otimes m \mapsto (-1)^{|b||m|} m \otimes b$ gives an isomorphism 
\begin{equation}\label{side-switch}
B \otimes_R M \simeq M \otimes_R B
\end{equation} 
in ${}_B\mathsf{SMod} = \mathsf{SMod}_B$.

\begin{lemma}\label{extenofhoms}
Let $R$, $B$ be as above, and assume that $B$ is flat as an $R$-module.
Let $M, N \in {}_R\mathsf{SMod}$, and assume that $M$ is finitely presented as a right $R$-module.

1. There is a natural $K$-linear morphism,  
$$\mathrm{Hom}_{{}_R\mathsf{Mod}}(M, N) \otimes_R B \overset{\simeq}{\longrightarrow} \mathrm{Hom}_{\mathsf{Mod}_B}(M\otimes_R B, N\otimes_R B).$$ 

2. Assume $M \subset N$, and that $B$ is a faithfully flat $R$-module. If $M \otimes_R B$ is a direct summand of $N\otimes_R B$
in $\mathsf{SMod}_B$, then $M$ is a direct summand of $N$ in ${}_R\mathsf{SMod}$.
\end{lemma}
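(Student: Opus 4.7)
For part 1, I would construct the natural $B$-linear map
\[
\Phi : \mathrm{Hom}_R(M,N) \otimes_R B \longrightarrow \mathrm{Hom}_B(M \otimes_R B,\, N \otimes_R B)
\]
by $\Phi(\varphi \otimes b)(m \otimes b') = (-1)^{|b||m|}\varphi(m) \otimes bb'$, with super signs coming from \eqref{side-switch}. It is trivially an isomorphism when $M = R$, and by additivity of $\mathrm{Hom}(-,N)$ and $- \otimes_R B$ in the first slot, also an isomorphism for $M = R^{p|q}$ finitely generated free. For general finitely presented $M$ choose a presentation $R^{p|q} \to R^{r|s} \to M \to 0$; applying $\mathrm{Hom}_R(-,N) \otimes_R B$ yields a left-exact sequence (using flatness of $B$), while applying $\mathrm{Hom}_B(-,\, N \otimes_R B)$ to the base-changed right-exact presentation yields a parallel left-exact sequence. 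The morphism $\Phi$ between the two sequences is iso on the two free entries, and taking kernels forces iso on $M$.

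For part 2, the strategy is to descend the summand property over $B$ to purity of $M$ in $N$ over $R$, and then invoke the classical result that finitely presented pure submodules are direct summands. To establish purity, let $L \in \mathsf{SMod}_R$ be arbitrary. Using \eqref{side-switch} and the associativity of the tensor product over the supercommutative ring $R$, one has a natural identification
\[
(L \otimes_R M) \otimes_R B \;\cong\; (L \otimes_R B) \otimes_B (M \otimes_R B),
\]
and similarly with $N$ in place of $M$. By hypothesis $M \otimes_R B$ is a direct summand, hence pure, in $N \otimes_R B$ over $B$, so tensoring over $B$ with $L \otimes_R B$ preserves the injectivity of its inclusion. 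Thus $(L \otimes_R M) \otimes_R B \to (L \otimes_R N) \otimes_R B$ is injective, and by faithful flatness of $B$ over $R$, the map $L \otimes_R M \to L \otimes_R N$ is also injective. Hence $M$ is pure in $N$.

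With $M$ finitely presented and pure in $N$, the Fieldhouse--Warfield theorem gives that $M$ is a direct summand of $N$ in ${}_R\mathsf{SMod}$; the classical proof, which uses a finite presentation of $M$ together with the equations-solvability characterization of purity, transfers to the super setting without essential change. The main subtleties I anticipate are the bookkeeping of super signs in constructing $\Phi$ and in the tensor-associativity step of part 2, together with a careful adaptation of the Fieldhouse--Warfield argument to ${}_R\mathsf{SMod}$ under the supercommutativity hypothesis on $R$.
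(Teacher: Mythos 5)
Your Part 1 is correct and amounts to the standard argument; the paper obtains the same isomorphism more quickly as the composite
$\mathrm{Hom}_{{}_R\mathsf{Mod}}(M,N)\otimes_R B\simeq\mathrm{Hom}_{{}_R\mathsf{Mod}}(M,N\otimes_R B)\simeq\mathrm{Hom}_{{}_B\mathsf{Mod}}(B\otimes_R M,N\otimes_R B)\simeq\mathrm{Hom}_{\mathsf{Mod}_B}(M\otimes_R B,N\otimes_R B)$,
where the first isomorphism is exactly the classical finitely-presented-plus-flat base-change fact that you reprove from a homogeneous presentation. No objection there.

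Part 2, however, has a genuine gap at its final step. Your descent of purity is fine: the identification $(L\otimes_R M)\otimes_R B\cong (L\otimes_R B)\otimes_B(M\otimes_R B)$ together with faithful flatness does show that $M$ is pure in $N$. But the theorem you then invoke is misstated. The Fieldhouse--Warfield splitting theorem says that a pure exact sequence $0\to M\to N\to N/M\to 0$ splits when the \emph{quotient} $N/M$ is finitely presented (finitely presented modules are pure-projective); it does not say that a finitely presented pure \emph{submodule} is a direct summand. That would require $M$ to be pure-injective, which finite presentation does not give: for instance $\mathbb{Z}$ is a pure subgroup of $\widehat{\mathbb{Z}}=\prod_p\mathbb{Z}_p$ but is not a direct summand, since $\mathrm{Hom}(\widehat{\mathbb{Z}},\mathbb{Z})=0$ (this is over $\mathbb{Z}$, but analogous examples exist over $K[t]$; the point is that the finite-presentation hypothesis sits on the wrong side of the sequence). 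Your reduction to purity discards precisely the extra information that makes the lemma true, namely that a splitting already exists after the faithfully flat base change. The paper instead descends the retraction itself: by Part 1 applied to the pair $(M,M)$, one has $\mathrm{Hom}_{{}_R\mathsf{Mod}}(M,M)\otimes_R B\simeq\mathrm{Hom}_{\mathsf{Mod}_B}(M\otimes_R B,M\otimes_R B)$, so the given $B$-linear retraction shows that the restriction map $\mathrm{Hom}_{{}_R\mathsf{Mod}}(N,M)\to\mathrm{Hom}_{{}_R\mathsf{Mod}}(M,M)$ becomes surjective after applying $\otimes_R B$, hence is surjective by faithful flatness; the resulting $R$-linear retraction $\phi:N\to M$ is then made even by averaging, $\phi_0=\tfrac12(\phi+{}^g\phi)$. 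This last symmetrization, which is where $\mathrm{char}\,K\neq 2$ enters to produce a splitting in ${}_R\mathsf{SMod}$ rather than merely in ${}_R\mathsf{Mod}$, is also left implicit in your write-up and should not be waved away.
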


\begin{proof}
1. The desired isomorphism is obtained as the composite of isomorphisms,
\begin{align*}
&\mathrm{Hom}_{{}_R \mathsf{Mod}}(M, N) \otimes_R B \simeq \mathrm{Hom}_{{}_R\mathsf{Mod}}(M, N \otimes_R B)\\ 
&\simeq \mathrm{Hom}_{{}_B\mathsf{Mod}}(B \otimes_R M, N \otimes_R B) \simeq \mathrm{Hom}_{\mathsf{Mod}_B}(M\otimes_R B, N\otimes_R B).
\end{align*}
The assumptions are used only for the first canonical isomorphism, for which $N$ is regarded merely as an 
$(R, R)$-bimodule. The last isomorphism is induced from the isomorphism \eqref{side-switch}. 

2. Present $\mathbb{Z}_2 = \langle g \mid g^2 =1\rangle$ as a multiplicative group generated by $g$. 
Recall that every supervector space $V$ is identified with the module over the group algebra $K\mathbb{Z}_2$
in which $g$ acts on homogeneous elements $v \in V$ by 
\begin{equation}\label{Z2action}
v \mapsto (-1)^{|v|}v.  
\end{equation}

In the general situation without assuming $M \subset N$, 
let the group $\mathbb{Z}_2$ act on $\mathrm{Hom}_{{}_R \mathsf{Mod}}(M, N)$ by conjugation; 
explicitly, the 
$g$-conjugation ${}^g\varphi$ of $\varphi$ is defined by ${}^g\varphi(m) = g\varphi(gm)$. 
Then the $\mathrm{Hom}$ space
turns into a supervector space with respect to the parity which corresponds, as recalled above, 
to the $\mathbb{Z}_2$ action just defined. 
Given an $R$-linear map $\varphi : M \to N$, its even part is given by 
$\varphi_0 = (1/2)(\varphi + {}^g\varphi)$. It follows that $\varphi_0$ is in ${}_R\mathsf{SMod}$, and 
$(\varphi \, \psi)_0 =  \varphi_0 \, \psi$ for any $\psi : L \to M$ in ${}_R\mathsf{SMod}$.

Let us be in the assumed situation. It follows from Part 1 above that the restriction morphism 
$\mathrm{Hom}_{{}_R \mathsf{Mod}}(N, M) \to \mathrm{Hom}_{{}_R \mathsf{Mod}}(M, M)$ is surjective, since it is so
with $\otimes_R B$ applied. Therefore we have an $R$-linear retraction $\phi : N \to M$. 
The argument in the preceding paragraph shows that $\phi_0$ is a retraction in ${}_R\mathsf{SMod}$.
\end{proof}

\begin{lemma}\label{gross is local}
$Gr$ is a local $K$-functor.
\end{lemma}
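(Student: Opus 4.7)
The plan is to verify directly that $Gr = Gr(s|r, m|n)$ is a sheaf on the topology $T_{loc}$. Fix $R \in \mathsf{SAlg}_K$ and a $T_{loc}$-covering $\{SSp\, R_{f_i} \to SSp\, R\}_{1 \leq i \leq n}$, where $f_1, \ldots, f_n \in R_0$ satisfy $\sum R_0 f_i = R_0$. Set $B = R_{f_1}\times \cdots \times R_{f_n}$; each $R \to R_{f_i}$ is flat and the $D(Rf_i)$ cover $SSpec\, R$, so $B$ is a faithfully flat $R$-supermodule. It remains to show exactness of
$$Gr(R) \to \prod_i Gr(R_{f_i}) \rightrightarrows \prod_{i,j} Gr(R_{f_i f_j}).$$

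For injectivity, suppose $M, N \in Gr(R)$ agree in each $Gr(R_{f_i})$. Then $M \otimes_R B$ and $N \otimes_R B$ coincide as supersubmodules of $R^{m|n} \otimes_R B$; since $B$ is faithfully flat, $M$ can be recovered as the preimage of $M \otimes_R B$ under the inclusion $R^{m|n} \hookrightarrow R^{m|n} \otimes_R B$, and likewise for $N$, hence $M = N$.

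For the gluing property, given a compatible family $M_i \in Gr(R_{f_i})$ with matching restrictions in $R_{f_i f_j}^{m|n}$, the plan is to construct
$$M := \{\, x \in R^{m|n} \mid x/1 \in M_i \text{ in } R_{f_i}^{m|n} \text{ for every } i\,\} \subseteq R^{m|n}.$$
Standard faithfully flat descent for submodules, applied to the exact complex
$$0 \to R^{m|n} \to \prod_i R_{f_i}^{m|n} \to \prod_{i,j} R_{f_i f_j}^{m|n},$$
then yields $M \otimes_R R_{f_i} = M_i$ for each $i$. Finite generation of $M$ follows by lifting generators of each $M_i$ to elements of $R^{m|n}$ after clearing $f_i$-denominators and combining them by way of a partition of unity coming from $1 \in \sum R_0 f_i$. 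With $M \otimes_R B = \prod_i M_i$ realized as a direct summand of $R^{m|n} \otimes_R B$ in $\mathsf{SMod}_B$, Lemma \ref{extenofhoms}(2) upgrades this to $M$ being a direct summand of $R^{m|n}$ in ${}_R\mathsf{SMod}$, and Proposition \ref{rankprop}(4) then gives that $M$ has superrank $s|r$, so $M \in Gr(R)$ is the required preimage.

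The step to watch carefully is the descent for the submodule structure rather than merely for an abstract supermodule: one must ensure that the retraction $R^{m|n}\otimes_R B\to M\otimes_R B$ detected after base change to $B$ actually gives rise to a retraction in ${}_R\mathsf{SMod}$, and not merely in ${}_R\mathsf{Mod}$. This is precisely what the $\mathbb{Z}_2$-conjugation averaging inside the proof of Lemma \ref{extenofhoms}(2) is designed to handle. Once that is in hand, every remaining step reduces to the analogous classical commutative-algebra statement applied to the underlying ungraded modules.
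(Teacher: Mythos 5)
Your argument follows essentially the same route as the paper: you build the candidate $M$ as the intersection of the preimages of the $M_i$ in $R^{m|n}$, check $M_{f_i}=M_i$, and then use Lemma \ref{extenofhoms}(2) over the faithfully flat covering $B=\prod_i R_{f_i}$ to see that $M$ is a direct summand (the paper proves the localization identity by the explicit denominator-clearing argument you are implicitly invoking as ``standard descent,'' and it leaves injectivity and the superrank statement implicit, so those parts of yours are fine).

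One point needs repair: Lemma \ref{extenofhoms} assumes $M$ is \emph{finitely presented}, not merely finitely generated, and your generator-lifting argument only delivers finite generation, which does not suffice over a possibly non-Noetherian $R$. The paper closes this exactly where you already are: since $M\otimes_R B$ is a direct summand of the free module $B^{m|n}$, it is finitely presented over $B$, and finite presentation descends along the faithfully flat morphism $R\to B$ (\cite{bur}, I, \S 3, Proposition 11); with that substitution your proof is complete. Also note that Proposition \ref{rankprop}(4) as stated requires the localizations $M_{f_i}$ to be \emph{free}, which the $M_i$ need not be; the superrank of $M$ is instead read off directly from the definition, since every prime of $R$ lies in some $D(Rf_i)$ and $M_p\simeq (M_i)_{p'}$ there.
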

\begin{proof}
Let $R\in\mathsf{SAlg}_K$, and suppose that $f_1, \ldots, f_t\in R_0$ satisfy $\sum_{1\leq i\leq t}R_0 f_i=R_0$.
Fix a collection $\{M_i\in Gr(R_{f_i})\}_{1\leq i\leq t}$ such that for any $i\neq j$, the canonical images of $(M_i)_{f_j}$ and
$(M_j)_{f_i}$ in $R_{f_i f_j}^{m|n}$ coincide. For the canonical morphism of $R$-supermodules 
$R^{m|n}\to (R^{m|n})_{f_i}=R_{f_i}^{m|n}$, let $N_i$ denote the pre-image of $M_i$. It is clear that $M_i=(N_i)_{f_i}$. Set $N=\bigcap_{1\leq i\leq t} N_i$.
Take an element $n\in N_i$. There is an integral non-negative number $k$ such that $(f_if_j)^k n\in N_j$ for any $j$, hence $f_i^k n\in N_j$ for any $j$. Therefore, $N_{f_i}=(N_i)_{f_i} = M_i$ for any $i$.
It remains to prove that $N$ is a direct summand of $R^{m|n}$. 

Set $B=\prod_{1\leq i\leq t}R_{f_i}$. By Lemma 1.2 (ii), \cite{z}, $B$ is a fppf covering of $R$. Note that
$B \otimes_R N \simeq N\otimes_R B$ is a direct summand of
$B^{m|n}$, whence it is finitely presented, as a left and right $B$-module. 
By \cite{bur}, I, \S 3, Proposition 11, $N$ is a finitely presented, as a right, say, $R$-module.
It follows by Lemma \ref{extenofhoms} that $N$ is a direct summand of $R^{m|n}$, as desired.
\end{proof}

Let $W$ be a supersubspace of $V$ with $s\dim W =(m-s)|(n-r)$. Define a subfunctor $Gr_W$ of $Gr$ by $$ Gr_W(A)=\{M | M\bigoplus (W\otimes A)=A^{m|n}\},\ A \in \mathsf{SAlg}_K. $$

\begin{lemma}\label{Grlm}
$Gr_W$ is an open affine subfunctor of $Gr$.
\end{lemma}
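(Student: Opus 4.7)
The plan is to first exhibit $Gr_W$ as an affine superspace via a graph construction, and then verify that it is open in $Gr$ by reducing the relevant condition to the non-vanishing of a suitable annihilator ideal.

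First I fix a homogeneous complement $V'\subseteq V$ to $W$ with $s\dim V'=s|r$, so that $V=V'\oplus W$. For any $A\in \mathsf{SAlg}_K$ and any even $A$-linear map $\phi : V'\otimes A\to W\otimes A$, the graph $M_\phi=\{v+\phi(v)\mid v\in V'\otimes A\}$ is an $A$-supersubmodule of $A^{m|n}$ complementary to $W\otimes A$, so $M_\phi\in Gr_W(A)$. Conversely, any $M\in Gr_W(A)$ makes the composition $\pi_M : M\hookrightarrow A^{m|n}\twoheadrightarrow V'\otimes A$ into an isomorphism of $A$-supermodules (since $M\cap(W\otimes A)=0$ and $M+(W\otimes A)=A^{m|n}$), and $\phi:=p_W\circ \pi_M^{-1}$ recovers $M$ as $M_\phi$. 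This gives a bijection, functorial in $A$, between $Gr_W(A)$ and $(\mathrm{Hom}_K(V',W)\otimes A)_0$, which is represented by the affine superspace ${\bf A}^{p|q}$ with $p=s(m-s)+r(n-r)$ and $q=s(n-r)+r(m-s)$. This already establishes that $Gr_W$ is affine.

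For openness, I consider any morphism $SSp \ B\to Gr$ corresponding to $M\in Gr(B)$. The pullback $Gr_W\times_{Gr} SSp \ B$ associates to a $B$-superalgebra $C$ the set of morphisms $\beta : B\to C$ such that $M\otimes_B C\in Gr_W(C)$. By the criterion above, this holds iff the natural $B$-linear map
$$\Phi : M\oplus (W\otimes B) \to B^{m|n},\qquad (m,w)\mapsto m+w,$$
becomes an isomorphism after $-\otimes_B C$. Both source and target are finitely generated projective $B$-supermodules of equal superrank $m|n$, and I reduce to checking surjectivity: a surjection between projective supermodules of equal superrank splits, and its kernel is then projective of superrank $0|0$, hence zero. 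Setting $Q:=\mathrm{coker}\,\Phi$ (a finitely generated $B$-supermodule), right exactness of $\otimes_B$ yields that $\Phi\otimes_B C$ is surjective iff $Q\otimes_B C=0$, and by Lemma \ref{annihilator} this is equivalent to $\iota^C_B(I)C=C$ with $I=Ann_B(Q)$. Thus $Gr_W\times_{Gr} SSp \ B=D(I)$, an open subfunctor of $SSp \ B$, showing that $Gr_W$ is open in $Gr$.

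The main obstacle I anticipate is the collapse of the isomorphism condition to mere surjectivity in the second paragraph. It relies on the additivity of superrank under direct summands, together with the super-Nakayama observation that a finitely generated $B$-supermodule whose localizations at every maximal superideal vanish must itself be zero; both facts are traceable to the definition of superrank and the Section 2 material on local superalgebras and projective supermodules.
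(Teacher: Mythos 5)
Your proposal is correct and follows essentially the same route as the paper: the affineness part is exactly the graph construction that the paper delegates to \cite{dg}, I, \S 1, 3.9 (yielding the same $\mathbf{A}^{u|v}$ with $u=s(m-s)+r(n-r)$, $v=s(n-r)+r(m-s)$), and the openness part is the paper's argument in a mildly different guise, since your $\mathrm{coker}\,\Phi$ is the paper's $R^{m|n}/(M+W\otimes R)$ and both reduce ``isomorphism'' to ``epimorphism'' by equality of superranks before invoking Lemma \ref{annihilator} to identify the preimage with $D(I)$, $I=Ann(\mathrm{coker})$.
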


\begin{proof}
Choose ${\bf g}_M : SSp \ R\to Gr$ defined by an element $M\in Gr(R)$. Then $\phi\in SSp \ R(A)$ belongs to
${\bf g}_M(A)^{-1}(Gr_W(A))$ iff the induced supermodule morphism $\mu_A : M\otimes_R A\to A^{m|n}/ W\otimes_R A$ is an isomorphism, where 
$A$ is an $R$-supermodule via $\phi$.   
Since the superranks of $M\otimes_R A$ and $A^{m|n}/ W\otimes_R A$ are the same, $\mu_A$ is an isomorphism iff it is an epimorphism. Both statements easily follow by Proposition \ref{rankprop} (2) combined with \cite{z}, 
Lemma 1.5. By Lemma \ref{annihilator}, ${\bf g}_M^{-1}(Gr_W)= D(I),$ where $I=Ann_R(R^{m|n}/(M + W\otimes R))$. The same arguments as in \cite{dg}, I, \S 1, 3.9, show that $Gr_W$ is isomorphic to the affine superscheme ${\bf A}^{u|v}$ (cf. \cite{z}, p.719), where $u=s(m-s)+r(n-r), v=s(n-r)+r(m-s)$.
\end{proof}

\begin{corollary}\label{Grcor}
The $K$-functor $Gr(s|r, m|n)$ is a superscheme.
\end{corollary}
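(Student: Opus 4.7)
The plan is to exhibit $Gr = Gr(s|r, m|n)$ as a local $K$-functor admitting an open covering by affine subfunctors of the form $Gr_W$, using the preceding lemmas. Since Lemma \ref{gross is local} already tells us that $Gr$ is a local $K$-functor, and Lemma \ref{Grlm} identifies each $Gr_W$ (for a supersubspace $W \subseteq V$ of superdimension $(m-s)|(n-r)$) as an open affine subfunctor isomorphic to $\mathbf{A}^{u|v}$, it remains only to produce enough such $W$ so that the $Gr_W$ cover $Gr$ in the sense of Section 3.

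First I would recall that, by definition, an open covering only needs to be tested on field extensions $A \in \mathsf{F}_K$. So I would choose, once and for all, a homogeneous basis $e_1, \dots, e_m, f_1, \dots, f_n$ of $V$ with $|e_i|=0$, $|f_j|=1$, and let $W$ range over the finite family of \emph{coordinate supersubspaces}
\[
 W_{I,J} = \bigl(\bigoplus_{i \in I} K e_i\bigr) \oplus \bigl(\bigoplus_{j \in J} K f_j\bigr),
\]
where $I \subseteq \{1,\dots,m\}$, $J \subseteq \{1,\dots,n\}$ with $|I|=m-s$, $|J|=n-r$. Each $Gr_{W_{I,J}}$ is open affine by Lemma \ref{Grlm}.

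Next I would verify the covering condition $Gr(F) = \bigcup_{I,J} Gr_{W_{I,J}}(F)$ for every field $F \in \mathsf{F}_K$. Over a field, any $M \in Gr(F)$ is a direct summand of $F^{m|n}$ of superrank $s|r$, hence a free supersubmodule; since the parity decomposition of $F^{m|n}$ is $F^m \oplus F^n$, we have $M = M_0 \oplus M_1$ with $\dim_F M_0 = s$, $\dim_F M_1 = r$. The existence of a complementary coordinate supersubspace then reduces to two ordinary linear algebra statements: some $(m-s)$-subset $I$ of $\{1,\dots,m\}$ yields $\{e_i\}_{i \in I}$ whose images in $F^m/M_0$ form a basis, and similarly for an $(n-r)$-subset $J$. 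The resulting $W_{I,J} \otimes F$ is then a complement of $M$, so $M \in Gr_{W_{I,J}}(F)$.

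Combining these two ingredients, $\{Gr_{W_{I,J}}\}_{I,J}$ is an open affine covering of the local $K$-functor $Gr$, which by Definition \ref{def-superscheme} means that $Gr$ is a superscheme. I do not expect any serious obstacle here: the only slight subtlety is remembering that the covering axiom is tested on fields, which makes the complement-of-subspace step purely classical; the super aspect enters only through the parity-preserving split of $M$, which is automatic since all the modules involved are $\mathbb{Z}_2$-graded.
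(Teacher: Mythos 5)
Your proposal is correct and matches the argument the paper intends: the corollary is stated without proof precisely because it follows from Lemma \ref{gross is local} (locality) and Lemma \ref{Grlm} (openness and affineness of the $Gr_W$), once one checks on fields $F\in\mathsf{F}_K$ that the coordinate subfunctors $Gr_{W_{I,J}}$ cover $Gr$, exactly as you do. Your filling-in of the covering step via the graded splitting $M=M_0\oplus M_1$ over a (purely even) field and classical complement-of-subspace arguments is the standard and correct completion.
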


Let $U$ be a supersubspace of $V$ such that $s\dim U=s|r$. Denote the stabilizer $Stab_{GL(V)}(U)$ (cf. \cite{z}, p.720) by
$P(U)$; this is a closed supersubgroup of $GL(V)$. 

\begin{prop}\label{Grprop}
The quotient $GL(V) \tilde{/} P(U)$ is isomorphic to $Gr(s|r, m|n)$.
\end{prop}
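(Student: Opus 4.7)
The plan is to construct a natural map $GL(V)\to Gr(s|r, m|n)$ whose induced map on quotient sheaves gives the desired isomorphism. For each $A\in\mathsf{SAlg}_K$ define
$$\theta(A)\colon GL(V)(A)\to Gr(s|r, m|n)(A),\qquad g\mapsto g(U\otimes A).$$
Since $U\otimes A$ is a direct summand of $V\otimes A = A^{m|n}$ of superrank $s|r$, the image is indeed in $Gr$. By definition of $P(U)$, we have $g(U\otimes A)=h(U\otimes A)$ iff $h^{-1}g\in P(U)(A)$, so $\theta$ is constant on right $P(U)$-cosets and induces an injection $(GL(V)/P(U))_{(n)}\hookrightarrow Gr$. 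By Corollary \ref{Grcor} and Proposition \ref{superschemeissheaf}, $Gr$ is a $K$-sheaf, so by the universal property of sheafification (Proposition \ref{sheafification}) we obtain a morphism
$$\overline{\theta}\colon GL(V)\tilde{/}P(U)\longrightarrow Gr(s|r, m|n),$$
which is still injective by the description of sheafification in Remark \ref{suitable}.

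It remains to show that $\overline{\theta}$ is surjective; by Remark \ref{suitable} this means that for any $M\in Gr(A)$ there is an fppf covering $A\to B$ such that $M\otimes_A B$ lies in the image of $\theta(B)$, i.e.\ equals $g(U\otimes B)$ for some $g\in GL(V)(B)$. Since $M$ is a direct summand of $A^{m|n}$, write $A^{m|n}=M\oplus N$, where $N$ is projective of superrank $(m-s)|(n-r)$. By Proposition \ref{rankprop}, there exist $f_1,\ldots,f_t\in A_0$ with $\sum A_0 f_i=A_0$ such that both $M_{f_i}$ and $N_{f_i}$ are free of the expected superranks over $A_{f_i}$ (refine the covering given by the proposition applied separately to $M$ and $N$). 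Fix a decomposition $V=U\oplus W$ with $s\dim W=(m-s)|(n-r)$. Choosing homogeneous bases of $M_{f_i}$ and $N_{f_i}$ and matching them with fixed homogeneous bases of $U$ and $W$ respectively defines a unique $g_i\in GL(V)(A_{f_i})$ sending $U\otimes A_{f_i}$ to $M_{f_i}$. Setting $B=\prod_i A_{f_i}$, which is an fppf covering of $A$ by Lemma 1.2(ii) of \cite{z}, the $g_i$ assemble into $g\in GL(V)(B)$ with $\theta(B)(g)=M\otimes_A B$, proving surjectivity.

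Combining injectivity and fppf-local surjectivity yields that $\overline{\theta}$ is an isomorphism of $K$-sheaves. The main technical step is the local trivialization in the surjectivity argument; the key input is that finitely generated projective supermodules are locally free in the sense of Proposition \ref{rankprop}, which allows us to reduce an arbitrary direct summand of $A^{m|n}$ to the standard one $U\otimes B$ after passing to a Zariski (hence fppf) covering.
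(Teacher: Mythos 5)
Your proposal is correct and follows essentially the same route as the paper: embed the naive quotient into $Gr$ via $g \mapsto g(U\otimes A)$, note that $Gr$ is a $K$-sheaf (Corollary \ref{Grcor} and Proposition \ref{superschemeissheaf}), and establish fppf-density by using Proposition \ref{rankprop} to trivialize both $M$ and its complement simultaneously on a common Zariski refinement $\{A_{f_i}\}$, with $B=\prod_i A_{f_i}$ an fppf covering by Lemma 1.2(ii) of \cite{z}. The only differences are expository: you make explicit the base-matching construction of the elements $g_i\in GL(V)(A_{f_i})$, which the paper leaves implicit in its remark characterizing the image of the naive quotient.
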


\begin{proof}
We have an embedding $(GL(V)/P(U))_{(n)}\to Gr$ given by $g \mapsto g(U\otimes A),\ g\in GL(V)(A),\ A\in \mathsf{SAlg}_K$.
Moreover, an element $M\in Gr(A)$ belongs to $(GL(V)/P(U))_{(n)}(A)$ iff $A^{m|n}=M\bigoplus P$ and both $M$ and $P$ are free $A$-supermodules of superranks $s|r$ and $(m-s)|(n-r)$, respectively. 
It remains to prove that $(GL(V)/P(U))_{(n)}$ is {\it dense} in $Gr$ with respect to the Grothendieck topology of fppf coverings. 

By Proposition \ref{rankprop} (4) for an element $M\in Gr(A)$ and its complement $P$ there are elements $f_1,\ldots , f_t, g_1, \ldots , g_{l}\in A_0$ such that $\sum_{1\leq i\leq t}A_0 f_i=\sum_{1\leq j\leq l}A_0 g_j=A_0$ and each $A_{f_i}$-supermodule $M_{f_i}$ (respectively,
each $A_{g_j}$-supermodule $P_{g_j}$) is free. Observe that Proposition 7 from \cite{bur}, II, \S 2, holds for any (not necessary commutative)
ring $A$ and any multiplicative subsets $S, T$ of its center. In particular, $A_{f_i g_j}$-supermodules $M_{f_i g_j}$ and $P_{f_i g_j}$ are free,
$1\leq i\leq t, 1\leq j\leq l$. Since $\sum_{1\leq i\leq t, 1\leq j\leq l}A_0f_i g_j=A_0$, Lemma 1.2 (ii), \cite{z}, concludes the proof.
\end{proof}

\section{Affiness criteria for quotient dur $K$-sheaves}\label{affinessdursheaf}

Let $G = SSp~D$ be an affine supergroup. Let $X = SSp~B$ be an affine superscheme on which 
$G$ acts from the right. Thus, $D$ is a Hopf superalgebra so that $\mathsf{SMod}^D$ forms a tensor category
in the obvious way, and $B$ is a right $D$-comodule superalgebra, that is, an algebra object in  
$\mathsf{SMod}^D$. Let $\rho : B \to B \otimes D$ denote the structure on $D$; it then gives rise to the
action $SSp~\rho : X \times G \to X$. Set $C = B^G$, the superalgebra of $G$-invariants in $B$. This coincides with
\begin{equation}\label{coinv}
B^{coD} = \{b \in B \mid \rho(b) = b \otimes 1 \}.
\end{equation}
Recall from Section \ref{K-functors} that $X \tilde{/} G$ (resp.,  $X \tilde{\tilde{/}} G$) denotes the sheafification
(resp., the dur sheafification) of the naive quotient $(X/G)_{(n)}$ of $X$ over $G$. 

We let $\mathsf{SMod}^D_B$ denote the
category $(\mathsf{SMod}^D)_B$ of right $B$-module objects in $\mathsf{SMod}^D$, and call an object in this category
a $(D, B)$-Hopf supermodule; this notion is a superization of the relative Hopf modules which were defined by Takeuchi \cite{tak}.
Given an object $N$ in $\mathsf{SMod}_C$, the right $B$-supermodule $N \otimes_C B$, given $\mathrm{id}_N \otimes \rho :
N \otimes_C B \to (N \otimes_C B) \otimes D$, turns into an object in $\mathsf{SMod}_B^D$, so that we have 
the functor
\begin{equation}\label{Hopfmodfunctor}
\mathsf{SMod}_C \to \mathsf{SMod}_B^D, \quad  N \mapsto N \otimes_C B.
\end{equation}
This will be seen to be a tensor functor; see Remark \ref{tensorfunctor} below.

The next theorem will play a crucial role when we prove Theorem \ref{mainthm} and Proposition 
\ref{ev of factor is factor of ev}
in the following two sections. This theorem generalizes Satz A of Oberst \cite{ober} to the
super situation; see also Schneider \cite{sch}, Theorem I, which proves Oberst's Satz A in
the non-commutative situation.

\begin{theorem}\label{superOberst}
Let the notation be as above.

1. The following are equivalent: 
\begin{itemize}
\item[(1)]
$G$ acts on $X$ freely, and the dur $K$-sheaf $X \tilde{\tilde{/}} G$ is affine;
\item[(2)]
\begin{itemize}
\item[(a)]
$B$ is injective as a right $D$-comodule, and
\item[(b)]
the map $$\alpha = \alpha_B : B \otimes B \to B \otimes D,\ \alpha(b \otimes b') = b \rho (b')$$ 
is a surjection;
\end{itemize}
\item[(3)]
\begin{itemize}
\item[(a)]
$B$ is faithfully flat over $C$, and
\item[(b)]
the map $$\beta = \beta_B : B \otimes_C B \to B \otimes D,\ \beta(b \otimes_C b') = b \rho (b')$$ 
induced from the $\alpha_B$ above is a bijection.
\end{itemize}
\item[(4)]
The functor $\mathsf{SMod}_C \to \mathsf{SMod}_B^D$ given in \eqref{Hopfmodfunctor} is a category equivalence. 
\end{itemize}

If these conditions are satisfied, then $X \tilde{\tilde{/}} G = SSp~C$. 

2. Suppose that $G$ is algebraic, or in other words, $D$ is finitely generated. Suppose that $B$ is Noetherian;
see Proposition \ref{Noetherian}. 
If the equivalent conditions above are satisfied, 
then $X \tilde{\tilde{/}} G = SSp~C$ is Noetherian (or equivalently, $C$ is Noetherian), and it coincides with 
the $K$-sheaf $X \tilde{/} G$.
\end{theorem}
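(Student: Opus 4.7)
The plan is to follow the blueprint of the classical Oberst--Schneider theorem (\cite{ober}, \emph{Satz~A}; \cite{sch}, Theorem~I) on faithfully flat Hopf--Galois extensions, adapted to the super context. Condition (1) will be translated, via the universal property of the dur sheafification, into the purely algebraic assertion that $C\subseteq B$ is a super Hopf--Galois extension with $B$ faithfully flat over $C$; the four-way equivalence of Part~1 then follows from super analogues of the classical Hopf module and descent theorems, while Part~2 is a standard faithfully flat descent once the identification $X\tilde{\tilde{/}}G = SSp\,C$ is in hand.

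First I would establish (3)$\Leftrightarrow$(4). Assuming (3), the functor $N\mapsto N\otimes_C B$ has $M\mapsto M^{coD}$ as a candidate quasi-inverse, and the bijectivity of $\beta$ together with faithful flatness of $B$ over $C$ yields the structure isomorphisms $M\simeq M^{coD}\otimes_C B$ and $N\simeq (N\otimes_C B)^{coD}$ via super faithfully flat descent (Lemma~\ref{faithfulflat}). Conversely, under (4) the object $B\otimes_C B$ (a $(D,B)$-Hopf supermodule) and the free Hopf supermodule of rank one $B\otimes D$ both correspond to $C$, whence $\beta$ is an iso; exactness and faithfulness of the equivalence give faithful flatness. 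The direction (3)$\Rightarrow$(2) is then easy: $\beta$ bijective entails $\alpha$ surjective, and $B$ is coflat, hence injective by Proposition~\ref{injcomod}, because $(B\,\Box_D\,-)\otimes_C B\simeq B\otimes(-)$ is exact while $-\otimes_C B$ is faithfully exact.

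The delicate implication (2)$\Rightarrow$(3) is the one the authors postpone to Section~10 via the bosonization technique. The idea is to replace the super Hopf algebra $D$ and the super comodule algebra $B$ by ordinary counterparts $D^\sharp$ and $B^\sharp$ obtained by bosonizing with $K\mathbb{Z}_2$ (a special case of the Yetter--Drinfeld formalism mentioned in the introduction); the hypotheses surjectivity of $\alpha$ and $D$-coinjectivity transport to their non-super analogues for $B^\sharp$, and the classical Oberst--Schneider theorem delivers faithful flatness of $B^\sharp$ over $(B^\sharp)^{coD^\sharp}$ and bijectivity of its $\beta$. Unwinding the $\mathbb{Z}_2$-equivariance returns (3). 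For (1)$\Leftrightarrow$(2)/(3): by Remark~\ref{suitable} applied in the dur topology, $(X\tilde{\tilde{/}}G)(R)$ consists of classes of $R'$-points of $X$ for faithfully flat $R\to R'$ modulo the $G(R')$-action; freeness of the action amounts to $\alpha$ (equivalently $\beta$) being injective, while $X\tilde{\tilde{/}}G$ affine produces an invariants superalgebra $C'$ over which $B$ is faithfully flat with $\beta$ an iso, and a comparison of universal properties identifies $C'=C$ and $X\tilde{\tilde{/}}G = SSp\,C$.

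For Part~2, Noetherianness of $C$ is standard faithfully flat descent of the ascending chain condition: a chain $I_1\subseteq I_2\subseteq\cdots$ of superideals in $C$ yields a chain $I_n B$ which stabilizes in the Noetherian superalgebra $B$ (Proposition~\ref{Noetherian}), and faithful flatness recovers $I_n = I_n B\cap C$. For the coincidence $X\tilde{/}G = X\tilde{\tilde{/}}G$, the affine superscheme $SSp\,C$ is itself a $K$-sheaf by Proposition~\ref{superschemeissheaf}, so the canonical morphism $X\tilde{/}G\to X\tilde{\tilde{/}}G = SSp\,C$ is an isomorphism once one checks that an $R$-point of $SSp\,C$ lifts to an $X$-point after an \emph{fppf} (not merely faithfully flat) base change; this reduces to $B$ being finitely presented over $C$, which follows because $B\otimes_C B\simeq B\otimes D$ with $D$ finitely generated shows $B$ is of finite type over $C$ by super faithfully flat descent of finite generation, and $C$ Noetherian upgrades finite type to finite presentation. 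The main obstacle, as I see it, is the bosonization step in (2)$\Rightarrow$(3): one must carefully check that super-injectivity of $B$ as a $D$-comodule and surjectivity of $\alpha$ pass to their ordinary counterparts under bosonization, and that the classical invariants $(B^\sharp)^{coD^\sharp}$ unwind precisely to the super invariants $C$ after accounting for the sign rule \eqref{Z2action}.
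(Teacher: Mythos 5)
Your overall architecture coincides with the paper's: (3) $\Leftrightarrow$ (4) via faithfully flat descent together with the observation that $\beta_B$ is the adjunction counit at $B \otimes D$; (2) $\Leftrightarrow$ (3) via bosonization by $K\mathbb{Z}_2$ and the classical Oberst--Schneider theorem, which is exactly what Section 10 of the paper does; the equivalence with (1) by translating the dur-sheaf condition into algebra (the paper delegates this entirely to \cite{z}, Section 4, Propositions 4.1 and 4.2); and Part 2 by faithfully flat descent of the Noetherian property plus the observation that $C \leq B$ is an fppf covering. Your descent argument for the Noetherianity of $C$, and your derivation of finite presentation of $B$ over $C$ from $B \otimes_C B \simeq B \otimes D$ with $D$ finitely generated, are sound and in fact more explicit than the paper, which simply asserts $C$ Noetherian and $B \geq C$ and then cites \cite{z}, Proposition 4.2.

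There is, however, one step that is wrong as written and would derail your treatment of (1). You assert that ``freeness of the action amounts to $\alpha$ (equivalently $\beta$) being injective.'' Freeness means that for every $R$ the map $X(R) \times G(R) \to X(R) \times X(R)$, $(x,g) \mapsto (x, xg)$, is injective, i.e.\ that $X \times G \to X \times X$ is a monomorphism of $K$-functors; on coordinate superalgebras this dualizes to $\alpha_B : B \otimes B \to B \otimes D$ being an \emph{epimorphism} of superalgebras, and, in conjunction with the affineness of the quotient, a surjection --- which is precisely condition (2)(b). Injectivity of $\alpha_B$ is the dual of an entirely different (dominance-type) property and has nothing to do with freeness, so with your dictionary the equivalence of (1) with (2) and (3) cannot be established. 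A second, more minor, slip: in your proof of (4) $\Rightarrow$ (3) the objects $B \otimes_C B$ and $B \otimes D$ do not ``both correspond to $C$'' under the functor \eqref{Hopfmodfunctor}; rather, $B \otimes_C B$ is the image of $B \in \mathsf{SMod}_C$ while $(B \otimes D)^{coD} = B$, so the correct statement --- the one the paper uses --- is that $\beta_B$ is the counit of the adjunction evaluated at $B \otimes D$, hence an isomorphism once the adjunction is an equivalence, with faithful flatness of $B$ over $C$ coming from the faithful exactness of that equivalence.
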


\begin{proof}
The equivalence (1) $\Leftrightarrow$ (3) as well as the last statement of Part 1 has been proved by
the second named author \cite{z}, Section 4; see Proposition 4.2 for (3) $\Rightarrow$ (1), and Proposition 4.1
and the second sentence of its proof for the rest.  
If the condition (3) is satisfied, the assumptions of Part 2 imply that $C$ is Noetherian, and $B \geq C$.
Therefore, Part 2 follows again by
\cite{z}, Proposition 4.2. 

The equivalence (3) $\Leftrightarrow$ (4) is proved in the same way of proving
\cite{sch}, Theorem 3.7, in its special case when $H = \overline{H}$. In fact, (3) $\Rightarrow$ (4) can be
alternatively understood as the faithfully flat descent theorem, which proves under the assumption (3)(a), 
the equivalence between $\mathsf{SMod}_C$ and the category of right comodules over the natural coring $B \otimes_C B$
in $\mathsf{SMod}_K$, since the latter category is identified with $\mathsf{SMod}_B^D$ through the isomorphism
assumed by (3)(b). On the other hand, the functor given in \eqref{Hopfmodfunctor} has the right
adjoint $M \mapsto M^{coD}$ (see \eqref{coinv}), where $M \in \mathsf{SMod}_B^D$,
and the map $\beta_B$ is the adjunction for $B \otimes D$. This together with the faithful exactness 
of the equivalence shows (4) $\Rightarrow$ (3).

We postpone proving the remaining (2) $\Leftrightarrow$ (3) until Section \ref{bosonization}.
\end{proof}

\begin{rem}\label{tensorfunctor}
Keep the notation as above. Given an object $N$ in $\mathsf{SMod}_C$, regard it as a $(C, C)$-superbimodule,
by defining the left $C$-action as in \eqref{leftaction}. Then, $\mathsf{SMod}_C$ forms
a tensor category with respect to the tensor product $N \otimes_C N'$ and the unit $C$. Similarly, $\mathsf{SMod}_B^D$
forms a tensor category. We see that the functor given in \eqref{Hopfmodfunctor} is a tensor functor. 
\end{rem} 

\section{Proof of the main theorem}\label{proofmain}

This section is devoted mostly to prove our Main Theorem \ref{mainthm}.

\subsection{}\label{proof1}
Given $X \in \mathcal{F}$, we let $X_{res} = X|_{\mathsf{Alg}_K}$ denote the functor restricted to $\mathsf{Alg}_K$. Thus we have the functor
$$\mathcal{F} = \mathsf{Sets}^{\mathsf{SAlg}_K} \to \mathsf{Sets}^{\mathsf{Alg}_K},\ X \mapsto X_{res}.$$ 
The geometric realization of the functor $X_{res}$ (cf. \cite{dg}, I, \S 1, 4.2)
is also denoted by $|X_{res}|$.
Given $A \in \mathsf{SAlg}_K$, recall from \eqref{overlineA} the definition of $\overline{A}$. Let $\pi_A : A \to \overline{A}$
denote the quotient map.  
 
\begin{lemma}\label{coincidence}
Let $X \in \mathcal{F}$.

1. We have the coincidence  $|X|^e=|X_{res}|^e$ of topological spaces. 

2. $U \mapsto U_{res}$ gives a bijection, $Op(X) \overset{\simeq}{\longrightarrow} Op(X_{res})$.
\end{lemma}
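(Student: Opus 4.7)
The plan is to realize both $|X|^e$ and $|X_{res}|^e$ as direct limits over $\mathsf{F}_K$ via Lemma \ref{natiso}, and then transfer the bijection of Proposition \ref{open-open} across this identification.

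For Part 1, I would apply Lemma \ref{natiso} twice to obtain $|X|^e \simeq \lim_{\rightarrow} X|_{\mathsf{F}_K}$ and $|X_{res}|^e \simeq \lim_{\rightarrow} X_{res}|_{\mathsf{F}_K}$. Since every $B \in \mathsf{F}_K$ is purely even, these are the same direct system, so the underlying sets coincide. The topology on $X(B) = X_{res}(B)$ was defined in the proof of Lemma \ref{natiso} in terms of pairs $(R, x) \in \mathcal{M}_X$ on the super side and analogously in terms of $\mathcal{M}_{X_{res}}$ on the non-super side. The key reduction is that any morphism $R \to B$ from a superalgebra to a field factors through the even quotient $\pi_R : R \to \overline{R}$, because $\phi(R_1) \subseteq B_1 = 0$; consequently $SSp\ R(B)$ bijects canonically with $SSp\ \overline{R}(B)$, each $(R, x) \in \mathcal{M}_X$ restricts on $B$-points to $(\overline{R}, \overline{x}) \in \mathcal{M}_{X_{res}}$ with $\overline{x} = X(\pi_R)(x)$, and under this bijection each basic open $D(I)(B)$ for a superideal $I \subseteq R$ coincides with $D(\pi_R(I))(B)$ for an ordinary ideal in $\overline{R}$. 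Hence the two topologies on $X(B)$ agree, and so also on the direct limit.

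For Part 2, I would combine Part 1 with two applications of Proposition \ref{open-open} to produce the chain
$$Op(X) \overset{\simeq}{\longleftarrow} Op(|X|^e) = Op(|X_{res}|^e) \overset{\simeq}{\longrightarrow} Op(X_{res})$$
sending $X_P \leftrightarrow P \leftrightarrow (X_{res})_P$. To identify this composite with $U \mapsto U_{res}$, write $U = X_P$ and unpack the definition: for $A \in \mathsf{Alg}_K$, an element $x \in X(A)$ lies in $X_P(A)$ iff $X(\phi)(x)$ lies over $P$ for every $\phi : A \to B$ with $B \in \mathsf{F}_K$, which is precisely the condition defining $(X_{res})_P(A)$ since $A$ and $B$ are both purely even. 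Thus $(X_P)_{res} = (X_{res})_P$, and the composite bijection is indeed $U \mapsto U_{res}$. Injectivity alone also follows at once from Lemma \ref{open}(2), since knowing $U_{res}$ determines $U(C)$ for every $C \in \mathsf{F}_K$.

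The main obstacle is the topology comparison in Part 1: a priori the ``super'' topology on $X(B)$ could be strictly finer than the ``non-super'' one, because ranging over the larger collection $\mathcal{M}_X$ imposes more conditions for a subset to be declared open. The factorization $R \to \overline{R} \to B$ through the maximal purely even quotient is exactly what reduces the super side to the non-super side and collapses this potential gap.
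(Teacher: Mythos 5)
Your argument is correct and essentially identical to the paper's: Part 1 is proved by identifying both spaces with $\lim\limits_{\rightarrow} X|_{\mathsf{F}_K}$ via Lemma \ref{natiso} and using the factorization of any $R \to B$, $B \in \mathsf{F}_K$, through $\pi_R : R \to \overline{R}$ to match the two topologies, and Part 2 follows from Part 1 and Proposition \ref{open-open} exactly as you describe (the paper leaves the identification $(X_P)_{res} = (X_{res})_P$ implicit). One small slip in your closing remark: imposing \emph{more} openness conditions by ranging over the larger collection $\mathcal{M}_X$ would a priori make the super topology \emph{coarser}, not finer, though this does not affect your proof, since the nontrivial containment is precisely the one your factorization argument handles.
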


\begin{proof} 
1. By Lemma \ref{natiso} 
we have coincidence $|X|^e=|X_{res}|^e$ of sets.
Again by Lemma \ref{natiso}, $V\subseteq X(B)$ is open iff for any pair $(R, x)\in \mathcal{M}_{X}$ there is a superideal
$I_x$ in $R$ such that ${\bf g}_x(B)^{-1}(V)=D(I_x)(B)$. Denote $X(\pi_R)(x)\in X(\overline{R})$ by
$\overline{x}$. It remains to notice that $SSp \ R(B)=SSp \ \overline{R}(B)$, ${\bf g}_x^{-1}(V)= {\bf g}^{-1}_{\overline{x}}(V)$ and
$D(I_x)(B)=D(\overline{I_x})(B),$ where $\overline{I_x}=\pi_R(I_x)$. 

2. This follows from Part 1 above and Proposition \ref{open-open}.
\end{proof}
 
\begin{rem}\label{varcoincidence}
The lemma above can be generalized so that if $Y\subseteq X$ is a subfunctor 
such that $X_{res} = Y_{res}$, then $|X|^e=|Y|^e$ and $Op(X) \overset{\simeq}{\longrightarrow} Op(Y)$.  
\end{rem}

\begin{lemma}\label{flat}
Suppose $B \in \mathsf{SAlg}_K$, $C \in \mathsf{Alg}_K$. Then, $B\geq C$ iff $B_0\geq C$ and $B_1$ is a flat $C$-module that is finitely
presented as a $B_0$-module. 
\end{lemma}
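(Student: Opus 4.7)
The plan is to prove both directions by exploiting the $C$-module decomposition $B = B_0 \oplus B_1$, which respects the grading because $C$ is purely even. Flatness of $B$ over $C$ is equivalent to flatness of both summands (as each is a direct $C$-summand), and for faithful flatness, given a maximal ideal $\mathfrak{m} \subset C$, one has $\mathfrak{m}B = \mathfrak{m}B_0 \oplus \mathfrak{m}B_1$; since $1 \in B_0$, the conditions $\mathfrak{m}B = B$ and $\mathfrak{m}B_0 = B_0$ coincide. Hence $B$ is faithfully flat over $C$ if and only if $B_0$ is.

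For the forward direction's finite presentation, I would fix a presentation $B = C[x_1, \ldots, x_m, \xi_1, \ldots, \xi_n]/I$ with $I = (f_1, \ldots, f_k, g_1, \ldots, g_l)$ (even $f_p$, odd $g_q$). The even part $C[x, \xi]_0 = C[x] \otimes_C \wedge^{\mathrm{ev}}(\xi)$ is finitely presented as a $C$-algebra, since $\wedge^{\mathrm{ev}}(\xi)$ is a finitely generated free $C$-module of rank $2^{n-1}$ (and any $C$-algebra that is finitely generated as a $C$-module admits a finite presentation from its multiplication table). The intersection $I_0 = I \cap C[x, \xi]_0$ is generated as an ideal of $C[x, \xi]_0$ by the finite set $\{f_p\} \cup \{\xi_j g_q\}$, so $B_0 = C[x, \xi]_0 / I_0$ is finitely presented over $C$. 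Parallel reasoning handles $B_1$: the surjection $\wedge^{\mathrm{ev}}(\xi)^n \twoheadrightarrow \wedge^{\mathrm{odd}}(\xi)$ sending $e_i \mapsto \xi_i$ splits as a $C$-linear map (its target being a free $C$-module), so its kernel is a finitely generated projective $C$-module, a fortiori finitely generated over $\wedge^{\mathrm{ev}}(\xi)$. Hence $C[x, \xi]_1$ is finitely presented as $C[x, \xi]_0$-module; with $I_1 = I \cap C[x, \xi]_1$ generated by $\{g_q\} \cup \{\xi_j f_p\}$, $B_1 = C[x, \xi]_1/I_1$ is finitely presented as $C[x, \xi]_0$-module, and then as $B_0$-module by base change along $C[x, \xi]_0 \twoheadrightarrow B_0$ (possible because $I_0$ annihilates $B_1$).

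For the converse, starting from $B_0 = C[x_1, \ldots, x_m]/(f_1, \ldots, f_k)$ and a free $B_0$-module presentation $B_0^l \xrightarrow{(a_{ij})} B_0^n \to B_1 \to 0$ with $\xi_1, \ldots, \xi_n$ the images of the standard basis, I would lift the entries $a_{ij}$ and each product $\xi_i\xi_j \in B_0$ to polynomials $\tilde a_{ij}, \tilde h_{ij} \in C[x]$, and assert
\[
B \ \cong\ C[x_1, \ldots, x_m, \xi_1, \ldots, \xi_n] \Big/ \bigl(f_p,\ \textstyle\sum_j \tilde a_{ij}\xi_j,\ \xi_i\xi_j - \tilde h_{ij}\bigr),
\]
which is a finite presentation as $C$-superalgebra. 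Verification is by direct computation of the even and odd parts: the relations $\xi_i\xi_j = \tilde h_{ij}$ reduce all pair-products of the $\xi$'s to polynomials in $x$, so the even part simplifies to $C[x]/(f_p) = B_0$ (the consistency relations $\sum_j \tilde a_{ij}\tilde h_{kj} \in (f_p)$ holding because $\sum_j a_{ij}\xi_j = 0$ in $B_1$ forces $\sum_j a_{ij} h_{kj} = 0$ in $B_0$), and the odd part simplifies to $\bigoplus_j B_0\xi_j$ modulo the lifted relations, giving back $B_1$.

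The main obstacle lies in the forward direction's analysis of $C[x, \xi]_1$ as $C[x, \xi]_0$-module via the exterior algebra structure — in particular, controlling the relations in $\wedge^{\mathrm{odd}}(\xi)$ over the non-domain $\wedge^{\mathrm{ev}}(\xi)$ — but this cleanly reduces to the $C$-linear splitting observation. The converse direction amounts to bookkeeping, with odd anticommutativity being automatic in the free supercommutative superalgebra (since the characteristic of $K$ is $\neq 2$).
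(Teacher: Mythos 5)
Your argument is correct and follows essentially the same route as the paper: reduce (faithful) flatness to the $C$-module decomposition $B=B_0\oplus B_1$, present $B$ as $C[x,\xi]/I$ with homogeneous generators, and identify the generators of $I_0$ and $I_1$ to get finite presentation of $B_0$ over $C$ and of $B_1$ over $B_0$, with the converse built from a module presentation of $B_1$ and the products $\xi_i\xi_j$. You merely supply details the paper leaves implicit (the $C$-linear splitting showing $C[x,\xi]_1$ is finitely presented over $C[x,\xi]_0$, and the explicit presentation in the converse), both of which check out.
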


\begin{proof}
The superalgebra $B$ is a faithfully flat $C$-(super)module iff $B_0$ is a faithfully flat $C$-module and $B_1$ is a flat
$C$-module (cf. \cite{bur}, I, \S 2, Proposition 2). Assume that $B=C[k|l]/I$, where $C[k|l]$ is a {\it free commutative} $C$-superalgebra with {\it free even} generators $t_1, \ldots , t_k$ and {\it free odd} generators $z_1, \ldots , z_l$. Besides, $I$ is a finitely generated superideal of $C[k|l]$.
Let $f_1, \ldots f_d, f_{d+1}, \ldots, f_s$ be homogeneous generators of $I$ such that $|f_i|=0$ if $i\leq d$, 
and $|f_i|=1$ otherwise. Thus $I\bigcap C[k|l]_0$ is generated (as a $C[k|l]_0$-ideal) by $f_i, f_r z_j, 1\leq i\leq d, d+1\leq r\leq s, 1\leq j\leq l$.  
Since $C[k|l]_0$ is a finitely presented $C$-algebra, it infers that $B_0$ is. Analogously, $B_1=(\bigoplus_{1\leq j\leq l}C[k|l]_0z_j)/I_1$ and
$I_1$ is generated (as a $C[k|l]_0$-module) by $f_iz_j, f_r$. 
Conversely, if $B_1$ is a finitely presented $B_0$-module, then $B$ is a finitely presented $B_0$-superalgebra. 
\end{proof} 

Let $Y$ be a $K$-functor such that for any monomorphism of superalgebras $\phi : A\to B$, the map
$Y(\phi)$ is injective. The naive quotient $(G/H)_{(n)}$ gives an example of such a functor.
Assume additionally that $Y$ commutes with the direct products of superalgebras. In particular, $Y$ is suitable; see Definition \ref{def-suitable}.
It is clear that $Y_{ev}$ inherits both of the above properties of $Y$. In particular, $\widetilde{Y_{ev}}$ is a subfunctor of $\tilde{Y}$. Denote the 
sheafification of $Y_{res}$ in the category $\mathsf{Sets}^{\mathsf{Alg}_K}$ by $\widetilde{Y_{res}}$.

\begin{lemma}\label{identify}
If $Y$ is as above, then the functors $(\widetilde{Y_{ev}})_{res}$ and $\widetilde{Y_{res}}$ are canonically identified.
\end{lemma}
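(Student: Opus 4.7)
My plan is to compute both sides as explicit direct limits via Remark \ref{suitable} and then construct mutually inverse maps.

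First, since $Y$ commutes with finite direct products of superalgebras and sends monomorphisms to injections, the same holds for $Y_{ev}$ (as noted in the excerpt), so both $Y_{ev}$ and its restriction $(Y_{ev})_{res}$ are suitable. For $A \in \mathsf{Alg}_K$ one has $A_0 = A$, hence $(Y_{ev})_{res}(A) = Y(A) = Y_{res}(A)$. Remark \ref{suitable} and its analogue for the fppf topology on $\mathsf{Alg}_K$ then yield
\[
\widetilde{Y_{ev}}(A) = \lim_{\longrightarrow} Y_{ev}(B, A), \qquad \widetilde{Y_{res}}(A) = \lim_{\longrightarrow} Y_{res}(C, A),
\]
with $B$ ranging over fppf coverings of $A$ in $\mathsf{SAlg}_K$ and $C$ over fppf coverings of $A$ in $\mathsf{Alg}_K$.

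For the map $\widetilde{Y_{res}}(A) \to (\widetilde{Y_{ev}})_{res}(A)$, every $C \geq A$ in $\mathsf{Alg}_K$ is also an fppf covering in $\mathsf{SAlg}_K$; since $C \otimes_A C$ is purely even we have $Y_{ev}(C, A) = Y_{res}(C, A)$, so the right-hand direct system canonically embeds in the left-hand one.

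For the reverse map, given $B \geq A$ in $\mathsf{SAlg}_K$, Lemma \ref{flat} provides $B_0 \geq A$ in $\mathsf{Alg}_K$. The identification $Y_{ev}(B) \simeq Y(B_0)$ (via the injection induced by $\iota^B_{B_0}$) sends an element $y \in Y_{ev}(B, A)$ to some $y \in Y(B_0)$, and the crucial step is to verify $y \in Y_{res}(B_0, A)$, i.e.\ that the two pullbacks of $y$ in $Y(B_0 \otimes_A B_0)$ coincide. Since $A$ is purely even one has the $A$-module decomposition
\[
(B \otimes_A B)_0 = (B_0 \otimes_A B_0) \oplus (B_1 \otimes_A B_1),
\]
so the natural morphism $B_0 \otimes_A B_0 \to B \otimes_A B$ is a direct-summand monomorphism of superalgebras, and applying $Y$ produces an injection $Y(B_0 \otimes_A B_0) \hookrightarrow Y(B \otimes_A B)$. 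Chasing the commutative diagram formed by the two structure maps $B_0 \rightrightarrows B_0 \otimes_A B_0$ and $B \rightrightarrows B \otimes_A B$ then transfers the equalizer condition defining $Y_{ev}(B, A)$ to the one defining $Y_{res}(B_0, A)$, giving the desired class in $\widetilde{Y_{res}}(A)$.

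Finally I would check that the two constructions give inverse bijections in the direct limits. The round trip $\widetilde{Y_{res}}(A) \to \widetilde{Y_{ev}}(A) \to \widetilde{Y_{res}}(A)$ is the identity because $C_0 = C$ for $C \in \mathsf{Alg}_K$. For the other direction, starting from $y \in Y_{ev}(B, A)$ the intermediate element lies in $Y_{ev}(B_0, A)$, and the $A$-superalgebra inclusion $B_0 \hookrightarrow B$ is a morphism in the filtered system of fppf coverings of $A$ (both $B$ and $B_0$ are such coverings), which sends $y$ back to $y$; hence the two classes agree in $\widetilde{Y_{ev}}(A)$. Naturality in $A$ is routine. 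The main obstacle is the equalizer transfer in the preceding paragraph, which relies essentially on $A$ being purely even (so that the even tensor factor splits off cleanly) and on the hypothesis that $Y$ preserves monomorphisms.
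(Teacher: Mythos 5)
Your proposal is correct and follows essentially the same route as the paper: identify $Y_{ev}(B)$ with $Y(B_0)$ via $Y(\iota^B_{B_0})$, invoke Lemma \ref{flat} to get $B_0 \geq A$, and transfer the descent datum using the injectivity of $Y$ on the monomorphism $B_0 \otimes_A B_0 \to B \otimes_A B$ (the paper simply asserts the vertical arrows in its comparison diagram are embeddings, where you make the direct-sum splitting of $(B\otimes_A B)_0$ explicit). The only step you defer, which the paper handles explicitly by passing to the compositum $B_0 \otimes_A D_0$, is the independence of the chosen covering $B$, needed so that the reverse assignment descends to the direct limit.
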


\begin{proof}
For any $B\in \mathsf{SAlg}_K$ we identify the sets $Y_{ev}(B)$ and $Y(B_0)$ via $Y(\iota^B_{B_0})$. 
If $C\in \mathsf{Alg}_K$ and $y\in \widetilde{Y_{ev}}(C)$, then there is $B\geq C$ such that
$y$ belongs to $\mathrm{Ker}(Y(B_0)\begin{array}{c}\to \\
\to\end{array} Y((B\otimes_C B)_0))$.
By Lemma \ref{flat} $B_0\geq C$. We have a commutative diagram 
$$\begin{array}{ccc}
Y(B_0) & \begin{array}{c}\to \\
\to\end{array} & Y((B\otimes_C B)_0) \\
\uparrow & & \uparrow \\
Y(B_0) & \begin{array}{c}\to \\
\to\end{array} & Y(B_0\otimes_C B_0)
\end{array}
$$
whose vertical arrows are embeddings (the left arrow is an identity map).
Thus $y$ belongs to $\mathrm{Ker}(Y(B_0)\begin{array}{c}\to \\
\to\end{array} Y(B_0\otimes_C B_0))$, that is $y$ represents an element $y'\in \widetilde{Y_{res}}(C)$.
The definition of $y'$ does not depend on $B$. If $z\in Y(D_0)$ also represents $y$, where $D\geq C$, then
$C\leq B_0, D_0\leq R=B_0\otimes_C D_0$ and 
$Y(\iota^R_{B_0})(y)=Y(\iota^R_{D_0})(z)$ belongs to 
$\mathrm{Ker}(Y(R)\begin{array}{c}\to \\
\to\end{array} Y(R\otimes_C R))$. 
The map $y\mapsto y'$ is obviously a bijection. We leave it to the reader to check its functoriality.
\end{proof}
     
\begin{corollary}\label{identifycor}
Let $G$ be an algebraic supergroup, and let $H \subseteq G$ be a closed supersubgroup. 

1. We have $(G_{ev} \tilde{/} H_{ev})_{res} = G_{res} \tilde{/} H_{res}$.

2. $U \mapsto U_{res}$ well defines a map $Op(G_{ev} \tilde{/} H_{ev}) \to Op(G_{res} \tilde{/} H_{res})$,
and the thus defined map is a bijection.
\end{corollary}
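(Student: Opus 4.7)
The strategy is to deduce Part 1 from Lemma \ref{identify} applied to $Y = (G/H)_{(n)}$, and then obtain Part 2 almost immediately by combining Part 1 with Lemma \ref{coincidence}(2).

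For Part 1, we first check that $Y = (G/H)_{(n)}$ satisfies the hypotheses of Lemma \ref{identify}. Since $G$ and $H$ are representable, they commute with finite direct products, and so does $Y$. If $\phi : A \hookrightarrow B$ is a monomorphism of superalgebras, then $G(A) \hookrightarrow G(B)$ and $H(A) \hookrightarrow H(B)$ are injective, and one has $H(A) = H(B)\cap G(A)$ inside $G(B)$ because $H$ is a subfunctor of $G$. Hence two cosets in $G(A)/H(A)$ that become equal in $G(B)/H(B)$ are already equal in $G(A)/H(A)$, so $Y(\phi)$ is injective. We next identify $Y_{ev}$ and $Y_{res}$ with the naive quotients of the corresponding supergroups. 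The same coset argument, applied to the canonical inclusion $\iota^B_{B_0} : B_0 \hookrightarrow B$, together with the canonical identification $G_{ev}(B) = G(B_0)$ (an identification, since $G(B_0)\to G(B)$ is injective whenever $B_0\hookrightarrow B$ is), yields
\[
Y_{ev}(B) \;=\; G(B_0)/H(B_0) \;=\; G_{ev}(B)/H_{ev}(B) \;=\; (G_{ev}/H_{ev})_{(n)}(B).
\]
Trivially $Y_{res} = (G_{res}/H_{res})_{(n)}$. Applying Lemma \ref{identify} then gives
\[
(G_{ev}\tilde{/} H_{ev})_{res} \;=\; (\widetilde{Y_{ev}})_{res} \;=\; \widetilde{Y_{res}} \;=\; G_{res}\tilde{/} H_{res},
\]
which is Part 1.

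Part 2 is then formal: taking $X = G_{ev}\tilde{/} H_{ev}$ in Lemma \ref{coincidence}(2), which applies to any $K$-functor, we obtain a bijection $Op(G_{ev}\tilde{/} H_{ev}) \to Op((G_{ev}\tilde{/} H_{ev})_{res})$ given by $U \mapsto U_{res}$, and by Part 1 the codomain coincides with $Op(G_{res}\tilde{/} H_{res})$. The only substantive step is the coset-level verification that $Y_{ev} = (G_{ev}/H_{ev})_{(n)}$, which rests on the elementary identity $H(A) = H(B)\cap G(A)$ inside $G(B)$; everything else is either functoriality of sheafification or a direct invocation of the preceding lemmas.
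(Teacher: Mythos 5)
Your proof is correct and follows essentially the same route as the paper: Part 1 is Lemma \ref{identify} applied to $Y=(G/H)_{(n)}$, and Part 2 combines Part 1 with Lemma \ref{coincidence}(2). The only difference is that you spell out the verification that $(G/H)_{(n)}$ satisfies the hypotheses of Lemma \ref{identify} and that $Y_{ev}=(G_{ev}/H_{ev})_{(n)}$ (via $H(A)=H(B)\cap G(A)$), details the paper leaves implicit.
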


\begin{proof}
1. This follows by Lemma \ref{identify} applied to $(G/H)_{(n)}$, since
$((G/H)_{(n)})_{res} =(G_{res}/H_{res})_{(n)}$.

2. This follows by Part 1 above and Part 2 of Lemma \ref{coincidence}.
\end{proof}

\begin{rem}\label{GL/P}
Let $U \subseteq V$ be supervector spaces such as in Proposition \ref{Grprop}. Then,
$$(GL(V) \tilde{/} P(U))_{res} = 
GL(V_0) \tilde{/} P(U_0) \times GL(V_1) \tilde{/} P(U_1) = GL(V)_{res} \tilde{/} P(U)_{res}.$$
This follows from Proposition \ref{Grprop}, since we see that the supergrassmannian $Gr(s|r, m|n)$,
restricted to $\mathsf{Alg}_K$, is canonically isomorphic to the direct product $Gr(s, m) \times Gr(r, n)$
of Grassmannians. Indeed, if $R \in \mathsf{Alg}_K$, then an $R$-supermodule $M$ is a direct summand of $R^{m|n}=V \otimes R$ iff 
$M_i$ is a direct summand of $V_i \otimes R,\ i=0, 1$.
\end{rem}

In what follows 
let $G$ be an algebraic supergroup, and $H \subseteq G$ a closed supersubgroup. Since one sees
$(G / H)_{(n)} \supseteq (G_{ev} / H_{ev})_{(n)}$, it follows that
$$G \tilde{/} H \supseteq G_{ev} \tilde{/} H_{ev}.$$

\begin{prop}\label{surjection}
The map
\begin{equation}\label{mapOp1}
Op(G \tilde{/} H) \to Op(G_{ev} \tilde{/} H_{ev})
\end{equation}
given by $U \mapsto U \bigcap (G_{ev}/H_{ev})$ is a surjection.
\end{prop}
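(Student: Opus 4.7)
My plan is to derive Proposition~\ref{surjection} from Remark~\ref{varcoincidence} applied to the inclusion $Y:=G_{ev}\tilde{/}H_{ev}\hookrightarrow X:=G\tilde{/}H$. That remark together with Proposition~\ref{open-open} will provide the homeomorphism $|X|^e=|Y|^e$ and a resulting bijection $Op(X)\overset{\simeq}{\longrightarrow} Op(Y)$. Tracing definitions, this bijection is precisely $U\mapsto U\cap Y$: for every $F\in\mathsf{F}_K$ the restrictions $U|_{\mathsf{F}_K}$ and $(U\cap Y)|_{\mathsf{F}_K}$ coincide, since $F_1=0$ forces $X(F)=Y(F)$ and hence $U(F)\subseteq Y(F)$. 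Thus, once the hypothesis $X_{res}=Y_{res}$ of Remark~\ref{varcoincidence} is in hand, the map of the Proposition is a bijection, a fortiori a surjection.

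To establish $X_{res}=Y_{res}$, I would first rewrite $Y_{res}=G_{res}\tilde{/}H_{res}$ via Corollary~\ref{identifycor}(1); the containment $Y_{res}\subseteq X_{res}$ is then immediate from $Y\subseteq X$. For the reverse, fix $C\in\mathsf{Alg}_K$ and $y\in X(C)$, and, by Remark~\ref{suitable}, pick an fppf cover $B\geq C$ in $\mathsf{SAlg}_K$ together with a representative $gH(B)$, $g\in G(B)$, satisfying the cocycle condition over $B\otimes_C B$. The plan is then to pass to the purely even quotient $\overline{B}=B/BB_1$: since $\overline{B}$ is purely even, the composite $K[G]\overset{g}{\to}B\twoheadrightarrow\overline{B}$ automatically factors through $\overline{K[G]}$, producing $\bar{g}\in G(\overline{B})=G_{res}(\overline{B})$. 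Pushing the cocycle identity forward along $B\otimes_C B\to\overline{B}\otimes_C\overline{B}$ yields the analogous identity for $\bar{g}H(\overline{B})$, so that $y$ is represented over an $\mathsf{Alg}_K$-cover and therefore lies in $G_{res}\tilde{/}H_{res}(C)$---provided $\overline{B}\geq C$ in $\mathsf{Alg}_K$.

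The hard part will be verifying this last condition $\overline{B}\geq C$. Finite presentation of $\overline{B}$ over $C$ is straightforward from Lemma~\ref{flat}, which forces $B_1$ to be finitely generated as a $B_0$-module, whence $BB_1$ is a finitely generated ideal of $B$ and the quotient $\overline{B}$ is finitely presented over $C$. Faithful flatness of $\overline{B}$ over $C$, however, need not hold for an arbitrary $B$, and this is the technical crux: I expect it to be necessary first to refine $B$ within the direct system of Remark~\ref{suitable}---for instance by tensoring $B$ over $C$ with an auxiliary fppf $C$-superalgebra that trivialises the obstruction coming from the multiplication $B_1\otimes B_1\to B_0$---so that for the refined cover $B'$ the purely even quotient $\overline{B'}$ becomes genuinely faithfully flat over $C$. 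Applying Lemma~\ref{flat} to $B'$ then delivers the desired fppf $\mathsf{Alg}_K$-cover, after which the remainder of the argument is a formal unwinding of the sheafification formula.
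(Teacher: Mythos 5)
Your reduction in the first paragraph is sound as far as it goes: if one knew $X_{res}=Y_{res}$ for $X=G\tilde{/}H$ and $Y=G_{ev}\tilde{/}H_{ev}$, then Remark~\ref{varcoincidence} together with Proposition~\ref{open-open} would indeed give that $U\mapsto U\cap Y$ is a bijection. But that hypothesis is exactly the hard point, and your argument for it has a genuine gap. The equality $(G\tilde{/}H)_{res}=G_{res}\tilde{/}H_{res}$ is Proposition~\ref{ev of factor is factor of ev} of the paper, which is proved only \emph{after} the main theorem and uses it (together with Theorem~\ref{superOberst}, the filtration by $B\mathfrak{a}^i$, and \cite{amas}, Lemma~5.15); invoking it here would be circular, and your attempt to reprove it by bare descent does not close. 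Concretely, the step you flag as the ``technical crux'' is not merely technical: it fails, and the proposed repair cannot work. Take $C=K[t]$ and $B=C[z_1,z_2]/(z_1z_2-t)$ with $z_1,z_2$ odd; then $B$ is free of rank $3$ over $C$, hence an fppf cover, but $t\in B_1^2$, so $\overline{B}=B_0/B_1^2$ is killed by $t$ and is not flat over $C$. Moreover, for any further fppf cover $B'\geq C$ the relation $t\in\bigl((B\otimes_C B')_1\bigr)^2$ persists, so $\overline{B\otimes_C B'}$ is again killed by $t$: refining within the direct system of Remark~\ref{suitable}, or tensoring with an auxiliary cover, can never restore faithful flatness of the purely even quotient. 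A correct argument must produce a genuinely different (purely even) cover over which $y$ is representable, and that is precisely what requires the machinery of Sections~8--9.

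The paper avoids this issue entirely by proving only surjectivity, via the supergrassmannian. Using \cite{z}, Proposition~6.3, one embeds $G\tilde{/}H$ into $GL(V)\tilde{/}P(U)=Gr(s|r,m|n)$. For the supergrassmannian the analogue of your hypothesis \emph{is} available (Remark~\ref{GL/P}: $(GL(V)\tilde{/}P(U))_{res}=GL(V)_{res}\tilde{/}P(U)_{res}$, because $Gr$ is an explicitly known superscheme whose restriction to $\mathsf{Alg}_K$ is a product of ordinary Grassmannians), so the map $Op(GL(V)\tilde{/}P(U))\to Op(GL(V)_{ev}\tilde{/}P(U)_{ev})$ is a bijection by Lemma~\ref{coincidence} and Corollary~\ref{identifycor}. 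The Demazure--Gabriel immersion theorem for $G_{res}\tilde{/}H_{res}\to GL(V)_{res}\tilde{/}P(U)_{res}$ then makes $Op(GL(V)_{ev}\tilde{/}P(U)_{ev})\to Op(G_{ev}\tilde{/}H_{ev})$ surjective, and factoring this composite through \eqref{mapOp1} yields the claim. If you want to salvage your approach, you would have to first carry out something equivalent to this embedding argument anyway; as written, the proposal assumes a statement that is at least as deep as the proposition being proved.
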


\begin{proof}
By \cite{z}, Proposition 6.3, there exists a faithful representation $G\to GL(V)$ such that 
$H = Stab_{G}(U)$ for a supersubspace $U$ of $V$.
With the range restricted to $\mathsf{Alg}_K$,
the group scheme $G_{res}$ acts on the scheme $$(GL(V) \tilde{/} P(U))_{res} = GL(V)_{res} \tilde{/} P(U)_{res}$$ by left multiplications, and $H_{res}$ coincides with the centralizer
of the element $e_K P(U)_{res}(K)$, where $e_K$ is a unit of $GL(V)_{res}(K)$. 
By Proposition 5.2 of \cite{dg}, III, \S 3, the canonical morphism
$G_{res} \tilde{/} H_{res} \to GL(V)_{res} \tilde{/} P(U)_{res}$ is an immersion.
It follows by Corollary \ref{identifycor} that the map
\begin{equation}\label{mapOp2}
Op(GL(V)_{ev} \tilde{/} P(U)_{ev}) \to Op(G_{ev} \tilde{/} H_{ev})
\end{equation}
induced from the canonical $G_{ev} \tilde{/} H_{ev} \to GL(V)_{ev} \tilde{/} P(U)_{ev}$
is a surjection. 
We see from Lemma \ref{coincidence}, Corollary \ref{identifycor} and Remark \ref{GL/P} that the map \eqref{mapOp1} for $GL(V)$,\ $P(U)$ 
\begin{equation*}\label{maponOp2}
Op(GL(V) \tilde{/} P(U)) \to Op(GL(V)_{ev} \tilde{/} P(U)_{ev})
\end{equation*}
is a bijection.
Since the composite of this last bijection with the surjection \eqref{mapOp2} factors through \eqref{mapOp1},
the desired surjectivity follows.  
\end{proof}

\begin{lemma}\label{G_ev/H_ev}
$G_{ev} \tilde{/} H_{ev}$ is a superscheme.
\end{lemma}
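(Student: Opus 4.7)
\medskip
\noindent The plan is to reduce the claim to the classical Demazure--Gabriel theorem by identifying $G_{ev} \tilde{/} H_{ev}$ with the ``purely even extension'' of the scheme $G_{res} \tilde{/} H_{res}$. First I would invoke the classical Demazure--Gabriel theorem cited in the introduction to obtain that $G_{res} \tilde{/} H_{res}$ is a Noetherian scheme, and fix a finite open affine covering $\{U_i\}$ whose coordinate algebras $R_i$ are Noetherian $K$-algebras.

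Next I would introduce the functor $\pi^\ast : \mathsf{Sets}^{\mathsf{Alg}_K} \to \mathsf{Sets}^{\mathsf{SAlg}_K}$ defined by $\pi^\ast(Y)(A) = Y(A_0)$. Three routine observations follow: (i) $\pi^\ast$ sends an affine scheme with coordinate algebra $R$ to the affine superscheme $SSp\,R$ with $R$ regarded as purely even; (ii) $\pi^\ast$ sends open subfunctors to open subfunctors --- for a morphism $SSp\,A\to \pi^\ast Y$ corresponding to $x\in Y(A_0)$, the pre-image of $\pi^\ast U$ is $D(AJ)\subseteq SSp\,A$, where $D(J)\subseteq SSp\,A_0$ is the pre-image of $U$ along the underlying classical morphism into $Y$; and (iii) $\pi^\ast Y$ is a local $K$-functor by Proposition \ref{local}, since a $T_{loc}$-covering of $R\in \mathsf{SAlg}_K$ uses only elements of $R_0$ and the induced sheaf diagram for $\pi^\ast Y$ reduces to the sheaf diagram of $Y$ for the corresponding $T_{loc}$-covering of $R_0$. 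Consequently $\pi^\ast(G_{res} \tilde{/} H_{res})$ is a Noetherian superscheme with affine open covering $\{SSp\,R_i\}$.

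It remains to identify $G_{ev} \tilde{/} H_{ev}$ with $\pi^\ast(G_{res} \tilde{/} H_{res})$. The natural inclusion $(G_{ev}/H_{ev})_{(n)}(A) = G(A_0)/H(A_0) \hookrightarrow (G_{res} \tilde{/} H_{res})(A_0) = \pi^\ast(G_{res} \tilde{/} H_{res})(A)$ factors uniquely through the sheafification $G_{ev} \tilde{/} H_{ev}$, since the target is a $K$-sheaf by Proposition \ref{superschemeissheaf}. The main obstacle --- and the hard part of the proof --- is to show the resulting morphism is an isomorphism. The essential technical point is a refinement of fppf covers: any $x\in (G_{res} \tilde{/} H_{res})(A_0)$ is represented by some $y\in G(D)/H(D)$ with $D\geq A_0$ in $\mathsf{Alg}_K$, and then $B := A\otimes_{A_0}D$ is an fppf cover of $A$ in $\mathsf{SAlg}_K$ whose even part is $B_0=D$ (since $D$ is purely even), so the same $y$ represents an element of $G_{ev} \tilde{/} H_{ev}(A)$ mapping onto $x$. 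A parallel refinement argument gives injectivity, completing the identification.
\medskip
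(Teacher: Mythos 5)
Your proof is correct, but it takes a genuinely different route from the paper's. The paper first establishes the bijection $Op(G_{ev} \tilde{/} H_{ev}) \simeq Op(G_{res} \tilde{/} H_{res})$ (Corollary \ref{identifycor}, which rests on Lemma \ref{identify} together with the geometric-realization machinery of Lemma \ref{coincidence} and Proposition \ref{open-open}), lifts a finite affine covering of the classical quotient to open subfunctors $U'$ of $G_{ev} \tilde{/} H_{ev}$, and then proves each $U'$ affine by passing to its pre-image $V'' = Sp~B$ in $G_{res}$ and invoking condition (3) of Theorem \ref{superOberst} for $H_{ev}$, $B$, $B^{H_{ev}}$. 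You instead construct the candidate superscheme $\pi^{\ast}(G_{res} \tilde{/} H_{res})$ outright and identify it with the sheafification of $(G_{ev}/H_{ev})_{(n)}$. Your three supporting observations are sound: $T_{loc}$-coverings of $R$ are governed by $R_0$ (since $(R_{f_if_j})_0 = (R_0)_{f_i} \otimes_{R_0} (R_0)_{f_j}$), the objects of $\mathsf{F}_K$ are purely even so open coverings are preserved, and an fppf cover $D \geq A_0$ base-changes to the fppf cover $A \otimes_{A_0} D \geq A$ whose even part is exactly $D$. Moreover, once $\pi^{\ast}(G_{res} \tilde{/} H_{res})$ is known to be a $K$-sheaf (via Proposition \ref{superschemeissheaf}), Remark \ref{suitable} applied to the subfunctor $(G_{ev}/H_{ev})_{(n)} \subseteq \pi^{\ast}(G_{res} \tilde{/} H_{res})$ realizes $G_{ev} \tilde{/} H_{ev}$ as a subfunctor of the target, so the injectivity half of your identification is automatic and only the cover-refinement argument for surjectivity is needed. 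Your route is lighter: it bypasses Theorem \ref{superOberst} and the $Op$-bijection entirely, and it yields the sharper structural statement $G_{ev} \tilde{/} H_{ev} \simeq \pi^{\ast}(G_{res} \tilde{/} H_{res})$, which the paper only obtains implicitly. What the paper's heavier route buys is the explicit affine model $U' \simeq SSp~B^{H_{ev}}$ with affine $H_{ev}$-stable pre-image $V' \subseteq G_{ev}$ and the verification of Theorem \ref{superOberst}(3) for $\overline{B}$; these by-products are re-used in Lemma \ref{lem-overlineB}, in the proof of Proposition \ref{prop-aim}, and in Proposition \ref{ev of factor is factor of ev}, so with your proof those ingredients would still have to be supplied separately later.
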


\begin{proof}
Let $U'' \in Op(G_{res} \tilde{/} H_{res})$ be affine. By 
Corollary \ref{identifycor}, Part 2,
there exists a unique $U' \in Op(G_{ev} \tilde{/} H_{ev})$ such that $(U')_{res} = U''$.
Denote the pre-images of $U'$, $U''$ by $V' \in Op(G_{ev})$, $V'' \in Op(G_{res})$, respectively.
Obviously, $(V')_{res} = V''$. In addition, $V'$ and $V''$ are stable under the (free) actions (from the right) by
$H_{ev}$ and $H_{res}$, respectively, and 
\begin{equation}\label{eq-quotient}
V' \tilde{/} H_{ev} = U', \quad V'' \tilde{/} H_{res} = U''.
\end{equation}
To see this for $U'$ and $V'$, for example, note that $V'$, being the pre-image of $U' \cap (G_{ev}/H_{ev})_{(n)}$,
is stable under the $H_{ev}$-action, and $(V'/H_{ev})_{(n)} = U' \cap (G_{ev}/H_{ev})_{(n)}$, whose sheafification
is obviously $U'$.  
 
By \cite{jan},
Part I, 5.7(1), $V''$ is affine. Suppose $V'' = Sp~B$, where $B \in \mathsf{Alg}_K$. 
Then, $U'' \simeq Sp~B^{H_{res}}$. We see that $H_{ev}$, $B$ and $B^{H_{ev}}(= B^{H_{res}})$ satisfy 
the condition (3) (for $G$, $B$ and $C$) of Theorem \ref{superOberst}, Part 1, and so
that $U'$ is affine, and is isomorphic to $SSp~B^{H_{ev}}$.

Recall that Theorem \ref{mainthm} was proved in the non-super situation by Demazure and
Gabriel \cite{dg}, III, \S 3, 5.4. This classical result ensures 
that $G_{res} \tilde{/} H_{res}$ has a finite open affine covering. If $U''$ ranges over 
such a covering, then the corresponding $U'$ form a finite open covering of $G_{ev} \tilde{/} H_{ev}$
which is affine, as was just seen.
\end{proof}

\subsection{}\label{proof2}
We need some purely Hopf-algebraic argument. Keep $G$, $H$ as above. 
Suppose $G = SSp~A$. Thus, $A$ is a finitely generated Hopf superalgebra. 
Let $\varepsilon : A \to K$ denote the counit of $A$. For every supersubalgebra, say $R$, of 
$A$, we suppose that it has $\varepsilon|_R : R \to K$
as counit, and let $R^+ = \mathrm{Ker}(\varepsilon|_R)$ denote its kernel.
The largest purely even quotient algebra 
$\overline{A} = A/AA_1$ of $A$
is now a quotient Hopf algebra.
We have
$$ G_{ev} = SSp~\overline{A}, \quad  G_{res} = Sp~\overline{A}.$$
Define
$$ W^A = A_1/A_0^+A_1;$$
this is the odd part of the cotangent space of $G$ at 1. 
Note that $A \mapsto \overline{A}$,
$A \mapsto W^A$ are functorial. Since $A$ is supposed to be finitely generated, $\overline{A}$
is finitely generated, and $W^A$ is finite-dimensional; see \cite{amas}, Proposition 4.4.
Regard $A$ as a left $\overline{A}$-comodule
superalgebra along the quotient map $\pi = \pi_A : A \to \overline{A}$. Let $\Delta : A \to 
A \otimes A,\ \Delta(a) =  \sum a_1 \otimes a_2$ denote the coproduct on $A$. Set
\begin{equation}\label{lcoinv}
R = {}^{co\overline{A}}A = \{ a \in A \mid \sum \pi(a_1) \otimes a_2 = \pi(1) \otimes a \}.
\end{equation}
This is a right coideal supersubalgebra of $A$, that is, a supersubalgebra such that $\Delta(R) \subset R \otimes A$.
Recall from \cite{amas} the following result.

\begin{prop}\label{prop-isom} (\cite{amas}, Theorem 4.5)\
There is a counit-preserving isomorphism 
\begin{equation}\label{isom}
A \overset{\simeq}{\longrightarrow} \overline{A} \otimes \wedge(W^A)
\end{equation}
of left $\overline{A}$-comodule superalgebras. It follows that there is a counit-preserving 
isomorphism $R \simeq \wedge(W^A)$ of superalgebras.
\end{prop}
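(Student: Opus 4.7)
The plan is to prove Proposition \ref{prop-isom} in three stages, combining faithfully flat descent for Hopf algebra quotients with a direct exterior-algebra computation on the coinvariant subalgebra.

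Stage one — coinvariant decomposition. First I would show that the quotient $\pi : A \to \overline{A}$ is a faithfully flat (super) Hopf algebra quotient. Since $A$ is supercommutative and finitely generated over $K$, this is standard (a super-analog of the Lagrange-type theorem for commutative Hopf algebras). By faithfully flat descent applied to $\pi$, the canonical multiplication map induces an isomorphism
\[
\overline{A} \otimes R \;\xrightarrow{\simeq}\; A
\]
of left $\overline{A}$-comodule right $R$-module superalgebras, where $R = {}^{co\overline{A}}A$ is as defined in \eqref{lcoinv}. In particular, $A$ is free as a right $R$-module over $\overline{A}$, and $R$ inherits a counit from $A$.

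Stage two — identification of $R$ with $\wedge(W^{A})$. This is the core step. Because $\mathrm{char}\,K \neq 2$, every odd element $x \in A_1$ satisfies $2x^2 = 0$ by supercommutativity, hence $x^2 = 0$; consequently $A_1$ is a nil ideal and the filtration $A \supset AA_1 \supset AA_1^{2} \supset \cdots$ terminates. Since $W^A = A_1/A_0^+ A_1$ is finite-dimensional (as $A$ is finitely generated), I pick odd elements $\xi_1,\dots,\xi_n \in A_1$ lifting a basis of $W^A$; by subtracting correction terms constructed via the counit and the antipode of $A$, I arrange $\xi_i \in R$. Supercommutativity then forces $\xi_i\xi_j + \xi_j\xi_i = 0$ and $\xi_i^2 = 0$, so the assignment $\bar\xi_i \mapsto \xi_i$ yields a counit-preserving superalgebra map $\phi : \wedge(W^A) \to R$. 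Surjectivity of $\phi$ follows from a Nakayama-type argument applied to the nilpotent augmentation ideal $R^+$ together with the identification $W^A \simeq R_1/(R_0^+ R_1)$ induced by Stage one; injectivity is obtained via the faithful flatness of Stage one and a rank/dimension count on graded pieces of the $AA_1$-adic filtration.

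Stage three — assembly. Applying $\overline{A}\otimes_K(-)$ to the isomorphism $\phi$ of Stage two and composing with the inverse of the descent isomorphism of Stage one produces the required counit-preserving isomorphism
\[
\overline{A}\otimes \wedge(W^A) \;\xrightarrow{\simeq}\; A
\]
of left $\overline{A}$-comodule superalgebras, with the final clause $R \simeq \wedge(W^A)$ being exactly the content of Stage two.

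The hard part will be Stage two, specifically ensuring the lifts $\xi_i$ can simultaneously be chosen inside $R$ and forming the exterior algebra on the nose, and then proving $\phi$ is injective. The delicacy lies in reconciling the two defining conditions on $R$ (that it is a right coideal subalgebra and that its odd cotangent space is $W^A$) with the absence of any a priori grading on $R$ beyond the $\mathbb{Z}_2$-parity; controlling this via the nilpotent filtration by powers of $A A_1$ and invoking faithful flatness from Stage one is what makes the dimension count $\dim_K R = 2^n$ work out.
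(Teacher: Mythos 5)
You should first note that the paper does not prove this proposition at all: it is quoted from \cite{amas}, Theorem 4.5, so there is no in-text argument to compare with; your proposal must therefore stand on its own, and as written it has two genuine gaps, both located exactly where the real difficulty lies. The first is in Stage one. Since $\overline{A}=A/AA_1$ is a \emph{quotient} of $A$ and not a subalgebra, there is no ``canonical multiplication map'' $\overline{A}\otimes R\to A$; any isomorphism $A\simeq\overline{A}\otimes R$ has to be assembled from the coaction $(\pi\otimes\mathrm{id})\circ\Delta$ together with a suitable projection $A\to R$, and producing that projection is the heart of the matter. Moreover, faithfully flat descent in the sense of the Oberst--Schneider-type Theorem \ref{superOberst} yields the bijectivity of the Galois map $A\otimes_R A\to\overline{A}\otimes A$ and a category equivalence, but \emph{not} the normal-basis isomorphism $A\simeq\overline{A}\otimes R$ of comodule algebras, which is strictly stronger (Hopf--Galois does not imply cleft in general); and the faithful flatness of $A$ over $R$ that you invoke is itself one of the nontrivial equivalent conditions---in \cite{amas} it is deduced \emph{from} the tensor decomposition rather than used to prove it. So Stage one essentially assumes the conclusion.

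The second gap is in Stage two. The step ``by subtracting correction terms constructed via the counit and the antipode, I arrange $\xi_i\in R$'' is unsupported and is genuinely delicate: the membership condition $(\pi\otimes\mathrm{id})\Delta(\xi)=1\otimes\xi$ cannot be forced by a convolution-inverse trick, because $\overline{A}$ lies \emph{below} $A$ as a quotient, so one first needs a left $\overline{A}$-colinear section $\overline{A}\to A$ (equivalently the $\overline{A}$-injectivity, indeed cofreeness, of $A$), which is again part of what is being proved. Likewise, establishing injectivity of $\wedge(W^A)\to R$ ``via a dimension count $\dim_K R=2^n$'' presupposes the answer. The ingredients you do have right---$\xi^2=0$ for odd $\xi$ since $\mathrm{char}\,K\neq2$, nilpotency of $AA_1$ and hence of $R^+$ (which lies in $AA_1$ because $\pi(R)={}^{co\overline{A}}\overline{A}=K$), and finite dimensionality of $W^A$---are indeed the raw materials, but the workable order of argument (the one in \cite{amas}, echoed by Lemma \ref{injectivity} and the proof of Proposition \ref{prop-aim} in this paper) runs the other way: one first splits the nilpotent filtration by powers of $AA_1$ step by step to obtain $\overline{A}$-colinear sections and hence the decomposition \eqref{isom}, and only then reads off $R\simeq\wedge(W^A)$, the freeness of $A$ over $R$, and the flatness statements as corollaries.
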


Suppose $H = SSp~D$. Thus, $D$ is a quotient Hopf superalgebra of $A$. We have $\overline{D}$, $W^D$, as above.
Set $T = {}^{co\overline{D}}D$; see \eqref{lcoinv}. Then, $T$ has a special counit with kernel $T^+$, and
$T \simeq \wedge(W^D)$, as above. 
We can choose an isomorphism such as \eqref{isom} and a similar isomorphism 
$D \overset{\simeq}{\longrightarrow} \overline{D} \otimes \wedge(W^D)$ so that the following
diagram commutes; see \cite{amas}, Remark 4.8.
\begin{equation*}
\begin{CD}
A @>{\simeq}>> \overline{A} \otimes \wedge(W^A) \\
@V{f}VV @VV{\overline{f} \otimes \wedge(W^f)}V \\
D @>{\simeq}>> \overline{D} \otimes \wedge(W^D)
\end{CD}
\end{equation*}
It results that the quotient map $A \to D$ restricts to a surjection $R \to T$,
which is identified with $\wedge(W^f) : \wedge(W^A) \to \wedge(W^D)$. 
Let $\mathfrak{a} = \mathrm{Ker}(R \to T)$ denote the kernel. 

\begin{lemma}\label{RtoT}
This $R \to T$ is a counit-preserving surjection of right $D$-comodule superalgebras, and the kernel $\mathfrak{a}$ is
nilpotent. 
\end{lemma}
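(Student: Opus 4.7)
The plan is to reduce every claim to the structural isomorphism of Proposition~\ref{prop-isom} and the compatible choice of isomorphisms recalled in the paragraph just before the statement. The surjectivity of $R\to T$ was already noted there: under the counit-preserving isomorphisms $R\simeq\wedge(W^A)$ and $T\simeq\wedge(W^D)$, the restriction $f|_R$ is identified with $\wedge(W^f):\wedge(W^A)\to\wedge(W^D)$, and $W^f$ is surjective because $f$ is.

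Counit-preservation is formal: the counits on $R$ and $T$ are, by definition, the restrictions of $\varepsilon_A$ and $\varepsilon_D$, and $f$ is a Hopf superalgebra map, so $\varepsilon_D\circ f=\varepsilon_A$; restricting to $R$ gives the claim. For the right $D$-comodule assertion, I would endow $R$ with the coaction $(\mathrm{id}_R\otimes f)\circ\Delta|_R:R\to R\otimes D$ (well-defined since $R$ is a right coideal supersubalgebra of $A$), and endow $T$ with $\Delta|_T:T\to T\otimes D$; compatibility of $f|_R$ with these coactions is then the identity $(f\otimes f)\circ\Delta_A=\Delta_D\circ f$ evaluated on elements of $R$, which holds because $f$ is a coalgebra map.

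The substantive point is nilpotency of $\mathfrak{a}$. Since $f|_R$ preserves counits, $\mathfrak{a}\subseteq R^+$. Under the isomorphism $R\simeq\wedge(W^A)$, which is counit-preserving, $R^+$ corresponds to the augmentation ideal $\bigoplus_{i\geq 1}\wedge^i(W^A)$ of the exterior algebra. Because $A$ is finitely generated, $W^A$ is finite-dimensional (as recalled in the text from \cite{amas}, Proposition~4.4), so setting $N=\dim_K W^A$ one has $\wedge^i(W^A)=0$ for $i>N$, whence the $(N{+}1)$-st power of this augmentation ideal vanishes. Therefore $\mathfrak{a}^{N+1}=0$. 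I do not anticipate any real obstacle here; all the heavy lifting has already been carried out in Proposition~\ref{prop-isom} and in the compatible choice of isomorphisms recalled from \cite{amas}, Remark~4.8.
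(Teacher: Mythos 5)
Your proof is correct and follows essentially the same route as the paper: both reduce everything to the isomorphism $R\simeq\wedge(W^A)$ of Proposition~\ref{prop-isom} and the compatible diagram recalled from \cite{amas}, Remark~4.8. The only (harmless) difference is in the nilpotency step, where the paper identifies $\mathfrak{a}$ as the ideal generated by $\mathrm{Ker}(W^f)$ and thereby gets the sharper index $\mathfrak{a}^{d+1}=0$ with $d=\dim_K\mathrm{Ker}(W^f)$, whereas you use only $\mathfrak{a}\subseteq R^+$ together with the nilpotency of the augmentation ideal of $\wedge(W^A)$ --- a coarser bound, but entirely sufficient since only nilpotency, not its index, is used later.
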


\begin{proof}
The first assertion is easy to see. The remaining nilpotency follows since one sees that via the identification
given above, $\mathfrak{a}$ is generated by $\mathrm{Ker}(W^f : W^A \to W^D)$, so that
$\mathfrak{a}^{d+1} = 0$ if $d = \mathrm{dim}_K\mathrm{Ker}(W^f)$.
\end{proof}

By Lemma \ref{G_ev/H_ev}, $G_{ev} \tilde{/} H_{ev}$ has a finite open affine covering, say $\{ U'_i \}_{1 \leq i \leq n}$. 
Proposition \ref{surjection} ensures that for each $1 \leq i \leq n$, there exists $U_i \in Op(G \tilde{/} H)$ such that
$U_i \cap G_{ev} \tilde{/} H_{ev} = U'_i$. 

\begin{lemma}\label{U_i}
$\{ U_i \}_{1 \leq i \leq n}$ is an open covering of $G \tilde{/} H$.
\end{lemma}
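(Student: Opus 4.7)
The plan is to deduce the covering of $G \tilde{/} H$ from the covering of $G_{ev} \tilde{/} H_{ev}$ by pulling back along the quotient morphism $\pi : G \to G \tilde{/} H$, reducing to a covering question on $G$, and then descending via fppf. First I would set $V_i := \pi^{-1}(U_i)$; each $V_i$ is an open subfunctor of the affine superscheme $G = SSp\, A$. Since the composite $G_{ev} \hookrightarrow G \xrightarrow{\pi} G \tilde{/} H$ factors as $G_{ev} \to G_{ev} \tilde{/} H_{ev} \hookrightarrow G \tilde{/} H$, and since $U_i \cap (G_{ev} \tilde{/} H_{ev}) = U'_i$ by construction, the intersection $V_i \cap G_{ev}$ coincides with the preimage of $U'_i$ under the quotient $G_{ev} \to G_{ev} \tilde{/} H_{ev}$. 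The covering hypothesis on $\{U'_i\}$ then implies that $\{V_i \cap G_{ev}\}$ is an open covering of $G_{ev}$.

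Next I would show that $\{V_i\}$ covers $G$. For any field extension $F \in \mathsf{F}_K$, every superalgebra morphism $A \to F$ annihilates $A_1$ (because $F$ is purely even) and therefore factors through $\overline{A} = A/AA_1$; hence $G(F) = G_{ev}(F)$, and likewise $V_i(F) = (V_i \cap G_{ev})(F)$ for each $i$. Consequently
\[
\bigcup_{i=1}^n V_i(F) = \bigcup_{i=1}^n (V_i \cap G_{ev})(F) = G_{ev}(F) = G(F).
\]
Because open subfunctors are detected on field-valued points by Lemma \ref{open}(2), this field-level equality implies that the open subfunctor $\bigcup_i V_i \subseteq G$ equals $G$; equivalently, $\{V_i\}$ is an open covering of $G$ in the full sense that $\bigcup_i V_i(A) = G(A)$ for every $A \in \mathsf{SAlg}_K$.

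To descend back to $G \tilde{/} H$, I would fix $R \in \mathsf{SAlg}_K$ and $y \in (G \tilde{/} H)(R)$. By Remark \ref{suitable} there exist a fppf covering $B \geq R$ and an element $g \in G(B)$ whose image $\pi(g) \in (G \tilde{/} H)(B)$ equals the image of $y$. Since $\{V_i\}$ covers $G$, some $i$ satisfies $g \in V_i(B)$, so $y|_B = \pi(g) \in U_i(B)$. The super-variant of \cite{jan}, Part I, 1.7(6) recorded in Section \ref{K-functors}, applied to the open subfunctor $U_i \subseteq G \tilde{/} H$ and the faithfully flat morphism $R \to B$, then forces $y \in U_i(R)$, completing the proof.

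The substantive input is the field-level identity $G(F) = G_{ev}(F)$ together with Lemma \ref{open}(2): these permit us to trade a covering question about the full superscheme $G$ for a covering question about its purely even part $G_{ev}$, which has already been resolved via Lemma \ref{G_ev/H_ev} and Proposition \ref{surjection}.
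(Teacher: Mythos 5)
Your overall strategy (pull $U_i$ back to $G$, use that field-valued points of $G$ and of $G_{ev}$ coincide, then descend) starts in the right spirit, but there is a genuine gap in the middle which the final descent step inherits. Recall that in this paper an \emph{open covering} of a $K$-functor $X$ only requires $X(F)=\bigcup_i Y_i(F)$ for $F\in\mathsf{F}_K$; the stronger statement $X(A)=\bigcup_i Y_i(A)$ for all $A\in\mathsf{SAlg}_K$ is false in general, and your step 2 asserts exactly this stronger statement for $\{V_i\}$ on $G=SSp\,A$. The appeal to Lemma \ref{open}(2) is not legitimate there, because the pointwise union $\bigcup_i V_i$ of open subfunctors is \emph{not} an open subfunctor: its preimage under a morphism $SSp\,B\to SSp\,A$ is a union $\bigcup_i D(J_i)(B)$, which is in general strictly smaller than $D(\sum_i J_i)(B)$ and is not of the form $D(J)$. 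Already for $A=K[t]$ with $I_1=(t)$, $I_2=(t-1)$ one has $\mathrm{id}_A\notin D(I_1)(A)\cup D(I_2)(A)$, although these two opens cover all field-valued points. Consequently your step 3 collapses: there you produce an fppf covering $B\geq R$ and $g\in G(B)$ with $\pi(g)=y|_B$, and you need a \emph{single} index $i$ with $g\in V_i(B)$ --- but $B$ is a superalgebra, not a field, so the (correct) field-level covering of $G$ by the $V_i$ gives you nothing at $B$. Note also that the conclusion you are aiming for, $\bigcup_i U_i(R)=(G\,\tilde{/}\,H)(R)$ for all $R$, is false in general (consider $GL(V)\,\tilde{/}\,P(U)\simeq Gr$: an $R$-point of the supergrassmannian need not lie in any single affine chart $Gr_W$); only the field-level statement is true, and that is all the lemma asserts.

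The paper sidesteps all of this by testing the covering only on algebraically closed fields $L\in\mathsf{F}_K$ (which suffices, by the superized 1.7(6) applied to the faithfully flat extension $F\to\overline{F}$). The key point you are missing is that for such $L$ every fppf covering $B\geq L$ retracts onto $L$, so $\tilde{X}(L)=X(L)$ for any suitable $X$; applied to $(G/H)_{(n)}$ and $(G_{ev}/H_{ev})_{(n)}$ this yields $(G\,\tilde{/}\,H)(L)=G(L)/H(L)=(G_{ev}\,\tilde{/}\,H_{ev})(L)=\bigcup_i U_i'(L)\subseteq\bigcup_i U_i(L)$, with no descent from a non-field covering needed. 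Your observation $G(F)=G_{ev}(F)$ is the right ingredient, but it must be combined with this identification of the sheafified quotient with the naive quotient over algebraically closed fields, rather than with the fppf descent argument you propose.
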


\begin{proof} 
It suffices to prove that if $L$ is an algebraically closed field in $\mathsf{F}_K$,
then $(G \tilde{/} H)(L) \subseteq \bigcup_i U_i(L)$. Note that $U_i(L) \supseteq U'_i(L)$.
One sees that if $X \in \mathcal{F}$ is suitable, then $ X(L) = \tilde{X} (L)$ for $L$ as above.
This applied to $(G/H)_{(n)}$, $(G_{ev}/H_{ev})_{(n)}$ implies that $(G \tilde{/} H)(L) = 
(G_{ev} \tilde{/} H_{ev})(L)$. The desired result follows since we have 
$(G_{ev} \tilde{/} H_{ev})(L) = \bigcup_i U'_i(L)$.
\end{proof}

Theorem \ref{mainthm} will follow from the last lemma if we prove that each $U_i$ is affine and Noetherian.

To remove the index $i$ for simplicity, we re-start by choosing $U \in Op(G \tilde{/} H)$, 
$U' \in Op(G_{ev} \tilde{/} H_{ev})$ so that $U'$ is affine, and 
$U \cap G_{ev} \tilde{/} H_{ev} = U'$. Our aim is to prove that $U$ is affine and Noetherian.
Let $V$, $V'$ be the pre-images of $U$, $U'$, respectively. 
As was seen in the proof of Lemma \ref{G_ev/H_ev}, $V'$ is affine.
Suppose $V' = SSp~\overline{B}$ with $\overline{B} \in \mathsf{Alg}_K$, keeping an algebra morphism 
$\overline{A} \to \overline{B}$ in mind. Identify $A = \overline{A} \otimes R$ via
a fixed isomorphism such as \eqref{isom}. Set $B = \overline{B} \otimes R$; note that
the $\overline{B}$ given above coincides with $B/BB_1$, as is expected from the notation. Since
$V$ is the unique element in $Op(G)$ whose intersection with $G_{ev}$ equals $V'$, we see
$V = SSp~B$. Indeed, this $SSp~B$ is open in $G$ by 
Lemma \ref{openaffine}, 
and its intersection with $G_{ev}$ equals $SSp~\overline{B}$, as is seen from a push-out diagram 
in $\mathsf{SAlg}_K$. Since $\overline{B}$ is Noetherian,
it follows that $B$ is Noetherian; see Proposition \ref{Noetherian}.  

The same argument as proving \eqref{eq-quotient} shows that
$V = SSp~B$ is stable under the action by $H = SSp~D$ from the right,
and $V \tilde{/} H = U$. This $H$-action on $V$ is obviously free, and makes $B$ into a right $D$-comodule
superalgebra. 
By applying Theorem \ref{superOberst} on $G, X$ to
$H, V$, we see that the aim of ours above is equivalent to proving the next proposition.

\begin{prop}\label{prop-aim}
The right $D$-comodule superalgebra $B$ satisfies the following conditions given in (2) of Theorem \ref{superOberst},
Part 1:
\begin{itemize}
\item[(a)]
$B$ is injective as a right $D$-comodule, and
\item[(b)]
the map $$\alpha = \alpha_B : B \otimes B \to B \otimes D,\ \alpha(b \otimes b') = b \rho (b')$$ 
is a surjection, where $\rho : B \to B \otimes D$ denotes the structure on $B$.
\end{itemize}
\end{prop}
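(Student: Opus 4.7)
The plan is to reduce Proposition \ref{prop-aim} to the known classical analog for the even quotient, via the Hopf-algebraic decomposition supplied by Proposition \ref{prop-isom}.

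The first step will be to apply Theorem \ref{superOberst} Part 1, with $G$ and $X$ replaced by $H_{ev}$ and $V'$. We already know from the proof of Lemma \ref{G_ev/H_ev} that $V' \tilde{/} H_{ev} = U'$ is affine, so the equivalence (1) $\Leftrightarrow$ (2) of that theorem yields that $\overline{B}$ is injective as a right $\overline{D}$-comodule and that the map $\alpha_{\overline{B}} : \overline{B} \otimes \overline{B} \to \overline{B} \otimes \overline{D}$ is surjective. Call these properties (a$'$) and (b$'$).

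The second step is to exploit the compatible decompositions guaranteed by Proposition \ref{prop-isom} and by the commutative diagram displayed just before Lemma \ref{RtoT}. Fix counit-preserving isomorphisms $A \simeq \overline{A} \otimes R$ and $D \simeq \overline{D} \otimes T$ of left $\overline{A}$- and $\overline{D}$-comodule superalgebras such that the quotient $f : A \to D$ is identified with $\overline{f} \otimes (R \to T)$, where $R \to T$ is a counit-preserving surjection of right $D$-comodule superalgebras whose kernel $\mathfrak{a}$ is nilpotent (Lemma \ref{RtoT}). Since $V$ was obtained from $V'$ by taking the unique open affine in $G$ whose intersection with $G_{ev}$ is $V'$, one has $B = \overline{B} \otimes R$ and, modulo the nilpotent ideal $BB_1$, the coaction $\rho_B$ reduces to $\rho_{\overline{B}}$ and $D$ reduces to $\overline{D}$.

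Given this decomposition, condition (b) will follow from (b$'$) by a Nakayama-type induction along the descending chain of nilpotent ideals $BB_1^{k} \otimes D + B \otimes D_1^{k}$. Concretely: given $b \otimes d \in B \otimes D$, first use (b$'$) to write $\overline{b} \otimes \overline{d}$ as $\alpha_{\overline{B}}(\sum \overline{b}_j \otimes \overline{b}'_j)$ in $\overline{B} \otimes \overline{D}$; then lift the $\overline{b}_j, \overline{b}'_j$ to $B$ via the inclusion $\overline{B} \hookrightarrow B$ coming from the decomposition, to obtain an element of $B \otimes D$ whose image under $\alpha_B$ differs from $b \otimes d$ by an element in $BB_1 \otimes D + B \otimes DD_1$; iterate, using nilpotency of $BB_1$ and finite-dimensionality of $D_1$ (which follows from finite generation of $D$), to finish in finitely many steps.

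For condition (a), the main obstacle will lie here: the coaction $\rho_B$ does \emph{not} factor as a tensor product of coactions on $\overline{B}$ and on $R$, because the comultiplication on $A$ is not a tensor product of the comultiplications on $\overline{A}$ and on $R$ (recall $R$ is only a right coideal subalgebra). The plan is, rather than building a retraction of $\rho_B$ directly, to derive injectivity from the equivalent conditions (3) and (4) of Theorem \ref{superOberst} Part 1. Once (b) is established, the surjectivity of $\alpha_B$ together with the faithful flatness of $B$ over $\overline{B}$ (which is a free $\overline{B}$-module of rank $\dim R$ by the decomposition) and the faithful flatness of $\overline{B}$ over $\overline{B}^{H_{ev}}$ (given by (a$'$)-(b$'$) and Theorem \ref{superOberst}) should yield faithful flatness of $B$ over $C = B^H$ and bijectivity of $\beta_B$, i.e.\ condition (3) of Theorem \ref{superOberst}; this condition in turn implies (2)(a), namely the injectivity required in (a). The subtle point throughout is to check that the coinvariants $C = B^H$ are controlled by $\overline{B}^{H_{ev}}$ under the decomposition, which should again reduce to the compatibility in the diagram preceding Lemma \ref{RtoT} and the nilpotency of $\mathfrak{a}$.
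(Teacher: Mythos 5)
Your overall strategy---reduce to the classical statement for $\overline{B}$ over $\overline{D}$ and then climb back up through nilpotents using the decomposition $A\simeq\overline{A}\otimes R$---is the right starting point, and your step obtaining (a$'$) and (b$'$) is exactly the paper's Lemma \ref{lem-overlineB}. However, both of your lifting arguments have genuine gaps. For (b): the image of $\alpha_B$ is a left $(B\otimes 1)$-submodule of $B\otimes D$, but it is \emph{not} stable under multiplication by $1\otimes D$. Your iteration therefore cannot absorb the component of the error term lying in $B\otimes DD_1$: knowing $1\otimes d\in\mathrm{Im}(\alpha_B)+N$ and multiplying by $1\otimes d'$ with $d'$ odd produces terms in $\mathrm{Im}(\alpha_B)\cdot(1\otimes DD_1)$, which you cannot return to $\mathrm{Im}(\alpha_B)$ plus a strictly smaller error. (A subalgebra $E$ with $E+N=S$, $N$ a nilpotent ideal, need not equal $S$; the Nakayama argument only works for the module directions you actually have.) The paper sidesteps this by applying Nakayama only on the $B$-side (reducing to the surjectivity of $\overline{B}\otimes_B\alpha_B:\overline{B}\otimes B\to\overline{B}\otimes D$, whose target still carries the full $D$), and then disposes of the ``$D$ versus $\overline{D}$'' discrepancy not by a nilpotence argument but by the category equivalence $\mathsf{SMod}_T^D\approx\mathsf{SMod}^{\overline{D}}$ of \eqref{equiv} (from \cite{amas}, Proposition 1.1), under which the residual map $\overline{B}\otimes\mathfrak{B}\to\overline{B}\otimes D$ corresponds exactly to $\alpha_{\overline{B}}$. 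Some such structural input is needed; the naive filtration by $B\otimes DD_1^k$ does not close up.

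For (a) the gap is more serious: your route through condition (3) of Theorem \ref{superOberst} requires faithful flatness of $B$ over $C=B^{coD}$ and bijectivity of $\beta_B$, and for this you need to identify $C$ in terms of $\overline{B}^{co\overline{D}}$. But that identification (essentially \eqref{overlineC}) is established in the paper only in Proposition \ref{ev of factor is factor of ev}, \emph{after} and \emph{using} Proposition \ref{prop-aim} together with the full equivalence of Theorem \ref{superOberst} for $B$; moreover faithful flatness of $B$ over the subalgebra $\overline{B}\otimes 1$ and of $\overline{B}$ over $\overline{B}^{co\overline{D}}$ says nothing about flatness over the a priori unrelated subalgebra $C\subseteq B$. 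So this part of your plan is circular as stated. The paper instead proves (a) directly by a filtration argument: the quotients $B\mathfrak{a}^i/B\mathfrak{a}^{i+1}$ are objects of $\mathsf{SMod}_{\mathfrak{B}}^D$, every such object is $D$-injective (Lemma \ref{injectivity}, which rests on Doi's theorem for $(\overline{D},\overline{B})$-Hopf modules plus the equivalence \eqref{equiv}), hence each short exact sequence $0\to B\mathfrak{a}^i/B\mathfrak{a}^{i+1}\to B/B\mathfrak{a}^{i+1}\to B/B\mathfrak{a}^i\to 0$ splits $D$-colinearly and $B$ is a finite direct sum of $D$-injectives. You would need to replace your step for (a) by an argument of this kind, since injectivity cannot be extracted from (3) without first knowing what $C$ is.
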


\subsection{}\label{proof3}
Recall $\mathfrak{a} = \mathrm{Ker}(R \to T)$ from Lemma \ref{RtoT}.
Set
\begin{equation}\label{mathfrakB}
\mathfrak{B} = B/B\mathfrak{a}.
\end{equation}
We list up the properties of $B$, $\overline{B}$ and $\mathfrak{B}$ which will be needed.
\begin{itemize}
\item[(i)]
$B$ is a right $D$-comodule superalgebra which includes $R$ as a right $D$-comodule supersubalgebra.
\item[(ii)]
$B/BR^{+} = \overline{B}$; this is naturally a right $\overline{D}$-comodule algebra.
\item[(iii)]
$\mathfrak{B}$ is a right $D$-comodule superalgebra which includes $T$ as a right $D$-comodule supersubalgebra.
\item[(iv)]
$\mathfrak{B}/\mathfrak{B}T^{+} = \overline{B}$.
\end{itemize}

\begin{lemma}\label{lem-overlineB}
The right $\overline{D}$-comodule algebra $\overline{B}$ satisfies the conditions
\begin{itemize}
\item[(c)]
$\overline{B}$ is injective as a right $\overline{D}$-comodule, and
\item[(d)]
the map $$\alpha_{\overline{B}} : \overline{B} \otimes \overline{B} \to \overline{B} \otimes \overline{D},\ 
\alpha_{\overline{B}}(b \otimes b') = b \overline{\rho}(b')$$ 
is a surjection, where $\overline{\rho}: \overline{B} \to \overline{B} \otimes \overline{D}$ denotes the structure on $\overline{B}$.
\end{itemize}
\end{lemma}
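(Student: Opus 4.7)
The plan is to obtain (c) and (d) as a direct application of Theorem \ref{superOberst}, Part 1, in the purely even situation, namely to the affine group scheme $H_{ev} = SSp \ \overline{D}$ acting on the affine scheme $V' = SSp \ \overline{B}$. Indeed, (c), (d) are literally conditions (2)(a), (2)(b) of that theorem for the pair $(\overline{D}, \overline{B})$ (viewed as superalgebras concentrated in even degree), so it suffices to establish condition (1) for this pair.

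First, the coaction has to be set up. The structure map $\rho : B \to B \otimes D$ is a morphism of right $D$-comodule superalgebras, and since $A \mapsto \overline{A}$ is compatible with tensor products, there is a canonical identification $\overline{B \otimes D} \simeq \overline{B} \otimes \overline{D}$ (both quotients equal $(B \otimes D)/(BB_1 \otimes D + B \otimes DD_1)$). Passing to these quotients one obtains $\overline{\rho} : \overline{B} \to \overline{B} \otimes \overline{D}$ which makes $\overline{B}$ into a right $\overline{D}$-comodule algebra, and encodes the action of $H_{ev}$ on $V'$.

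Next, the two ingredients of Theorem \ref{superOberst}(1) must be checked. \emph{Freeness}: for every $R \in \mathsf{Alg}_K$ the groups and sets agree, $H_{ev}(R) = H(R)$ and $V'(R) = V(R)$, so the free action of $H(R)$ on $V(R)$ (which is part of the ambient assumption on $V$) gives at once a free action of $H_{ev}(R)$ on $V'(R)$. \emph{Affineness of the dur sheafification}: by our choice $U' = V' \tilde{/} H_{ev}$ is affine, hence a superscheme, hence by Proposition \ref{superschemeissheaf} already a dur $K$-sheaf. Combining this with the universal properties of the fppf and dur sheafifications of $(V'/H_{ev})_{(n)}$, we conclude $V' \tilde{\tilde{/}} H_{ev} \simeq V' \tilde{/} H_{ev} = U'$, which is affine.

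With condition (1) verified, the equivalence (1) $\Leftrightarrow$ (2) of Theorem \ref{superOberst}, Part 1 delivers exactly (c) and (d). The only slightly delicate point in the argument is the identification of the two sheafifications $V' \tilde{\tilde{/}} H_{ev}$ and $V' \tilde{/} H_{ev}$; everything else is a routine transcription via the quotient functor $(-) \mapsto \overline{(-)}$ of data that is already in place.
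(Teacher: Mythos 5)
Your proof is correct, but it enters Theorem \ref{superOberst} at a different node of its circle of equivalences than the paper does. The paper's own proof is a one--liner: it invokes the implication $(3)\Rightarrow(2)$ of Theorem \ref{superOberst} (essentially Oberst's original Satz A), because condition (3) for the triple $(\overline{D},\overline{B},\overline{B}^{H_{res}})$ --- faithful flatness of $\overline{B}$ over the invariants together with bijectivity of $\beta_{\overline{B}}$ --- was already verified in the proof of Lemma \ref{G_ev/H_ev} from the classical facts about $V''\to U''$. You instead verify condition (1): freeness of the $H_{ev}$-action on $V'$ and affineness of $V'\tilde{\tilde{/}}H_{ev}$, the latter deduced from the affineness of $U'=V'\tilde{/}H_{ev}$ together with the (correct) observation that the fppf and dur sheafifications coincide as soon as the former is itself a dur sheaf (Proposition \ref{superschemeissheaf} plus the two universal properties). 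This is valid, but note that the affineness of $U'$ you take as input was itself obtained in Lemma \ref{G_ev/H_ev} precisely by checking condition (3) and applying $(3)\Rightarrow(1)$; so your argument travels $(3)\Rightarrow(1)\Rightarrow(2)$ where the paper goes $(3)\Rightarrow(2)$ directly. What your route buys is that one only needs to carry forward the single geometric fact ``$U'$ is affine'' rather than the Hopf--algebraic data of condition (3); what it costs is the extra dur--versus--fppf identification and a small imprecision in the freeness check: freeness is required for all $R\in\mathsf{SAlg}_K$, not only for $R\in\mathsf{Alg}_K$. This is harmless here, since $H_{ev}=SSp~\overline{D}$ and $V'=SSp~\overline{B}$ are represented by purely even algebras, so their $R$-points depend only on $R_0$ and the general case reduces to the purely even one --- but that reduction should be stated.
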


\begin{proof} 
This follows from Theorem \ref{superOberst} (or rather from the original, non-super version,
Satz A of \cite{ober}), since $\overline{B}$ satisfies the conditions given in (3) of the theorem,
as was seen in the proof of Lemma \ref{G_ev/H_ev}.
\end{proof}

\begin{lemma}\label{injectivity}
Every object in $\mathsf{SMod}_{\mathfrak{B}}^D$ 
is injective as a right $D$-comodule. 
\end{lemma}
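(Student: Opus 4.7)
The plan is to reduce injectivity over $D$ to injectivity over $\overline{D}$, exploiting the tensor product decompositions $D\simeq \overline{D}\otimes T$ (Proposition \ref{prop-isom}) and $\mathfrak{B}\simeq \overline{B}\otimes T$. The latter will be obtained at the start of the proof by combining $B\simeq \overline{B}\otimes R$ (from \eqref{isom} applied in the ambient algebraic supergroup) with the identities $B\mathfrak{a}=\overline{B}\otimes \mathfrak{a}$ and $T=R/\mathfrak{a}$, using that $\mathfrak{a}$ is the kernel of $R\to T$.

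Given $M\in\mathsf{SMod}^D_{\mathfrak{B}}$, I would establish an equivalence of categories $\mathsf{SMod}^D_{\mathfrak{B}}\simeq \mathsf{Mod}^{\overline{D}}_{\overline{B}}$ induced by $M\mapsto M/MT^+$, with a suitable inverse built via the decomposition $\mathfrak{B}\simeq \overline{B}\otimes T$. The nilpotence of $T^+$ implied by Lemma \ref{RtoT} together with $T\simeq \wedge(W^D)$ ensures that $M$ can be reconstructed from its reduction modulo $T^+$ by a finite-step descent.

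By Lemma \ref{lem-overlineB}, the pair $(\overline{D},\overline{B})$ satisfies condition (3) of Theorem \ref{superOberst}, so the classical Oberst-Schneider theorem in the non-super case yields an equivalence $\mathsf{Mod}^{\overline{D}}_{\overline{B}}\simeq\mathsf{Mod}_{\overline{C}}$ for the algebra $\overline{C}$ of $\overline{D}$-coinvariants in $\overline{B}$; combined with the injectivity of $\overline{B}$ as right $\overline{D}$-comodule (Lemma \ref{lem-overlineB}(c)), this implies that every object $N\in\mathsf{Mod}^{\overline{D}}_{\overline{B}}$ is a direct summand of a cofree $\overline{D}$-comodule. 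Lifting via the equivalence above together with $D\simeq \overline{D}\otimes T$, one concludes that every $M\in\mathsf{SMod}^D_{\mathfrak{B}}$ is a direct summand of some cofree $D$-comodule $V\otimes D$, and is therefore injective as a right $D$-comodule by Proposition \ref{injcomod}.

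The main obstacle will be the careful construction of the equivalence $\mathsf{SMod}^D_{\mathfrak{B}}\simeq \mathsf{Mod}^{\overline{D}}_{\overline{B}}$ and its compatibility with the $D$-comodule structure, since the $D$-coaction on $M$ couples the $\overline{D}$-part with the $T$-part nontrivially. I expect that this will be handled via the bosonization technique alluded to in Section 10, in which the $T$-structure is recast as a braided Hopf module over the braided Hopf algebra $T\simeq \wedge(W^D)$ internal to $\mathsf{Mod}^{\overline{D}}$, so that the braided analog of the fundamental theorem of Hopf modules supplies the needed equivalence and the descent of injectivity.
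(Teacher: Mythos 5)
Your proposal follows essentially the same route as the paper: reduce modulo $T^+$ using the tensor decompositions of Proposition \ref{prop-isom}, invoke Lemma \ref{lem-overlineB} for the purely even data, and lift injectivity back through the category equivalence induced by $N \mapsto N/NT^+$ (the paper cites \cite{amas}, Proposition 1.1, for the equivalence $\mathsf{SMod}_T^D \approx \mathsf{SMod}^{\overline{D}}$, and Doi's theorem for the $\overline{D}$-injectivity of relative Hopf modules, rather than re-deriving these via Oberst--Schneider and bosonization as you suggest). The one step you should tighten is the final lifting: rather than concluding directly that $M$ is a summand of a cofree $D$-comodule $V \otimes D$ (which would require knowing how the quasi-inverse of your equivalence acts on cofree comodules), argue as the paper does that injectivity of $M/MT^+$ in $\mathsf{SMod}^{\overline{D}}$ makes $M$ an injective object of $\mathsf{SMod}_T^D$, so the structure morphism $M \to M \otimes D$ splits there, exhibiting $M$ as a direct summand of the injective $D$-comodule $M \otimes D$.
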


\begin{proof}
By (c) of the last lemma, Doi's theorem, Theorem 1 of \cite{doi}, ensures that 
every $( \overline{D}, \overline{B})$-Hopf (non-super) module is 
$\overline{D}$-injective.
Let $ M \in \mathsf{SMod}_{\mathfrak{B}}^D$. By (iii), (iv), we see $M/MT^{+} \in \mathsf{SMod}_{\overline{B}}^{\overline{D}}$. This last object is 
especially a
$(\overline{D}, \overline{B})$-Hopf module, which is $\overline{D}$-injective, as was just seen.
We see from 
\cite{amas}, Proposition 1.1
that if $ N \in \mathsf{SMod}_T^D$, then $N/NT^+ \in \mathsf{SMod}^{\overline{D}}$, and that the assignment $ N \mapsto N/NT^+$ gives 
a category equivalence 
\begin{equation}\label{equiv}
\mathsf{SMod}_T^D \approx \mathsf{SMod}^{\overline{D}}. 
\end{equation}
Note that $M \in \mathsf{SMod}_T^D$, to which assigned is $M/MT^{+}$.  
Since the last object is injective in $\mathsf{SMod}^{\overline{D}}$ by Proposition \ref{injcomod}, $M$ is 
injective in $\mathsf{SMod}_T^D$, whence the structure morphism $M \to M \otimes D$ splits in $\mathsf{SMod}_T^D$. This implies
that $M$ is $D$-injective.
\end{proof}

Now, we can complete the proof of Theorem \ref{mainthm} by proving Proposition \ref{prop-aim}.

\vspace{\baselineskip}
\noindent
\emph{Proof of Proposition \ref{prop-aim}.} 
Recall from Lemma \ref{RtoT} that $\mathfrak{a}$ is nilpotent. Let $r > 0$ be an integer such that $\mathfrak{a}^r = 0$. 
By the last lemma, every object in $\mathsf{SMod}_{\mathfrak{B}}^D$ is $D$-injective. For each $0 \leq i < r$,
we have $B\mathfrak{a}^i/B\mathfrak{a}^{i+1} \in \mathsf{SMod}_{\mathfrak{B}}^D$; this is therefore $D$-injective. 
It follows that the short exact sequence 
\begin{equation*}
0 \to B\mathfrak{a}^i/B\mathfrak{a}^{i+1} \to B/B\mathfrak{a}^{i+1} \to B/B\mathfrak{a}^{i} \to 0
\end{equation*}
in $\mathsf{SMod}_B^D$ splits $D$-colinearly. We see that $B$ decomposes as 
\begin{equation*}
B \simeq \bigoplus_{i=0}^{r-1} B\mathfrak{a}^i/B\mathfrak{a}^{i+1}
\end{equation*}
into $D$-injective direct summands, whence it is $D$-injective. We have thus verified the condition (a).

To verify (b), it suffices to prove that 
the base extension $\overline{B} \otimes_{B} \alpha_B : \overline{B} \otimes B \to \overline{B} \otimes D$
of $\alpha_B$ along $B \to \overline{B}$ is a surjection, since the kernel $\mathrm{Ker}(B \to \overline{B})$ is nilpotent. Note that 
this $\overline{B} \otimes_{B} \alpha_B$ factors through
\begin{equation}\label{factor}
\overline{B} \otimes \mathfrak{B} \to \overline{B} \otimes D,\ b \otimes b' \mapsto b \rho'(b'),
\end{equation}
where $\rho' : \mathfrak{B} \to \mathfrak{B} \otimes D$ denotes the structure on $\mathfrak{B}$. 
It suffices to prove that this \eqref{factor} is a surjection.
This is a morphism in $\mathsf{SMod}_T^D$, which corresponds 
via the category equivalence \eqref{equiv} to the surjection $\alpha_{\overline{B}}$ in (d) of Lemma \ref{lem-overlineB}.
Therefore, the map \eqref{factor} and hence $\alpha_B$ are surjections, as desired. \quad $\Box$

\subsection{}\label{subsec-cor}

Here is a corollary to the proof given in the preceding subsection. 

\begin{corollary}\label{cor1} 
Let $G$ be an algebraic supergroup, and let $H$ be a 
closed supersubgroup of $G$. Then the following are equivalent:
\begin{itemize}
\item[(1)]
$G \tilde{/} H$ is affine;
\item[(2)]
$G_{ev} \tilde{/} H_{ev}$ is affine;
\item[(3)]
$G_{res} \tilde{/} H_{res}$ is affine.
\end{itemize}
Moreover, these are equivalent to the analogous conditions for $H \tilde{\backslash} G$, 
$H_{ev} \tilde{\backslash} G_{ev}$ and $H_{res} \tilde{\backslash} G_{res}$.
\end{corollary}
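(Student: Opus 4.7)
The plan is to establish the equivalence among $(1)$, $(2)$, $(3)$ for $G \tilde{/} H$ by leveraging the constructions developed in the proof of Theorem \ref{mainthm}, and then deduce the analogous left-coset equivalence by passing to opposite supergroups.

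For $(2) \Leftrightarrow (3)$: this follows essentially from Corollary \ref{identifycor}. If $G_{ev} \tilde{/} H_{ev} = SSp\ C'$ is affine, then restricting to $\mathsf{Alg}_K$ and invoking Corollary \ref{identifycor}(1) gives $G_{res} \tilde{/} H_{res} \simeq Sp\ \overline{C'}$. Conversely, if $G_{res} \tilde{/} H_{res} = Sp\ D$ is affine, then using that $G_{ev} \tilde{/} H_{ev}$ factors through even parts and coincides with $G_{res} \tilde{/} H_{res}$ when restricted to $\mathsf{Alg}_K$, one checks that $G_{ev} \tilde{/} H_{ev} \simeq SSp\ D$ (regarding the purely even $D$ as a superalgebra). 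For $(2) \Rightarrow (1)$: apply the lifting procedure of subsection \ref{proof2} to the trivial single-element affine cover $\{U' = G_{ev} \tilde{/} H_{ev}\}$. By Proposition \ref{surjection} this produces $U \in Op(G \tilde{/} H)$ with $U \cap (G_{ev} \tilde{/} H_{ev}) = G_{ev} \tilde{/} H_{ev}$, and by Proposition \ref{prop-aim} this $U$ is affine. Lemma \ref{U_i} then forces $\{U\}$ to be an open cover of $G \tilde{/} H$, so $U = G \tilde{/} H$ is affine.

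The most delicate direction is $(1) \Rightarrow (3)$. Assume $G \tilde{/} H = SSp\ C$ is affine. By the forthcoming Corollary \ref{affandfaith}, the quotient morphism $G \to G \tilde{/} H$ is affine and faithfully flat, so $C = K[G]^H$ and $K[G]$ is faithfully flat over $C$. My plan is to use the tensor product decomposition $K[G] \simeq \overline{K[G]} \otimes R$ with $R \simeq \wedge(W^{K[G]})$ from Proposition \ref{prop-isom}, and the analogous decomposition for $K[H]$, to separate the classical and the purely odd contributions. This should yield a compatible factorization $C \simeq \overline{C} \otimes R^H$ with $\overline{C} = \overline{K[G]}^{H_{res}}$; then faithful flatness of $\overline{K[G]}$ over $\overline{C}$ follows by passing the faithful flatness of $K[G]$ over $C$ to the even quotient. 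The classical Oberst theorem (Satz A of \cite{ober}) then gives $G_{res} \tilde{/} H_{res} \simeq Sp\ \overline{C}$ affine. The main obstacle will be rigorously establishing the isomorphism $C \simeq \overline{C} \otimes R^H$ and the identification $\overline{C} = \overline{K[G]}^{H_{res}}$, since this requires a careful analysis of how the $K[H]$-coaction interacts with the tensor product decomposition of Proposition \ref{prop-isom}.

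The equivalence for $H \tilde{\backslash} G$, $H_{ev} \tilde{\backslash} G_{ev}$, $H_{res} \tilde{\backslash} G_{res}$ then follows from the established right-coset equivalence applied to the opposite supergroups $G^{op} \supseteq H^{op}$, via the natural isomorphism $H \tilde{\backslash} G \simeq G^{op} \tilde{/} H^{op}$ and the corresponding isomorphisms at the $ev$- and $res$-levels.
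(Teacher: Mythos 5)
Your steps $(2)\Leftrightarrow(3)$ and $(2)\Rightarrow(1)$ are sound: they reuse Corollary \ref{identifycor}, Proposition \ref{surjection}, Proposition \ref{prop-aim} and Lemmas \ref{U_i}, \ref{Y=X} in the way the machinery of Sections \ref{proof2}--\ref{proof3} permits. The gap is the direction $(1)\Rightarrow(3)$, which you yourself flag as unresolved and whose proposed mechanism does not work as stated. The isomorphism $K[G]\simeq\overline{K[G]}\otimes R$ of Proposition \ref{prop-isom} is an isomorphism of \emph{left $\overline{K[G]}$-comodule superalgebras} only; it is not compatible with the coproduct of $K[G]$ on the right, hence not with the right $K[H]$-coaction $\rho=(\mathrm{id}\otimes\pi)\Delta$. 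So there is no reason for the invariants to factor as $C\simeq\overline{C}\otimes R^{coK[H]}$, and the identification $\overline{C}=\overline{K[G]}^{co\overline{K[H]}}$ that your plan needs is in fact one of the hardest computations in the paper: it is equation \eqref{overlineC} in Proposition \ref{ev of factor is factor of ev}, proved via the filtration by $B\mathfrak{a}^i$, the graded algebra $\mathrm{gr}\,C$, the tensor equivalence of Theorem \ref{superOberst}(4), and \cite{amas}, Lemma 5.15. Your proposal silently presupposes this (and more) before it is available.

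The paper's own proof avoids all of this by translating every condition into comodule injectivity. By Theorem \ref{superOberst} applied to $B=K[G]$, $D=K[H]$ (the surjectivity of $\alpha$ is automatic because the Galois map of $K[G]$ itself is bijective), condition (1) and its left-coset analogue are both equivalent to: $K[G]$ is injective as a right, equivalently left, $K[H]$-comodule; and (2), (3) and their analogues are all equivalent to: $\overline{K[G]}$ is injective as a $\overline{K[H]}$-comodule. The implication from the first injectivity to the second follows from the cofreeness of $K[G]$ over $\overline{K[G]}$ and of $K[H]$ over $\overline{K[H]}$ supplied by \eqref{isom} (so that $K[G]$ is $\overline{K[H]}$-injective and $\overline{K[G]}$ splits off it as a $\overline{K[H]}$-comodule), while the converse implication is exactly what the proof of Proposition \ref{prop-aim} establishes. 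To repair your argument, replace the tensor-factorization step by this injectivity transfer; as written, $(1)\Rightarrow(3)$ is a genuine gap.
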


\begin{proof}
Suppose $G = SSp~A,\ H = SSp~D$, as above. From Theorem \ref{superOberst} 
and the analogous statement for $H \tilde{\backslash} G$, we see 
that the condition (1) above and the analogous one for $H \tilde{\backslash} G$ 
are both equivalent to
\begin{itemize}
\item[(1$'$)]
$A$ is injective as a right or equivalently left $D$-comodule,
\end{itemize}
and that the conditions (2), (3) and the analogous two conditions are all equivalent to 
\begin{itemize}
\item[(2$'$)]
$\overline{A}$ is injective as a right or equivalently left $\overline{D}$-comodule.
\end{itemize}
As is seen from isomorphisms such as \eqref{isom},\ $A$ (resp., $D$) is cofree as a left (and right) comodule over 
$\overline{A}$ (resp., $\overline{D}$). 
This shows that (1$'$) $\Rightarrow$ (2$'$), while the proof of
Proposition \ref{prop-aim} shows (1$'$) $\Leftarrow$ (2$'$).
\end{proof}

\section{Some properties of quotients}

\begin{lemma} \label{Y=X}
Let $X$ be a local $K$-functor in $\mathcal{F}$, and let $Y \subseteq X$ be a local subfunctor. Suppose that $\{ X_i \}_{i \in I}$
is an open covering of $X$.

1. $Y$ is closed in $X$ iff each $Y \cap X_i$ is closed in $X_i$.

2. If $X_i \subseteq Y$ for all $i$, then $Y = X$.
\end{lemma}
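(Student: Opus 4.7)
The plan is to prove the two parts separately, in both cases leaning on the fact that $X$ and $Y$ are local so that Proposition \ref{local} applies to morphisms into them.

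For Part 2, I would argue directly. Fix $A \in \mathsf{SAlg}_K$ and $x \in X(A)$; by Yoneda's Lemma $x$ corresponds to a morphism $\mathbf{g}_x : SSp\,A \to X$, and I want to show this factors through $Y$. Set $V_i := \mathbf{g}_x^{-1}(X_i)$; by definition of an open covering, $\{V_i\}_{i\in I}$ is an open covering of $SSp\,A$. The restrictions $\mathbf{g}_x|_{V_i} : V_i \to X_i$ factor through $Y$ (since $X_i \subseteq Y$), producing a compatible family of morphisms $V_i \to Y$ that agree on every intersection $V_i \cap V_j$ because they all come from $\mathbf{g}_x$. Since $Y$ is local, Proposition \ref{local} yields a unique $\mathbf{h} : SSp\,A \to Y$ whose restriction to each $V_i$ is $\mathbf{g}_x|_{V_i}$. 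Composing $\mathbf{h}$ with the inclusion $Y \hookrightarrow X$ gives a morphism into $X$ whose restrictions to the $V_i$ coincide with those of $\mathbf{g}_x$; the uniqueness clause of Proposition \ref{local} applied to the local functor $X$ forces this composite to equal $\mathbf{g}_x$. Hence $\mathbf{g}_x$ factors through $Y$, so $x \in Y(A)$.

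For Part 1, the forward direction ($\Rightarrow$) is formal: given any morphism $\mathbf{f} : SSp\,A \to X_i$, compose with the open immersion $X_i \hookrightarrow X$ to obtain $\mathbf{f}' : SSp\,A \to X$; then $\mathbf{f}'^{-1}(Y) = \mathbf{f}^{-1}(Y \cap X_i)$, which is closed in $SSp\,A$ by hypothesis, proving that $Y \cap X_i$ is closed in $X_i$.

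For the converse ($\Leftarrow$), assume each $Y \cap X_i$ is closed in $X_i$ and fix $\mathbf{f} : SSp\,A \to X$. The open subfunctors $V_i := \mathbf{f}^{-1}(X_i)$ cover $SSp\,A$, and, writing $\mathbf{f}_i : V_i \to X_i$ for the restriction, $\mathbf{f}^{-1}(Y) \cap V_i = \mathbf{f}_i^{-1}(Y \cap X_i)$ is closed in $V_i$. The question thus reduces to the local statement: if $Z$ is a subfunctor of $SSp\,A$ and $\{V_i\}$ is an open covering with each $Z \cap V_i$ closed in $V_i$, then $Z$ is closed in $SSp\,A$. I would refine $\{V_i\}$ to a finite covering by principal opens $\{D(Af_j)\}$ with $f_j \in A_0$ (a covering of an affine superscheme always admits a finite refinement of this form, as noted in the proof of Proposition \ref{local}); on each $SSp\,A_{f_j}$ the subfunctor $Z \cap D(Af_j)$ equals $V(J_j)$ for a superideal $J_j \subseteq A_{f_j}$, and compatibility on the overlaps $D(Af_jf_k)$ together with standard gluing of (super)ideals produces a superideal $I \subseteq A$ whose localization at $f_j$ is $J_j$ and which satisfies $V(I) = Z$. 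This is the verbatim superization of the classical gluing argument in \cite{jan}, Part I, 1.12(4).

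The main obstacle is the gluing step in the converse of Part 1: one must ensure that all constructions (principal localization, intersection, $V(-)$) respect the $\mathbb{Z}_2$-grading so that a superideal, rather than merely an ideal, is produced. Since each of these operations is manifestly compatible with the grading, the classical argument transports to our setting without essential modification, which is presumably why the authors refer to it as a per verbatim superization.
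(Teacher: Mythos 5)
Your proof is correct, and for Part 1 it is essentially the paper's proof: the authors simply declare Part 1 to be the direct superization of \cite{jan}, Part I, Lemma 1.13, which is exactly the reduction-to-$SSp\,A$-plus-gluing-of-superideals argument you spell out. Where you genuinely diverge is Part 2: the paper first observes that $X_i\subseteq Y$ makes $Y\cap X_i=X_i$ closed in $X_i$, so $Y$ is closed in $X$ by Part 1, and then invokes the superization of \cite{jan}, Part I, 1.12(5) for closed subfunctors; you instead bypass closedness entirely and glue the sections $\mathbf{g}_x|_{V_i}:V_i\to X_i\subseteq Y$ inside the local functor $Y$ via Proposition \ref{local}, recovering $\mathbf{g}_x$ by the uniqueness clause for the local functor $X$. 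Your route is self-contained and uses only the locality hypothesis on $Y$, at the price of redoing by hand the gluing that the cited fact packages; both are sound. One point worth making explicit in your converse of Part 1: the final identification $V(I)=Z$ for $Z=\mathbf{f}^{-1}(Y)$ is not a pointwise statement (the covering condition is only imposed over fields), so it needs $Z$ to be a local subfunctor of $SSp\,A$ — which it is, being the fibre product of the local functors $SSp\,A$ and $Y$ over $X$ — followed by the same Proposition \ref{local} gluing you use in Part 2. Since you flagged at the outset that you lean on locality of $Y$ throughout, this is a presentational omission rather than a gap.
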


\begin{proof}
1. This is a superization of \cite{jan}, Part I, Lemma 1.13. The proof there given can be directly superized.

2. By superizing the fact (5) noted in \cite{jan}, Part I, Section 1.12, we see that the statement holds if 
$Y$ is closed in $X$. This last assumption is now satisfied by Part 1 above.
\end{proof}
\begin{prop}\label{ev of scheme}
Let $X$ be a superscheme. Then $X_{ev}$ is a closed subfunctor of $X$. In particular,
$X_{ev}$ is also a superscheme. 
\end{prop}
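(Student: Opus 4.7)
The plan is to apply Lemma \ref{Y=X}(1) with respect to an open affine covering $\{X_i \simeq SSp \ A_i\}_{i \in I}$ of $X$. Two items must be verified: (a) each $X_{ev} \cap X_i$ is closed in $X_i$, and (b) $X_{ev}$ is itself a local subfunctor of $X$, so that the lemma applies.

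For (a), I would first identify $X_{ev} \cap X_i$ with $(X_i)_{ev}$ as subfunctors of $X$; one inclusion is automatic. For the reverse, take $y \in (X_{ev} \cap X_i)(A)$ with $y = X(\iota_{A_0}^A)(y_0)$, $y_0 \in X(A_0)$. The morphism ${\bf g}_y$ factors as ${\bf g}_{y_0} \circ SSp \ \iota_{A_0}^A$, so ${\bf g}_{y_0}^{-1}(X_i) = D(J)$ for some ideal $J \subseteq A_0$; the hypothesis $y \in X_i(A)$ translates to $JA = A$, and the $\mathbb{Z}_2$-graded decomposition $JA = J \oplus JA_1$ forces $J = A_0$, whence $y_0 \in X_i(A_0)$ and $y \in (X_i)_{ev}(A)$. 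Since $X_i$ is affine, $(X_i)_{ev} = V((A_i)_1 A_i) \simeq SSp \ \overline{A_i}$ is a closed subfunctor of $X_i$ by the discussion around \eqref{overlineA}.

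For (b), I would exhibit $X_{ev}$ explicitly as a superscheme, from which it is automatically a $K$-sheaf by Proposition \ref{superschemeissheaf} and thus a local $K$-functor by Remark \ref{sheaf is local}. Step (a) applied to pairs gives $(X_i)_{ev} \cap (X_j)_{ev} = (X_i \cap X_j)_{ev}$, so the closed embeddings $SSp \ \overline{A_i} \hookrightarrow X_i$ agree on overlaps and glue to a superscheme $Z$ equipped with a closed immersion $Z \hookrightarrow X$. One then identifies $Z$ with $X_{ev}$ as subfunctors of $X$. For $Z(C) \subseteq X_{ev}(C)$: given $z \in Z(C)$, cover $SSp \ C$ by basic opens $D(Cf)$ landing in some $SSp \ \overline{A_{i(f)}}$; the resulting maps $\overline{A_{i(f)}} \to C_f$ factor uniquely through $(C_0)_f$ since $\overline{A_{i(f)}}$ is purely even, yielding local even lifts $z_{0,f} \in X((C_0)_f)$; these agree on overlaps by the injectivity of $X((C_0)_{fg}) \to X(C_{fg})$ for superschemes (a consequence of the injection $(C_0)_{fg} \hookrightarrow C_{fg}$), and glue via the local $K$-functor property of $X$ on $SSp \ C_0$ into some $z_0 \in X(C_0)$ with $X(\iota_{C_0}^C)(z_0) = z$. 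The reverse $X_{ev}(C) \subseteq Z(C)$ is dual: for $y = X(\iota_{A_0}^A)(y_0)$, the same purely even factorization argument shows that ${\bf g}_{y_0} : SSp \ A_0 \to X$ factors through $Z$, whence so does ${\bf g}_y$.

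Having both (a) and (b) in hand, Lemma \ref{Y=X}(1) concludes that $X_{ev}$ is closed in $X$. The main obstacle is step (b), specifically the compatibility of locally defined even lifts on double overlaps; this rests on uniqueness of the factorization of maps $\overline{A_i} \to D$ through the purely even part $D_0 \hookrightarrow D$, combined with the local $K$-functor property of $X$ applied to the induced covering of $SSp \ A_0$ by localizations.
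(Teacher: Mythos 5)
Your argument is correct and ends exactly where the paper's does --- Lemma \ref{Y=X}(1) applied to an open affine covering, with the affine case settled by $(SSp \ A)_{ev}=V(AA_1)\simeq SSp \ \overline{A}$ --- but the route to the two hypotheses of that lemma is genuinely different. The paper first invokes the Comparison Theorem to write $X=Y^{\diamond}$ with $Y\in\mathcal{SV}$, and then characterizes $y\in (Y^{\diamond})_{ev}(A)$ by the condition that $y^{*}$ carries each $\mathcal{O}_Y(V)$ into $\mathcal{O}_{SSpec \ A}((y^e)^{-1}(V))_0$; from this single sheaf-theoretic description both the intersection identity $(Z^{\diamond})_{ev}=(Y^{\diamond})_{ev}\bigcap Z^{\diamond}$ for open $Z$ and the locality of $(Y^{\diamond})_{ev}$ (via Proposition \ref{local}) fall out at once. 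You stay entirely on the functorial side: the intersection identity comes from the graded decomposition $JA=J\oplus JA_1$ (which correctly forces $J=A_0$ when $JA=A$), and locality comes from first exhibiting $X_{ev}$ as a superscheme glued from the closed pieces $SSp \ \overline{A_i}$, so that Proposition \ref{superschemeissheaf} and Remark \ref{sheaf is local} apply. Your version avoids the Comparison Theorem at the cost of the gluing in step (b), which is where all the work sits; the paper's version buys locality essentially for free but presupposes the equivalence $\mathcal{SF}\approx\mathcal{SV}$. One point in your step (b) needs a touch more care: for an arbitrary superscheme $X$ the injectivity of $X((C_0)_{fg})\to X(C_{fg})$ does not follow merely from the injection of superalgebras $(C_0)_{fg}\hookrightarrow C_{fg}$. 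You should first place the two candidate lifts in a common open affine subfunctor --- the superized variant of \cite{jan}, Part I, 1.7(6) quoted in Section 3 applies here, since the even part of $(C_0)_{fg}\to C_{fg}$ is an identity and hence faithfully flat --- and only then invoke the injectivity of $\mathrm{Hom}_{\mathsf{SAlg}_K}(A_j,(C_0)_{fg})\to \mathrm{Hom}_{\mathsf{SAlg}_K}(A_j,C_{fg})$. With that repair the proposal is complete.
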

\begin{proof}
By Comparison Theorem $X$ can be identified with $Y^{\diamond}$, where $Y\in \mathcal{SV}$.  
By the definition, $y\in Y^{\diamond}(A)$ belongs to $(Y^{\diamond})_{ev}(A)$ iff $y$ factors through the canonical morphism 
$SSpec \ A\to SSpec \ A_0 , A\in\mathsf{SAlg}_K$. By Lemma \ref{canobijection}, $y\in (Y^{\diamond})_{ev}(A)$ iff for any open subset $V\subseteq Y^e$
we have $y^*(\mathcal{O}_Y(V))\subseteq \mathcal{O}_{SSpec \ A}((y^e)^{-1}(V))_0$. In particular, for any open subfunctor
$Z\subseteq Y$ we have $(Z^{\diamond})_{ev}=(Y^{\diamond})_{ev}\bigcap Z^{\diamond}$. Moreover, Proposition \ref{local}
implies that $(Y^{\diamond})_{ev}$ is a local $K$-functor. 
If $\{Y_i\}_{i\in I}$ is an open affine
covering of $Y$, then $\{Y^{\diamond}_i\}_{i\in I}$ is an open affine covering of $Y^{\diamond}$. Since $(Y^{\diamond})_{ev}\bigcap Y^{\diamond}_i$
is closed in $Y^{\diamond}_i$ for arbitrary $i\in I$, $(Y^{\diamond})_{ev}$ is closed in $Y^{\diamond}$ by Lemma \ref{Y=X}.
\end{proof}

\begin{prop}\label{ev of factor is factor of ev}
Let $G$ be an algebraic supergroup, and  let $H$ be a closed supersubgroup of $G$. Then we have 
$$(G \tilde{/} H)_{ev}=G_{ev} \tilde{/} H_{ev}, \quad (G \tilde{/} H)_{res}=G_{res} \tilde{/} H_{res}.$$
\end{prop}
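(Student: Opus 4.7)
The plan is to reduce the second equality to the first and then prove the first via the local affine covering constructed in the proof of the Main Theorem. For any $K$-functor $X$ one has $(X_{ev})_{res}=X_{res}$, since $A=A_0$ for $A\in\mathsf{Alg}_K$. So assuming $(G\tilde{/}H)_{ev}=G_{ev}\tilde{/}H_{ev}$, restriction gives
$$(G\tilde{/}H)_{res}=((G\tilde{/}H)_{ev})_{res}=(G_{ev}\tilde{/}H_{ev})_{res}=G_{res}\tilde{/}H_{res}$$
by Corollary~\ref{identifycor}(1). Thus only the first identity needs proof.

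The strategy is to check the equality locally on the finite open affine covering $\{U_i\}_{1\leq i\leq n}$ of $G\tilde{/}H$ from the proof of the Main Theorem (Lemmas~\ref{G_ev/H_ev}, \ref{U_i} and the surrounding discussion). By construction $U_i=SSp\,C_i$ with $C_i=B_i^{co D}$, where $V_i=SSp\,B_i$ is an $H$-stable affine open in $G$ of the form $B_i=\overline{B_i}\otimes R$ ($R=\wedge(W^A)$), and $U'_i:=U_i\cap(G_{ev}\tilde{/}H_{ev})=SSp\,\overline{B_i}^{co\overline{D}}$ is an affine open covering of $G_{ev}\tilde{/}H_{ev}$. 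Since $U_i$ is affine, $(U_i)_{ev}=SSp\,\overline{C_i}$ with $\overline{C_i}=C_i/C_i(C_i)_1$, so the local task is the Hopf-algebraic identification $\overline{C_i}\simeq\overline{B_i}^{co\overline{D}}$ induced by $C_i\subset B_i\twoheadrightarrow\overline{B_i}$.

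I would prove this identification by reusing the machinery of Section~\ref{proof3}. Set $\mathfrak{a}=\ker(R\to T)$ and $\mathfrak{B}=B/B\mathfrak{a}$ as in \eqref{mathfrakB}. By Lemma~\ref{injectivity} every successive quotient $B\mathfrak{a}^i/B\mathfrak{a}^{i+1}$ in the nilpotent filtration of $B\mathfrak{a}$ is $D$-injective, so the short exact sequences split $D$-colinearly and $B\simeq\mathfrak{B}\oplus B\mathfrak{a}$ in $\mathsf{SMod}^D$. Applying $(-)^{co D}$ gives the surjection $C\twoheadrightarrow\mathfrak{B}^{co D}$; the category equivalence \eqref{equiv} sends the trivial comodule to the trivial comodule and hence induces a natural isomorphism $\mathfrak{B}^{co D}\simeq\overline{B}^{co\overline{D}}$ compatible with projection. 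To identify the kernel, observe that the kernel of $C\to\overline{B}^{co\overline{D}}$ is $C\cap BB_1$; a direct calculation exploiting $B=\overline{B}\otimes R$ and the description $R_1^2=R_0^+$ shows $C\cap BB_1=CC_1$, yielding the desired isomorphism.

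Having $(U_i)_{ev}=U'_i$, I would then show $(G\tilde{/}H)_{ev}\cap U_i=(U_i)_{ev}$: the nontrivial inclusion follows from the remark preceding Lemma~\ref{open}, applied with $\alpha=\iota^A_{A_0}$ whose even part $\mathrm{id}_{A_0}$ is trivially a faithfully flat extension, giving $(G\tilde{/}H)(\iota^A_{A_0})^{-1}(U_i(A))=U_i(A_0)$. Combining these, $(G\tilde{/}H)_{ev}\cap U_i=U'_i=(G_{ev}\tilde{/}H_{ev})\cap U_i$ for each $i$. Both $(G\tilde{/}H)_{ev}$ (closed in $G\tilde{/}H$ by Proposition~\ref{ev of scheme}) and $G_{ev}\tilde{/}H_{ev}$ (Lemma~\ref{G_ev/H_ev}) are superschemes, hence local $K$-functors, so the standard locality argument via Proposition~\ref{local}---pulling back $\{U_i\}$ along the classifying map $SSp\,A\to G\tilde{/}H$ of any point and gluing local factorizations---identifies the two subfunctors. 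The main obstacle will be the Hopf-algebraic step $\overline{C_i}\simeq\overline{B_i}^{co\overline{D}}$, which requires combining the equivalence \eqref{equiv}, the $D$-injectivity/filtration argument from Section~\ref{proof3}, and the kernel computation adapted to the specific structure $B_i=\overline{B_i}\otimes R$.
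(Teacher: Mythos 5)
Your global strategy matches the paper's: reduce the second equality to the first via Corollary \ref{identifycor}, localize to the covering $\{U_i\}$, $\{U'_i\}$ from Sections \ref{proof2}--\ref{proof3}, reduce to the Hopf-algebraic statement $\overline{C}=\overline{B}^{co\overline{D}}$, and glue with Lemma \ref{Y=X}. Your treatment of $(G\tilde{/}H)_{ev}\cap U_i=(U_i)_{ev}$ and the splitting $B\simeq\mathfrak{B}\oplus B\mathfrak{a}$ in $\mathsf{SMod}^D$ are fine, and your corepresentability shortcut for $\mathfrak{B}^{coD}\simeq\overline{B}^{co\overline{D}}$ is salvageable (the corepresenting object in $\mathsf{SMod}_T^D$ is $T$, not the trivial comodule, but $T\mapsto T/T^+\simeq K$ under \eqref{equiv}, so the equivalence does identify $M^{coD}=\mathrm{Hom}_{\mathsf{SMod}_T^D}(T,M)$ with $(M/MT^+)^{co\overline{D}}$; this replaces the paper's route via bijectivity of $\beta_{\mathfrak{B}}$ and faithful flatness of $\overline{B}$ over $\mathfrak{C}$).

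The genuine gap is the last step: you assert that ``a direct calculation exploiting $B=\overline{B}\otimes R$'' shows $C\cap BB_1=CC_1$. The nontrivial inclusion $C\cap BB_1\subseteq CC_1$ is precisely the hardest part of the paper's proof, and it is not a formal consequence of the tensor decomposition: for a general subsuperalgebra that decomposition gives nothing (e.g.\ $C=K[\theta_1\theta_2]\subseteq B=\wedge(\theta_1,\theta_2)$ has $CC_1=0$ but $C\cap BB_1\neq 0$), and the decomposition $B\simeq\overline{B}\otimes R$ holds only as left $\overline{A}$-comodule superalgebras, so it is not compatible with the right $D$-coaction that cuts out $C$. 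What is actually needed is to transfer structural information from $B$ to $C$ through the category equivalence $\mathsf{SMod}_C\approx\mathsf{SMod}_B^D$ of Theorem \ref{superOberst}(4): the paper shows that $(\mathrm{gr}\,B)^{coD}=\mathrm{gr}\,C$ by exactness, that $\mathfrak{C}=(\mathrm{gr}\,C)(0)$ is purely even and $(\mathrm{gr}\,C)(1)$ purely odd, that the surjectivity of $\bigotimes_{\mathfrak{B}}^i(\mathrm{gr}\,B)(1)\to(\mathrm{gr}\,B)(i)$ descends to $\mathrm{gr}\,C$ because the equivalence is a \emph{tensor} equivalence (Remark \ref{tensorfunctor}), and only then invokes \cite{amas}, Lemma 5.15 to pass from the associated graded back to the filtered object and conclude $CC_1^i=C\cap B\mathfrak{a}^i$. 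Without some version of this descent argument your proof does not close.
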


\begin{proof}
Since by Corollary \ref{identifycor}, the second equality follows from the first, we concentrate on proving the first.

Theorem \ref{mainthm} combined with Proposition \ref{ev of scheme} implies that $(G \tilde{/} H)_{ev}$ is a superscheme.
Since it includes $(G_{ev} / H_{ev})_{(n)}$, we have $(G \tilde{/} H)_{ev} \supseteq G_{ev} \tilde{/} H_{ev}$. 

Recall from the arguments and the results in Sections \ref{proof2}, \ref{proof3}  
that $G \tilde{/} H$ and $G_{ev} \tilde{/} H_{ev}$ have (finite) open coverings $\{U_i\}$, $\{U'_i\}$, respectively,
such that $U_i~ \bigcap~ G_{ev} \tilde{/} H_{ev} = U'_i$ for each $i$. We see easily that $\{(U_i)_{ev}\}$ is an open covering
of $(G \tilde{/} H)_{ev}$. Suppose that we have proved 
\begin{equation}\label{eq-ev}
(U_i)_{ev} = U'_i\ \, \mbox{for each}\ \, i.
\end{equation}
Then the desired equality will follow from Lemma \ref{Y=X} on $X \supseteq Y$, $\{ X_i \}$ applied to  
$(G \tilde{/} H)_{ev} \supseteq G_{ev} \tilde{/} H_{ev}$, $\{ (U_i)_{ev} \}$. Therefore, we will aim to prove \eqref{eq-ev}, below.

By the same argument as given in the two paragraphs preceding Proposition \ref{prop-aim}, 
we may and we do suppose that we are in the situation of Section \ref{proof3}, 
and that the $U_i$ and the $U'_i$ above, with $i$ fixed, are given by $SSp~B^{coD}$, 
$SSp~\overline{B}^{co\overline{D}}$, respectively; here, $U_i$ may be supposed to be $SSp~B^{coD}$,
since $U_i$ is affine as was shown by the proof of Proposition \ref{prop-aim}.  Set $C = B^{coD}$. Since $(U_i)_{ev} = SSp~\overline{C}$ with
$\overline{C} = C/CC_1$, it follows that to prove the desired \eqref{eq-ev}, it suffices to prove 
\begin{equation}\label{overlineC}
\overline{C} = \overline{B}^{co\overline{D}}.
\end{equation}

Now, we know from Theorem \ref{superOberst} that $N \mapsto N \otimes_C B$ gives a category 
equivalence $\mathsf{SMod}_C \overset{\approx}{\longrightarrow} \mathsf{SMod}_B^D$, whose quasi-inverse is given by its
right adjoint $M \mapsto M^{coD}$; see \eqref{coinv}. Set
\begin{equation*}
\mathrm{gr}~B = \bigoplus_{i=0}^{\infty} B\mathfrak{a}^i/B\mathfrak{a}^{i+1}.
\end{equation*}
This is naturally a right $D$-comodule superalgebra. With respect to the natural $\mathbb{N}$-grading 
$(\mathrm{gr}~B)(i) = B\mathfrak{a}^i/B\mathfrak{a}^{i+1}$, $i \in \mathbb{N}$, this is an 
$\mathbb{N}$-graded object in $\mathsf{SMod}_B^D$. Note $\mathfrak{B} = (\mathrm{gr}~B)(0)$.
Since the equivalence $M \mapsto M^{coD}$ is exact,
we see that $(\mathrm{gr}~B)^{coD}$ coincides with
\begin{equation*}
\mathrm{gr}~C := \bigoplus_{i=0}^{\infty} (C \cap B\mathfrak{a}^i)/(C \cap B\mathfrak{a}^{i+1}).
\end{equation*}
Set $\mathfrak{C} = (\mathrm{gr}~C)(0)$; this coincides with $\mathfrak{B}^{coD}$. We see from Lemma \ref{injectivity} 
that the right $D$-comodule superalgebra $\mathfrak{B}$ satisfies 
the condition (2), and hence the remaining conditions, given in Part 1 of Theorem \ref{superOberst}. In particular,
\begin{equation*}
\beta_{\mathfrak{B}} : \mathfrak{B} \otimes_{\mathfrak{C}} \mathfrak{B} \to \mathfrak{B} \otimes D,\ 
\beta_{\mathfrak{B}}(b \otimes_{\mathfrak{C}} b') = b \rho' (b')
\end{equation*}
is a bijection, where $\rho' : \mathfrak{B} \to \mathfrak{B} \otimes D$ denotes the structure on $\mathfrak{B}$. 
 By applying $T/T^+ \otimes_T$ twice, it follows that the last bijection induces a bijection
$\overline{B} \otimes_{\mathfrak{C}} \overline{B} \overset{\simeq}{\longrightarrow} \overline{B} \otimes \overline{D}$.
We claim that $\overline{B}$ is faithfully flat over $\mathfrak{C}$, which together with the last bijection 
will imply that 
\begin{equation}\label{mathfrakC}
\mathfrak{C} = \overline{B}^{co\overline{D}}.
\end{equation}
To prove the claim, let $N \to N'$ be a morphism in $\mathsf{SMod}_{\mathfrak{C}}$. By the condition (4) for 
$\mathfrak{B}$, the morphism is an injection iff the induced morphism $N \otimes_{\mathfrak{C}} \mathfrak{B}
\to N' \otimes_{\mathfrak{C}} \mathfrak{B}$ in $\mathsf{SMod}_{\mathfrak{B}}^D$, and hence in $\mathsf{SMod}_T^D$, is
an injection. By the category equivalence \eqref{equiv}, the last condition is equivalent to that
$N \otimes_{\mathfrak{C}} \overline{B} \to N' \otimes_{\mathfrak{C}} \overline{B}$ is an injection. This proves
the claim. 

We see from \eqref{mathfrakC} that $\mathfrak{C} = (\mathrm{gr}~C)(0)$ is purely even. On the other hand, 
$(\mathrm{gr}~C)(1)$ is purely odd since so is $(\mathrm{gr}~B)(1)$. By construction, the product map $\bigotimes_{\mathfrak{B}}^i (\mathrm{gr}~B)(1) \to
(\mathrm{gr}~B)(i)$ is a surjection for each $i > 0$. 
We know already that $N \mapsto N \otimes_{\mathfrak{C}} \mathfrak{B}$ gives a category
equivalence $\mathsf{SMod}_{\mathfrak{C}} \approx \mathsf{SMod}_{\mathfrak{B}}^D$, which is indeed an 
equivalence of tensor categories, as is seen from Remark \ref{tensorfunctor}. 
It follows that 
the product map $\bigotimes_{\mathfrak{C}}^i (\mathrm{gr}~C)(1) \to
(\mathrm{gr}~C)(i)$ as well is a surjection for each $i$, or in other words, $\mathrm{gr}~C$ is generated by the elements of 
degree $\leq$ 1. Therefore, the $\mathbb{Z}_2$-grading on $\mathrm{gr}~C$ is such that
\begin{equation*}
(\mathrm{gr}~C)_0 = \bigoplus_{i\ \mathrm{even}} (\mathrm{gr}~C)(i),\quad (\mathrm{gr}~C)_1 = \bigoplus_{i\ \mathrm{odd}} (\mathrm{gr}~C)(i).
\end{equation*}
We can now apply \cite{amas}, Lemma 5.15 on $R$, $\mathcal{R}$ to the present $C$, $\mathrm{gr}~C$, to conclude
$CC_1^i = C \cap B\mathfrak{a}^i$ for all $i > 0$. The result for $i = 1$ shows
$\overline{C} = \mathfrak{C}$, which coincides with $\overline{B}^{co\overline{D}}$
by \eqref{mathfrakC}. This proves the desired \eqref{overlineC}.
\end{proof}

We define here \emph{affine morphisms}, and later \emph{faithfully flat morphisms};
see Definition \ref{def-faithfullmorph}.

\begin{definition}\label{def-affine}
Following \cite{dg}, we say that a morphism of $K$-functors ${\bf f} :
X\to Y$ is \emph{affine} if, for any $R\in\mathsf{SAlg}_K$ and any morphism
${\bf g} : SSp \ R\to Y$, the fiber product $X\times_{Y} SSp \ R$ is an
affine superscheme. 
\end{definition}
If $Y$ is an affine superscheme and ${\bf f} : X\to
Y$ is an affine morphism, then $X$ is obviously affine. In fact,
for ${\bf g}= \mathrm{id}_Y$ we have $X\simeq X\times_Y Y$.
\begin{lemma}\label{affine morphism}
If ${\bf f} : X\to Y$ is an affine morphism and $Y$ is a superscheme, then $X$ is
also a superscheme.
\end{lemma}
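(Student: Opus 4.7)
The plan is to lift an open affine covering from $Y$ to one on $X$. Since $Y$ is a superscheme, I would choose an open affine covering $\{Y_i\simeq SSp \ A_i\}_{i\in I}$ of $Y$ and set $X_i:={\bf f}^{-1}(Y_i)$. Each $X_i$ is open in $X$ by the very definition of an open subfunctor, and I claim each $X_i\simeq X\times_Y Y_i$ is an affine superscheme: Yoneda's Lemma translates the inclusion $Y_i\hookrightarrow Y$ into a morphism $SSp \ A_i\to Y$, and the hypothesis that ${\bf f}$ is affine asserts precisely that the fiber product along such a morphism is affine.

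Next, I would check that $\{X_i\}_{i\in I}$ is an open covering of $X$. Given any field $C\in\mathsf{F}_K$ and $x\in X(C)$, the image ${\bf f}(C)(x)$ lies in $Y(C)=\bigcup_i Y_i(C)$, so $x\in X_i(C)$ for some $i$, as required by the definition of open covering in the paper.

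It remains to verify that $X$ is a local $K$-functor; this together with the affine covering just constructed will yield that $X$ is a superscheme by Definition \ref{def-superscheme}. Given a $T_{loc}$-covering $\{SSp \ R_{f_j}\to SSp \ R\}_{1\leq j\leq n}$ and compatible elements $x_j\in X(R_{f_j})$, composition with ${\bf f}$ yields compatible elements of $Y(R_{f_j})$ which, by locality of the superscheme $Y$, glue to some ${\bf h}\in Y(R)$. Then $X\times_Y SSp \ R$, formed along ${\bf h}$, is an affine superscheme $SSp \ C$ by hypothesis, and the pairs $(x_j,\mathrm{id})$ furnish compatible morphisms $SSp \ R_{f_j}\to SSp \ C_{f_j}$; these glue via the locality of the affine superscheme $SSp \ C$, and projecting through $SSp \ C=X\times_Y SSp \ R\to X$ produces the desired element of $X(R)$. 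Uniqueness is handled analogously, using locality of $Y$ to reduce the question to morphisms into the affine superscheme $SSp \ C$.

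I expect the verification of locality to be the only delicate step, since the affineness of each $X_i$ is essentially immediate from the definition of an affine morphism. The gluing argument above reduces the locality of $X$ to the already-established locality of $Y$ and that of affine superschemes, so no genuine obstacle should arise.
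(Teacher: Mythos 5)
Your proof is correct and follows essentially the same route as the paper, which simply defers to the classical argument of Demazure--Gabriel (I, \S 2, Proposition 3.3): pull back an open affine covering of $Y$, identify each preimage ${\bf f}^{-1}(Y_i)$ with the affine fiber product $X\times_Y SSp\ A_i$, check the covering condition on fields, and deduce locality of $X$ from that of $Y$ and of affine superschemes. Your explicit gluing verification in the last step is exactly the content the paper leaves to the reader.
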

\begin{proof} Combining Yoneda's Lemma with superization of \cite{jan}, Part I, 1.7(3), one can copy the proof
of Proposition 3.3, \cite{dg}, I, \S 2 .
\end{proof}
\begin{lemma}\label{affinessislocal}
Let ${\bf f} : X\to Y$ be a morphism of local $K$-functors. If there is an open covering $\{Y_i\}_{i\in I}$ of $Y$ 
such that for each $i \in I$, the induced
morphism ${\bf f}^{-1}(Y_i)\to Y_i$ is affine, then ${\bf f}$ is affine.
\end{lemma}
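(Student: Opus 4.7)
The plan is to verify Definition~\ref{def-affine} directly: for any $R\in\mathsf{SAlg}_K$ and any morphism $\mathbf{g}:SSp \ R\to Y$, I must show that $Z:=X\times_Y SSp \ R$ is an affine superscheme. First I would pull back the hypothesised cover, setting $U_i:=\mathbf{g}^{-1}(Y_i)$; these form an open cover of $SSp \ R$. Since each $U_i$ is itself covered by principal opens of the form $D(Rf)$ with $f$ in the even part of an appropriate superideal, I obtain an open cover of $SSp \ R$ by principal opens, each sitting inside some $U_i$. As was shown inside the proof of Proposition~\ref{local}, any open cover of $SSp \ R$ admits a finite subcovering by principal opens, so I can select finitely many $f_1,\ldots,f_n\in R_0$ with $\sum_{j=1}^n R_0 f_j=R_0$ and indices $i(1),\ldots,i(n)$ such that $D(Rf_j)\subseteq U_{i(j)}$ for every $j$.

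Next, for each $j$ the open inclusion $SSp \ R_{f_j}\simeq D(Rf_j)\hookrightarrow U_{i(j)}$ composed with $\mathbf{g}$ yields a morphism $SSp \ R_{f_j}\to Y_{i(j)}$. The hypothesis that $\mathbf{f}^{-1}(Y_{i(j)})\to Y_{i(j)}$ is affine then makes $Z_j:=\mathbf{f}^{-1}(Y_{i(j)})\times_{Y_{i(j)}}SSp \ R_{f_j}$ an affine superscheme, say $Z_j=SSp \ C_j$. By standard fiber-product manipulations, $Z_j$ coincides with $Z\times_{SSp \ R}SSp \ R_{f_j}$, hence is naturally an open subfunctor of $Z$. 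Since the category of $T_{loc}$-sheaves is closed under fibered products and $X$, $Y$ and $SSp \ R$ are all local $K$-functors, $Z$ is itself local; together with the affine open cover $\{Z_j\}_{j=1}^n$ this shows $Z\in\mathcal{SF}$.

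To conclude that $Z$ is affine I would set $C:=\mathcal{O}(Z)$ and consider the canonical morphism $\iota:Z\to SSp \ C$ corresponding to the identity of $C$. Applying the sheaf property of $\mathcal{O}$ (Proposition~\ref{sheaf}) to the cover $\{Z_j\}$, together with Remark~\ref{affinesheaf} describing $\mathcal{O}$ on the principal-open overlaps $Z_j\cap Z_k$, should identify $C_{f_j}$ canonically with $C_j$ for every $j$; under this identification $\iota|_{Z_j}$ becomes the canonical isomorphism $SSp \ C_j\to SSp \ C_{f_j}$. Since $\sum_j C_0 f_j=C_0$, the principal opens $\{D(Cf_j)\}_{j=1}^n$ form an open cover of $SSp \ C$, and Lemma~\ref{covering} then forces $\iota$ to be an isomorphism. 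The main obstacle I anticipate is making the identification $C_{f_j}\simeq C_j$ fully rigorous: this is the super analogue of the classical fact that on an affine scheme covered by principal opens the structure sheaf localizes correctly, and its proof will require a Čech-style argument combining the sheaf property of $\mathcal{O}$ with the observation that each $Z_j\cap Z_k$ is itself a principal open both in $Z_j$ and in $Z_k$.
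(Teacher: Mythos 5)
Your proof is correct and follows essentially the same route as the paper: reduce to an affine target, refine the pulled-back cover to finitely many principal opens $D(Rf_j)$, observe that the fibre product is a superscheme with a finite affine cover, and identify it with $SSp$ of its global sections by checking on that cover (the paper uses Lemma \ref{covering} in the guise of the Comparison Theorem, passing to $|Z|\to SSpec\ C$ over $SSpec\ R$). The one technical point you flag, $C_{f_j}\simeq C_j$, is exactly what the paper invokes via Lemma \ref{principalopen}(2), and your proposed finite \v{C}ech argument (using that the overlaps $Z_j\cap Z_k$ are principal opens in each $Z_j$ and that localization at $f_j$ is exact) is the right way to justify it.
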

\begin{proof}
Arguing as in Corollary 3.8, \cite{dg}, I, \S 2, one can reduce the general case to $Y=SSp \ R$. Moreover, one can assume that
$Y_i=D(Rg_i)$, where the even elements $g_i$ generate $R_0$ as an ideal. By our assumption, $\{{\bf f}^{-1}(Y_i)\}_{i\in I}$ is an open affine covering of $X$. Therefore, $X$ is a superscheme. Using Comparison Theorem one can translate our statement into the category
$\mathcal{SV}$. Denote ${\mathcal O}_X(|X|^e)$ by $A$. By Lemmas \ref{canobijection} and \ref{principalopen} we have a commutative diagram
$$\begin{array}{ccc}
|X| & \to & SSpec \ A \\
 \ \searrow & & \swarrow   \\
 & SSpec \ R &
\end{array}
$$
such that for any $i\in I$, in the induced (also commutative) diagram
$$\begin{array}{ccc}
|X|_{f^*(g_i)} & \to & U(Af^*(g_i)) \\
 \ \searrow & & \swarrow   \\
 & U(Rg_i) &
\end{array}
$$
the horizontal arrow is an isomorphism. Here $f=|{\bf f}|$. Thus $|X|\to SSpec \, A$ is also an isomorphism.
\end{proof}
Let ${\bf f} : X\to Y$ be a morphism of $K$-sheaves. We see that ${\bf f}$ is an epimorphism (of $K$-sheaves) iff the image of $X$ is dense (in the fppf topology) in $Y$,
or in other words, for any $A\in \mathsf{SAlg}_K, y\in Y(A)$, there is a fppf covering $B\geq A$ and $x\in X(B)$ such that ${\bf f}(B)(x)=Y(\iota^B_A)(y)$. For example, any quotient $G\to G \tilde{/} H$ is an epimorphism of $K$-sheaves.
Besides, if a superalgebra $B$ is a fppf covering of its supersubalgebra $A$, then $SSp \ \iota^B_A$ is an epimorphism of $K$-sheaves. 
\begin{lemma}\label{isomorphism} (see Example 2.6, \cite{dg}, III, \S 1) 
Assume that we have a commutative diagram of sheaves, 
$$\begin{array}{ccccl}
X  & \stackrel{{\bf f}}{\longrightarrow}  & Y \\
{\bf g} \ \searrow&&\swarrow \ {\bf h} & \\
& Z. &
\end{array}$$
If ${\bf p} : Z'\to Z$ is an epimorphism of sheaves and the induced morphism
$X\times_Z Z'\to Y\times_Z Z'$ is an isomorphism, then ${\bf f}$ is.
\end{lemma}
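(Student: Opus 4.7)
The plan is to check that $f(R) : X(R) \to Y(R)$ is a bijection for every $R \in \mathsf{SAlg}_K$; since $X$ and $Y$ are $K$-sheaves this will give the isomorphism of sheaves. The only input I need from the hypothesis on $p$ is the description of an epimorphism of $K$-sheaves recalled just before the lemma: for every $z \in Z(R)$ there exist a fppf covering $\iota = \iota_R^B : R \to B$ and $z' \in Z'(B)$ with $p(B)(z') = Z(\iota)(z)$. I will write $g = h \circ f$ for the diagonal of the given triangle.

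For injectivity, suppose $x_1, x_2 \in X(R)$ with $f(x_1) = f(x_2) =: y$. Set $z := g(x_1) = g(x_2)$ and choose a lift $z' \in Z'(B)$ of $z_B := Z(\iota)(z)$ as above. Then $(x_{1,B}, z')$ and $(x_{2,B}, z')$ are two elements of $(X \times_Z Z')(B)$ whose common image in $(Y \times_Z Z')(B)$ is the pair $(y_B, z')$. The assumed bijection $X \times_Z Z' \to Y \times_Z Z'$ therefore forces them to coincide, and the sheaf property of $X$ (injectivity of $X(R) \to X(B)$) promotes this to $x_1 = x_2$.

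For surjectivity, start with $y \in Y(R)$, set $z := h(y)$, and again pick $z' \in Z'(B)$ with $p(z') = z_B$. Then $(y_B, z')$ lies in $(Y \times_Z Z')(B)$ and has a unique preimage $(x, z') \in (X \times_Z Z')(B)$ under the given isomorphism; in particular $f(x) = y_B$ and $g(x) = p(z')$. To descend $x$ to $R$ I would compare its two pullbacks $x_1, x_2 \in X(B \otimes_R B)$ via the first pullback $z'_1$ of $z'$: both pairs $(x_i, z'_1)$ lie in $(X \times_Z Z')(B \otimes_R B)$, since $f(x_i) = y_{B \otimes_R B}$ and $g(x_i) = z_{B \otimes_R B} = p(z'_1)$, and both have image $(y_{B \otimes_R B}, z'_1)$ under the isomorphism. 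Hence $x_1 = x_2$, the sheaf axiom for $X$ produces $\tilde{x} \in X(R)$ restricting to $x$, and $f(\tilde{x}) = y$ then follows from the injectivity of $Y(R) \to Y(B)$.

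The only delicate point is the descent step in the surjectivity argument. It would be natural to try to match both pullbacks $z'_1, z'_2$ of $z'$ in $Z'(B \otimes_R B)$ at once, but these generically differ, so no direct comparison of $(x_1, z'_1)$ with $(x_2, z'_2)$ is possible. Using a single fixed lift $z'_1$ recasts the question as uniqueness of preimages under the hypothesized bijection on $(B \otimes_R B)$-points, which is immediate. Everything else is formal from the universal property of fiber products and the $K$-sheaf axioms.
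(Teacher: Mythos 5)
Your proof is correct and follows essentially the same route as the paper: injectivity of ${\bf f}(R)$ by lifting $h(y)$ to a $Z'$-point over an fppf covering and using the hypothesized isomorphism there, then surjectivity by descending a preimage from the covering via the sheaf axiom. The only (harmless) variation is in the descent step, where the paper invokes the just-established monomorphism property of ${\bf f}$ at $B\otimes_R B$ to identify the two pullbacks of $x$, while you compare them directly inside $(X\times_Z Z')(B\otimes_R B)$ against the fixed lift $z'_1$ --- a slightly more self-contained packaging of the same idea.
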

\begin{proof}
Suppose $x_1, x_2\in X(A)$, and ${\bf f}(A)(x_1)={\bf f}(A)(x_2)=y$. For an fppf covering $B\geq A$ there is $z\in Z'(B)$ such that 
$(y', z)$ belongs to $Y(B)\times_{Z(B)} Z'(B), y'=Y(\iota^B_A)(y)$. Moreover, $(x'_1, z), (x'_2, z)$ belong to $X(B)\times_{Z(B)} Z'(B),$ where
$x'_i=X(\iota^B_A)(x_i), i=1,2,$ and their images
in $Y(B)\times_{Z(B)} Z'(B)$ coincide. By the assumption, $x'_1=x'_2$ and since $X(\iota^B_A)$ is injective, $x_1=x_2$.
Analogously, for any $y\in Y(A)$, there are a fppf covering $B\geq A$ and $x'\in X(B)$ such that $X(B)(x')=
y'\in\mathrm{Ker}(Y(B)\begin{array}{c}\to \\ \to\end{array} Y(B\otimes_A B))$. Since ${\bf f}$ is an embedding, there is
a pre-image $x\in X(A)$ of $x'$ such that ${\bf f}(A)(x)=y$.
\end{proof}
\begin{prop}\label{cartesian} (cf. Corollary 2.12, \cite{dg}, III, \S 1)
Consider a cartesian square (or pullback square, see \cite{mc}, p.71)
$$\begin{array}{ccc}
X & \stackrel{{\bf f}}{\longrightarrow} & Y \\
{\bf p}\downarrow & & \downarrow {\bf q}\\ 
Z & \stackrel{{\bf g}}{\longrightarrow} & T
\end{array}
$$
of $K$-sheaves. Assume that ${\bf g}$ is an epimorphism and ${\bf p}$ is affine. Then ${\bf q}$ is affine.
\end{prop}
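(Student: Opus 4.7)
The plan is to verify, for an arbitrary test morphism $\mathbf{h}: SSp \ R \to T$, that the fiber product $W = Y \times_T SSp \ R$ is an affine superscheme. We will first obtain affineness of $W$ after an fppf base change, exploiting the hypothesis that $\mathbf{p}$ is affine, and then descend back to $SSp \ R$ via the descent-type Lemma \ref{isomorphism}.

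First, since $\mathbf{g}$ is an epimorphism of $K$-sheaves, the element $\mathbf{h} \in T(R)$ admits, after passing to a suitable fppf covering $R \leq R'$, a lift to a morphism $\mathbf{k}: SSp \ R' \to Z$ satisfying $\mathbf{g}\mathbf{k} = \mathbf{h} \circ SSp \ \iota_R^{R'}$. A direct manipulation of the cartesian square $X = Y \times_T Z$ yields canonical isomorphisms
$$W \times_{SSp \ R} SSp \ R' \simeq Y \times_T SSp \ R' \simeq (Y \times_T Z) \times_Z SSp \ R' = X \times_Z SSp \ R',$$
the middle identification being afforded by $\mathbf{k}$. By the hypothesis that $\mathbf{p}$ is affine, the rightmost object is an affine superscheme, say $SSp \ B$ for some $R'$-superalgebra $B$.

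Next we invoke fppf descent for affine superschemes to produce an $R$-superalgebra $A$ together with an isomorphism $A \otimes_R R' \simeq B$ compatible with the descent datum on $B$ coming from $W \times_{SSp \ R} SSp \ R'$. Concretely, $A$ is built as the equalizer of the two natural arrows $B \rightrightarrows B \otimes_R R'$ encoded by this descent datum. The inclusion $A \hookrightarrow B$ induces a morphism $SSp \ B \to SSp \ A$ over $SSp \ R$ whose two pullbacks to $SSp \ (B \otimes_R R')$ coincide by the very construction of $A$. Since the projection $W \times_{SSp \ R} SSp \ R' \to W$ is an epimorphism of $K$-sheaves (epimorphisms being stable under base change in any topos), this data uniquely descends to a morphism $\mathbf{u}: W \to SSp \ A$ over $SSp \ R$.

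Finally, we apply Lemma \ref{isomorphism} to the commutative triangle $W \to SSp \ A \to SSp \ R$, with $\mathbf{u}$ as the first arrow, together with the fppf epimorphism $SSp \ R' \to SSp \ R$: the base-changed morphism
$$W \times_{SSp \ R} SSp \ R' \to SSp \ A \times_{SSp \ R} SSp \ R' = SSp \ (A \otimes_R R') = SSp \ B$$
is, by construction, the canonical identification $W \times_{SSp \ R} SSp \ R' \simeq SSp \ B$, hence an isomorphism. Lemma \ref{isomorphism} then forces $\mathbf{u}$ itself to be an isomorphism, so $W \simeq SSp \ A$ is affine, as required. The one non-formal ingredient in this plan is the fppf descent used to produce $A$ from $B$, a super-variant of classical faithfully flat descent for affine schemes; once that is granted, the rest is a mechanical application of Lemma \ref{isomorphism}.
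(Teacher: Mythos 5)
Your proof is correct and follows essentially the same route as the paper's: both use the epimorphism to pass to an fppf covering over which the relevant fibre product becomes affine by the hypothesis on ${\bf p}$, realize the candidate coordinate superalgebra as the equalizer of the two natural algebra maps coming from the covering, descend the resulting morphism to the candidate affine superscheme, and conclude with Lemma \ref{isomorphism}. The only substantive difference is that you invoke faithfully flat descent as a black box to obtain $A \otimes_R R' \simeq B$, whereas the paper verifies the corresponding isomorphism directly, using that the equalizer defining its algebra $N$ commutes with the flat base change $\otimes_B A$ and that the base-changed situation acquires a section.
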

\begin{proof}
Let ${\bf g}_t : SSp \ B\to T, t\in T(B)$. There is $A\geq B$ and $x\in Z(A)$ such that ${\bf g}(A)(x)=T(\iota^A_B)(t)$. In particular,
${\bf g}{\bf g}_x= {\bf g}_t SSp \ \iota^A_B$. The square
$$\begin{array}{ccc}
SSp \ A\times_T Y & \stackrel{SSp \ \iota^A_B\times \mathrm{id}_Y}{\longrightarrow} & SSp \ B\times_T Y \\
\downarrow & & \downarrow \\ 
SSp \ A & \stackrel{SSp \ \iota^A_B}{\longrightarrow} & SSp \ B
\end{array}
$$   
is obviously cartesian (the vertical arrows are projections). Moreover, $SSp \ A\times_T Y\simeq SSp \ A\times_Z (Z\times_T Y)=SSp \ A\times_Z X$ is an affine superscheme. In other words, one can assume that $X=SSp \ D, Z=SSp \ A, T=SSp \ B$. Then $X\times_Y X\simeq SSp \ A\times_{SSp \ B} SSp \ D$ is affine and the projections ${\bf pr}_i : X\times_Y X\to X, i=1, 2,$ are dual to superalgebra morphisms $i_s : D\to K[X\times_Y X], s=1,2$.
Set $N=\ker(i_1-i_2)$. $N$ is a supersubalgebra of $D$ and $(SSp \ N, SSp \ \iota^D_N)$ is a cokernel of $({\bf pr}_1, {\bf pr}_2)$. Define a subfunctor $Y'\subseteq Y$ by $Y'(A)={\bf f}(A)(X(A)), A\in \mathsf{SAlg}_K$.
There is $Y'\to SSp \ N$ given by $y\mapsto (SSp \ \iota^D_N)(A)(x)$ whenever $y={\bf f}(A)(x), x\in X(A)$. Since $Y'$ is dense in $Y$, there is ${\bf s} : Y\to SSp \ N$ such that $SSp \ \iota^D_N ={\bf s}{\bf f}$. Observe that ${\bf h}{\bf pr}_1={\bf h}{\bf pr}_2$, where ${\bf h}={\bf q}{\bf f}={\bf g}{\bf p}$. Therefore, there is an unique morphism ${\bf m} : SSp \ N\to SSp \ B$ such that $(SSp \ \iota^D_N){\bf m}={\bf h}$. 
It is sufficient to check that $SSp \ A\times_{SSp B} \ {\bf s}$ is an isomorphism and apply Lemma \ref{isomorphism}. The functor $SSp \ A\times_{SSp \ B} \ ?$ takes the above cartesian square to a cartesian square
$$\begin{array}{ccc}
SSp \ A\otimes_B D & \stackrel{{\bf f}'}{\longrightarrow} & SSp \ D \\
{\bf p}' \downarrow & & \downarrow {\bf q}' \\ 
SSp \ A\otimes_B A & \stackrel{{\bf g}'}{\longrightarrow} & SSp \ A
\end{array} 
$$   
with ${\bf n}'= SSp \ A\times_{SSp \ BB} \ {\bf n}, {\bf n}\in\{{\bf f}, {\bf p}, {\bf q}, {\bf g}\}$. Finally, the corresponding cokernel
is isomorphic to $(SSp \ A\otimes_B N, {\bf s}')$ and our statement is obvious. 
\end{proof}
\begin{definition}\label{def-faithfullmorph}
A morphism of superschemes ${\bf f} : X\to Y$ is called \emph{flat}, if for any open affine supersubschemes $SSp \ A\subseteq X$ and
$SSp \ B\subseteq Y$ such that ${\bf f}(SSp \ A)\subseteq SSp \ B$, the superalgebra $A$ is a flat $B$-module.
The morphism $\bf f$ is called \emph{faithfully flat}, if it is flat and $|{\bf f}|^e$ is surjective.
\end{definition}
Remind that if a $B$-superalgebra $A$ is a faithfully flat $B$-module, then $(SSpec \ \iota^A_B)^e$ is surjective. 
In particular, if $X$ and $Y$ have open affine coverings indexed by the same set, say $\{X_i\}_{i\in I}$ and $\{Y_i\}_{i\in I}$, respectively, 
such that for every $i\in I$, ${\bf f}(X_i)\subseteq Y_i$, and $K[X_i]$ is a faithfully flat $K[Y_i]$-module, then ${\bf f}$ is faithfully flat by the Comparison Theorem.
\begin{corollary}\label{affandfaith}
Assume that an algebraic supergroup $G$ acts freely on an affine superscheme $X$. If the quotient $X \tilde{/} G$ is
a superscheme, then the quotient morphism $X \to X \tilde{/} G$
is affine and faithfully flat.
\end{corollary}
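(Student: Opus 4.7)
The plan is to split the claim into affineness and faithful flatness and reduce both to tools already in place.

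For affineness I will apply Proposition \ref{cartesian}. The freeness of the $G$-action together with the sheaf property of the affine supergroup $G$ implies that the square
$$\begin{CD}
X\times G @>{\mu}>> X\\
@V{\mathrm{pr}_1}VV @VV{\pi}V\\
X @>>{\pi}> X \tilde{/} G
\end{CD}$$
with $\mu(x,g)=xg$ is cartesian. Indeed, given $(x_1,x_2)$ with $\pi(x_1)=\pi(x_2)$, some fppf cover $R'$ of $R$ produces a $g\in G(R')$ with $x_1 g = x_2$; freeness makes $g$ unique, and this uniqueness forces the two pullbacks of $g$ to $R'\otimes_R R'$ to coincide, so $g$ descends to $G(R)$ because $G$ is a sheaf. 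The left vertical $\mathrm{pr}_1$ is affine as a morphism between affine superschemes, and the bottom horizontal $\pi$ is an epimorphism of $K$-sheaves by the very construction of $X\tilde{/}G$ as the sheafification of the naive quotient. Proposition \ref{cartesian} therefore yields that the right vertical $\pi : X\to X\tilde{/}G$ is affine.

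For faithful flatness I will argue locally on the target. Cover the superscheme $X\tilde{/}G$ by affine open subschemes $U_i=SSp\,C_i$. Since $\pi$ is affine, each preimage $V_i=\pi^{-1}(U_i)$ is affine, say $V_i=SSp\,B_i$, and it is $G$-stable because $U_i$ is a subfunctor of the sheaf quotient. A direct check using density of the naive quotient in its sheafification identifies $V_i\tilde{/}G$ with $U_i$: every $y\in U_i(R)$ comes locally from some $x\in X(R')$ whose image $\pi(x)=y|_{R'}$ lies in $U_i(R')$, forcing $x\in V_i(R')$, and the freeness again handles injectivity. Since $U_i$ is affine, it is a dur $K$-sheaf by Proposition \ref{superschemeissheaf}, hence $V_i\tilde{\tilde{/}}G = V_i\tilde{/}G = U_i$ is affine. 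The implication $(1)\Rightarrow(3)$ of Theorem \ref{superOberst} then gives that $B_i$ is faithfully flat over $C_i=B_i^{coD}$.

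This supplies the flatness condition of Definition \ref{def-faithfullmorph} on the compatible cover $\{(V_i,U_i)\}$. Moreover, faithful flatness of $B_i$ over $C_i$ forces $|SSpec\,B_i|^e \to |SSpec\,C_i|^e$ to be surjective (as in the proof of Proposition \ref{superschemeissheaf}), and since the $U_i$ cover $X\tilde{/}G$, this assembles into surjectivity of $|\pi|^e$. The only delicate step I expect is verifying that the square above is cartesian, since this really uses both the freeness of the action and the sheaf property of $G$; once this is in hand the remainder is a straightforward assembly of Proposition \ref{cartesian}, Proposition \ref{superschemeissheaf}, and Theorem \ref{superOberst}.
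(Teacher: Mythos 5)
Your proposal is correct and follows essentially the same route as the paper: the paper's (very terse) proof likewise reduces via Lemma \ref{affinessislocal} to affine opens and invokes Proposition \ref{cartesian} to superize Jantzen, Part I, 5.7(1), whose content is exactly your cartesian square $X\times G\simeq X\times_{X\tilde{/}G}X$ together with the local faithful-flatness statement that you extract from Theorem \ref{superOberst}. The only cosmetic difference is that you establish affineness of $\pi$ globally before localizing, and your fppf-descent step in verifying the cartesian square can be shortened, since $(X/G)_{(n)}$ injects into $X\tilde{/}G$, so $\pi(x_1)=\pi(x_2)$ already yields $g\in G(R)$ without descending from a cover.
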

\begin{proof}
Denote the quotient morphism by $\pi$. By Lemma \ref{affinessislocal} one has to show that for an open affine supersubscheme $SSp \ R=V\subseteq X \tilde{/} G$ 
the open subfunctor $U=\pi^{-1}(V)$ is also affine, say $U\simeq SSp \ A$, and $R\leq A$. Using Proposition \ref{cartesian} one can easily superize \cite{jan}, Part I, 5.7(1).
\end{proof}
\begin{rem}\label{diffway}
If $\mathrm{char} \, K > 0$, then the statement that $G\tilde{/} H$ is a superscheme can be proved in a different way.
The following observation plays crucial role in this proof. If $H_1\leq H_2$ are closed
supersubgroups of $G$, $G \tilde{/} H_2$ is a superscheme and $H_2 \tilde{/} H_1$
is an affine superscheme, then $G \tilde{/} H_1$ is a superscheme.
In fact, the following diagram (cf. \cite{cps}, p. 8)
$$\begin{array}{ccc}
G\times H_2 \tilde{/} H_1 & \to & G \tilde{/} H_1 \\
\downarrow & & \downarrow \\
G & \to & G \tilde{/} H_2
\end{array},$$
where $G\times H_2 \tilde{/} H_1\to G$ is the canonical projection and
$G\times H_2\tilde{/} H_1 \to  G\tilde{/}H_1$ is induced by $(g, hH_1(A))\mapsto
ghH_1(A), g\in G(A), h\in H_2(A), A\in \mathsf{SAlg}_K$, is a cartesian
square. By Proposition \ref{cartesian} the canonical morphism $G\tilde{/}H_1\to G\tilde{/}H_2$
is affine. It remains to apply Lemma \ref{affine morphism}. 

Now, denote by $G_1$ the
first infinitesimal supersubgroup of $G$ (cf. \cite{z}, p.738). The supergroup
$G_1$ is finite and normal in $G$. Moreover, the supergroup $G \tilde{/} G_1$ is purely even.
By Lemma \ref{G_ev/H_ev} $$G \tilde{/} HG_1\simeq (G \tilde{/} G_1) \tilde{/} (HG_1 \tilde{/} G_1)$$ 
is a superscheme and $HG_1\tilde{/} H\simeq G_1\tilde{/} G_1 \bigcap H$
is affine by Theorem 7.1 from \cite{z} (or by Corollary \ref{cor1}). The above observation completes the proof.
\end{rem}
\begin{rem}\label{folklore}
The following result seems a folklore: if $G$ is an algebraic group over an arbitrary field, and if $H$ is its closed, geometrically reductive subgroup,
then $G \tilde{/} H$ is affine.
\end{rem} 
\begin{proof}
By \cite{wh}, $H$ is geometrically reductive iff $H^0_{red}$ is reductive.
If $\mathrm{char} \, K =0$, then any algebraic group is reduced, and $G\tilde{/} H\simeq (G\tilde{/} H^0 )\tilde{/}(H\tilde{/} H^0)$ is affine by
\cite{cps}, Corollary 4.5, and by \cite{jan}, Part I, 5.5(6). If $\mathrm{char} \, K > 0$, then by arguing as above, one can assume that $H$ is connected. There is an integer $n> 0$ such that the induced morphisms
$G_{red}\to G\tilde{/} G_n$ and $H_{red}\to H\tilde{/} H_n$ are epimorphisms; cf. \cite{wh}. Since $HG_n\tilde{/} G_n \simeq H\tilde{/} H_n$ is a reduced, geometrically reductive 
group (cf. \cite{bs}, p.70), it follows again by \cite{cps}, Corollary 4.5 that $G\tilde{/} HG_n \simeq
(G\tilde{/} G_n)\tilde{/} (HG_n\tilde{/} G_n)$ is affine. It remains to observe that $H G_n \tilde{/} H \simeq
G_n \tilde{/} H_n$ is affine, and to argue as in Remark \ref{diffway}. 
\end{proof}

\section{Bosonization technique}\label{bosonization}

In this section that is rather independent of the preceding ones, we choose arbitrarily a non-zero
commutative ring $K$, and let $K$ be the ground ring over which we work. 
We aim to give the postponed proof of (2) $\Leftrightarrow$ (3) of Theorem \ref{superOberst}, Part 1,
in a generalized situation. We do not assume that (Hopf) algebras are commutative.
 
Let $J$ be a Hopf algebra, and assume that the antipode of $J$, which we denote by $S : J \to J$,
is bijective. Let $\Delta : J \to J \otimes J$,\ $\varepsilon : J \to K$ denote the coproduct and the counit of $J$,
respectively. For this coproduct we use the sigma notation
\begin{equation*}
\Delta(x) = \sum x_1 \otimes x_2, \quad (\Delta \otimes \mathrm{id}) \circ \Delta (x) = (\mathrm{id} \otimes \Delta) \circ \Delta (x) =
\sum x_1 \otimes x_2 \otimes x_3.
\end{equation*}
We have the braided tensor category ${}^J_J \mathcal{YD}$ of \emph{Yetter-Drinfeld modules}
with left $J$-action and left $J$-coaction. To be more precise, an object, say $V$, in ${}^J_J \mathcal{YD}$ 
is a left $J$-module and left $J$-comodule, whose structures we denote by
\begin{equation*}
J \otimes V \to V,\ x \otimes v \mapsto \ x \rightharpoonup v; \quad  
\lambda : V \to J \otimes V,\ \lambda (v) = \sum v_J \otimes v_V;
\end{equation*}
these are required to satisfy 
\begin{equation*}
\lambda (x \rightharpoonup v) = \sum x_1v_{J}S(x_3) \otimes (x_2 \rightharpoonup v_{V}),\quad x \in J,\ v \in V.
\end{equation*}
The category ${}^J_J \mathcal{YD}$ has the same tensor product just as 
the category of left $J$-(co)modules, and has the braiding given by
\begin{equation*}
V \otimes W \overset{\simeq}{\longrightarrow} W \otimes V, \quad     
v \otimes w \mapsto \sum (v_J \rightharpoonup w) \otimes v_V,
\end{equation*}
where $V, W \in {}^J_J \mathcal{YD}$; see \cite{mon}, Section 10.6. 

Suppose that $J$ is the group algebra $K\mathbb{Z}_2$ of the group $\mathbb{Z}_2$. 
Regard each object $V \in \mathsf{SMod}_K$ as a left $J$-module by letting the generator of 
$\mathbb{Z}_2$ act on homogeneous elements $v \in V$ by \eqref{Z2action}.
Then, $V$ turns into an object in 
${}^{J}_{J}\mathcal{YD}$. We can thus embed $\mathsf{SMod}_K$ into 
${}^{J}_{J}\mathcal{YD}$ as a braided tensor full subcategory. Therefore,
results on ${}^J_J\mathcal{YD}$ can apply to $\mathsf{SMod}_K$.

Suppose again that $J$ is general. Let $D$ be a Hopf algebra object in ${}^J_J \mathcal{YD}$.
Radford's \emph{biproduct construction} \cite{rad} (or in recent terms, \emph{bosonization}) constructs an ordinary 
Hopf algebra,
\begin{equation*}
\Hat{D} := D \cmddotrtimes J.
\end{equation*}
As an algebra, this is a smash product; see \cite{sw}, Section 7.2.

\begin{lemma}\label{antipode}
The antipode of $D$ is bijective iff the antipode of $\Hat{D}$ is bijective.
\end{lemma}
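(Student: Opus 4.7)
The plan is to apply Radford's explicit antipode formula for the biproduct. Writing the left $J$-coaction on $D$ as $\lambda(d) = \sum d^{(-1)} \otimes d^{(0)}$ and letting $S_D$ denote the braided antipode of $D$ in ${}^J_J\mathcal{YD}$, the antipode of $\Hat{D} = D \cmddotrtimes J$ takes the form
\begin{equation*}
S_{\Hat{D}}(d \cmddotrtimes x) \;=\; (1 \cmddotrtimes S_J(d^{(-1)} x)) \cdot (S_D(d^{(0)}) \cmddotrtimes 1).
\end{equation*}
Expanding the smash product multiplication via $(1 \cmddotrtimes y)(b \cmddotrtimes 1) = \sum (y_1 \rightharpoonup b) \cmddotrtimes y_2$ and using the anti-comultiplicativity of $S_J$, this becomes
\begin{equation*}
S_{\Hat{D}}(d \cmddotrtimes x) \;=\; \sum \bigl(S_J(d^{(-1)}_{\,2}\, x_2) \rightharpoonup S_D(d^{(0)})\bigr) \cmddotrtimes S_J(d^{(-1)}_{\,1}\, x_1),
\end{equation*}
exhibiting $S_{\Hat{D}}$ as a $K$-linear composite built from $S_D$, $S_J$, and the Yetter-Drinfeld structure data linking them.

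For the forward implication ($S_D$ bijective $\Rightarrow$ $S_{\Hat{D}}$ bijective), I plan to factor $S_{\Hat{D}}$ on the underlying $K$-module $D \otimes J$ as a composite of three $K$-linear maps: a ``coaction-action twist'' of the form (a variant of) $d \otimes x \mapsto \sum d^{(0)} \otimes d^{(-1)} x$ that uses only $S_J^{\pm 1}$ and the $J$-(co)action on $D$, the tensor product $S_D \otimes S_J$, and a final reordering dictated by the smash product multiplication. The auxiliary twist and reordering are bijective on $D \otimes J$ \emph{independently} of the bijectivity of $S_D$: their inverses are produced by combining the Yetter-Drinfeld compatibility between the $J$-action and $J$-coaction on $D$ with the fact that $S_J$ is bijective (e.g.\ the twist is inverted by $d \otimes x \mapsto \sum d^{(0)} \otimes S_J^{-1}(d^{(-1)}) x$). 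Consequently, bijectivity of $S_{\Hat{D}}$ reduces to bijectivity of the middle factor $S_D \otimes S_J$, and since $S_J$ is bijective by standing hypothesis on $J$, this is equivalent to bijectivity of $S_D$.

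The converse follows at once from the same factorization read in reverse: if $S_{\Hat{D}}$ is bijective, then $S_D \otimes S_J$ is bijective (since the flanking factors are), and hence so is $S_D$. The main obstacle is the careful bookkeeping required to establish the factorization cleanly and to verify that the auxiliary twist and reordering factors are genuinely bijective on $D \otimes J$ without appealing to $S_D$; this amounts to a disciplined application of the Yetter-Drinfeld axiom together with the bijectivity of $S_J$. Once this bookkeeping is in hand, both directions of the lemma follow simultaneously.
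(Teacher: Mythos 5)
Your argument is correct and is essentially the paper's own proof: the paper disposes of this lemma by citing Radford \cite{rad}, Proposition 2, which is precisely the explicit antipode formula for the biproduct and the bijectivity statement you derive from it. Your factorization $S_{\Hat{D}} = T_2 \circ (S_D \otimes S_J) \circ T_1$ with the two flanking twists inverted via $S_J^{-1}$ and the Yetter--Drinfeld coassociativity is exactly the content behind that citation, so you are simply supplying the details the paper outsources.
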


\begin{proof}
This is seen from \cite{rad}, Proposition 2.
\end{proof}

Regard $D$ as an ordinary coalgebra, and let $\mathsf{Mod}^D$ denote the category of right $D$-comodules, as before.
Since $J$ is a Hopf subalgebra of $\Hat{D}$, we can define the category $\mathsf{Mod}^{\Hat{D}}_J$ 
of right $(\Hat{D}, J)$-Hopf modules; see \cite{tak}.  Note that $J$ is an algebra object in the tensor category 
$\mathsf{Mod}^{\Hat{D}}$ of right $\Hat{D}$-comodules, and $\mathsf{Mod}^{\Hat{D}}_J$ is precisely
the category $(\mathsf{Mod}^{\Hat{D}})_J$ of the right $J$-module object in $\mathsf{Mod}^{\Hat{D}}$.

Suppose $N \in \mathsf{Mod}^D$. Let $\rho : N \to N \otimes D$, $\rho (n) = \sum n_0 \otimes n_1$ 
denote the structure. We set $\Hat{N} = N \otimes J$, and define 
$\Hat{\rho} : \Hat{N} \to \Hat{N} \otimes \Hat{D}$ by
\begin{equation*}
\Hat{\rho} (n \otimes x) = \sum (n_0 \otimes (n_1)_J x_1) \otimes ((n_1)_D \otimes x_2),\quad n \otimes x \in N \otimes J = \Hat{N}.
\end{equation*}
In particular, $D$ is in $\mathsf{Mod}^D$ with respect to the coproduct $\Delta : D \to D \otimes D$. For this, 
$\Hat{\Delta} :  \Hat{D} \to \Hat{D} \otimes \Hat{D}$ coincides with the coproduct of $\Hat{D}$.
Let $J$ act on $\Hat{N} = N \otimes J$ by the right multiplication on the factor $J$.

\begin{prop}\label{equivalence}
$\Hat{N} = (\Hat{N}, \Hat{\rho})$ is an object in $\mathsf{Mod}^{\Hat{D}}_J$, and 
\begin{equation}\label{Hat}
N \mapsto \Hat{N},\ \mathsf{Mod}^D \to \mathsf{Mod}^{\Hat{D}}_J
\end{equation}
gives a functor. Moreover, this is a category equivalence.
\end{prop}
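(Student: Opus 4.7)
The plan has three phases: verifying the claimed structure on $\hat N$, constructing a quasi-inverse, and identifying the compositions. First I would check by direct sigma-notation calculation, using Radford's formula $\hat\Delta(d \otimes x) = \sum (d_1 \otimes (d_2)_J x_1) \otimes ((d_2)_D \otimes x_2)$ for the coproduct of $\hat D = D \cmddotrtimes J$ together with the Yetter-Drinfeld compatibility of $\rightharpoonup$ and $\lambda$, that $\hat\rho$ is a coassociative and counital right $\hat D$-coaction on $\hat N = N \otimes J$. The counit axiom reduces to the identity $(\mathrm{id}_J \otimes \varepsilon_D) \circ \lambda = 1_J \otimes \varepsilon_D$, which holds because $\varepsilon_D$ is a morphism in ${}^J_J\mathcal{YD}$ with $K$ carrying the trivial structures. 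The $\hat D$-colinearity of right multiplication by $J$ on the second tensor factor is a routine bookkeeping identity using $\Delta_J = \hat\Delta|_J$. Functoriality is immediate: a $D$-colinear map $f : N \to N'$ induces $f \otimes \mathrm{id}_J : \hat N \to \hat{N'}$, which is manifestly $\hat D$-colinear and $J$-linear.

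For the quasi-inverse $\mathsf{Mod}^{\hat D}_J \to \mathsf{Mod}^D$, I would take $M \mapsto M^{coJ}$, where the $J$-coaction on $M$ is induced by the Hopf-algebra projection $\pi : \hat D \to J$, $d \otimes x \mapsto \varepsilon_D(d)\, x$. Under $\pi$, any $M \in \mathsf{Mod}^{\hat D}_J$ becomes an ordinary right $(J,J)$-Hopf module. The $D$-comodule structure on $M^{coJ}$ arises from the coalgebra surjection $\pi_D : \hat D \to D$, $d \otimes x \mapsto d\,\varepsilon_J(x)$ (the coalgebra property follows from the counit axiom for $\lambda$): pushing the $\hat D$-coaction on $M$ along $\pi_D$ yields a $D$-coaction on $M$, and I would verify that it carries $M^{coJ}$ into $M^{coJ} \otimes D$, so $M^{coJ} \in \mathsf{Mod}^D$. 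The natural isomorphism $(\hat N)^{coJ} \simeq N$ is then transparent, because on $\hat N$ the induced $J$-coaction collapses via the same counit identity to $\mathrm{id}_N \otimes \Delta_J$, whose coinvariants are $N \otimes K = N$, and the inherited $D$-coaction recovers $\rho$. For the reverse composition, the classical fundamental theorem of Hopf modules (\cite{sw}, Theorem 4.1.1), applied to $M$ as a $(J,J)$-Hopf module, provides a natural $J$-Hopf-module isomorphism $M \simeq M^{coJ} \otimes J$.

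The main obstacle will be this last step: checking that the fundamental-theorem isomorphism $M \simeq M^{coJ} \otimes J$ is in fact $\hat D$-colinear, not merely $J$-colinear, when the target carries the coaction $\hat\rho$ built from the inherited $D$-structure on $M^{coJ}$. This amounts to showing that the original $\hat D$-coaction on $M$ is reconstructible from its $D$-part (via $\pi_D$), its $J$-part (via $\pi$), and the $J$-action on $M$, by Radford's biproduct formula — a statement dual to the decomposition $\hat D \cong D \cmddotrtimes J$ at the Hopf-algebra level. Once this reconstruction identity is verified by a careful computation on elements of the form $n \otimes x \in M^{coJ} \otimes J$, both compositions become natural isomorphisms and the equivalence follows.
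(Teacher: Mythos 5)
Your overall architecture --- direct verification of the structure on $\Hat{N}$, a quasi-inverse that strips off the $J$-factor, and the fundamental theorem of Hopf modules (\cite{sw}, Theorem 4.1.1) applied to the underlying $(J,J)$-Hopf module of $M$ --- is the same as the paper's. The paper, however, takes the quasi-inverse to be the \emph{quotient} $M \mapsto M/MJ^{+}$ (with $J^{+} = \mathrm{Ker}\,\varepsilon$) rather than the coinvariants $M \mapsto M^{coJ}$, and this difference is exactly where your proposal has a genuine gap.

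The problem is your claim that the coaction $(\mathrm{id}\otimes\pi_D)\circ\delta$, obtained by pushing the $\Hat{D}$-coaction $\delta$ forward along $\pi_D : \Hat{D}\to D$, $d\otimes x\mapsto d\,\varepsilon(x)$, ``carries $M^{coJ}$ into $M^{coJ}\otimes D$''. It does not, and your own test case already shows this: one has $(\Hat{N})^{coJ}=N\otimes 1$, but
$$(\mathrm{id}\otimes\pi_D)\,\Hat{\rho}(n\otimes 1)=\sum\,\bigl(n_0\otimes (n_1)_J\bigr)\otimes (n_1)_D,$$
which lies in $(N\otimes 1)\otimes D$ only when the $J$-coaction $\lambda$ is trivial on the relevant components; in particular the ``inherited'' coaction does not recover $\rho$. (The same phenomenon for $M=\Hat{D}$ is precisely why Radford's braided coproduct on $D=\Hat{D}^{coJ}$ is \emph{not} the restriction of $(\pi_D\otimes\pi_D)\Hat{\Delta}$ but requires the correcting projection $\Pi(m)=\sum m_0\cdot S(\pi(m_1))$.) The quotient avoids this: for $x\in J^{+}$ one computes $(\mathrm{id}\otimes\pi_D)\delta(m\cdot x)=\sum (m_0\cdot x)\otimes\pi_D(m_1)\in MJ^{+}\otimes D$, so the pushed-forward coaction descends to $M/MJ^{+}$, and on $\Hat{N}/\Hat{N}J^{+}\cong N$ the offending factor $(n_1)_J$ is killed by $\varepsilon$, leaving exactly $\rho$. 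So either switch to $M/MJ^{+}$ as the paper does (canonically isomorphic to $M^{coJ}$ once the fundamental theorem is available), or define the $D$-coaction on $M^{coJ}$ as $(\Pi\otimes\pi_D)\circ\delta$ and redo the verifications with that correction. Your final paragraph correctly isolates the one remaining verification --- that the fundamental-theorem isomorphism $M\simeq\widehat{M/MJ^{+}}$ is a morphism in $\mathsf{Mod}^{\Hat{D}}_J$ --- which is the same point the paper handles by building the map directly from $\delta$ composed with $\mathrm{id}\otimes\varepsilon\otimes\mathrm{id}$ and then observing that it coincides with the classical Hopf-module isomorphism.
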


\begin{proof}
The first sentence is directly verified. To prove the category equivalence, let $M \in \mathsf{Mod}^{\Hat{D}}_J$. 
Define $J^+ = \mathrm{Ker}~\varepsilon$.
One sees that the structure morphism $M \to M \otimes \Hat{D}$, composed with $\mathrm{id} \otimes \mathrm{id} \otimes \varepsilon : 
M \otimes D \otimes J \to M \otimes D$, induces a right $D$-comodule structure on $M/MJ^+$, and 
\begin{equation}\label{bar}
M \mapsto M/MJ^+,\  \mathsf{Mod}^{\Hat{D}}_J \to \mathsf{Mod}^D
\end{equation}
gives a functor. We wish to prove that the functors \eqref{Hat} and \eqref{bar} are quasi-inverses of each other.
We see that $\mathrm{id} \otimes \varepsilon : N \otimes J \to N$ induces an isomorphism 
\begin{equation*}
\Hat{N}/\Hat{N}J^+ \overset{\simeq}{\longrightarrow} N 
\end{equation*}
in $\mathsf{Mod}^D$ which is natural in $N$. On the other hand, the structure morphism $M \to M \otimes \Hat{D}$, composed
with $\mathrm{id} \otimes \varepsilon \otimes \mathrm{id} : M \otimes D \otimes J \to M \otimes J$, induces a morphism
\begin{equation*}
M \to M/MJ^+ \otimes J = \widehat{M/MJ^+}
\end{equation*}
in $\mathsf{Mod}^{\Hat{D}}_J$ which is natural in $M$. This last is indeed an isomorphism since it coincides with 
the well-known Hopf-module isomorphism (see \cite{sw}, Theorem 4.1.1), if we regard $M$ naturally as an object in $\mathsf{Mod}^J_J$. 
The last two natural
isomorphisms prove the desired equivalence.
\end{proof}

We keep the notation as above.
Let $B$ be a non-zero left $J$-module algebra, that is, an algebra object in the tensor category ${}_J \mathsf{Mod}$ of 
left $J$-modules. Suppose in addition that $B$ is a right $D$-comodule whose structure morphism, denoted as before by
$\rho : B \to B \otimes D$, $\rho(b) = \sum b_0 \otimes b_1$, is left $J$-linear and satisfies the following braid relation:
\begin{equation*}
\rho (bb') = \sum b_0 ((b_1)_J \rightharpoonup b'_0) \otimes (b_1)_D b'_1, \quad b,~ b' \in B.
\end{equation*}
The assumptions are satisfied if $B$ is an algebra object in $({}^J_J\mathcal{YD})^D$. Set 
$C = B^{coD}$; see \eqref{coinv}.
This $C$ is a left $J$-module subalgebra of $B$. 
Just as in Theorem \ref{superOberst}, we have the maps  
\begin{equation*} 
\alpha = \alpha_B : B \otimes B \to B \otimes D, \quad \alpha(b \otimes b') = b \rho (b'),
\end{equation*}
\begin{equation*}
\beta = \beta_B : B \otimes_C B \to B \otimes D, \quad \beta(b \otimes_C b') = b \rho (b').
\end{equation*}
Obviously, $\alpha_B$ is a surjection iff $\beta_B$ is.
Regard $\Hat{B} = B \rtimes J$ as the smash-product algebra.  Identify $C$ with 
$C \otimes K$ in $\Hat{B}$.

\begin{prop}\label{BHatB} Let the notation be as above.

1. $\Hat{B} = (\Hat{B}, \Hat{\rho})$ is a right $\Hat{D}$-comodule algebra such that 
$C = \Hat{B}^{co\Hat{D}}$.

2. The map $\beta_B$ above is a surjection (resp., bijection) iff 
the map
\begin{equation*}
\beta_{\Hat{B}} : \Hat{B} \otimes_C \Hat{B} \to \Hat{B} \otimes \Hat{D}, \quad 
\beta_{\Hat{B}}(b \otimes_C b') = b \Hat{\rho} (b')
\end{equation*}
is a surjection (resp., bijection).
\end{prop}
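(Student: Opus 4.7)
The plan is to handle Part 1 by direct computation combined with one use of the Hopf-module structure theorem, and to handle Part 2 by transporting the Galois map through the identification $\hat{B}=B\otimes J$. For Part 1, I would first verify that $(\hat{B},\hat{\rho})$ is a right $\hat{D}$-comodule algebra. The coassociativity and counit axioms for $\hat{\rho}$ follow immediately from Proposition \ref{equivalence} applied to $B\in\mathsf{Mod}^D$. That $\hat{\rho}$ is multiplicative reduces to a routine check using the braid relation $\rho(bb')=\sum b_0((b_1)_J\rightharpoonup b'_0)\otimes(b_1)_D b'_1$, the $J$-linearity of $\rho$, the smash-product multiplication on $\hat{B}$, and the Yetter-Drinfeld axioms on $D$; I would omit this bookkeeping.

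The inclusion $C\subseteq\hat{B}^{co\hat{D}}$ follows directly from $\rho(c)=c\otimes 1_D$ together with $\lambda(1_D)=1_J\otimes 1_D$, which give $\hat{\rho}(c\otimes 1_J)=(c\otimes 1_J)\otimes 1_{\hat{D}}$. For the reverse inclusion, I would exploit the Hopf algebra projection $\pi_J:\hat{D}\to J$, $d\otimes x\mapsto\varepsilon(d)x$, which splits the inclusion $J\hookrightarrow\hat{D}$, to endow $\hat{B}$ with the auxiliary right $J$-coaction $(\mathrm{id}\otimes\pi_J)\circ\hat{\rho}$. The Yetter-Drinfeld counit identity $(\mathrm{id}_J\otimes\varepsilon_D)\lambda_D=\eta_J\varepsilon_D$ on $D$ (which holds because $\varepsilon_D$ is a morphism in ${}^J_J\mathcal{YD}$) makes this auxiliary coaction collapse to the canonical extension $b\otimes x\mapsto\sum b\otimes x_1\otimes x_2$, so $\hat{B}^{coJ}=B\otimes 1_J$. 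Any $y\in\hat{B}^{co\hat{D}}$ is \emph{a fortiori} $J$-coinvariant, hence of the form $b\otimes 1$; substituting back into $\hat{\rho}(b\otimes 1)=(b\otimes 1)\otimes 1_{\hat{D}}$ and applying $\varepsilon_J$ to the inner $J$-factor of $\hat{D}$ forces $\rho(b)=b\otimes 1_D$, i.e., $b\in C$.

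For Part 2, the cascade $\beta_B=p\circ\beta_{\hat{B}}\circ j_B$, with $j_B:B\otimes_C B\hookrightarrow\hat{B}\otimes_C\hat{B}$, $b\otimes_C b'\mapsto(b\otimes 1)\otimes_C(b'\otimes 1)$, and $p:\hat{B}\otimes\hat{D}\twoheadrightarrow B\otimes D$, $(b\otimes x)\otimes(d\otimes y)\mapsto\varepsilon(x)\varepsilon(y)\,b\otimes d$, gives one half of the dictionary; here $j_B$ is split by $(b\otimes x)\otimes_C(b'\otimes y)\mapsto\varepsilon(y)\,b\otimes_C(x\rightharpoonup b')$ (well-definedness uses the $J$-stability of $C$), and $p$ has the obvious section. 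To promote this into the full bi-implication, I would then set up $K$-linear isomorphisms $\hat{B}\otimes_C\hat{B}\cong(B\otimes_C B)\otimes J\otimes J$ and $\hat{B}\otimes\hat{D}\cong B\otimes D\otimes J\otimes J$ (the latter incorporating a twist by the Yetter-Drinfeld coaction of $D$) under which $\beta_{\hat{B}}$ is carried to $\beta_B\otimes\mathrm{id}_{J\otimes J}$. Surjectivity and bijectivity are then preserved in both directions: the forward direction is tensor-functoriality, and the reverse uses that $K$ is a direct $K$-summand of $J$ via $\eta_J\circ\varepsilon_J=\mathrm{id}_K$, so $\beta_B$ is a retract of $\beta_B\otimes\mathrm{id}_{J\otimes J}$ as a $K$-linear map.

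The main obstacle is precisely this last step: the twisted right $C$-action $(b\otimes x)c=\sum b(x_1\rightharpoonup c)\otimes x_2$ on $\hat{B}$ forces the source-side isomorphism to involve the antipode (or its inverse) of $J$ together with the $J$-action on $B$, while the Yetter-Drinfeld mixing of $D$ and $J$ inside $\hat{D}$ contributes a further twist on the target side. Verifying the commutative diagram intertwining $\beta_{\hat{B}}$ with $\beta_B\otimes\mathrm{id}_{J\otimes J}$ therefore demands careful Hopf-algebraic bookkeeping of antipode identities in $J$ together with the Yetter-Drinfeld compatibilities, and this is the technical core of the proof.
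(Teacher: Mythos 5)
Your proposal follows essentially the same route as the paper's proof: for Part 1, reduce a coinvariant to the form $b\otimes 1$ by applying counits to the coinvariance identity and then extract $\rho(b)=b\otimes 1_D$; for Part 2, intertwine $\beta_{\Hat{B}}$ with $\beta_B\otimes\mathrm{id}_{J\otimes J}$ via explicit bijections and conclude using that $K$ is a direct summand of $J$. The ``technical core'' you defer is precisely what the paper writes out: it factors the intertwiner through an auxiliary map $\gamma:\Hat{B}\otimes_C B\to\Hat{B}\otimes D$ together with bijections $\theta$ (built from $S^-$ and the $J$-action on $B$) and $\phi,\psi$ (built from the Yetter--Drinfeld coaction of $D$ and the $J$-action), so that $\beta_B\otimes\mathrm{id}_J=\psi\circ\phi\circ\gamma\circ\theta$ while $\beta_{\Hat{B}}$ is $\gamma\otimes\mathrm{id}_J$ followed by a bijection---exactly the qualitative shape (antipode twist on the source side, Yetter--Drinfeld twist on the target side) that you predicted.
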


\begin{proof}
1. One sees directly that $\Hat{B}$ is a right $\Hat{D}$-comodule algebra. Obviously, 
$C \subseteq \Hat{B}^{co\Hat{D}}$. To prove the converse inclusion, let 
$\mathbf{b} = \sum_i b_i \otimes x_i \in \Hat{B}^{co\Hat{D}}$. Then we have 
in $B \otimes J \otimes D \otimes J$,
\begin{equation}\label{sigma}
\sum_i (b_i)_0 \otimes (b_i)_{1J} (x_i)_1 \otimes (b_i)_{1D} \otimes (x_i)_2 
= \sum_i b_i \otimes x_i \otimes 1 \otimes 1.
\end{equation}
Applying $\mathrm{id} \otimes \varepsilon \otimes \varepsilon \otimes \mathrm{id}$, we have
$\mathbf{b} = \sum_i b_i \varepsilon (x_i) \otimes 1$. Hence we may suppose $\mathbf{b} = b \otimes 1$ 
with $b \in B$, in which case we see $b \in C$, as desired, by applying 
$\mathrm{id} \otimes \varepsilon \otimes \mathrm{id} \otimes \varepsilon$ to the both sides of \eqref{sigma}.

2. Let $S^- : J \to J$ denote the composite-inverse of the antipode $S$ of $J$. 
We define maps,
\begin{equation*}
\phi : B \otimes J \otimes D \to B \otimes D \otimes J, \quad 
\phi(b \otimes x \otimes d) = \sum b \otimes d_D \otimes xS(d_J),
\end{equation*}
\begin{equation*}
\psi : B \otimes D \otimes J \to B \otimes D \otimes J, \quad 
\psi (b \otimes d \otimes x) = \sum (S^- (x_1) \rightharpoonup b) \otimes d \otimes x_2.
\end{equation*}
These are bijections, whose inverses are given by
\begin{equation*}
\phi^{-1} (b \otimes d \otimes x) = \sum b \otimes xd_J \otimes d_D, 
\end{equation*}
\begin{equation*}
\psi^{-1} (b \otimes d \otimes x) = \sum (x_1 \rightharpoonup b) \otimes d \otimes x_2.
\end{equation*} 
We have a well-defined map, 
\begin{equation*}
\theta : (B \otimes_C B) \otimes J \to \Hat{B} \otimes_C B,\quad \theta ((b \otimes_C b') \otimes x) 
= \sum ((x_1 \rightharpoonup b) \otimes x_2) \otimes_C b',
\end{equation*}
which is a bijection with inverse
\begin{equation*}
\theta^{-1}((b \otimes x) \otimes_C b') = \sum ((S^-(x_1) \rightharpoonup b) \otimes_C b') \otimes x_2.
\end{equation*}
One sees that the map $\beta_{\Hat{B}}$ for $\Hat{B}$ is a surjection (resp., bijection) 
iff 
\begin{equation*}
\gamma : \Hat{B} \otimes_C B \to \Hat{B} \otimes D, \quad 
\gamma ((b \otimes x) \otimes_C b') = \sum (b(x_1 \rightharpoonup b'_0) \otimes x_2 (b'_1)_J) \otimes (b'_1)_D
\end{equation*}
is, since the former is the composite of the base extension $\gamma \otimes \mathrm{id}_J$ (of $\gamma$ 
along $K \to J$) with the bijection
\begin{equation*}
\Hat{B} \otimes \Hat{D} \overset{\simeq}{\longrightarrow} \Hat{B} \otimes \Hat{D}, \quad 
(b \otimes x) \otimes (d \otimes y) \mapsto \sum (b \otimes xy_1) \otimes (d \otimes y_2).
\end{equation*}
The desired result follows since we see that the base extension $\beta_{B} \otimes \mathrm{id}_J$ 
coincides with the composite 
$\psi \circ \phi \circ \gamma \circ \theta$.
\end{proof} 

\begin{prop}\label{braidedSchneider}
Assume that $K$ is a field, and the antipode of $D$ is bijective. Then the following are 
equivalent:
\begin{itemize}
\item[(1)]
\begin{itemize}
\item[(a)]
$B$ is injective as a right $D$-comodule, and
\item[(b)]
$\alpha_B : B \otimes B \to B \otimes D$ is a surjection;
\end{itemize}
\item[(2)]
\begin{itemize}
\item[(a)]
$B$ is faithfully flat as a left $C$-module, and
\item[(b)]
$\beta_B : B \otimes_C B \to B \otimes D$ is a bijection;
\end{itemize}
\item[(3)]
\begin{itemize}
\item[(a)]
$B$ is faithfully flat as a right $C$-module, and
\item[(b)]
$\beta_B : B \otimes_C B \to B \otimes D$ is a bijection.
\end{itemize}
\end{itemize}
\end{prop}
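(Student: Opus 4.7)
The plan is to bosonize and reduce the statement to the ordinary (non-braided) Schneider theorem \cite{sch}, Theorem I, which is the specialization of our proposition to the case $J = K$. Concretely, I pass to the bosonizations $\Hat{B} = B \rtimes J$ and $\Hat{D} = D \cmddotrtimes J$, and check that each of the six numbered conditions (1)(a),(b), (2)(a),(b), (3)(a),(b) for the triple $(B, D, C)$ is equivalent to the corresponding classical condition for the ordinary triple $(\Hat{B}, \Hat{D}, C)$. By Proposition \ref{BHatB}(1), $C = \Hat{B}^{co\Hat{D}}$, and by Lemma \ref{antipode} the antipode of $\Hat{D}$ is bijective, so the hypotheses of Schneider's theorem are satisfied. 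Once the six conditions are transferred, one application of the classical theorem to $(\Hat{B}, \Hat{D}, C)$ closes all three equivalences simultaneously.

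Two of the three transfers are essentially routine. The surjectivity/bijectivity conditions (b) go across via Proposition \ref{BHatB}(2), which asserts that $\beta_B$ is a bijection iff $\beta_{\Hat{B}}$ is, together with the elementary observation that $\alpha_B$ is surjective iff $\beta_B$ is (and similarly for $\Hat{B}$). For the faithful-flatness conditions (2)(a) and (3)(a) I will check that $\Hat{B}$ is isomorphic to $B \otimes J$ both as a left and as a right $C$-module. On the left this is immediate from the smash-product relation $c(b \otimes x) = cb \otimes x$. On the right, a straightforward calculation (using coassociativity and the skew-antipode identity $\sum S^{-}(x_{2}) x_{1} = \varepsilon(x)$) shows that the map
\[
\phi : \Hat{B} \longrightarrow B \otimes J, \qquad \phi(b \otimes x) = \sum (S^{-}(x_{1}) \rightharpoonup b) \otimes x_{2}
\]
intertwines the intrinsic smash-product right $C$-action with the action on the $B$-factor alone. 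Since $J$ is $K$-free and hence faithfully flat over $K$, faithful flatness of $B$ over $C$ on either side is equivalent to that of $\Hat{B}$.

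The main obstacle is the injectivity transfer (1)(a): one must show that $B$ is injective in $\mathsf{Mod}^D$ iff $\Hat{B}$ is injective in $\mathsf{Mod}^{\Hat{D}}$. Proposition \ref{equivalence} gives an equivalence $\mathsf{Mod}^D \approx \mathsf{Mod}^{\Hat{D}}_J$ under which $B$ corresponds to $\Hat{B}$, and since categorical equivalences preserve and reflect injectives, $B$ is injective in $\mathsf{Mod}^D$ iff $\Hat{B}$ is injective in the relative category $\mathsf{Mod}^{\Hat{D}}_J$. The delicate step is to pass from injectivity in $\mathsf{Mod}^{\Hat{D}}_J$ to injectivity in $\mathsf{Mod}^{\Hat{D}}$; over a field this is equivalent to the statement that $\Hat{B}$ is $\Hat{D}$-coflat iff it is coflat in the relative sense. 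My plan here is to exploit the Hopf-algebra projection $\Hat{D} \twoheadrightarrow J$ with section $J \hookrightarrow \Hat{D}$ built into the bosonization, which identifies $\Hat{D} \simeq D \otimes J$ as a right $J$-comodule, and to use this to rewrite $\Hat{B} \Box_{\Hat{D}} \Hat{N}$ in terms of $B \Box_D N$ for $N$ the corresponding left $D$-comodule under the mirror equivalence. Exactness of the functor $\Hat{B} \Box_{\Hat{D}} -$ on ${}^{\Hat{D}}\mathsf{Mod}$ then reduces to exactness of $B \Box_D -$ on ${}^D\mathsf{Mod}$. With all six conditions matched, Schneider's theorem applied to $(\Hat{B}, \Hat{D}, C)$ finishes the proof.
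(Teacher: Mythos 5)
Your overall architecture --- bosonize, transfer the conditions to the triple $(\Hat{B}, \Hat{D}, C)$, and invoke Schneider's Theorem I --- is the same as the paper's, and two of your three transfers are carried out exactly as there: the surjectivity/bijectivity conditions go across by Proposition \ref{BHatB}, Part 2, and the faithful-flatness conditions by the left/right $C$-module identifications of $\Hat{B}$ with $B\otimes J$ (your map $\phi$ is, up to the tensor-factor swap, the inverse of the right $B$-linear isomorphism $J\otimes B\to\Hat{B}$, $x\otimes b\mapsto\sum(x_1\rightharpoonup b)\otimes x_2$ used in the paper). The genuine gap is in the injectivity transfer. Passing from $\mathsf{Mod}^{D}$ to $\mathsf{Mod}^{\Hat{D}}_J$ via Proposition \ref{equivalence} is fine, but your mechanism for comparing injectivity in $\mathsf{Mod}^{\Hat{D}}_J$ with injectivity in $\mathsf{Mod}^{\Hat{D}}$ --- rewriting $\Hat{B}\Box_{\Hat{D}}\Hat{N}$ in terms of $B\Box_D N$ --- only controls the cotensor product against comodules of the form $\Hat{N}$, i.e.\ those carrying a compatible $J$-action. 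That does yield the implication ``$\Hat{B}$ coflat over $\Hat{D}$ $\Rightarrow$ $B$ coflat over $D$'' (restrict exactness to hat-objects and strip off the $K$-free factor $J$), but it does \emph{not} yield the direction you need for $(1)\Rightarrow(\Hat{1})$: an arbitrary left $\Hat{D}$-comodule need not admit any $J$-module structure, the hat-objects are not closed under subobjects or quotients inside ${}^{\Hat{D}}\mathsf{Mod}$, and so exactness of $\Hat{B}\Box_{\Hat{D}}-$ on hat-objects does not by itself give exactness on all of ${}^{\Hat{D}}\mathsf{Mod}$. As written, the sentence asserting that exactness on ${}^{\Hat{D}}\mathsf{Mod}$ ``reduces to'' exactness of $B\Box_D-$ is a claim, not an argument.

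The paper sidesteps both difficulties. For the hard direction it replaces cotensor bookkeeping by a splitting argument: $\Hat{B}$ is injective in $\mathsf{Mod}^{\Hat{D}}_J$, so the structure morphism $\Hat{B}\to\Hat{B}\otimes\Hat{D}$, a monomorphism in that category, splits there; hence $\Hat{B}$ is a $\Hat{D}$-colinear direct summand of the cofree, hence injective, comodule $\Hat{B}\otimes\Hat{D}$, and is therefore injective in $\mathsf{Mod}^{\Hat{D}}$ (\cite{tak1}, Proposition A.2.1). More importantly, the paper never needs the converse transfer $(\Hat{1})\Rightarrow(1)$ at all: it proves only $(1)\Rightarrow(\Hat{1})$, $(2)\Leftrightarrow(\Hat{2})$, $(3)\Leftrightarrow(\Hat{3})$, and closes the cycle with the separate implication $(3)\Rightarrow(1)$, where (1)(a) follows because faithful flatness of $B$ over $C$ together with the bijectivity of $\beta_B$ makes $B$ faithfully coflat as a $D$-comodule. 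To repair your proof, either adopt this splitting argument and the cyclic structure, or supply an actual proof that coflatness of $\Hat{B}$ over $\Hat{D}$ can be tested on hat-objects; the latter is precisely what is missing.
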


\begin{proof}
By Lemma \ref{antipode}, the antipode of $\Hat{D}$ is bijective. Combining this with Part 1 of Proposition 
\ref{BHatB}, we can apply Schneider's Theorem \cite{sch}, Theorem I,
to our $\Hat{B},~ C,~\Hat{D}$, in order to see that the following three conditions are equivalent.
\begin{itemize}
\item[($\Hat{1}$)]
\begin{itemize}
\item[(a)]
$\Hat{B}$ is injective as a right $\Hat{D}$-comodule, and
\item[(b)]
$\beta_{\Hat{B}} : \Hat{B} \otimes_C \Hat{B} \to \Hat{B} \otimes \Hat{D}$ is a surjection;
\end{itemize}
\item[($\Hat{2}$)]
\begin{itemize}
\item[(a)]
$\Hat{B}$ is faithfully flat as a left $C$-module, and
\item[(b)]
$\beta_{\Hat{B}} : \Hat{B} \otimes_C \Hat{B} \to \Hat{B} \otimes \Hat{D}$ is a bijection;
\end{itemize}
\item[($\Hat{3}$)]
\begin{itemize}
\item[(a)]
$B$ is faithfully flat as a right $C$-module, and
\item[(b)]
$\beta_{\Hat{B}} : \Hat{B} \otimes_C \Hat{B} \to \Hat{B} \otimes \Hat{D}$ is a bijection.
\end{itemize}
\end{itemize}
By Proposition \ref{BHatB}, Part 2, we see (2) $\Leftrightarrow$ ($\Hat{2}$), (3) $\Leftrightarrow$ ($\Hat{3}$). 
To see the latter, note that 
\begin{equation*}
J \otimes B \to \Hat{B},\quad x \otimes b \mapsto \sum (x_1 \rightharpoonup b) \otimes x_2
\end{equation*}
gives a right $B$-linear isomorphism, where on the left-hand side, we let $B$ act by the right multiplication 
on the factor $B$. We thus have (2) $\Leftrightarrow$ (3). We see (3) $\Rightarrow$ (1), since (3) implies that
$B$ is faithfully coflat as a right $D$-comodule,
which is equivalent to (1)(a). To complete the proof we have only to prove (1) $\Rightarrow$ ($\Hat{1}$).

Assume (1). By Proposition \ref{BHatB}, Part 2, we have ($\Hat{1}$)(b).
By Proposition \ref{equivalence}, $\Hat{B}$ is an injective object in $\mathsf{Mod}^{\Hat{D}}_J$, 
whence the structure morphism $\Hat{B} \to \Hat{B} \otimes \Hat{D}$ splits in $\mathsf{Mod}^{\Hat{D}}_J$, 
implying ($\Hat{1}$)(a); see \cite{tak1}, Proposition A.2.1. This completes the proof. 
\end{proof}

Now we can give the postponed proof.

\vspace{\baselineskip}
\noindent
\emph{Proof of (2) $\Leftrightarrow$ (3) in Theorem \ref{superOberst}}. 
When $J = K \mathbb{Z}_2$, in particular, the proposition above can apply to the super context and when
$D$ is a supercommutative Hopf superalgebra, whose antipode is necessarily involutory and hence is bijective.
Then the desired equivalence results.
$\square$

\section*{Acknowledgments}
The first named author was supported by
Grant-in-Aid for Scientific Research (C) 20540036 and by Grant-in-Aid for Foreign
JSPS Fellow 21\,$\cdot$\,09219 both from Japan Society of the Promotion of Science.
The second named author was supported by FAPESP 09/50981-2 and by RFFI 10.01.00383 a.
He also thanks Professor Ivan Shestakov whose efforts played crucial role to manage his visit to Sao Paulo University.  
The authors both thank the referees for their kind suggestions, which improved the exposition 
of this paper.

\end{document}